\documentclass[a4paper]{amsart}
\usepackage{url}
\usepackage{graphicx}
\usepackage[all]{xy}
\usepackage[mathscr]{eucal}
\usepackage{amsmath,amssymb,amsfonts}
\newtheorem{theorem}{Theorem}[section]
\newtheorem{lemma}[theorem]{Lemma}
\newtheorem{corollary}[theorem]{Corollary}
\newtheorem{example}[theorem]{Example}
\newtheorem{examples}[theorem]{Examples}
\newtheorem{proposition}[theorem]{Proposition}
\newtheorem{remark}[theorem]{Remark}
\newtheorem{conjecture}[theorem]{Conjecture}

\newtheorem{notation}[theorem]{Notation}
\usepackage{stackengine}
\usepackage{xcolor}
\usepackage[all]{xy}
\usepackage{tikz}
\usetikzlibrary{decorations.markings}
\tikzset{->-/.style={decoration={
  markings,
  mark=at position .5 with {\arrow{stealth}}},postaction={decorate}}}

\newcommand{\degree}{degree}

\title[Generalizations of the Thompson groups]{Higher dimensional generalizations of the Thompson groups via higher rank graphs}

\author{Mark V. Lawson}
\address{Mark V. Lawson, Department of Mathematics
and the
Maxwell Institute for Mathematical Sciences,
Heriot-Watt University,
Riccarton,
tight
Edinburgh EH14 4AS,
UNITED KINGDOM.}
\email{m.v.lawson@hw.ac.uk}

\author{Aidan Sims}
\address{Aidan Sims,
School of Mathematics and Applied Statistics,
University of Wollongong,
NSW 2522, 
AUSTRALIA.}
\email{asims@uow.edu.au}

\author{Alina Vdovina}
\address{Alina Vdovina,
Department of Mathematics,
The City College of New York,
160 Convent Avenue,
New York, NY 10031,
USA.}
\email{avdovina@ccny.cuny.edu}

\begin{document}

\begin{abstract}
We construct a family of groups from suitable higher rank graphs which are higher dimensional generalizations of the Thompson groups. 
We introduce group invariants, inspired by the $K$-theory of $C^{*}$-algebras, and show that many of our groups are not isomorphic to the Brin-Thompson groups $nV$, when $n \geq 2$.
\end{abstract}
\maketitle

\section{Introduction}

Groups are the abstract versions of groups of bijections.
But how can bijections themselves be constructed?
We can start with something simpler: namely, partial bijections.
However, the union of two partial bijections need not itself be a partial bijection;
if it is, we say that the partial bijections are {\em compatible}.
Thus, we could try to make bijections from suitable sets of compatible partial bijections.
This raises the question of where we could find the partial bijections.
There are many possible sources, but one is to use cancellative monoids.
If $S$ is a cancellative monoid then multiplication on the left by an element $a$ of $S$
is a partial bijection.
More generally, we can replace the cancellative monoid $S$ by a cancellative category $C$, regarded as a partial algebraic structure,
and left multiplication by an arrow $a$ now has a partial domain of definition
being all those arrows that can be composed on the right with $a$.
The abstract theory of partial bijections is inverse semigroup theory (there is a zero which represents the empty partial bijection),
so the above line of reasoning suggests that we might construct  groups from inverse semigroups, and construct inverse semigroups in turn from cancellative categories.
This, in a nutshell, is the idea of this paper.
We now flesh out the details.

In this paper, we shall show how to construct a family of groups (which, it will transpire, are topological full groups) from suitable higher rank graphs.
The groups can be considered to be infinite analogues of finite direct products of finite symmetric groups.
Despite their name, higher rank graphs are in fact a class of cancellative categories;
they generalize free categories,
and arise naturally in constructing important examples of $C^{\ast}$-algebras \cite{KP}.
Thus, we shall be constructing groups from a family of cancellative categories.
Specifically, in this paper, we generalize \cite{LV2019b} from the monoid case to the category case.
Our approach is closely related to the one adopted in \cite{DM}.

\begin{remark}{\em In this paper, we treat categories as being algebraic structures generalizing monoids.
Thus, our categories are always assumed to be sets except when we talk about categories of structures. 
In particular, category theory {\em per se} plays almost no role in this paper.
For us, a category is a `monoid with many identities.' For this reason, we have used algebraic terminology throughout.}
\end{remark}

Our application of cancellative categories will make essential use of the pioneering work of Jack Spielberg \cite{JS2014, JS2018}.
The background from inverse semigroup theory needed to read this paper is outlined in Section 2;
the construction of groups from cancellative categories is described in Section 3;
and the application of the results from Section~3 to that of constructing groups from higher rank graphs
is the subject of Section 4.

We now want to understand in more detail the structure of the groups we have constructed from higher rank graphs.
To understand a group properly usually requires the group act on a suitable geometric structure.
In the case of the classical groups, they arise as groups of units of matrix rings
and therefore act on geometric structures constructed from vector spaces.
In this paper, we show, in particular, that the groups we construct arise as groups of units of Boolean inverse monoids.
Such inverse monoids have ring-theoretic characteristics, without themselves being rings \cite{Wehrung}.
In addition, their sets of idempotents form Boolean algebras on which the group acts.
Showing that a group is a group of units of a Boolean inverse monoid therefore brings with it geometric information.
This is the subject of Section 5 and Section 6, with Section 7 describing the structure of the Boolean inverse monoid
and relating it to the structure of higher rank graphs.

The groups we have constructed also arise as the topological full groups of certain kinds of topological groupoids.
This is proved in Section 8.
Specifically, we apply the theory of noncommuative Stone duality, 
a summary of which can be found in \cite{Lawson2022},
to the Boolean inverse monoids we have constructed.
Under non-commutative Stone duality, Boolean inverse monoids are related to a class of \'etale groupoids
and the groups of units of the Boolean inverse monoid are the topological full groups of the associated \'etale groupoids.
In the countably infinite case, the \'etale groupoids associated to our Boolean inverse monoids are
Hausdorff \'etale topological groupoids which are effective, minimal and have a space of identities homeomorphic to the Cantor space. 
This implies, in particular, that in this case our group is isomorphic to a subgroup of the group of self-homeomorphisms of the Cantor space.

By \cite[Theorem~3.10]{Matui2015}, our group is therefore a complete invariant
for the corresponding \'etale groupoid. This suggests that both the homology of the groupoid, and the $K$-groups
of its $C^{\ast}$-algebra, should be interesting group-theoretically. 
This theme is developed in Section~9 and leads onto Section 10, where we construct some examples.
Specifically, using the invariants of Section 9, we show
that the corresponding groups are non-isomorphic as well. 
In particular, they are not isomorphic
to the known examples of groups $nV$ for $n\geq 2$.

\begin{center}
{\bf Some notation}
\end{center}

\begin{itemize}

\item $I(X,\tau)$ the inverse semigroup of all partial homeomorphisms between the open subsets of the topological space $(X,\tau)$.
When $X$ is endowed with the discrete topology, we simply denote this inverse semigroup by $I(X)$, the symmetric inverse monoid on the set $X$. Section~2.1.

\item $S^{e}$ the inverse semigroup of all elements $s \in S$ where both $s^{-1}s$ and $ss^{-1}$ are essential idempotents.
Section~2.1.

\item $\mathsf{RI}(C)$ the inverse semigroup of all bijective morphisms between the right ideals of the category $C$. Section~3.1.

\item $\mathsf{R}(C)$ the inverse semigroup of all bijective morphisms between the finitely generated right ideals of the finitely aligned category $C$. Section~3.1.

\item $\mathscr{G}(C) = \mathsf{R}(C)^{e}/\sigma$ the {\em group} constructed from the category $C$ which is finitely aligned and has only a finite number of identities. Section~3.1.

\item $\Sigma (C)$ the inverse hull of the cancellative category $C$. Section~3.2.

\item $\mathsf{P}(C)$ the inverse monoid of all bijective morphisms between finitely generated right ideals generated by codes
when $C$ is a strongly finitely aligned conical cancellative category with a finite number of identities which satisfies the condition (MC). Section~3.3.

\item $\mathsf{B}(C) = \mathsf{R}(C)/{\equiv}$, a Boolean inverse monoid with group of units isomorphic to $\mathscr{G}(C)$
when $C$ is a  strongly finitely aligned higher rank graph with a finite number of identities with no sources and is row finite. Section~5 and Section 6.

\item $\mathcal{G}(C)$ the {\em groupoid} associated with the higher rank graph $C$. Section~8.

\end{itemize}

\section{Background definitions}

This section is included for reference only.

\subsection{Inverse semigroups}

Our semigroups will usually have a zero and we write $e \perp f$ if $e$ and $f$ are idempotents such that $ef = 0$.
We say that $e$ and $f$ are {\em orthogonal}.
We refer the reader to \cite{Lawson1998} for the detailed theory of inverse semigroups but we recall some important definitions here.
Remember: inverse semigroup theory is the abstract theory of partial bijections;
the composition of partial bijections is always defined since there is a partial bijection defined on the empty set.
An {\em inverse semigroup} is a semigroup in which for each element $a$ there is a unique element, denoted by $a^{-1}$,
such that $a = aa^{-1}a$ and $a^{-1} = a^{-1}aa^{-1}$.
The elements $aa^{-1}$ and $a^{-1}a$ are always idempotents and we denote then by $\mathbf{d}(a) = a^{-1}a$
and $\mathbf{r}(a) = aa^{-1}$.
We write $a \perp b$, and say that $a$ and $b$ are {\em orthogonal}, if $\mathbf{d}(a) \mathbf{d}(b) = 0$ and $\mathbf{r}(a) \mathbf{r}(b) = 0$.
The set of idempotents in $S$ is denoted by $\mathsf{E}(S)$.
It is called the {\em semilattice of idempotents} of $S$.\footnote{This is really a meet-semilattice when we define $e \wedge f = ef$.}
If $X \subseteq S$, define $\mathsf{E}(X) = X \cap \mathsf{E}(S)$.
Observe that if $e$ is any idempotent and $a$ is any element then $aea^{-1}$ is an idempotent.
Thus the `conjugate of an idempotent is an idempotent'.
If $S$ is an inverse {\em monoid} its group of units is denoted by $\mathsf{U}(S)$.
A non-zero element $a$ of an inverse semigroup with zero is called an {\em infinitesimal} if $a^{2} = 0$.
Observe that $a$ is an infinitesimal if and only if $\mathbf{d}(a) \perp \mathbf{r}(a)$ by \cite[Lemma~2.5]{Lawson2017}.
An inverse semigroup $S$ is said to be {\em fundamental} if the only elements of $S$ that commute
with all the idempotents of $S$ are themselves idempotents.
The following is an easy consequence of \cite[Theorem~5.2.9]{Lawson1998}.

\begin{lemma}\label{lem:finite-fundamental}
If a fundamental inverse semigroup has a finite number of idempotents then it is itself finite.
\end{lemma}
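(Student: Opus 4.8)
The plan is to invoke the standard structure theorem for fundamental inverse semigroups, namely \cite[Theorem~5.2.9]{Lawson1998}, which says that a fundamental inverse semigroup $S$ embeds (as a full inverse subsemigroup, fixing the semilattice of idempotents) into the Munn semigroup $T_{E}$ of its own semilattice $E = \mathsf{E}(S)$. Recall that $T_{E}$ consists of all order-isomorphisms between principal order ideals $e^{\downarrow}$ of $E$, with composition the usual partial-bijection product. The key point is that if $E$ is finite, then $T_{E}$ is finite: there are only finitely many principal order ideals $e^{\downarrow}$ (one for each $e \in E$), each is a finite set, and hence there are only finitely many order-isomorphisms between any two of them. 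So $T_{E}$ is a finite inverse semigroup, and any subsemigroup of it — in particular the isomorphic copy of $S$ — is finite.

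The steps, in order, would be: first, observe that $E = \mathsf{E}(S)$ is finite by hypothesis; second, recall that $S$ fundamental gives an injective homomorphism $\delta \colon S \to T_{E}$ by \cite[Theorem~5.2.9]{Lawson1998}; third, bound $|T_{E}|$ by $\sum_{e,f \in E} \#\{\text{order-isomorphisms } e^{\downarrow} \to f^{\downarrow}\}$, which is a finite sum of finite quantities since each $e^{\downarrow}$ is a finite poset; fourth, conclude $|S| \le |T_{E}| < \infty$. None of this requires any calculation beyond noting that the relevant index sets are finite.

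I do not expect a genuine obstacle here: the lemma is essentially a corollary of the cited structure theorem together with the trivial observation that the Munn semigroup of a finite semilattice is finite. The only thing to be careful about is citing the Munn-semigroup embedding in exactly the form stated in \cite{Lawson1998} (fundamental $\Leftrightarrow$ the natural map to $T_{E}$ is injective), and making sure that one is using the fact that $T_{E}$ is built from the \emph{principal} order ideals, so that its cardinality is controlled by $|E|$ alone. Given that, the proof is two or three lines.
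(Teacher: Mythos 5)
Your proof is correct and follows exactly the route the paper intends: the paper gives no written proof, merely noting that the lemma is ``an easy consequence of'' \cite[Theorem~5.2.9]{Lawson1998}, and your argument spells out that consequence via the Munn representation $S \hookrightarrow T_{E}$ and the finiteness of $T_{E}$ for finite $E$. No discrepancy.
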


Define the {\em natural partial order} on $S$ by $a \leq b$ if and only if $a = ba^{-1}a$.
It can be proved that with respect to this order, an inverse semigroup is partially ordered
but observe that $a \leq b$ implies that $a^{-1} \leq b^{-1}$.
An inverse semigroup is called {\em $E$-unitary} if $e \leq a$, where $e$ is an idempotent, implies that $a$ is an idempotent.

Define the {\em compatibility relation} $a \sim b$ precisely when $a^{-1}b$ and $ab^{-1}$ are both idempotents;
this relation is reflexive and symmetric, but not transitive in general.
If $a \sim b$ we say that $a$ and $b$ are {\em compatible}.
Observe that orthogonal elemenets are compatible.
A non-empty subset $X$ of an inverse semigroup is said to be {\em compatible}
if each pair of elements of $X$ is compatible.
Observe that if $a,b \leq c$ then $a \sim b$.
It follows that $a \sim b$ is a necessary condition for $a$ and $b$ to have a join $a \vee b$
with respect to the natural partial order.
If $a \perp b$ and $a \vee b$ exists we speak of an {\em orthogonal join}.
The compatibility relation plays an important role in this paper.
The following is
\cite[Lemma~1.4.11]{Lawson1998}
and
\cite[Lemma~1.4.12]{Lawson1998}.

\begin{lemma}\label{lem:compatibility-meets} In an inverse semigroup, we have the following:
\begin{enumerate}
\item  $a \sim b$
if and only if all of the following hold: $a \wedge b$ exists, $\mathbf{d}(a \wedge b) = \mathbf{d}(a)\mathbf{d}(b)$, and
 $\mathbf{r}(a \wedge b) = \mathbf{r}(a)\mathbf{r}(b)$.
 \item If $a \sim b$ then $a \wedge b = ab^{-1}b = bb^{-1}a = ba^{-1}a = aa^{-1}b$.
 \end{enumerate}
 \end{lemma}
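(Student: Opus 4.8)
The plan is to prove part~(2) first, deduce the forward implication of part~(1) from it by a short computation, and then handle the converse of part~(1), which is the substantive case.

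For part~(2), assume $a \sim b$, so that $a^{-1}b$ and $ab^{-1}$ are idempotents; being idempotents they are self-inverse, which gives the two identities $a^{-1}b = b^{-1}a$ and $ab^{-1} = ba^{-1}$. Using these I would first record the algebraic identities $ab^{-1}b = ba^{-1}b = b(a^{-1}b) = b(b^{-1}a) = bb^{-1}a$ and, symmetrically, $ba^{-1}a = ab^{-1}a = a(b^{-1}a) = a(a^{-1}b) = aa^{-1}b$. Next I would show that $ab^{-1}b$ is the meet $a \wedge b$: it lies below $a$ because $ab^{-1}b = a\,\mathbf{d}(b)$ is $a$ times an idempotent, and below $b$ because $ab^{-1}b = b(a^{-1}b)$ is $b$ times an idempotent; and if $c \le a$ and $c \le b$, then using $c = a\,\mathbf{d}(c) = b\,\mathbf{d}(c)$ together with $\mathbf{d}(c) \le \mathbf{d}(b)$ one gets $(ab^{-1}b)\,\mathbf{d}(c) = ab^{-1}c = a\,\mathbf{d}(c) = c$, so $c \le ab^{-1}b$. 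Since the hypothesis $a \sim b$ is symmetric in $a$ and $b$, the same computation shows $ba^{-1}a = a \wedge b$; combined with the algebraic identities above, all four products equal $a \wedge b$.

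For the forward implication of part~(1), given $a \sim b$ we already know from part~(2) that $a \wedge b$ exists and equals $ab^{-1}b = bb^{-1}a$, and a direct calculation, pushing commuting idempotents past one another, gives $\mathbf{d}(a\wedge b) = (ab^{-1}b)^{-1}(ab^{-1}b) = b^{-1}b\,a^{-1}a\,b^{-1}b = \mathbf{d}(a)\mathbf{d}(b)$ and $\mathbf{r}(a\wedge b) = (bb^{-1}a)(bb^{-1}a)^{-1} = bb^{-1}\,aa^{-1}\,bb^{-1} = \mathbf{r}(a)\mathbf{r}(b)$. For the converse, suppose $m := a \wedge b$ exists with $\mathbf{d}(m) = \mathbf{d}(a)\mathbf{d}(b)$ and $\mathbf{r}(m) = \mathbf{r}(a)\mathbf{r}(b)$. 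From $m \le a$ we get $m = a\,\mathbf{d}(m) = a\,\mathbf{d}(a)\mathbf{d}(b) = ab^{-1}b$, and from $m \le b$ we get $m = \mathbf{r}(m)\,b = \mathbf{r}(a)\mathbf{r}(b)\,b = aa^{-1}b$, so $ab^{-1}b = aa^{-1}b$. Multiplying this equation on the left by $a^{-1}$ gives $a^{-1}b = a^{-1}a\,b^{-1}b = \mathbf{d}(a)\mathbf{d}(b) \in \mathsf{E}(S)$, and multiplying it on the right by $b^{-1}$ gives $ab^{-1} = aa^{-1}\,bb^{-1} = \mathbf{r}(a)\mathbf{r}(b) \in \mathsf{E}(S)$; hence $a \sim b$.

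I expect the only real difficulty to be bookkeeping. Every step is an instance of a small stock of standard facts about an inverse semigroup and its natural partial order --- that $x \le y$ is equivalent to each of $x = y\,\mathbf{d}(x)$ and $x = \mathbf{r}(x)y$, that $x \le y$ forces $\mathbf{d}(x) \le \mathbf{d}(y)$ and $\mathbf{r}(x) \le \mathbf{r}(y)$, that idempotents commute, and that $(xy)^{-1} = y^{-1}x^{-1}$ --- so the care needed lies entirely in tracking which products are being formed and which instance of each fact is invoked at each stage.
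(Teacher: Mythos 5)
The paper does not prove this lemma; it simply cites \cite[Lemmas~1.4.11 and 1.4.12]{Lawson1998}. Your argument is correct and is essentially the standard proof from that source: every step checks out, including the lower-bound verification $(ab^{-1}b)\,\mathbf{d}(c)=c$ and the converse direction, where extracting $a^{-1}b=\mathbf{d}(a)\mathbf{d}(b)$ and $ab^{-1}=\mathbf{r}(a)\mathbf{r}(b)$ from the two expressions for the meet is exactly the right move.
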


Observe that the meet $a \wedge b$ may exist without $a$ and $b$ being compatible.
An inverse semigroup is called a {\em $\wedge$-semigroup} if each pair of elements has a meet with respect
to the natural partial order.
Inverse $\wedge$-semigroups were first studied in \cite{Leech}
and will play an important role in this paper.

\begin{remark}
{\em If $\theta \colon S \rightarrow T$ is a homomorphism between
inverse semigroups then it is not true in general that $\theta (a \wedge b) = \theta (a) \wedge \theta (b)$.
However, if $a \sim b$ then by Lemma~\ref{lem:compatibility-meets}, we have that $a \wedge b = ab^{-1}b$;
of course, in this case, we do have that  $\theta (a \wedge b) = \theta (a) \wedge \theta (b)$ because $a \wedge b$
can be described purely algebraically.}
\end{remark}

Let $\rho$ be a congruence on a semigroup $S$;
it is said to be {\em idempotent-pure} if $a \, \rho \, e$, where $e$ is an idempotent, implies that $a$ is an idempotent;
it is said to be {\em $0$-restricted} if $a \, \rho \, 0$ implies that $a = 0$.
Let $\theta \colon S \rightarrow T$ be a homomorphism between semigroups;
it is said to be {\em $0$-restricted} if $\theta (a)$ is zero if and only if $a$ is zero;
it is said to be {\em idempotent-pure} if $\theta (a)$ is an idempotent if and only if $a$ is an idempotent.
The proofs of the following are straightforward from the definitions.

\begin{lemma}\label{lem:idpt-pure-characterizations}
Let $S$ be an inverse semigroup:
\begin{enumerate}
\item The congruence $\rho$ is idempotent-pure if and only if $\rho \, \subseteq \, \sim$.
\item The homomorphism $\theta \colon S \rightarrow T$ is idempotent-pure if and only if
$\theta (a) \sim \theta (b)$ implies that $a \sim b$.
\end{enumerate}
\end{lemma}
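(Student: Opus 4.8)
The plan is to prove each of the two stated equivalences directly from the definitions; the only idea that is not purely formal is the observation that $a \sim \mathbf{d}(a)$ forces $a$ to be an idempotent, so I would isolate that first: if $a \sim \mathbf{d}(a)$ then $a\mathbf{d}(a)^{-1} = a\mathbf{d}(a) = a$ is by definition of the compatibility relation an idempotent.

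\textbf{Part (1).} For the forward direction, assume $\rho$ is idempotent-pure and suppose $a\,\rho\,b$. Multiplying this relation on the left by $a^{-1}$ and on the right by $b^{-1}$ gives $a^{-1}a\,\rho\,a^{-1}b$ and $ab^{-1}\,\rho\,bb^{-1}$; since $a^{-1}a$ and $bb^{-1}$ are idempotents, idempotent-purity forces $a^{-1}b$ and $ab^{-1}$ to be idempotents, i.e. $a \sim b$, so $\rho \subseteq {\sim}$. For the converse, assume $\rho \subseteq {\sim}$ and suppose $a\,\rho\,e$ with $e \in \mathsf{E}(S)$. Since a congruence on an inverse semigroup is compatible with the inversion operation, $a^{-1}\,\rho\,e^{-1} = e$, and hence $a^{-1}a\,\rho\,e^2 = e$; by transitivity $a\,\rho\,a^{-1}a = \mathbf{d}(a)$, so $a \sim \mathbf{d}(a)$, which by the observation above makes $a$ an idempotent. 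Thus $\rho$ is idempotent-pure.

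\textbf{Part (2).} Here I would first note that one implication in the definition of idempotent-pure is automatic, since any homomorphism of inverse semigroups preserves idempotents (and inverses), so the content is that $\theta(a)$ being idempotent implies $a$ is idempotent. Assuming $\theta$ is idempotent-pure and $\theta(a) \sim \theta(b)$, the elements $\theta(a^{-1}b) = \theta(a)^{-1}\theta(b)$ and $\theta(ab^{-1}) = \theta(a)\theta(b)^{-1}$ are idempotents of $T$, so by idempotent-purity $a^{-1}b$ and $ab^{-1}$ are idempotents of $S$, giving $a \sim b$. Conversely, assume $\theta(a) \sim \theta(b)$ implies $a \sim b$, and suppose $\theta(a)$ is idempotent. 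Taking $b = \mathbf{d}(a)$ we have $\theta(b) = \theta(a)^{-1}\theta(a) = \theta(a)$, so trivially $\theta(a) \sim \theta(b)$ by reflexivity of ${\sim}$; hence $a \sim \mathbf{d}(a)$, and as in Part (1) this yields $a \in \mathsf{E}(S)$.

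\textbf{Main obstacle.} There is no serious obstacle: the statement really is routine. The single point worth spelling out is the little fact that $a \sim \mathbf{d}(a)$ implies $a \in \mathsf{E}(S)$, together with the device — used in both converse directions — of comparing $a$ with $\mathbf{d}(a)$ (using that congruences, respectively homomorphisms, of inverse semigroups respect inversion) in order to bring the hypothesis to bear on a general element rather than only on products of the form $a^{-1}b$.
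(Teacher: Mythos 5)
Your proof is correct and complete; the paper itself omits the argument, remarking only that the proofs are ``straightforward from the definitions,'' and your direct verification---including the key little observation that $a \sim \mathbf{d}(a)$ forces $a$ to be idempotent, and the device of comparing $a$ with $\mathbf{d}(a)$ in both converse directions---is exactly the routine argument intended.
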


One way, of course, to construct a group from an inverse monoid is to consider its group of units 
but there is an alternative approach that will be important to this paper.
Let $S$ be an inverse semigroup.
There is a congruence $\sigma$ defined on $S$ such that $S/\sigma$ is a group
and if $\rho$ is any congruence on $S$ such that $S/\rho$ is a group then $\sigma \subseteq \rho$.
Thus $\sigma$ is the {\em minimum group congruence}.
In fact, $s \, \sigma \, t$ if and only if there exists $z \leq s,t$.
See \cite[Section 2.4]{Lawson1998}.
The following was proved as \cite[Theorem 2.4.6]{Lawson1998}.

\begin{proposition}\label{prop:four} Let $S$ be an inverse semigroup.
Then $S$ is $E$-unitary if and only if $\sigma \, = \, \sim$.
\end{proposition}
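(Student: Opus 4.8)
My plan is to prove the two implications separately, using only the description $s\,\sigma\,t \iff (\exists z)(z\le s,t)$, the remark above that $x\le y$ implies $x^{-1}\le y^{-1}$, and the standard fact that the natural partial order is compatible with multiplication (so $x\le x'$ and $y\le y'$ give $xy\le x'y'$; in particular $fs\le s$ and $sf\le s$ whenever $f$ is an idempotent). It is convenient to record first that $\sim\,\subseteq\,\sigma$ holds in \emph{every} inverse semigroup: if $a\sim b$ then $ab^{-1}$ is an idempotent, so $ab^{-1}b\le b$ (left‑multiplying $b$ by the idempotent $ab^{-1}$), while $ab^{-1}b = a\mathbf{d}(b)\le a$ (right‑multiplying $a$ by the idempotent $\mathbf{d}(b)$); hence $ab^{-1}b$ is a common lower bound of $a$ and $b$, so $a\,\sigma\,b$.

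For the direction ``$E$-unitary $\Rightarrow \sigma=\sim$'', assume $S$ is $E$-unitary and take $(a,b)\in\sigma$, say $z\le a$ and $z\le b$. Then $z^{-1}\le a^{-1}$, and multiplying this by $z\le b$ gives the idempotent $z^{-1}z\le a^{-1}b$; since $S$ is $E$-unitary, $a^{-1}b$ is an idempotent. Symmetrically $zz^{-1}\le ab^{-1}$ forces $ab^{-1}$ to be an idempotent, so $a\sim b$. This shows $\sigma\subseteq\sim$, and combined with the preliminary inclusion we get $\sigma=\sim$.

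For the converse, assume $\sigma=\sim$ and let $e$ be an idempotent with $e\le a$; I must show $a$ is an idempotent. The key move is to exhibit a single element lying below both $a$ and $\mathbf{d}(a)$. Since $e\le a$ implies $\mathbf{d}(e)\le\mathbf{d}(a)$ and $\mathbf{d}(e)=e$, the element $e$ itself already satisfies $e\le\mathbf{d}(a)$; hence $e$ witnesses $(a,\mathbf{d}(a))\in\sigma=\sim$, that is, $a\sim\mathbf{d}(a)$. By the definition of the compatibility relation, $a\,\mathbf{d}(a)^{-1}$ is an idempotent; but $\mathbf{d}(a)^{-1}=\mathbf{d}(a)$ and $a\mathbf{d}(a)=a$, so $a$ is an idempotent, as required.

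The argument is short, and the only step that takes a moment's thought is this last one: recognising that $e\le a$ automatically places $e$ below $\mathbf{d}(a)$, so that comparing $a$ with its own domain idempotent is exactly the right thing to do. (Alternatively one could feed $a\sim\mathbf{d}(a)$, or directly $e\sim a$, into Lemma~\ref{lem:compatibility-meets}, but the route via $\sigma$ above seems the most economical.) Everything else is routine bookkeeping with the natural partial order.
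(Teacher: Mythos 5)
Your proof is correct: the inclusion $\sim\,\subseteq\,\sigma$ via the common lower bound $ab^{-1}b$, the deduction $z^{-1}z\le a^{-1}b$ and $zz^{-1}\le ab^{-1}$ for the forward direction, and the comparison of $a$ with $\mathbf{d}(a)$ for the converse are all valid and constitute the standard argument. The paper itself gives no proof, citing \cite[Theorem~2.4.6]{Lawson1998}, and your argument is essentially the one found there.
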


The following is well-known and easy to check.

\begin{lemma}\label{lem:five} Let $S$ be an $E$-unitary inverse semigroup.
Then for $a,b \in S$ we have $a \sim b$ if and only if $ab^{-1}b = ba^{-1}a$.
\end{lemma}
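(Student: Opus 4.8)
The plan is to prove the two implications separately, noting that only the reverse one will actually invoke $E$-unitarity.

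For the forward implication, assume $a \sim b$. Then Lemma~\ref{lem:compatibility-meets}(2) applies verbatim and gives $ab^{-1}b = a \wedge b = ba^{-1}a$, which is exactly the asserted equality; there is nothing further to do.

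For the reverse implication, assume $ab^{-1}b = ba^{-1}a$ and write $z$ for this common element. The idea is to exhibit $z$ as a lower bound of both $a$ and $b$ in the natural partial order and then pass to the minimum group congruence. Since $b^{-1}b \in \mathsf{E}(S)$ and $z = a(b^{-1}b)$, the standard fact that right-multiplication by an idempotent produces an element below the original one (immediate from the definition $x \leq y \iff x = yx^{-1}x$) yields $z \leq a$; symmetrically, $z = b(a^{-1}a)$ with $a^{-1}a \in \mathsf{E}(S)$ yields $z \leq b$. Hence $z \leq a$ and $z \leq b$, so by the description of $\sigma$ recalled just before Proposition~\ref{prop:four} we have $a \, \sigma \, b$. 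Finally, $S$ is $E$-unitary, so Proposition~\ref{prop:four} gives $\sigma \, = \, \sim$, and therefore $a \sim b$, which completes the argument.

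In effect the whole proof is bookkeeping with the definitions, so there is no real obstacle. The only point worth flagging is the temptation, in the reverse direction, to verify directly that $a^{-1}b$ and $ab^{-1}$ are idempotents by manipulating the hypothesis: this is possible but fiddly, whereas routing through the common lower bound $z$ together with the identity $\sigma \, = \, \sim$ makes the reverse implication as short as the forward one.
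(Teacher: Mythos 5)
Your proof is correct. The paper states this lemma without proof (it is labelled ``well-known and easy to check''), so there is nothing to compare against; your argument --- the forward direction via Lemma~\ref{lem:compatibility-meets}(2), and the reverse direction by exhibiting $z = ab^{-1}b = ba^{-1}a$ as a common lower bound and invoking $\sigma = {\sim}$ from Proposition~\ref{prop:four} --- is exactly the standard route and is complete.
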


Intuitively, the above lemma says that the partial bijections $a$ and $b$ are identified
precisely when they agree on the intersection of their domains of definition.

\begin{example}{\em This example illustrates the idea of this paper. 
Groups often arise as groups of symmetries but, sometimes, how they arise is more elusive.
For example, the {\em abstract commensurator} of a group $G$ is the set of all isomorphisms between
subgroups of finite index factored out by the equivalence that identifies two such isomorphisms if they agree on
a subgroup of finite index.
This forms a group $\mbox{Comm}(G)$, called the {\em abstract commensurator} of $G$ \cite{Nek2002}.
In fact, this group is best understood using inverse semigroup theory.
The set, $\Omega (G)$, of all isomorphisms between subgroups of finite index is an inverse semigroup.
The group $\mbox{Comm}(G)$ is then  $\Omega (G)/\sigma$ where $\sigma$ is the minimum group congruence on  $\Omega (G)$.
The elements of $\mbox{Comm}(G)$ are `hidden symmetries' to use the terminology of Farb and Weinberger \cite{FW2004}.
The elements of $\Omega (G)$ are, in some sense, `large'.
}
\end{example}

We now describe an analogous procedure to the one described above for constructing a group
from an inverse semigroup (of partial bijections).
A special case was used in \cite{Lawson2007} for constructing the classical Thompson groups from free monoids.

Let $S$ be an inverse semigroup (of partial isomorphisms, for example).
Let $S' \subseteq S$ be an inverse subsemigroup whose elements are, in some sense, large;
whatever this might mean, we require that $S'$ does not contain a zero.
Then we obtain a group $S'/\sigma$.
We regard the elements of $S'/\sigma$ as hidden symmetries of the structure that gives rise to $S$.
{\em We now define what `large' means in the context of this paper.}
A non-zero idempotent $e$ of an inverse semigroup $S$ is said to be {\em essential} if $ef \neq 0$ for all non-zero
idempotents $f$ of $S$.
An element $s$ is said to be {\em essential} if both $s^{-1}s$ and $ss^{-1}$ are essential.
Denote by $S^{e}$ the set of all essential elements of $S$.
It follows by \cite[Lemma~4.2]{Lawson2007}, that $S^{e}$ is an inverse semigroup (without zero).
We therefore expect the group $S^{e}/\sigma$ to be interesting.
This will be the basis of our construction of a group from an inverse semigroup:
\begin{itemize}
\item We begin with an inverse semigroup $S$.
\item We describe its inverse semigroup of essential elements $S^{e}$.
\item We then construct the group $S^{e}/\sigma$.
\end{itemize}

\begin{remark}
{\em Let $S$ be an inverse semigroup with zero.
We try to axiomatize our notion of `large'.
A subset $\mathsf{L} \subseteq \mathsf{E}(S)$ is said to consist of {\em large elements} if the following properties hold:
\begin{enumerate} 
\item $0 \notin \mathsf{L}$.
\item If $e,f \in \mathsf{L}$ then $e \wedge f \in \mathsf{L}$.
\item If $a$ is an element such that $\mathbf{d}(a), \mathbf{r}(a) \in \mathsf{L}$
and $e \leq \mathbf{d}(a)$ where $e \in \mathsf{L}$ then $\mathbf{r}(ae) \in \mathsf{L}$.
\end{enumerate}
Define $S^{\mathsf{L}}$ to consist of all elements $a \in S$ such that $\mathbf{d}(a), \mathbf{r}(a) \in \mathsf{L}$.
Then $S^{\mathsf{L}}$ is an inverse subsemigroup of $S$.\footnote{Our thanks to Alex Martin for some discussions on this subject.}}
\end{remark}

Inverse semigroups arise naturally from topological spaces.\footnote{In fact, this is the origin of inverse semigroups as the abstract manifestations of pseudogroups of tranformations.}
If $(X,\tau)$ is a topological space, then the set $I(X,\tau)$ of all
homeomorphisms between the open subsets of $X$ is an inverse monoid.
The elements of $I(X,\tau)$ are called {\em partial homeomorphisms}.\footnote{The partial homeomorphism defined on the empty set is the zero for this semigroup,
so the multiplication is everywhere defined.}
If $\tau$ is the discrete topology we just write $I(X)$ instead of $I(X,\tau)$
and call it the {\em symmetric inverse monoid} on $X$.
If $A$ is a subset of $X$ then the identity function defined on $A$ is denoted by $1_{A}$.

\subsection{Posets}

We need a little notation from the theory of posets.
Let $(X,\leq)$ be a poset.
If $A \subseteq X$ then $A^{\uparrow}$ is the set of all elements of $X$ above some element of $A$
and $A^{\downarrow}$ is the set of all elements of $X$ below some element of $A$.
If $A = \{a\}$ we write $a^{\uparrow}$ instead of $\{a\}^{\uparrow}$, and $a^{\downarrow}$ instead of $\{a\}^{\downarrow}$.
If $A = A^{\downarrow}$ we say that $A$ is an {\em order ideal}.

The complement of an element $e$ of a Boolean algebra is denoted by $\bar{e}$.

\subsection{Categories}

We regard a category as a generalized monoid.
Thus, the set of identities of the category $C$, denoted by $C_{o}$, is a subset of $C$ and there are two maps $\mathbf{d}, \mathbf{r} \colon C \rightarrow C_{o}$,
called, respectively, {\em domain} and {\em codomain}.
The elements of the category $C$ are called {\em arrows}, and are such that  $\mathbf{r}(a)\stackrel{a}{\longleftarrow} \mathbf{d}(a)$.
In this paper, the product $ab$ is defined precisely when $\mathbf{d}(a) = \mathbf{r}(b)$.
If $A$ and $B$ are subsets of a category $C$ then $AB$ is that subset of $C$ consisting of all products $ab$ where $a \in A$,
$b \in B$ and $ab$ is defined. 
It could, of course, be empty.
Singleton sets $\{a\}$ will be denoted, simply, by $a$.
A category $C$ is said to be {\em cancellative} if $ab = ac$ implies that $b = c$,
and $ba = ca$ implies that $b = c$.\footnote{Categorically, every element of the category is monic and epic but it is the cancellation properties
--- as in the monoid case --- which come to the fore.}
An arrow $x$ is {\em invertible} if there is an arrow $y$ such that $xy$ and $yx$ are identities.
Clearly, every identity is invertible.
A category in which the identities are the only invertible arrows is said to be {\em conical};
we have adopted this odd terminology from the theory of monoids.
Let $C$ be a category and let $a,b \in C$.
We say that $a$ and $b$ are {\em independent} if $aC \cap bC = \varnothing$;
otherwise, they are said to be {\em dependent}.
Thus $a$ and $b$ dependent means that there are elements $u,v \in C$ such that $au = bv$.\footnote{The terms `comparable' and `incomparable' 
were used in \cite{LV2019b}. Strictly speaking, we should say `dependent on the right'
and `independent on the right' but we only work `on the right' in this paper, anyway.}

\subsection{Distributive and Boolean inverse monoids}

An inverse monoid is said to be {\em distributive} if each compatible pair of elements 
has a join and multiplication distributes over such joins.
A {\em morphism} of distributive inverse monoids is a monoid morphism
which is required to preserve compatible joins.
If $X \subseteq S$, where $S$ is a distibutive inverse monoid, 
define $X^{\vee}$ to be the set of all joins of finite compatible subsets of $X$.
A distributive inverse monoid is said to be {\em Boolean} if its semilattice of idempotents
forms a Boolean algebra with respect to the natural partial order.

Let $I$ be a semigroup ideal of an inverse semigroup.
Let $a \in I$. Then $aa^{-1} \in I$ since $I$ is a semigroup ideal.
On the other hand if $a^{-1}a \in I$ then $a \in I$ since $a = a(a^{-1}a)$.
It follows that non-trivial ideals of inverse semigroups always contain idempotents.
If $a \in I$ then $ae \in I$ for all $e \in \mathsf{E}(S)$.
Thus, ideals are always order ideals.

A semigroup ideal $I$ of a Boolean inverse semigroup is said to be an {\em additive ideal}
if $a,b \in I$ and $a \sim b$ implies that $a \vee b \in I$.
If $I$ is a semigroup ideal of a Boolean inverse semigroup,
then $I^{\vee}$ is an additive ideal.

Let $S$ be a Boolean inverse monoid.
If $X \subseteq S$ define $\mathbf{d}(X) = \{\mathbf{d}(x) \colon x \in X\}$ and $\mathbf{r}(X) = \{\mathbf{r}(x) \colon x \in X\}$.
Let $e$ and $f$ be non-zero idempotents in $S$.
We write $e \preceq f$, and say there is a {\em pencil} from $e$ to $f$, if there is a finite set $X$ of $S$ such that $e = \bigvee \mathbf{d}(X)$
and $\bigvee \mathbf{r}(X) \leq f$.
The following was first proved in \cite{Lenz} and also in \cite[Lemma~4.1]{Lawson2016}.

\begin{lemma}\label{lem:cake} Let $S$ be a Boolean inverse monoid.
Let $e$ be an idempotent in the additive ideal $I$ and let $f$ be any idempotent.
Then $f \in I$ if and only if $f \preceq e$.
\end{lemma}

A Boolean inverse monoid is called {\em $0$-simplifying} if it has no non-trivial additive ideals.
A Boolean inverse monoid that is both fundamental and $0$-simplifying is called {\em simple}.\footnote{It is important to remember that this means `simple as a Boolean inverse monoid'. The term `simple' is
used in general semigroup theory with the meaning of no `non-trivial ideals'. Our terminology is natural since if $S$ is a simple Boolean inverse monoid
and there is a non-trivial surjective morphism to another Boolean inverse monoid then this morphism must be an isomorphism.}

For the following, see \cite{Lawson2022}.
Let $S$ be an inverse monoid.
A non-empty subset $A \subseteq S$ is a {\em filter} if $A = A^{\uparrow}$
and if $a,b \in A$ there exists $c \in A$ such that $c \leq a,b$.
It is {\em proper} if it does not contain $0$.
A proper filter $P$ of a distributive inverse monoid is said to be {\em prime} if $\bigvee_{i=1}^{m} s_{i} \in P$ implies that $s_{i} \in P$ for some $i$. 
A maximal proper filter is called an {\em ultrafilter}.
If $F$ is a filter in $E$ we write $e \wedge F \neq 0$ to mean that $e \wedge f \neq 0$ for each element $f \in F$.
The following was proved as \cite[Lemma 12.3]{Exel}.

\begin{lemma}\label{lem:uf-exel} Let $F$ be a filter in the meet-semilattice $E$.
Suppose that $e \wedge F \neq 0$ implies that $e \in F$.
Then $F$ is an ultrafilter.
\end{lemma}

The following result is proved using Zorn's Lemma.

\begin{lemma}\label{lem:proper-filters-uf}
In an inverse monoid, 
every proper filter is contained in an ultrafilter.
\end{lemma}

Let $A$ be a filter in the inverse monoid $S$.
Define $\mathbf{d}(A) = (A^{-1}A)^{\uparrow}$
and 
$\mathbf{r}(A) = (AA^{-1})^{\uparrow}$.
Then $\mathbf{d}(A)$ is a filter in $S$ which is also an inverse subsemigroup.
We call it an {\em identity filter}.
Furthermore, $A = (a \mathbf{d}(A))^{\uparrow}$ for any $a \in A$.
Clearly, $0 \in A$ if and only if $0 \in \mathbf{d}(A)$.
Also, $\mathsf{E}(\mathbf{d}(A))$ is a filter in $\mathsf{E}(S)$.
Similar remarks apply to $\mathbf{r}(A)$.
More generally, if $F$ is any identity filter and $\mathbf{d}(a) \in F$ then $(aF)^{\uparrow}$ is a filter
and if $\mathbf{r}(a) \in F$ then $(Fa)^{\uparrow}$ is a filter.
Observe that $A$ is an ultrafilter in $S$ if and only if  $\mathbf{d}(A)$ and $\mathbf{r}(A)$ are identity ultrafilters.

\section{Constructing groups from suitable categories}

In this section, we shall show how to construct a group from a category under certain assumptions on the category.
This part of the paper is related to  \cite{JS2014, JS2018}.
In Section~4, we shall specialize the results of this section to those categories that arise as higher rank graphs.

\subsection{Constructing a group from a suitable category}

Since categories generalize monoids, we can extend monoid-theoretic definitions to a category-theoretic setting.
Let $C$ be a category.
A subset $R \subseteq C$, possibly empty, is called a {\em right ideal} if
$RC \subseteq R$.
In category theory, such sets are called {\em sieves} but we prefer the terminology from algebra.
If $X$ is any subset of a category $C$ then $XC$ is a right ideal {\em generated by $X$}.
If the set $X$ is finite we say the right ideal is {\em finitely generated}.
If $a \in C$ we call $aC$ the  {\em principal right ideal} generated by $a$.

\begin{remark}
{\em Let $C$ be a category and let $R_{1}$ and $R_{2}$ be right ideals in $C$.
Then $R_{1} \cap R_{2}$ is always a right ideal as is $R_{1} \cup R_{2}$.
If $R_{1}$ and $R_{2}$ are finitely generated then so too is $R_{1} \cup R_{2}$.
However, if $R_{1}$ and $R_{2}$ are finitely generated there is no need for $R_{1} \cap R_{2}$ to be finitely generated.
}
\end{remark}

\begin{lemma}\label{lem:fgcat} Let $C$ be a category.
Then $C$ is finitely generated as a right ideal if and only if it has a finite number of identities.
\end{lemma}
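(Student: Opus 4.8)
The plan is to prove the two implications separately, exploiting in both directions the elementary fact that every arrow $a$ of $C$ satisfies $a = \mathbf{r}(a)\,a$, where $\mathbf{r}(a)$ is an identity of $C$ and hence an element of the right ideal $C$ itself.

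For the implication ``finitely many identities $\Rightarrow$ $C$ is finitely generated as a right ideal'', I would simply take $X = C_{o}$ as the candidate finite generating set and verify that $C = C_{o}C$. Given an arbitrary arrow $a \in C$, the product $\mathbf{r}(a)\,a$ is defined, since $\mathbf{d}(\mathbf{r}(a)) = \mathbf{r}(a)$, and it equals $a$ by the defining property of an identity; hence $a \in C_{o}C$. As $C_{o}$ is finite by hypothesis, this exhibits $C$ as a finitely generated right ideal.

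For the converse, suppose $C = XC$ with $X$ finite. The key observation is that $\mathbf{r}$ is constant along left factors, i.e. $\mathbf{r}(xc) = \mathbf{r}(x)$ whenever $\exists xc$. Thus for each identity $e \in C_{o}$, since $e \in C = XC$ we may write $e = x_{e}c_{e}$ with $x_{e} \in X$, and then $\mathbf{r}(x_{e}) = \mathbf{r}(e) = e$. Consequently distinct identities $e \neq e'$ force $x_{e} \neq x_{e'}$, so $e \mapsto x_{e}$ is an injection $C_{o} \hookrightarrow X$, and finiteness of $X$ yields finiteness of $C_{o}$.

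I do not expect a genuine obstacle here; the only mild point to be careful about is that in the converse one should use the factorizations $e = x_{e}c_{e}$ only to extract the left factor $x_{e}$ (any choice will do, no canonical one is needed) and then check that the resulting assignment is injective rather than merely well defined. Everything else is a direct application of the category axioms recalled above.
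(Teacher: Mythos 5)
Your proof is correct and follows essentially the same route as the paper: the forward direction takes $C_{o}$ itself as the generating set, and the converse observes that every identity arises as $\mathbf{r}(x)$ for some $x$ in the finite generating set $X$ (you package this as an injection $C_{o}\hookrightarrow X$, the paper as the statement that $C_{o}$ is contained in the image of the finite set $X$ under $\mathbf{r}$, which is the same argument). No gaps.
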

\begin{proof} Suppose first that $C$ is finitely generated as a right ideal.
Then $C = XC$ where $X$ is a finite set.
Let $e \in C_{o}$ be an arbitrary identity of $C$.
Then $e \in XC$.
It follows that $e = xy$ for some $x \in X$ and $y \in C$.
Thus $e = \mathbf{r}(x)$.
We have proved that every identity of $C$ is the codomain of an element of $X$.
But $X$ is a finite set.
Thus the number of identities is finite.
Conversely, suppose that the number of identities is finite.
Then $C = C_{o}C$ and so $C$ is finitely generated as a right ideal.
\end{proof}

The following definition is due to \cite[Definition 3.1]{JS2014}.\\

\noindent
{\bf Definition. }We say that the category $C$ is {\em finitely aligned}
if $aC \cap bC$ is always finitely generated for any $a,b \in C$ (we include the possibility that it is empty).\\

\begin{remark}{\em The concept of finitely aligned was introduced in \cite{RSa}.
It seems to have been missed entirely within semigroup theory, since \cite{CP} makes no mention of it.
Perhaps, this is because good examples were lacking:
it is worth mentioning that free monoids are finitely aligned but here the interesection of two principal
right ideals is either empty or again a principal right ideal --- what is termed {\em singly aligned} in \cite{LV2019b}.
The semigroup version of finitely aligned has been introduced and explored in \cite{CG}.}
\end{remark}

\begin{lemma}\label{lem:fgright} Let $C$ be a category.
The intersection of any two finitely generated right ideals is finitely generated if and only if
$C$ is finitely aligned.
\end{lemma}
\begin{proof} It is clear that being finitely aligned is a necessary condition, we now show that it is sufficient.
Let $XC$ and $YC$ be two finitely generated right ideals.
Then $XC \cap YC = \bigcup_{x \in X, y \in Y} (xC \cap yC)$.
If $C$ is finitely aligned then each $xC \cap yC$ is finitely generated and, thus, so too is $XC \cap YC$.
\end{proof}

The proof of the following is easy. 

\begin{lemma}\label{lem:product-fa} 
Let $C$ and $D$ be finitely aligned categories.
Then $C \times D$ is finitely aligned.
\end{lemma}

A function $\theta \colon R_{1} \rightarrow R_{2}$ between two right ideals of a category $C$ satisfying
$\mathbf{d}(r) = \mathbf{d}(\theta (r))$ is called a {\em morphism}
if $\theta (rc) = \theta (r)c$ for all $r \in R_{1}$ and $c \in C$.
As usual, if $\alpha$ is a bijective morphism then $\alpha^{-1}$ is also a morphism.

\begin{remark}
{\em Morphisms as defined above arise naturally by left multiplication.
Let $C$ be a category and let $a \in C$.
Then we may define a function $\lambda_{a} \colon \mathbf{d}(a)C \rightarrow aC$ by $\lambda_{a}(x) = ax$. 
Observe that $\mathbf{d}(\lambda_{a}(x)) = \mathbf{d}(x)$.}
\end{remark}

\begin{lemma}\label{lem:right-ideal} Let $\theta \colon R_{1} \rightarrow R_{2}$ be a morphism between two right ideals.
Let $R \subseteq R_{1}$ be a right ideal.
Then $\theta (R)$ is a right ideal contained in $R_{2}$.
If $R$ is finitely generated then $\theta (R)$ is finitely generated.
\end{lemma}
\begin{proof} We prove first that $\theta (R)$ is a right ideal.
Let $c \in C$ be arbitrary and let $\theta (r) \in \theta (R)$.
Then $\theta (r)c = \theta (rc)$, since $\theta$ is a morphism.
But $rc \in R$.
It follows that $\theta (R)$ is a right ideal.
Suppose now that $R = XC$.
We prove that $\theta (XC) = \theta (X)C$.
We have that $\theta (XC) \subseteq \theta (X)C$ since $\theta (xc) = \theta (x)c$.
Conversely, $\theta (x)c = \theta (xc)$, since $\theta$ is a morphism.
In particular, it follows from this that if $X$ is finite then $\theta (X)$ is finite.
\end{proof}

\noindent
{\bf Definition. }Denote by $\mathsf{RI}(C)$ the set of all bijective morphisms between the right ideals of $C$
and by $\mathsf{R}(C)$ the set of all bijective morphisms between {\em finitely generated} right ideals of $C$.\\

\begin{proposition}\label{prop:inverse-monoid} Let $C$ be a category.
\begin{enumerate}
\item $\mathsf{RI}(C)$ is an inverse monoid.
\item If $C$ is finitely aligned and has a finite number of identities then  $\mathsf{R}(C)$ is an inverse submonoid of  $\mathsf{RI}(C)$.
\end{enumerate}
\end{proposition}
\begin{proof} (1) The whole category $C$ is a right ideal and so the identity function on $C$ is a bijective morphism,
and is an identity for $\mathsf{RI}(C)$.
The intersection of two right ideals is a right ideal.
It follows by Lemma~\ref{lem:right-ideal} that the composition of two bijective morphisms is a bijective morphism.
It is now clear that $\mathsf{RI}(C)$ is an inverse monoid.

(2) Suppose now that $C$ is finitely aligned and has a finite number of identities.
Then by Lemma~\ref{lem:fgcat}, the identity function on $C$ is the identity of $\mathsf{R}(C)$.
By Lemma~\ref{lem:fgright}, the intersection of any two finitely generated right ideals is a finitely generated right ideal.
By Lemma~\ref{lem:right-ideal}, it is now easy to see that $\mathsf{R}(C)$ is an inverse submonoid of $\mathsf{RI}(C)$.
\end{proof}

Let $C$ be a category.
We say that a non-empty right ideal $XC$ of $C$ is {\em essential} if it intersects every right ideal of $C$ in a non-empty set;
observe that is is enough to use principal right ideals.

\begin{lemma}\label{lem:one} Let $C$ be a category.
Then $1_{XC}$ is an essential idempotent in $\mathsf{RI}(C)$
if and only if 
$XC$ is an essential right ideal in $C$.
\end{lemma}
\begin{proof} Suppose that $1_{XC}$ is an essential idempotent in $\mathsf{RI}(C)$.
Let $a \in C$ be arbitrary.
Then $1_{aC}1_{XC} \neq \varnothing$.
But $1_{aC}1_{XC}$ is simply the identity function on the set $aC \cap XC$.
It follows that $aC \cap XC \neq \varnothing$.
The proof of the converse is similar.
\end{proof}

The following result tells us that if we want $\mathsf{R}(C)^{e}$
to be non-empty then we must assume that $C$ has a finite number of identities.

\begin{lemma}\label{lem:two} Let $C$ be a category that contains a finitely generated essential right ideal. 
Then $C$ contains only a finite number of identities.
\end{lemma}
\begin{proof} Let $XC$ be a finitely generated essential right ideal of $C$ and let $e \in C_{o}$ be an arbitrary identity.
Then $eC \cap XC$ is non-empty by assumption.
It follows that there exists $x \in X$ such that $eu = xv$ for some $u,v \in C$.
Thus $e = \mathbf{r}(x)$.
We have proved that each identity in $C$ is the codomain of an element of $X$.
But $X$ is a finite set.
It follows that the number of identities is finite.
\end{proof}

We can express whether a right ideal $XC$ is essential or not,
purely in terms of the properties of $X$.
A subset $X \subseteq C$ is said to be {\em large in $C$} if each $a \in C$ is dependent on an element of $X$.
A subset $X \subseteq aC$ is said to be {\em large in $aC$} if each $b \in aC$ is dependent on an element of $X$.

\begin{remark}
{\em What we call a `large' subset of $aC$ is called `exhaustive' in \cite{LS2010}}
\end{remark}

We now relate large subsets to essential right ideals.

\begin{lemma}\label{lem:zero} Let $C$ be a category with subset $X$.
Then $X$ is large if and only if $XC$ is essential.
\end{lemma}
\begin{proof} Suppose that $X$ is large.
We prove that $XC$ is essential.
Consider a principal right ideal $aC$.
Then since $X$ is large, we have that
$au = xv$ for some $x \in X$ and $u,v \in C$.
Thus $aC \cap xC \neq \varnothing$.
It follows that $XC$ is essential.
Conversely, suppose that $XC$ is essential.
Let $a \in C$ be arbitrary.
Then $aC \cap XC \neq \varnothing$.
Thus $au = xv$ for some $x \in X$ and $u,v \in C$.
It follows that $X$ is large.
\end{proof}

We can now define the group that we shall be interested in using the minimum group congruence $\sigma$. 
\vspace{5mm}
\begin{center}
\fbox{\begin{minipage}{15em}
{\bf Definition.} Let $C$ be a finitely aligned category with a finite number of identities.
Then
$$\mathscr{G}(C) = \mathsf{R}(C)^{e}/\sigma$$
is the  {\em group associated with $C$}.
\end{minipage}}
\end{center}
\vspace{5mm}

\subsection{The cancellative case}

We shall now revisit the construction of Section~3.1 under the additional assumption that $C$ is both cancellative and conical.

\begin{lemma}\label{lem:unique} Let $C$ be a category that is conical and cancellative.
Then $aC = bC$ if and only if $a = b$.
\end{lemma}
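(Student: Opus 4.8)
The plan is to prove the nontrivial implication; the converse is immediate, since $a = b$ literally gives $aC = bC$. So suppose $aC = bC$. First I would note that $a = a\mathbf{d}(a) \in aC = bC$, so there is an arrow $x$ with $\mathbf{r}(x) = \mathbf{d}(b)$ and $a = bx$; symmetrically, $b = b\mathbf{d}(b) \in bC = aC$ yields an arrow $y$ with $\mathbf{r}(y) = \mathbf{d}(a)$ and $b = ay$. Substituting one equation into the other gives $a = bx = (ay)x = a(yx)$ and $b = ay = (bx)y = b(xy)$, where along the way one checks that the composites $yx$ and $xy$ are defined, using $\mathbf{d}(y) = \mathbf{d}(b) = \mathbf{r}(x)$ and $\mathbf{d}(x) = \mathbf{d}(a) = \mathbf{r}(y)$.

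Next, since also $a = a\mathbf{d}(a)$ and $b = b\mathbf{d}(b)$, left cancellation in $C$ applied to $a(yx) = a\mathbf{d}(a)$ and to $b(xy) = b\mathbf{d}(b)$ forces $yx = \mathbf{d}(a)$ and $xy = \mathbf{d}(b)$. Thus both $xy$ and $yx$ are identities, so $x$ is invertible with inverse $y$. Because $C$ is conical, the only invertible arrows are identities, so $x$ is an identity; combined with $\mathbf{r}(x) = \mathbf{d}(b)$ and the fact that $\mathbf{r}(x) = x$ for an identity, this gives $x = \mathbf{d}(b)$. Hence $a = bx = b\mathbf{d}(b) = b$, as required.

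The argument is short, and the only real bookkeeping is making sure every product written down is genuinely composable; I expect verifying the domain/codomain matchings to be the main (and quite minor) obstacle, and I anticipate that no hypothesis beyond left cancellation and conicality is actually used.
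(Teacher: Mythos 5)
Your proof is correct and follows essentially the same route as the paper's: extract $a = bx$ and $b = ay$ from $aC = bC$, substitute to get $a = a(yx)$ and $b = b(xy)$, cancel to see $xy$ and $yx$ are identities, and invoke conicality to conclude $x$ and $y$ are identities. The extra bookkeeping you do on composability and on identifying $x$ with $\mathbf{d}(b)$ is a harmless elaboration of the same argument.
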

\begin{proof} Suppose that $aC = bC$.
Then $a = bx$ and $b = ay$ for some $x,y \in C$.
Thus $a = ayx$ and $b = bxy$.
By cancellation $xy$ and $yx$ are identities.
This implies $x$ and $y$ are invertible.
But $C$ is conical and so $x$ and $y$ are identities.
It follows that $a = b$.
The converse is immediate.
\end{proof}

The above result tells us that when the category is conical and cancellative,
we can identify principal right ideals by the unique elements that generate them.

We construct some special elements of the inverse monoid $\mathsf{R}(C)$. 
Let $a \in C$.
Define (as before) $\lambda_{a} \colon \mathbf{d}(a)C \rightarrow aC$ by $x \mapsto ax$.
In this case, it is easy to check that $\lambda_{a}$ is a bijective morphism,
the inverse of which we denote by $\lambda_{a}^{-1}$.
These maps are elements of the symmetric inverse monoid $I(C)$ and so generate
an inverse subsemigroup $\Sigma (C)$ called the {\em inverse hull} of $C$.
A product of the form $\lambda_{a}\lambda_{b}^{-1}$ is the empty function
unless $\mathbf{d}(a) = \mathbf{d}(b)$.
Clearly, $\lambda_{a}\lambda_{b}^{-1}$ is always a bijective morphism.

\begin{remark}
{\em The inverse hull $\Sigma (C)$ plays an important role in
this paper in answering structural questions about the inverse monoid $\mathsf{R}(C)$.}
\end{remark} 

\noindent
{\bf Definition. }We work in a conical, cancellative category $C$ where $a,b \in C$.
Let $\mathbf{d}(a) = \mathbf{d}(b)$.
Then $ab^{-1}$ is the bijective morphism from $bC$ to $aC$ given by $bx \mapsto ax$.
We call this, and the empty function, a {\em basic morphism}.
Observe that $\mathbf{d}(ab^{-1}) = bb^{-1}$ and $\mathbf{r}(ab^{-1}) = aa^{-1}$.\\

\begin{lemma}\label{lem:combinatorial} Let $C$ be a conical, cancellative category $C$.
Suppose that there is a bijective morphism $\theta$ from $bC$ to $aC$.
Then, in fact, $\theta = ab^{-1}$.
\end{lemma}
\begin{proof} Let $\theta \colon bC \rightarrow aC$ be a bijective morphism.
Then $\theta (b)C = aC$.
By Lemma~\ref{lem:unique}, $\theta (b) = a$.
Observe that $\theta (bc) = \theta (b)c = ac$.
Because $\theta$ is a morphism, we have that $\mathbf{d}(\theta (b)) = \mathbf{d}(b)$.
It follows that $\mathbf{d}(a) = \mathbf{d}(b)$.
We may therefore form the basic morphism $ab^{-1}$ and we have proved that $\theta = ab^{-1}$.
\end{proof}

In general, the inverse hull of a cancellative category is hard to describe but under the assumption that $C$ is finitely aligned,
the product of two basic morphisms can be explicitly computed. Note that, in the specific instance of higher rank graphs,
our basic morphisms are closely related to the building blocks of the inverse semigroup constructed in \cite{FMY}.
The following is our version of \cite[Lemma 3.3]{JS2014}.

\begin{lemma}\label{lem:apollo} Let $C$ be a finitely aligned conical cancellative category with a finite number of identities.
Suppose that $(\lambda_{a}\lambda_{b}^{-1})(\lambda_{c}\lambda_{d}^{-1})$ is non-empty
and that $bC \cap cC = \{x_{1}, \ldots, x_{m}\}C$.
Then
$$(\lambda_{a}\lambda_{b}^{-1})(\lambda_{c}\lambda_{d}^{-1})
=
\bigcup_{i=1}^{n} \lambda_{ap_{i}} \lambda_{dq_{i}}^{-1}$$
where $x_{i} = bp_{i} = cq_{i}$ where $1 \leq i \leq m$.
The elements $p_{i}$ and $q_{i}$ are uniquely determined.
\end{lemma}
\begin{proof} We calculate the product of $\lambda_{a}\lambda_{b}^{-1}$ with $\lambda_{c} \lambda_{d}^{-1}$ as partial bijections.
Let $bC \cap cC = \{x_{1}, \ldots, x_{m}\}C$
and put
$e = \mathbf{r}(b) = \mathbf{r}(c)$.
Then $\{x_{1}, \ldots, x_{m}\}C \subseteq eC$.
Let $x_{i} = bp_{i} = cq_{i}$ where $1 \leq i \leq m$.
The elements $p_{i}$ and $q_{i}$ are uniquely determined since we are working in a cancellative category.
Observe that $\mathbf{d}(a) = \mathbf{d}(b) = \mathbf{r}(p_{i})$ and so $ap_{i}$ is defined.
Similarly, $\mathbf{d}(d) = \mathbf{d}(c) = \mathbf{r}(q_{i})$ and so $dq_{i}$ is defined.
Observe that $\mathbf{d}(x_{i}) = \mathbf{d}(p_{i}) = \mathbf{d}(q_{i})$.
Thus the product $\lambda_{ap_{i}} \lambda_{dq_{i}}^{-1}$ is non-empty.
Observe that  $\lambda_{ap_{i}} \lambda_{dq_{i}}^{-1}$ and  $\lambda_{ap_{j}} \lambda_{dq_{j}}^{-1}$
are compatible;
this is easily checked by calculating
$(\lambda_{ap_{i}}\lambda_{dq_{i}}^{-1})^{-1}\lambda_{ap_{j}} \lambda_{dq_{j}}^{-1}$
and
$\lambda_{ap_{i}}\lambda_{dq_{i}}^{-1}(\lambda_{ap_{j}} \lambda_{dq_{j}}^{-1})^{-1}$
and showing that both are idempotents.
To prove that
$$(\lambda_{a}\lambda_{b}^{-1})(\lambda_{c}\lambda_{d}^{-1})
=
\bigcup_{i=1}^{n} \lambda_{ap_{i}} \lambda_{dq_{i}}^{-1}$$
it is enough, by symmetry, to check that both left and right hand sides have the same domains
and that the maps do the same thing, both of which are routine.
\end{proof}

In the case where $C$ is a finitely aligned cancellative category, the inverse hull
$\Sigma (C)$ is an inverse submonoid of $\mathsf{R}(C)$, but it is much easier to work with the latter than the former;
we describe the mathematical relationship between them below.
Lemma~\ref{lem:joinbasic}, Lemma~\ref{lem:oreo} and Lemma~\ref{lem:key-property}
show the important role played by the basic morphisms in the inverse monoid $\mathsf{R}(C)$.

\begin{lemma}\label{lem:joinbasic} Let $C$ be a finitely aligned conical cancellative category with a finite number of identities.
Let $\theta \colon XC \rightarrow YC$ be a bijective morphism between two finitely generated right ideals of $C$.
Then $\theta$ is a union of a finite number of basic morphisms.
\end{lemma}
\begin{proof} We have that $YC = \theta (X)C$ by Lemma~\ref{lem:right-ideal}.
Without loss of generality, we may put $Y = \theta (X)$ and so assume
that $\theta$ induces a bijection between $X$ and $Y$.
Let $x \in X$ and define $y_{x} = \theta (x) \in Y$.
Observe that $\mathbf{d}(x) = \mathbf{d}(y_{x})$.
We may therefore form the basic morphism $y_{x}x^{-1}$.
We claim that $\theta = \bigcup_{x \in X}y_{x}x^{-1}$.
Let $xc \in XC$.
Then $\theta (xc) = \theta (x)c = y_{x}c$.
But $(y_{x}x^{-1})(xc) = y_{x}c$.
\end{proof}

\begin{lemma}\label{lem:oreo} Let $C$ be a finitely aligned conical cancellative category with a finite number of identities.
We suppose that $\mathbf{d}(x) = \mathbf{d}(y)$ and that $\mathbf{d}(u) = \mathbf{d}(v)$.
\begin{enumerate}
\item $xy^{-1} \leq uv^{-1}$ if and only if $(x,y) = (us,vs)$ for some $s \in C$.
It follows that if $xy^{-1}$ is an idempotent so too is $uv^{-1}$.
\item  $xx^{-1} \perp yy^{-1}$ if and only if $x$ and $y$ are independent in $C$.
\item If $xC \cap yC = UC$, where $U$ is a finite set,
then
$xx^{-1} yy^{-1} = \bigcup_{u \in U} uu^{-1}$.
\end{enumerate}
\end{lemma}
\begin{proof} (1) By the definition of the order on partial functions, we have that $yC \subseteq vC$ and $xC \subseteq uC$.
In addition, $xy^{-1}$ and $uv^{-1}$ agree on elements of $yC$.
We have that $y = va$ and $x = ub$.
Now, $(xy^{-1})(y) = x$.
But $(uv^{-1})(y) = ua$.
It follows that $x = ua$ and so $ub = ua$ and so $a = b$.
The result now follows with $s = a = b$.
In order that $xy^{-1}$ be an idempotent, we must have that $x = y$.
It is therefore immediate that if $xy^{-1}$ is an idempotent then so too is $uv^{-1}$.

(2) The idempotents $xx^{-1}$ and  $yy^{-1}$ are orthogonal if and only if $xC \cap yC = \varnothing$.
But this is equivalent to saying that $x$ and $y$ are independent.

(3) The product of $xx^{-1}$ and $yy^{-1}$ is the identity function on $xC \cap yC$ which is the identity function on $UC$.
\end{proof}

The following is a key property since it shows how the basic morphisms sit inside the inverse monoid $\mathsf{R}(C)$.

\begin{lemma}\label{lem:key-property}  Let $C$ be a finitely aligned conical cancellative category with a finite number of identities.
Let $xy^{-1} \leq \bigcup_{j=1}^{n} u_{j}v_{j}^{-1}$ then
$xy^{-1} \leq u_{j}v_{j}^{-1}$ for some $j$.
\end{lemma}
\begin{proof} Observe that $yC \subseteq \{v_{1}, \ldots, v_{n}\}C$.
Thus $y = v_{j}p$ for some $j$ and $p \in C$.
Now $(xy^{-1})(y) = x$
whereas $(u_{j}v_{j}^{-1})(y) = u_{j}p$.
It follows that
$xy^{-1} \leq u_{j}v_{j}^{-1}$.
\end{proof}

In the light of the above results, it will be fruitful to abstract the properties we have distinguished so far in the relationship between 
the basic morphisms and the structure of $\mathsf{R}(C)$.
We shall revisit this abstraction in Section~5.1.
We say that $T$, a distributive inverse semigroup, is the {\em distributive completion} of $S$,
if there is a homomorphism $\iota \colon S \rightarrow T$ such that for any homomorphism $\alpha \colon S \rightarrow D$,
to a distributive inverse semigroup $D$, there is a unique morphism $\beta \colon T \rightarrow D$ such that $\alpha = \beta \iota$.

\begin{lemma}\label{lem:CUNY} Let $S$ be a distributive inverse monoid and let $\mathcal{B}$ be a subset of $S$, containing the zero, satisfying
the following four properties:
\begin{itemize}
\item Each element of $S$ is a finite join of elements of $\mathcal{B}$.
\item If $a \leq \bigvee_{i=1}^{m} a_{i}$ where $a,a_{i} \in \mathcal{B}$ then $a \leq a_{i}$ for some $i$.
\item If $a \leq b$ where $a,b \in \mathcal{B}$ and $a$ is a non-zero idempotent then $b$ is an idempotent.
\item The product of any two elements of $\mathcal{B}$ is either zero or the join of a finite number of elements from $\mathcal{B}$.
\end{itemize}
Denote by $\Sigma$ the inverse subsemigroup of $S$ generated by $\mathcal{B}$. 
We have the following:
\begin{enumerate}
\item $S$ is a $\wedge$-semigroup.
\item $S$ is the distributive completion of $\Sigma$.
\end{enumerate}
\end{lemma}
\begin{proof}
(1) Let $a = \bigvee_{i=1}^{m} a_{i}$, where $a_{i} \in \mathscr{B}$. 
Let $e = \bigvee_{j=1}^{n} e_{j}$, where $e_{i} \in \mathscr{B}$, be any idempotent such that $e \leq a$.
Observe that for each $j$ we have that $e_{j} \leq e$.
By assumption, for each $j$ there exists an $i$ such that $e_{j} \leq a_{i}$.
It follows that $a_{i}$ is an idempotent.
Now, we may split the join  $a = \bigvee_{i=1}^{m} a_{i}$ into two parts so that
$a = \left( \bigvee_{k=1}^{p} b_{k}   \right) \vee \left( \bigvee_{l=1}^{q} f_{l} \right)$
where $b_{k}, f_{l} \in \mathscr{B}$ and all the $f_{l}$ are idempotents and none of the $b_{k}$ is an idempotent.
It follows by our calculations above, that $\bigvee_{l=1}^{q} f_{l}$ is the largest idempotent less than or equal to $a$.
This proves that $S$ is a $\wedge$-semigroup by \cite{Leech}.

(2) We have an embedding $\Sigma \rightarrow S$.
We prove that $S$ is the distributive completion of $\Sigma$.
Let $\alpha \colon \Sigma \rightarrow D$ be any homomorphism to a distributive inverse monoid $D$.
Define $\beta \colon S \rightarrow D$ by $\beta (\bigvee_{j=1}^{n} a_{j}) = \bigvee_{j=1}^{n} \alpha (a_{j})$
where $a_{j} \in \mathcal{B}$.
We need to check that this is well-defined.
Suppose that
$$\bigvee_{j=1}^{n} a_{j}
=
\bigvee_{i=1}^{m} b_{i}$$
where $a_{j},b_{i} \in \mathcal{B}$.
Then, for each $j$, there exists an $i$ such that
$a_{j} \leq b_{i}$.
Thus $\alpha (a_{j}) \leq \alpha (b_{i})$.
From this result, and symmetry, the well-definedness of $\beta$ follows.
That $\beta$ is a morphism of distributive inverse semigroups is now routine to check.
\end{proof}

By proposition~\ref{prop:inverse-monoid}, we know that $\mathsf{R}(C)$ is an inverse monoid.
This inverse monoid is distributive because the union of two finitely generated right ideals is a finitely generated right ideal
and multiplication distributes over unions.
We may therefore apply Lemma~\ref{lem:CUNY}.

\begin{proposition}\label{prop:dis} Let $C$ be a finitely aligned conical cancellative category with a finite number of identities.
Then:
\begin{enumerate}
\item $\mathsf{R}(C)$ is a distributive inverse $\wedge$-monoid.
\item  $\mathsf{R}(C)$ is the distributive completion of $\Sigma (C)$.
\end{enumerate}
\end{proposition}

The following is important in the construction of the associated group.

\begin{proposition}\label{prop:three}
Let $C$ be a finitely aligned conical cancellative category with a finite number of identities.
Then the inverse monoid $\mathsf{R}(C)^{e}$ is $E$-unitary.
\end{proposition}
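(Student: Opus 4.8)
The plan is to unwind the definitions and reduce everything to right-cancellation in $C$. Suppose $1_{ZC} \le \theta$ in $\mathsf{R}(C)^{e}$, where $1_{ZC}$ is an idempotent of $\mathsf{R}(C)^{e}$ (so $ZC$ is a finitely generated essential right ideal) and $\theta \in \mathsf{R}(C)^{e}$. First I would record what the natural partial order says in this setting: $1_{ZC} \le \theta$ means precisely that $ZC$ is contained in the domain of $\theta$ and that $\theta$ restricts to the identity map on $ZC$, i.e. $\theta(w) = w$ for every $w \in ZC$.

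Next, using Lemma~\ref{lem:right-ideal} I would write the domain of $\theta$ as $XC$ with $\theta$ inducing a bijection of $X$ onto the chosen generating set of the codomain, and then fix an arbitrary $x \in X$. Since $ZC$ is essential, Lemma~\ref{lem:zero} tells us that $Z$ is large, so $xC \cap ZC \neq \varnothing$; choose $w$ in this intersection, say $w = xc$ with $\mathbf{d}(x) = \mathbf{r}(c)$. On the one hand, $w \in ZC$ gives $\theta(w) = w = xc$; on the other hand, since $\theta$ is a morphism, $\theta(w) = \theta(xc) = \theta(x)c$. Hence $\theta(x)c = xc$, and right-cancellativity of $C$ forces $\theta(x) = x$.

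Since $x \in X$ was arbitrary, $\theta$ fixes every generator of its domain, and because $\theta$ is a morphism (so $\theta(xc) = \theta(x)c = xc$ for all $xc \in XC$) it fixes every element of $XC$; thus $\theta = 1_{XC}$, which is an idempotent. For completeness one notes that $1_{XC}$ is genuinely an idempotent of $\mathsf{R}(C)^{e}$, since $1_{XC} = \mathbf{d}(\theta)$ is essential by virtue of $\theta \in \mathsf{R}(C)^{e}$. This proves that $\mathsf{R}(C)^{e}$ is $E$-unitary.

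I do not anticipate a genuine obstacle here: the only points needing a little care are correctly translating $1_{ZC} \le \theta$ into ``$ZC \subseteq \mathrm{dom}(\theta)$ and $\theta$ is the identity on $ZC$'', and checking that the chosen $w$ really lies in $\mathrm{dom}(\theta)$ so that the two evaluations of $\theta(w)$ are both legitimate --- but this is immediate from $w \in xC \subseteq XC = \mathrm{dom}(\theta)$. Everything else is a direct appeal to cancellation and to the fact that morphisms of right ideals are determined by their action on generators.
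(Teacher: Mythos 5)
Your proof is correct and follows essentially the same route as the paper's: translate $1_{ZC}\leq\theta$ into ``$\theta$ fixes the essential right ideal $ZC$ pointwise,'' use essentiality to intersect $xC$ (the paper uses $xaC$) with $ZC$, evaluate $\theta$ on the common element in two ways, and invoke right cancellation to conclude $\theta(x)=x$ for each generator $x$. The only difference is cosmetic --- you cancel at the level of generators of $X$ rather than at arbitrary elements $xa\in XC$ --- and both versions close the argument identically.
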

\begin{proof}
Let $\alpha \colon XC \rightarrow YC$ be a bijective morphism between
two finitely generated essential right ideals.
Suppose that $\alpha$ is the identity when restricted to the finitely generated essential right ideal $ZC$ where $ZC \subseteq XC$.
Let $x \in X$.
Then, since $ZC$ is an essential right ideal, we have that $xC \cap ZC \neq \varnothing$.
It follows that $xa = zb$ for some $a,b \in C$.
But, by assumption, $\alpha (zb) = zb$ and so $\alpha (xa) = xa$.
But $\alpha$ is a morphism and so $\alpha (xa) = \alpha (x)a$.
By cancellation, it follows that $\alpha (x) = x$.
It follows that $\alpha$ is the identity on $X$, and so $\alpha$ is also the identity on $XC$.
\end{proof}

Suppose that $C$ is a finitely aligned conical cancellative category with a finite number of identities.
Then by Proposition~\ref{prop:three}, Proposition~\ref{prop:four} and Lemma~\ref{lem:five},
we can say that two elements of $\mathsf{R}(C)^{e}$ are identified under $\sigma$ if they agree on the intersection of their domains of definition.
This process for constructing a group from an inverse semigroup
of partial bijections is identical to the one used in \cite{YC},
though our group is quite different from the one defined there.

\subsection{The group described in terms of maximal codes}

To say more about the structure of the groups we have constructed from a category,
we need to make further assumptions on that category.
First of all, we shall need to strengthen the notion of finite alignment.

A {\em code} is a finite subset $X \subseteq C$ any two distinct elements of which are independent.
Observe that `finiteness' is part of the definition of a code in this paper.
A {\em maximal code} is a large code.
Given an identify $e$ of $C$, a {\em code in $e$} is a finite subset $X \subseteq eC$  such that any two elements are independent.
A {\em maximal code in $e$} is a code in $e$ which is large in $eC$.
A finitely generated right ideal of $C$ is said to be {\em projective}\footnote{The authors would like to thank J. B. Fountain for this terminology.} if it generated by a code.
We now have the following refinement of the notion of a category's being finitely aligned.
We now strengthen Lemma~\ref{lem:unique}.

\begin{lemma}\label{lem:conical} Let $C$ be a conical cancellative category.
Let $U$ and $V$ be codes such that $UC = VC$.
Then $U = V$.
\end{lemma}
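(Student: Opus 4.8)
The statement generalizes Lemma~\ref{lem:unique}, which handled the case of singleton codes, so the plan is to reduce to that situation. Suppose $U$ and $V$ are codes with $UC = VC$. First I would show every element of $U$ lies below an element of $V$ in the following sense: given $u \in U$, since $u \in UC = VC$, we can write $u = v c$ for some $v \in V$ and $c \in C$; thus $uC \subseteq vC$, so in particular $uC \cap vC = uC \neq \varnothing$, i.e. $u$ is dependent on $v$. By the symmetric argument, this $v$ satisfies $v = u' d$ for some $u' \in U$ and $d \in C$, so $vC \subseteq u'C$, giving $uC \subseteq vC \subseteq u'C$. Hence $u$ and $u'$ are dependent elements of the code $U$; since distinct elements of a code are independent, we must have $u = u'$. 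Therefore $uC \subseteq vC \subseteq uC$, so $uC = vC$, and Lemma~\ref{lem:unique} (using that $C$ is conical and cancellative) forces $u = v$. In particular $u \in V$, so $U \subseteq V$.

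The reverse inclusion $V \subseteq U$ follows by the identical argument with the roles of $U$ and $V$ interchanged, and we conclude $U = V$.

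I do not anticipate a genuine obstacle here; the only subtlety is to be careful that the ``comparability along a chain $uC \subseteq vC \subseteq u'C$'' argument correctly exploits that \emph{both} $U$ and $V$ are codes — one uses the code property of $U$ to collapse $u = u'$ and then Lemma~\ref{lem:unique} to upgrade the equality of principal right ideals $uC = vC$ to an equality of generators. No finite alignment or largeness of the codes is needed for this direction; the hypotheses actually used are just conicality and cancellativity together with $UC = VC$.
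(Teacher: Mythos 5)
Your proof is correct and follows essentially the same route as the paper's: both use $UC=VC$ to produce a chain $uC\subseteq vC\subseteq u'C$, collapse $u=u'$ via the code property of $U$, and then deduce $u=v$ from conicality and cancellativity (you do this by citing Lemma~\ref{lem:unique}, while the paper inlines the same cancellation argument showing $ab$ and $ba$ are identities).
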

\begin{proof} Let $u \in U$.
Then $u = va$ for some $v \in V$ and $a \in C$.
Let $v = u'b$ for some $u' \in U$ and $b \in C$.
Then $u = va = u'ba$.
But $U$ is a code and so $u = u'$.
By cancellation, $ba$ is an identity.
Similarly, $ab$ is an identity.
But $C$ is conical and so $a$ and $b$ are identities.
We have therefore proved that $U \subseteq V$.
By symmetry, $V \subseteq U$ and so $U = V$ as claimed.
\end{proof}

\noindent
{\bf Definition.} We say that a (conical and cancellative) category $C$ is {\em strongly finitely aligned} if the set $xC \cap yC$, when non-empty, is a code.
If $xC \cap yC = \varnothing$, define $x \sqcup y = \varnothing$;
if $xC \cap yC \neq \varnothing$, define $x \sqcup y$ to be the code such that $xC \cap yC = (x \sqcup y)C$.\\

\begin{lemma}\label{lem:sfa} Let $C$ be a strongly finitely aligned conical cancellative category with a finite number of identities.
Let $XC$ and $YC$ be projective right ideals.
Then $XC \cap YC$ is either empty or a projective right ideal.
\end{lemma}
\begin{proof} We have that $XC \cap YC = \bigcup_{x \in X, y \in Y} (x \sqcup y)C$.
Thus $XC \cap YC$ is certainly finitely generated.
We prove that the set $Z = \bigcup_{x \in X, y \in Y} x \sqcup y$ is a code.
Let $a \in x_{i} \sqcup y_{j}$ and $b \in x_{k} \sqcup y_{l}$
where $x_{i}, x_{k} \in X$ and $y_{j}, y_{l} \in Y$.
We shall prove that $a$ and $b$ are independent.
Suppose, to the contrary, that $a$ and $b$ are dependent.
Then $z = au = bv$ for some $u,v \in C$.
But $a \in x_{i}C \cap y_{j}C$ and $b \in x_{k}C \cap y_{l}C$.
The sets  $x_{i}C \cap y_{j}C$ and $x_{k}C \cap y_{l}C$ are right ideals.
It follows that $z \in (x_{i}C \cap y_{j}C) \cap (x_{k}C \cap y_{l}C)$.
Thus $x = x_{i} = x_{k}$, since $x_{i}, x_{k} \in X$ and $X$ is a code
and $y = y_{j} = y_{l}$, since  $y_{j}, y_{l} \in Y$ and $Y$ is a code.
It follows that $a,b \in x \sqcup y$.
But $a,b \in x \sqcup y$ and $x \sqcup y$ is a code and so $a = b$.
\end{proof}

\noindent
{\bf Definition. } Let $C$ be a strongly finitely aligned cancellative category with a finite number of identities.
Define $\mathsf{P}(C)$ to be the set of all bijective morphisms between projective right ideals.\\

\begin{lemma}\label{prop:projective} Let $C$ be a strongly finitely aligned conical cancellative category with a finite number of identities.
Then $\mathsf{P}(C)$ is an inverse subsemigroup of $\mathsf{R}(C)$.
\end{lemma}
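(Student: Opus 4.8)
The plan is to check directly that $\mathsf{P}(C)$ is closed under inversion and composition inside the inverse monoid $\mathsf{R}(C)$ of Proposition~\ref{prop:inverse-monoid}. Closure under inversion is immediate, since the defining condition on $\mathsf{P}(C)$ is symmetric in domain and range: if $\theta$ is a bijective morphism between right ideals generated by codes then so is $\theta^{-1}$, those generating codes being moreover uniquely determined by Lemma~\ref{lem:conical}. Note also that $\mathsf{P}(C)$ is nonempty: it contains the zero (the morphism between empty right ideals, the empty set being a code) and indeed the identity $1_{C}$, since $C_{o}$ is finite and $eC\cap fC=\varnothing$ for distinct identities $e,f$, so $C_{o}$ is a code with $C=C_{o}C$. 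Hence the real content is closure under composition.

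The key preliminary I would establish is a refinement of Lemma~\ref{lem:right-ideal}: if $\theta\colon XC\to YC$ is a bijective morphism and $ZC\subseteq XC$ with $Z$ a code, then $\theta(ZC)=\theta(Z)C$ is generated by the code $\theta(Z)$. Finiteness of $\theta(Z)$ is clear. For pairwise independence, first observe that for any $z$ in the domain of $\theta$ one has $\theta(z)=\theta(z\,\mathbf{d}(z))=\theta(z)\mathbf{d}(z)$, and the right-hand product being defined forces $\mathbf{d}(\theta(z))=\mathbf{d}(z)$. Now suppose $z_{1}\neq z_{2}$ in $Z$ have dependent images, say $\theta(z_{1})u=\theta(z_{2})v$ for some $u,v\in C$; then $z_{1}u$ and $z_{2}v$ are defined and $\theta(z_{1}u)=\theta(z_{1})u=\theta(z_{2})v=\theta(z_{2}v)$, so injectivity of $\theta$ gives $z_{1}u=z_{2}v$, contradicting independence of $z_{1},z_{2}$ in the code $Z$. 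Applying this both to $\theta$ and to $\theta^{-1}$ shows that bijective morphisms carry code-generated right ideals to code-generated right ideals, in either direction.

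With this in hand, closure under composition is bookkeeping. Let $\phi\colon PC\to QC$ and $\theta\colon Q'C\to Q''C$ lie in $\mathsf{P}(C)$, with $P,Q,Q',Q''$ codes, and form $\theta\phi\in\mathsf{R}(C)$. By Lemma~\ref{lem:sfa} --- this is where strong finite alignment enters --- the intersection $QC\cap Q'C$ is either empty or equal to $ZC$ for some code $Z$. In the empty case $\theta\phi$ is the zero, which lies in $\mathsf{P}(C)$. Otherwise the domain of $\theta\phi$ is $\phi^{-1}(ZC)$ and its range is $\theta(ZC)$, and since $ZC$ lies in the domain of the bijective morphism $\phi^{-1}$ and in the domain of $\theta$, the preliminary step identifies these as the right ideals generated by the codes $\phi^{-1}(Z)$ and $\theta(Z)$ respectively. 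Hence $\theta\phi\in\mathsf{P}(C)$, and $\mathsf{P}(C)$ is an inverse subsemigroup (in fact an inverse submonoid) of $\mathsf{R}(C)$.

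The only genuinely non-routine point is the preservation of the independence relation under morphisms and their inverses in the preliminary step; once that is secured, everything else rests on Lemmas~\ref{lem:right-ideal} and~\ref{lem:sfa} together with the already-established fact that $\mathsf{R}(C)$ is an inverse monoid, so I expect no further obstacle.
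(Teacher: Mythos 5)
Your proposal is correct and follows essentially the same route as the paper: the heart of both arguments is showing that a bijective morphism sends a code to a code (dependent images pull back, via the morphism property and injectivity, to dependent preimages, contradicting independence), combined with Lemma~\ref{lem:sfa} to handle the intersection of the two code-generated right ideals when composing. Your extra observation that $\mathbf{d}(\theta(z))=\mathbf{d}(z)$, needed to see that $z_{1}u$ is actually defined, is a detail the paper leaves implicit, but it does not change the argument.
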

\begin{proof} By Lemma~\ref{lem:sfa}, the intersection of any two projective right ideals is a projective right ideal.
Let $\alpha \colon XC \rightarrow YC$ be a bijective morphism between two projective right ideals.
Let $ZC \subseteq XC$ be a projective right ideal.
It is enough to prove that $\alpha (Z)$ is a code.
Suppose that $\alpha (z)$ and $\alpha (z')$ are dependent for some $z,z' \in Z$.
Then $\alpha (z)u = \alpha (z')v$ for some $u,v \in C$.
Then $\alpha (zu) = \alpha (z'v)$ since $\alpha$ is a morphism.
Thus $zu = z'v$ because $\alpha$ is a bijection.
But $Z$ is a code, and so $z = z'$.
It follows that $\alpha (z) = \alpha (z')$.
We have therefore proved that $\alpha (Z)$ is a code.
It follows that $\alpha$ restricts to a bijective morphism $ZC \rightarrow \alpha (Z)C$.
The fact that $\mathsf{P}(S)$ is an inverse subsemigroup of $\mathsf{R}(S)$ is now immediate,
\end{proof}

The elements of $\mathsf{P}(C)^{e}$ are (using Lemma~\ref{lem:zero}) the bijective morphisms between the right ideals of $C$ generated by maximal codes.

\begin{lemma}\label{lem:projright}  Let $C$ be a strongly finitely aligned conical cancellative category with a finite number of identities.
Then $\mathsf{P}(C)^{e}$ is an inverse submonoid of $\mathsf{R}(C)^{e}$.
\end{lemma}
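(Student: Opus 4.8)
The plan is to identify $\mathsf{P}(C)^{e}$ with the intersection $\mathsf{P}(C) \cap \mathsf{R}(C)^{e}$ inside $\mathsf{R}(C)$. Both $\mathsf{P}(C)$ (by Lemma~\ref{prop:projective}) and $\mathsf{R}(C)^{e}$ (by \cite[Lemma~4.2]{Lawson2007}) are inverse subsemigroups of $\mathsf{R}(C)$, so their intersection is automatically an inverse subsemigroup of $\mathsf{R}(C)^{e}$; it will then remain only to check that the identity $1_{C}$ of $\mathsf{R}(C)$ lies in this set.

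First I would pin down the idempotents. The idempotents of $\mathsf{P}(C)$ are precisely the $1_{XC}$ with $X$ a code, and these sit among the idempotents $1_{YC}$ ($Y$ any finite subset) of $\mathsf{R}(C)$; crucially, for every single arrow $a \in C$ the singleton $\{a\}$ is a code, so $1_{aC} \in \mathsf{P}(C)$. Suppose now that $1_{XC}$ (with $X$ a code) is an essential idempotent of $\mathsf{P}(C)$. Then $1_{XC}1_{aC} \neq 0$ for every $a$, and since this product is the identity on $aC \cap XC$ we get $aC \cap XC \neq \varnothing$ for all $a$; that is, $X$ is large, so $XC$ is essential by Lemma~\ref{lem:zero}, and hence $1_{XC}$ is an essential idempotent of $\mathsf{R}(C)$ by Lemma~\ref{lem:one}. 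The converse is immediate because $\mathsf{E}(\mathsf{P}(C)) \subseteq \mathsf{E}(\mathsf{R}(C))$. Consequently an element $s$ of $\mathsf{P}(C)$ has $\mathbf{d}(s)$ and $\mathbf{r}(s)$ essential in $\mathsf{P}(C)$ if and only if it has them essential in $\mathsf{R}(C)$; since $\mathsf{P}(C)$ is closed under $\mathbf{d}$ and $\mathbf{r}$, this yields $\mathsf{P}(C)^{e} = \mathsf{P}(C) \cap \mathsf{R}(C)^{e}$.

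Finally, to promote this to a submonoid I would verify $1_{C} \in \mathsf{P}(C)^{e}$. We have $C = C_{o}C$ with $C_{o}$ finite by Lemma~\ref{lem:fgcat}; distinct identities $e \neq f$ satisfy $eC \cap fC = \varnothing$ (a common element would have range both $e$ and $f$), so $C_{o}$ is a code, and it is large since $a \in \mathbf{r}(a)C$ for every $a \in C$. Hence $1_{C} = 1_{C_{o}C} \in \mathsf{P}(C)$, and it is essential because $C_{o}$ is large (equivalently, $C$ is an essential right ideal). Therefore $1_{C} \in \mathsf{P}(C)^{e}$, completing the argument.

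The one point that needs care is the identity $\mathsf{P}(C)^{e} = \mathsf{P}(C) \cap \mathsf{R}(C)^{e}$: a priori essentiality is computed relative to the smaller semilattice $\mathsf{E}(\mathsf{P}(C))$, so an idempotent might be essential there without being essential in $\mathsf{R}(C)$. The observation that rescues this is that the test family $\{1_{aC} : a \in C\}$ which detects essentiality in $\mathsf{R}(C)$ by Lemma~\ref{lem:one} already lies inside $\mathsf{P}(C)$; everything else is routine.
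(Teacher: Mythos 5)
Your proof is correct and follows essentially the same route as the paper's: the key point in both is that singleton right ideals $aC$ are generated by codes, so an idempotent that is essential relative to $\mathsf{E}(\mathsf{P}(C))$ meets every principal right ideal and is therefore essential in $\mathsf{R}(C)$ by Lemma~\ref{lem:one}. Your packaging via $\mathsf{P}(C)^{e} = \mathsf{P}(C) \cap \mathsf{R}(C)^{e}$ and the explicit check that $1_{C} = 1_{C_{o}C}$ lies in $\mathsf{P}(C)^{e}$ are more careful than the paper's brief argument, but they do not constitute a different method.
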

\begin{proof} We prove first that $\mathsf{P}(C)^{e}$ is contained in $\mathsf{R}(C)^{e}$.
Observe that an idempotent in $\mathsf{P}(C)^{e}$ is an identity function defined on a
finitely generated right ideal of $C$ generated by a code which intersects every finitely generated right ideal of $C$ generated
by a code. In particular, it intersects principal right ideals of $C$.
It is now clear that  $\mathsf{P}(C)^{e}$ is contained in $\mathsf{R}(C)^{e}$.
The composition in  $\mathsf{P}(C)^{e}$ is just the restriction of the composition in $\mathsf{R}(C)^{e}$.
\end{proof}

\noindent
{\bf Condition (MC)}. Let $C$ be a strongly finitely aligned conical cancellative category with a finite number of identities.
We assume that if $XC$ is any finitely generated essential right ideal then there is a $YC \subseteq XC$ where
$Y$ is a maximal code. In other words, every  finitely generated essential right ideal contains an essential projective right ideal.\\

\begin{lemma}\label{lem:restriction} Let $C$ be a strongly finitely aligned conical cancellative category with a finite number of identities
satisfying condition (MC).
Then each element of $\mathsf{R}(C)^{e}$ is above an element of  $\mathsf{P}(C)^{e}$.
\end{lemma}
\begin{proof} Let $\alpha \colon XC \rightarrow YC$ be a bijective morphism between two finitely generated essential right ideals.
Without loss of generality, we assume that $\alpha (X) = Y$.
Let $ZC \subseteq XC$ be a finitely generated right ideal generated by a maximal code.
We prove that $\alpha (Z)$ is also a maximal code.
It is a code by Proposition~\ref{prop:projective}.
We now show that it is a maximal code.
To do this, we need to show that every element of $C$ is dependent on an element of $\alpha (Z)$.
Let $a \in C$.
Since $YC$ is an essential right ideal we have that $au = yv$ for some $u,v \in C$.
But $y \in Y$ and so there is an $x \in X$ such that $\alpha (x) = y$.
It follows that $au = \alpha (x)v$.
Thus, $au = \alpha (xv)$ since $\alpha$ is a morphism.
Now, $Z$ is a maximal code.
It follows that $xvp = zs$ for some $p,s \in C$.
Thus, $\alpha (x)vp = \alpha (z)s$.
Hence, $aup = \alpha (x)vp =  \alpha (z)s$.
We have proved that $a \in C$ is dependent on an element of $\alpha (Z)$.
It follows that $\alpha (Z)$ is a maximal code.
\end{proof}

The following was proved as \cite[Lemma~7.10]{LV2019b}.

\begin{lemma}\label{lem:quotients} Let $S$ be an inverse subsemigroup of an inverse semigroup $T$.
Suppose that each element of $T$ lies above an element of $S$ in the natural partial order.
Then $S/\sigma \cong T/\sigma$.
\end{lemma}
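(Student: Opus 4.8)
The plan is to exploit the characterization of $\sigma$ recalled just before Proposition~\ref{prop:four}, namely that $x \mathbin{\sigma} y$ in an inverse semigroup exactly when there is some $z \le x, y$. First I would note that the inclusion $S \hookrightarrow T$ induces a well-defined map $\phi \colon S/\sigma_S \to T/\sigma_T$ sending the $\sigma_S$-class of $s$ to the $\sigma_T$-class of $s$: this is because $\sigma_S \subseteq \sigma_T$ (if $z \le s, s'$ in $S$ then the same $z$ witnesses $s \mathbin{\sigma_T} s'$ in $T$, since the natural partial order on $S$ is the restriction of that on $T$), and $\phi$ is clearly a homomorphism.

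The heart of the argument is to show $\phi$ is a bijection. For surjectivity, let $t \in T$; by hypothesis there is $s \in S$ with $s \le t$, hence $s \mathbin{\sigma_T} t$, so the $\sigma_T$-class of $t$ is $\phi$ of the $\sigma_S$-class of $s$. For injectivity, suppose $s, s' \in S$ with $s \mathbin{\sigma_T} s'$; then there is $z \in T$ with $z \le s$ and $z \le s'$. Using the hypothesis again, pick $w \in S$ with $w \le z$. Then $w \le s$ and $w \le s'$ in $T$, and since $w, s, s'$ all lie in $S$ and the order on $S$ is inherited from $T$, we have $w \le s$ and $w \le s'$ in $S$. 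Hence $s \mathbin{\sigma_S} s'$, so $\phi$ is injective. Therefore $\phi$ is an isomorphism $S/\sigma \cong T/\sigma$.

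The only point requiring a little care — and the place I would expect a referee to want a sentence — is the compatibility of the two natural partial orders: that for $a, b \in S$ one has $a \le b$ in $S$ if and only if $a \le b$ in $T$. This is immediate from the definition $a \le b \iff a = ba^{-1}a$, since this equation involves only products and inverses, which are computed identically in $S$ and in $T$ (an inverse subsemigroup is closed under both). Granting this, every step above is a direct manipulation of the witness elements, and no genuine obstacle arises; the lemma is essentially a bookkeeping statement about how the "pick something smaller inside $S$" hypothesis lets one transport $\sigma$-relatedness back and forth between $S$ and $T$.
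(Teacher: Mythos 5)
Your proof is correct, and it is the standard argument one would expect: the paper itself gives no proof here, simply citing \cite[Lemma~7.10]{LV2019b}, and your construction of the induced map on $\sigma$-classes, with surjectivity from the hypothesis and injectivity from pushing the witness $z$ down into $S$, is exactly the intended reasoning. The point you flag about the natural partial order on $S$ being the restriction of that on $T$ is indeed the only detail worth making explicit, and you handle it correctly.
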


On the strength of Lemma~\ref{lem:restriction} and Lemma~\ref{lem:quotients}, we have proved the following.

\begin{theorem}\label{them:groups}
Let $C$ be a strongly finitely aligned conical cancellative category with a finite number of identities
satisfying condition (MC).
Then
$$\mathsf{P}(C)^{e}/\sigma \cong \mathsf{R}(C)^{e}/\sigma.$$
\end{theorem}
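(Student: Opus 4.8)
The plan is to deduce the statement directly from the two preceding lemmas, with essentially no new work beyond checking that their hypotheses are met. Concretely, I would apply Lemma~\ref{lem:quotients} to the pair $S = \mathsf{P}(C)^{e}$ and $T = \mathsf{R}(C)^{e}$, so that the desired isomorphism $\mathsf{P}(C)^{e}/\sigma \cong \mathsf{R}(C)^{e}/\sigma$ is read off as the conclusion $S/\sigma \cong T/\sigma$ of that lemma.

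First I would record that $S$ is genuinely an inverse subsemigroup of $T$; this is exactly the content of Lemma~\ref{lem:projright}, which rests on Lemma~\ref{lem:sfa} (an intersection of two finitely generated right ideals generated by codes is again generated by a code) together with the fact, established in the proof of Lemma~\ref{prop:projective}, that a bijective morphism sends codes to codes. So the first hypothesis of Lemma~\ref{lem:quotients} is in place.

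Second, I would verify that every element of $T = \mathsf{R}(C)^{e}$ lies above an element of $S = \mathsf{P}(C)^{e}$ in the natural partial order, and this is precisely Lemma~\ref{lem:restriction}. It is at this point that Condition (MC) enters: given $\alpha \colon XC \to YC$ in $\mathsf{R}(C)^{e}$ (with $\alpha(X) = Y$, by Lemma~\ref{lem:right-ideal}), one uses (MC) to choose a maximal code $Z$ with $ZC \subseteq XC$, checks that $\alpha(Z)$ is again a maximal code, and observes that $\alpha$ restricts to a bijective morphism $ZC \to \alpha(Z)C$ lying below $\alpha$; by Lemma~\ref{lem:zero} and Lemma~\ref{lem:projright} this restriction is an element of $\mathsf{P}(C)^{e}$. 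Thus the second hypothesis of Lemma~\ref{lem:quotients} holds as well, and the theorem follows.

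The only real mathematical substance has already been absorbed into Lemma~\ref{lem:restriction}, and hence into Condition (MC); there is no serious obstacle left in the present argument. The one point I would most want to be careful about is the bookkeeping around essentiality — namely that the maximal-code restriction produced in Lemma~\ref{lem:restriction} is essential as an element of $\mathsf{P}(C)^{e}$, i.e. intersects every finitely generated right ideal generated by a code, and not merely essential in $\mathsf{R}(C)^{e}$ — but this is exactly what Lemma~\ref{lem:projright} guarantees, since there an idempotent of $\mathsf{P}(C)^{e}$ is an identity map on a right ideal generated by a code that meets every such right ideal, and in particular every principal right ideal. With that consistency check noted, the proof is a two-line invocation of Lemma~\ref{lem:restriction} and Lemma~\ref{lem:quotients}.
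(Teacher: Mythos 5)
Your proposal is correct and is essentially identical to the paper's own proof, which derives the theorem directly from Lemma~\ref{lem:restriction} and Lemma~\ref{lem:quotients} exactly as you do. The extra care you take over essentiality in $\mathsf{P}(C)^{e}$ versus $\mathsf{R}(C)^{e}$ is a reasonable consistency check but introduces nothing beyond what Lemma~\ref{lem:projright} already records.
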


A typical element of $\mathsf{P}(C)^{e}$ is a bijective morphism $\theta \colon XC \rightarrow YC$
where $X$ and $Y$ are maximal codes and $\theta (X) = Y$.
Then $\theta = \bigvee_{x \in X} \theta (x)x^{-1}$ by Lemma~\ref{lem:joinbasic}.
By Lemma~\ref{lem:oreo}, this is an orthogonal join.

\vspace{5mm}
\begin{center}
\fbox{\begin{minipage}{15em}
Let $C$ be a strongly finitely aligned conical cancellative category with a finite number of identities satisfying (MC).
Then
$$\mathscr{G}(C) \cong \mathsf{P}(C)^{e}/\sigma$$
is the  {\em group associated with $C$}.
\end{minipage}}
\end{center}
\vspace{5mm}

\section{The group associated with a higher rank graph}

The goal of this section is to show how to construct the group $\mathscr{G}(C)$,
described in Section~3, in the case where $C$ is a higher rank graph (under some suitable assumptions on $C$).
Higher-rank graphs were introduced in \cite{KP} as combinatorial models for the systems of matrices, and
associated $C^*$-algebras, studied in \cite{RS}. 

We begin with some motivation.
Let $A^{\ast}$ be the free monoid on the finite set $A$.
Then there is a homomorphism $A^{\ast} \rightarrow \mathbb{N}$ given by $x \mapsto |x|$, the length of $x$.
More generally, let $G$ be a directed graph.
Denote by $G^{\ast}$ the free category generated by $G$.
This consists of all finite allowable strings of elements of $G$ with
the identities being identified with the vertices of $G$.
Let $x \in G^{\ast}$.
Then $x = x_{1} \ldots x_{n}$ where $x_{1}, \ldots, x_{n}$ are edges of $G$ so that the edge $x_{i}$ begins where the edge $x_{i+1}$ ends.
Define $\mathbf{d}(x)$ to be the identity at the source of $x_{n}$
and define $\mathbf{r}(x)$ to be the identity at the target of $x_{1}$.
The free monoid is a special case of the free category when the directed graph is the bouquet of circles.
This time, we can define a functor $G^{\ast} \rightarrow \mathbb{N}$ which associates with a path $x$ its length $\lambda(x)$.
This functor has the following important property:
if $\lambda (x) = m + n$ then we can write $x = x_{1}x_{2}$ {\em uniquely} where $\lambda (x_{1}) = m$ and $\lambda (x_{2}) = n$.  
This example is generalized in the following definition, which comes from \cite{KP}.
We write $\mathbb{N}^{k}$ for $k$-tuples of elements of $\mathbb{N}$.
The elements of $\mathbb{N}^{k}$ are denoted by bold letters.
The additive identity is denoted by $\mathbf{0}$.
The set $\mathbb{N}^{k}$ is the positive cone of the lattice-ordered group $\mathbb{Z}^{k}$,
where the order is defined componentwise.\\

\noindent
{\bf Definition.} A countable category $C$ is said to be a {\em higher rank graph} or a {\em $k$-graph}
if there is a functor $d \colon C \rightarrow \mathbb{N}^{k}$,
called the {\em degree map},
satisfying the {\em unique factorization property} (UFP): if $d(a) = \mathbf{m} + \mathbf{n}$ then there are unique elements $a_{1}$ and $a_{2}$ in $C$
such that $a = a_{1}a_{2}$ where $d(a_{1}) = \mathbf{m}$ and $d(a_{2}) = \mathbf{n}$.
We call $d(x)$ the {\em\degree} of $x$.
A {\em morphism} of $k$-graphs is a degree-preserving functor.\\

The following was proved as \cite[Proposition 1.8]{KP}.

\begin{lemma}\label{lem:products-of-hrg}
The direct product of higher rank graphs is again a higher rank graph.
\end{lemma}

\begin{remark}{\em The  $1$-graphs are precisely the countable free categories.}
\end{remark}

It follows that whereas the finite direct product of free categories is not free,
it is always a higher rank graph.

\begin{notation}{\em 
Let $C$ be a high rank graph.
It will be useful to have some systematic notation that springs from the (UFP) rather than using {\em ad hoc} notation.
Let $a \in C$ and suppose that $\mathbf{0} \leq \mathbf{m} \leq \mathbf{n} \leq d(a)$. 
Then by repeated application of the (UFP) we obtain the factorization $a = a_{1}a_{2}a_{3}$ 
where 
$d(a_{1}) = \mathbf{m}$,
$d (a_{2}) = \mathbf{n} - \mathbf{m}$
and
$d(a_{3}) = d(a) - \mathbf{n}$.
Define
$$a[\mathbf{n},\mathbf{m}] = a_{2}.$$
Then if 
if $\mathbf{0} \le \mathbf{m}_1 \le \mathbf{m}_2 \le \cdots \le \mathbf{m}_l \le d(a)$, 
we have (using the definition above)
that
$$a = a[\mathbf{0}, \mathbf{m}_1] a[\mathbf{m}_1, \mathbf{m}_2] \cdots a[\mathbf{m}_{l-1}, \mathbf{m}_l] a[\mathbf{m}_l, d(a)].$$
It follows, for example, that if $a = bc$ and $\mathbf{m} \le d(b)$, then $a[0,\mathbf{m}] = b[0,\mathbf{m}]$.}
\end{notation}

By \cite[Remarks~1.2]{KP}, we have the following.
All are easy to prove directly.

\begin{lemma}\label{lem:important-hrg}
Let $C$ be a $k$-graph.
\begin{enumerate}
\item $C$ is cancellative.
\item $C$ is conical.
\item The elements of $C$ of degree $\mathbf{0}$ are precisely the identities.
\end{enumerate}
\end{lemma}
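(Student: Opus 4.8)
The plan is to derive all three statements from the unique factorisation property (UFP), using only elementary arithmetic in the monoid $\mathbb{N}^{k}$: it is cancellative, it has no invertible elements other than $\mathbf{0}$, and $2\mathbf{m} = \mathbf{m}$ forces $\mathbf{m} = \mathbf{0}$. I would prove item~(3) first, since it is the linchpin. If $e$ is an identity then $e = ee$, so applying $d$ gives $d(e) = d(e) + d(e)$ in $\mathbb{N}^{k}$, hence $d(e) = \mathbf{0}$. Conversely, suppose $d(x) = \mathbf{0}$. Then $\mathbf{r}(x)$ and $\mathbf{d}(x)$ are identities, so both have degree $\mathbf{0}$, and the two equalities $x = \mathbf{r}(x)\,x$ and $x = x\,\mathbf{d}(x)$ exhibit $x$ as a product of two arrows each of degree $\mathbf{0}$. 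Applying the UFP to $x$ with the degree split $\mathbf{0} = \mathbf{0} + \mathbf{0}$, the uniqueness of the factorisation forces $\mathbf{r}(x) = x$, so $x$ is an identity. This establishes~(3).

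Next I would prove~(1). Suppose $ab = ac$. Applying $d$ gives $d(a) + d(b) = d(a) + d(c)$, so $d(b) = d(c)$ by cancellativity of $\mathbb{N}^{k}$. Now apply the UFP to the arrow $ab$ with the degree split $d(a) + d(b)$: there is a \emph{unique} pair $(p,q)$ with $ab = pq$, $d(p) = d(a)$ and $d(q) = d(b)$. Since $d(c) = d(b)$, both $(a,b)$ and $(a,c)$ are such pairs, so $b = c$. Right cancellation is symmetric: from $ba = ca$ we get $d(b) = d(c)$ as before, and applying the UFP to $ba$ with the split $d(b) + d(a)$ shows the pair whose \emph{second} component has degree $d(a)$ is unique, so $b = c$. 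Thus $C$ is cancellative.

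Finally, (2) is immediate from~(3): if $x$ is invertible, choose $y$ with $xy$ an identity; then $d(x) + d(y) = d(xy) = \mathbf{0}$ by~(3), which forces $d(x) = \mathbf{0}$ in $\mathbb{N}^{k}$, and then $x$ is an identity by~(3) again, so $C$ is conical. I do not expect a genuine obstacle here; the only point requiring care is to invoke the UFP uniqueness in exactly the right form — recognising that left (respectively, right) cancellation is precisely the uniqueness of the first (respectively, second) factor once the degree of that factor is pinned down — and to record at the outset the two trivial facts about $\mathbb{N}^{k}$ (cancellativity and $2\mathbf{m} = \mathbf{m} \Rightarrow \mathbf{m} = \mathbf{0}$) that drive the degree-zero characterisation.
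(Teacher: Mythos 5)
Your proof is correct. The paper gives no argument of its own here --- it simply cites \cite[Remarks~1.2]{KP} and notes that all three claims are ``easy to prove directly'' --- and your derivation from the UFP (degree-zero characterisation of identities first, then cancellativity as uniqueness of the factor of prescribed degree, then conicality from the fact that $\mathbb{N}^{k}$ has no nontrivial units) is precisely the standard direct argument being alluded to.
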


We shall now derive some properties of higher rank graphs that will be important later.
The following is proved by a simple application of the (UFP).
It generalizes \cite[Lemma 3.10]{LV2019b}.

\begin{lemma}\label{lem:levi} Let $d \colon C \rightarrow \mathbb{N}^{k}$ be a $k$-graph.
Suppose that $xy = uv$ where $d(x) \geq d(u)$.
Then there exists an element $t \in C$ such that $x = ut$ and $v = ty$.
In particular, if $d(x) = d(u)$ then $x = u$.
\end{lemma}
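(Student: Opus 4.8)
The plan is to apply the unique factorisation property (UFP) to the common value $a := xy = uv$, cutting it at the two degrees $\mathbf{m} := d(u)$ and $\mathbf{n} := d(x)$, which satisfy $\mathbf{0} \le \mathbf{m} \le \mathbf{n} \le d(a)$ by hypothesis (the last inequality holds because $d(a) = d(x) + d(y) \ge d(x) = \mathbf{n}$).

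First I would use the UFP to factor $x = x_{1}x_{2}$ with $d(x_{1}) = \mathbf{m}$ and $d(x_{2}) = \mathbf{n} - \mathbf{m}$. Then $a = xy = x_{1}(x_{2}y)$, so $a$ is written as a product of an arrow of degree $\mathbf{m}$ with an arrow of degree $d(a) - \mathbf{m}$. But $a = uv$ is another such factorisation, since $d(u) = \mathbf{m}$. By the uniqueness clause of the UFP, $x_{1} = u$ and $x_{2}y = v$. Setting $t := x_{2}$, we get $x = ut$ and $v = ty$, which is exactly the claimed factorisation. (Equivalently, one can phrase this using the segment notation of \eqref{eq:segments}: the factorisations $a = xy$ and $a = uv$ identify $x = a[0,\mathbf{n}]$, $u = a[0,\mathbf{m}]$, $v = a[\mathbf{m}, d(a)]$, and then $t := a[\mathbf{m},\mathbf{n}]$ works, using \eqref{eq:segments} applied to the chains $\mathbf{0} \le \mathbf{m} \le \mathbf{n}$ and $\mathbf{m} \le \mathbf{n} \le d(a)$ together with the remark that initial segments of $a$ depend only on $a$.)

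For the final sentence, if $d(x) = d(u)$ then $\mathbf{m} = \mathbf{n}$, so $d(t) = d(x_{2}) = \mathbf{n} - \mathbf{m} = \mathbf{0}$; hence $t$ is an identity by Lemma~\ref{lem:important-hrg}(3), and therefore $x = ut = u$.

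I do not expect any real obstacle here: the entire content is a single, careful application of the UFP, the only point requiring attention being that one must compare \emph{two} factorisations of the \emph{same} element $a$ at the \emph{same} degree $\mathbf{m}$ before invoking uniqueness, and that left cancellation (Lemma~\ref{lem:important-hrg}(1)) can be used in place of the uniqueness clause to extract $v = x_{2}y$ from $u(x_{2}y) = uv$ once $x_{1} = u$ is known.
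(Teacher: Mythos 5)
Your argument is correct and is precisely the ``simple application of the UFP'' that the paper alludes to (the paper gives no written proof of Lemma~\ref{lem:levi}, only the remark that it follows from the UFP). The key step --- comparing the two factorisations $a = x_{1}(x_{2}y) = uv$ of the same element at the same degree $\mathbf{m} = d(u)$ and invoking the uniqueness clause --- is exactly right, as is the degree-zero observation for the final assertion.
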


The above lemma proves the following.

\begin{lemma}\label{lem:indep} Let $d \colon C \rightarrow \mathbb{N}^{k}$ be a $k$-graph.
If $a \neq b$ and $d(a) = d(b)$ then $a$ and $b$ are independent.
\end{lemma}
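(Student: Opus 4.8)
The plan is to prove the contrapositive: if $a$ and $b$ are dependent, that is $aC \cap bC \neq \varnothing$, and $d(a) = d(b)$, then $a = b$. So I would start by picking an element $z \in aC \cap bC$. By definition of the principal right ideals this means $z = a x = b y$ for some $x, y \in C$, and in particular $\mathbf{d}(a) = \mathbf{r}(x)$ and $\mathbf{d}(b) = \mathbf{r}(y)$.

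Next I would apply Lemma~\ref{lem:levi} to the equation $ax = by$. Since $d(a) = d(b)$ we certainly have $d(a) \geq d(b)$, so the lemma applies with $(x,y)$ in the lemma playing the role of $(a,b)$ here: there is $t \in C$ with $a = bt$ and $y = tx$. Now compare degrees: applying the degree functor $d$ to $a = bt$ gives $d(a) = d(b) + d(t)$, and since $d(a) = d(b)$ we conclude $d(t) = \mathbf{0}$. By Lemma~\ref{lem:important-hrg}(3), $t$ is then an identity of $C$, and since $a = bt$ this forces $t = \mathbf{d}(b)$ and hence $a = b \mathbf{d}(b) = b$. (Equivalently one can just invoke the final sentence of Lemma~\ref{lem:levi} directly, since $d(x) = d(u)$ there gives $x = u$, but spelling out the degree computation makes the argument self-contained.)

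I do not anticipate a genuine obstacle here — the statement is essentially an immediate corollary of the Levi-type factorisation of Lemma~\ref{lem:levi}, which has already done the real work. The only small point to be careful about is the bookkeeping of which arrow plays which role when invoking Lemma~\ref{lem:levi}, since the roles of the ``outer'' and ``inner'' factors get swapped; writing $z = ax = by$ explicitly and checking that $\mathbf{d}(a) = \mathbf{r}(x)$ and $\mathbf{d}(b) = \mathbf{r}(y)$ (so that the products are legitimate) keeps this straightforward.
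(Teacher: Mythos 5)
Your proposal is correct and is essentially the paper's own proof: the paper simply writes $ax = by$ and invokes the final sentence of Lemma~\ref{lem:levi} (the case $d(x)=d(u)$ gives $x=u$) to conclude $a=b$, which is exactly the shortcut you mention at the end. The only quibble is the phrase ``$(x,y)$ in the lemma playing the role of $(a,b)$'' --- it is the outer factors $(x,u)$ of Lemma~\ref{lem:levi} that correspond to your $(a,b)$ --- but your displayed conclusion $a=bt$, $y=tx$ shows you applied it correctly.
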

\begin{proof} Suppose that $ax = by$ for some $x,y \in C$.
Then by Lemma~\ref{lem:levi}, we have that $a = b$ which contradicts our assumption.
\end{proof}

By the above lemma, any finite set of elements of $C$ in which all elements have the same degree is independent
and so forms a code.
Let $\mathbf{m} \in \mathbb{N}^{k}$.
Define $C_{\mathbf{m}}$ to be the subset of $C$ which consists of all elements of degree $\mathbf{m}$.\\

\noindent
{\bf Definition. }We say that a higher rank graph $C$ has {\em no sources} if for each identity $e$ of $C$ and element $\mathbf{m} \in \mathbb{N}^{k}$ there exists
an arrow $x \in C$ such that $\mathbf{r}(x) = e$ and $d(x) = \mathbf{m}$.\\

One can deduce the following from \cite[Proposition 1.8]{KP}.

\begin{lemma}\label{lem:products-no-sources} The direct product of two higher rank graphs each of which has no sources is itself
a higher rank graph that has no sources.
\end{lemma}

\begin{example}
{\em Let $G$ be a directed graph.
To say that $G^{\ast}$ has no sources means that the in-degree of each vertex is at least 1;
this means that ending at any vertex, I can construct a path of any length (by going backwards).
We refer the reader to \cite{JL}.}
\end{example}

\begin{lemma}\label{lem:mc} Let $d \colon C \rightarrow \mathbb{N}^{k}$ be a $k$-graph with no sources.
Then $C_{\mathbf{m}}$ is a maximal code.
\end{lemma}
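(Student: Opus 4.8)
The plan is to verify the two defining properties of a maximal code for $C_{\mathbf{m}}$: first that it is a code (a finite set of pairwise independent elements), and second that it is large in $C$. The first part is essentially already done. By Lemma~\ref{lem:indep}, any two distinct elements of $C$ of the same \degree{} are independent, so $C_{\mathbf{m}}$ is a set of pairwise independent elements; and finiteness is not quite immediate globally, but one can restrict attention to a single identity $e$, since $C_{\mathbf{m}} = \bigcup_{e \in C_o} (eC)_{\mathbf{m}}$ and the row-finiteness hypothesis says each $eC$ has only finitely many elements of \degree{} $\mathbf{m}$, while $C$ has finitely many identities. Hence $C_{\mathbf{m}}$ is finite, and so it is a code.

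For largeness, I would take an arbitrary $a \in C$ and produce an element $x \in C_{\mathbf{m}}$ with $aC \cap xC \neq \varnothing$, i.e.\ with $a$ and $x$ dependent. The natural candidate comes from the ``no sources'' hypothesis applied at the identity $\mathbf{d}(a)$: there is an arrow $b$ with $\mathbf{r}(b) = \mathbf{d}(a)$ and $d(b) = \mathbf{m}$. Then the product $ab$ is defined, $d(ab) = d(a) + \mathbf{m} \geq \mathbf{m}$, and by the unique factorisation property we may write $ab = x c$ with $d(x) = \mathbf{m}$ and $d(c) = d(a)$; concretely $x = (ab)[\mathbf{0},\mathbf{m}]$. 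Now $x \in C_{\mathbf{m}}$ and $ab = a\cdot b = x \cdot c \in aC \cap xC$, so this intersection is non-empty and $a$ is dependent on $x$. Since $a$ was arbitrary, $C_{\mathbf{m}}$ is large in $C$, hence a maximal code.

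The only subtlety worth flagging is where each hypothesis is used: ``no sources'' supplies the arrow $b$ needed to pad $a$ up to \degree{} at least $\mathbf{m}$ so that the UFP can extract a \degree{}-$\mathbf{m}$ prefix; ``row finite'' together with finitely many identities gives finiteness of $C_{\mathbf{m}}$; and Lemma~\ref{lem:indep} (itself resting on the cancellation/factorisation Lemma~\ref{lem:levi}) gives pairwise independence. I do not anticipate a genuine obstacle here — the argument is a direct application of the UFP — but the point requiring the most care is the bookkeeping that $x = (ab)[\mathbf{0},\mathbf{m}]$ really does land in $C_{\mathbf{m}}$ and that the witness $ab$ genuinely lies in both principal right ideals, which is immediate once the factorisation is written out.
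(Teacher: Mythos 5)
Your proof is correct and follows essentially the same route as the paper: Lemma~\ref{lem:indep} gives pairwise independence, and largeness is obtained by using ``no sources'' at $\mathbf{d}(a)$ to pad $a$ to an element of \degree{} at least $\mathbf{m}$ and then extracting the \degree{}-$\mathbf{m}$ prefix via the UFP. If anything you are slightly more careful than the paper, which asserts that $C_{\mathbf{m}}$ is a code without addressing its finiteness; your appeal to row-finiteness and finitely many identities (standing hypotheses in the paper's applications, though not stated in this lemma) is the right way to close that small gap.
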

\begin{proof} By Lemma~\ref{lem:indep}, $C_{\mathbf{m}}$ is a code.
We prove that it is maximal.
Let $a \in C$ be arbitrary.
Since $C$ is assumed to have no sources, there is an element $x$ such that $ax$ is defined and $d(x) = \mathbf{m}$.
It follows that $d(ax) \geq \mathbf{m}$.
By the (UFP), we can write $ax = by$ where $d(b) = \mathbf{m}$.
This proves the claim.
\end{proof}

\begin{lemma}\label{lem:maximal-code} Let $C$ be a $k$-graph with no sources, let $\mathbf{m} \in \mathbb{N}^{k}$, and let $e$ be an identity.
Then the set $eC_{\mathbf{m}}$ of all elements of $C$ with range $e$ and degree $\mathbf{m}$ is a maximal code in $eC$.
\end{lemma}
\begin{proof} Because we are assuming that there are no sources the set  $eC_{\mathbf{m}}$ is non-empty.
It consists of idependent elements by Lemma~\ref{lem:indep} and so is a code in $eC$.
It remains to prove that it is a maximal code in $eC$.
Let $c \in eC$ be arbitrary.
By Lemma~\ref{lem:mc}, there exists $a \in C_{\mathbf{m}}$ such that $c$ is dependent on $a$.
It follows that $cu = av$ for some $u,v \in C$.
Observe that the codomain of $a$ is the same as the codomain of $c$.
It follows that $a \in eC$.
\end{proof}

We can now show that condition (MC) holds for higher rank graphs with no sources.

\begin{lemma}\label{lem:containsmc} Let $d \colon C \rightarrow \mathbb{N}^{k}$ be a $k$-graph with no sources.
Let $XC$ be an essential finitely generated right ideal.
Put $\mathbf{m} = \bigvee_{x \in X}d(x)$.
Then $C_{\mathbf{m}}C \subseteq XC$.
Thus, every finitely generated essential right ideal contains a right ideal generated by a maximal code.
\end{lemma}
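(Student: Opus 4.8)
The plan is to show directly that $C_{\mathbf{m}}C \subseteq XC$ and then invoke Lemma~\ref{lem:mc} to conclude that $C_{\mathbf{m}}$ is a maximal code contained in $X$ (in the sense that $C_{\mathbf{m}}C \subseteq XC$), which establishes condition (MC) for $k$-graphs. First I would fix an arbitrary element $y \in C_{\mathbf{m}}$; it is enough to show $y \in XC$, since then every generator of $C_{\mathbf{m}}C$ lies in $XC$ and hence $C_{\mathbf{m}}C \subseteq XC$. Write $e = \mathbf{r}(y)$, an identity of $C$.

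Since $XC$ is essential, Lemma~\ref{lem:zero} tells us $X$ is large, so $y$ is dependent on some $x \in X$; that is, $yC \cap xC \neq \varnothing$, say $ys = xt$ for some $s, t \in C$. Now $d(x) \le \mathbf{m} = d(y)$ by the definition of $\mathbf{m}$ as the join of the degrees of elements of $X$ (note that $\mathbf{m} \ge d(x)$ for this particular $x$). We have $d(ys) = \mathbf{m} + d(s) \ge \mathbf{m} \ge d(x)$ and also $d(xt) = d(ys)$, so $d(x) \le d(xt)$, which is automatic. The key move is to apply Lemma~\ref{lem:levi} to the equation $ys = xt$ with $d(y) \ge d(x)$: this yields an element $w \in C$ with $y = xw$ and $t = ws$. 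In particular $y = xw \in xC \subseteq XC$, which is exactly what we wanted.

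Having shown $C_{\mathbf{m}}C \subseteq XC$, the final sentence follows: by Lemma~\ref{lem:mc}, $C_{\mathbf{m}}$ is a maximal code (using that $C$ has no sources), so $C_{\mathbf{m}}C$ is a finitely generated right ideal generated by a maximal code, and it is contained in $XC$. Hence every finitely generated essential right ideal contains such a right ideal, i.e. condition (MC) holds. The only subtle point, and the step I would be most careful about, is the degree comparison that licenses the application of Lemma~\ref{lem:levi}: one must check that the dependence relation $yC \cap xC \neq \varnothing$ forces $d(x) \le d(y)$ to be usable, which here comes for free because $\mathbf{m} = \bigvee_{x' \in X} d(x') \ge d(x)$ for the specific $x \in X$ witnessing dependence. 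Everything else is a routine application of the UFP via Lemma~\ref{lem:levi} and of Lemma~\ref{lem:zero}.
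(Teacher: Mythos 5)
Your proposal is correct and follows essentially the same route as the paper: use Lemma~\ref{lem:zero} to get a witness $x \in X$ with $yC \cap xC \neq \varnothing$, note $d(y) = \mathbf{m} \geq d(x)$, and apply Lemma~\ref{lem:levi} to conclude $y \in xC \subseteq XC$, with Lemma~\ref{lem:mc} supplying the final sentence. The extra care you take over the degree comparison is exactly the point the paper passes over with ``But $d(y) \geq d(x)$'', so nothing is missing.
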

\begin{proof}
Let $y \in C_{\mathbf{m}}$.
We are assuming that $XC$ is an essential right ideal and so $X$ is a large subset by Lemma~\ref{lem:zero}.
It follows that $ya = xb$ for some $x \in X$ and $a,b \in C$.
But $d(y) \geq d(x)$.
Thus by Lemma~\ref{lem:levi}, there is $t \in C$ such that $y = xt$.
We have therefore proved that $C_{\mathbf{m}} \subseteq XC$.
It follows that
$C_{\mathbf{m}}C \subseteq XC$.
\end{proof}

The following result is well-known but we prove it for the sake of completeness.

\begin{lemma}\label{lem:fa} Let $d \colon C \rightarrow \mathbb{N}^{k}$ be a $k$-graph and suppose that $a,b,c \in C$
satisfy $cC \subseteq aC \cap bC$.
Then there is an element $e \in C$ such that $cC \subseteq eC \subseteq aC \cap bC$ where $d(e) = d(a) \vee d(b)$.
\end{lemma}
\begin{proof} By assumption, $c = au = bv$ for some $u,v \in C$.
Thus $d(c) = d(a) + d(u)$ and $d(c) = d(b) + d(v)$.
It follows that $d(a), d(b) \leq d(c)$.
Thus $d(a) \vee d(b) \leq d(c)$ (in the lattice-orderd group $\mathbb{Z}^{k}$).
Hence, in the notation we introduced above, we may define 
$$e = c[0, d(a) \vee d(b)]$$ 
and
$$f = c[d(a) \vee d(b), d(c)]$$ 
such that $c = ef$ and $d(e) = d(a) \vee d(b)$. 
Observe that $cC = efC \subseteq eC$.
Also, $au = c = ef = e[0, d(a)]e[d(a), d(e)]f$,
where we have factorized $e$.
By the (UFP), we have that $e[0, d(a)] = a$.
We therefore have that
$eC = ae[d(a), d(e)] C \subseteq aC$. 
A symmetric argument gives $eC \subseteq bC$.
\end{proof}

\noindent
{\bf Definition. }Let $a,b \in C$, a higher rank graph.
Define the notation $a \vee b$ as follows.
If $a$ and $b$ are independent, define $a \vee b = \varnothing$.
Otherwise, $a \vee b$ consists of all elements $e \in aC \cap bC$ such that $d(e) = d(a) \vee d(b)$.\\

\begin{lemma}\label{lem:meets} Let $d \colon C \rightarrow \mathbb{N}^{k}$ be a $k$-graph.
Then $aC \cap bC = \bigcup_{x \in a \vee b} xC$.
\end{lemma}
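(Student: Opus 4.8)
The plan is to prove the two inclusions separately, using Lemma~\ref{lem:fa} for the nontrivial direction. For the easy inclusion $\bigcup_{x \in a \vee b} xC \subseteq aC \cap bC$: if $x \in a \vee b$ then by the very definition of $a \vee b$ we have $x \in aC \cap bC$, so $xC \subseteq aC$ and $xC \subseteq bC$ (right ideals are closed under right multiplication, and $aC$, $bC$ are right ideals); hence $xC \subseteq aC \cap bC$, and taking the union over all such $x$ gives the containment. This direction requires no hypotheses beyond $C$ being a category.

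For the reverse inclusion $aC \cap bC \subseteq \bigcup_{x \in a \vee b} xC$: first dispose of the trivial case. If $a$ and $b$ are independent, then $aC \cap bC = \varnothing$ by definition, and $a \vee b = \varnothing$, so both sides are empty. Otherwise, take any $c \in aC \cap bC$; then $cC \subseteq aC$ and $cC \subseteq bC$, so Lemma~\ref{lem:fa} applies and yields an element $e \in C$ with $cC \subseteq eC \subseteq aC \cap bC$ and $d(e) = d(a) \vee d(b)$. The conditions $e \in aC \cap bC$ and $d(e) = d(a)\vee d(b)$ are exactly what it means for $e$ to lie in $a \vee b$. Since $c \in cC \subseteq eC$, we get $c \in \bigcup_{x \in a \vee b} xC$, completing the inclusion.

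The only point that needs a moment's care is checking that $c \in cC$, i.e.\ that $c$ itself belongs to the right ideal it generates; this holds because $c = c\,\mathbf{d}(c)$ and $\mathbf{d}(c) \in C$, so $c \in cC$. I do not anticipate a genuine obstacle here: the real content has already been extracted into Lemma~\ref{lem:fa} (and ultimately the UFP), and this lemma is just the bookkeeping statement that packages the pointwise existence of such $e$ into an equality of right ideals. If one wants, one can also remark that the union on the right is finite when $C$ is (strongly) finitely aligned, since $a \vee b$ is then a code, but that is not needed for the equality itself.
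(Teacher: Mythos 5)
Your proof is correct and follows essentially the same route as the paper's: the easy inclusion is immediate from the definition of $a \vee b$, and the nontrivial inclusion is obtained by applying Lemma~\ref{lem:fa} to an arbitrary element of $aC \cap bC$ to produce a generator in $a \vee b$ above it. Your version merely spells out the bookkeeping (including the empty case and $c \in cC$) in more detail.
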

\begin{proof} Without loss of generality, we assume that $a \vee b$ is non-empty.
Let $y \in aC \cap bC$.
Then by Lemma~\ref{lem:fa}, we have that $y = ex$ for some $e \in a \vee b$.
It follows that the LHS is contained in the RHS.
The reverse inclusion is immediate.
\end{proof}

\noindent
{\bf Definition.}
We say that $C$ is {\em row finite} if for each identity $e$ of $C$, the number of elements of $eC$ of degree $\mathbf{m}$ is finite.\\

One can deduce the following from \cite[Proposition 1.8]{KP}.

\begin{lemma}\label{lem:products-row-finite} The direct product of two higher rank graphs each of which is row finite is itself
a higher rank graph that is row finite.
\end{lemma}

The following is immediate from the definitions and Lemma~\ref{lem:meets}.

\begin{lemma}\label{lem:finitely-aligned-rf}
A higher rank graph that is row finite is finitely aligned.
\end{lemma}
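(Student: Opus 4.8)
The plan is to read off the conclusion directly from Lemma~\ref{lem:meets} together with the row-finiteness hypothesis. Fix $a, b \in C$; we must show that $aC \cap bC$ is finitely generated, allowing the empty set. If $a$ and $b$ are independent, then $aC \cap bC = \varnothing$, which is generated by the empty set and hence finitely generated, so there is nothing to do. Assume then that $aC \cap bC \neq \varnothing$ and pick any $c$ in it.

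The first step is to pin down the generating set. By Lemma~\ref{lem:meets} we have $aC \cap bC = \bigcup_{x \in a \vee b} xC = (a\vee b)C$, so it suffices to prove that the set $a \vee b$ is finite. To that end I would observe that, writing $c = au = bv$, we get $\mathbf{r}(a) = \mathbf{r}(c) = \mathbf{r}(b)$; call this common identity $e$. By the definition of $a \vee b$, every $x \in a \vee b$ lies in $aC$ and has $d(x) = d(a) \vee d(b) =: \mathbf{m}$; in particular $\mathbf{r}(x) = e$ and $d(x) = \mathbf{m}$, so $a \vee b \subseteq eC_{\mathbf{m}}$, the set of elements of $eC$ of \degree{} $\mathbf{m}$.

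The second and final step is to invoke row-finiteness: since $C$ is row finite, $eC_{\mathbf{m}}$ is a finite set, hence so is its subset $a \vee b$. Therefore $aC \cap bC = (a \vee b)C$ is generated by the finite set $a \vee b$, which is exactly what is required for $C$ to be finitely aligned.

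I do not anticipate any real obstacle here: the argument is a direct unwinding of definitions once Lemma~\ref{lem:meets} is in hand. The only point that deserves a moment's care is the reduction of finite generation of $aC \cap bC$ to finiteness of the code $a \vee b$, and then the observation that $a \vee b$ sits inside a single ``row'' $eC_{\mathbf{m}}$ on which the row-finiteness hypothesis bites.
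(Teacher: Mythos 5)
Your proposal is correct and is exactly the argument the paper intends: the paper states the lemma is ``immediate from the definitions and Lemma~\ref{lem:meets}'', and your write-up simply fills in that unwinding, namely that $a\vee b \subseteq \mathbf{r}(a)C_{d(a)\vee d(b)}$, which is finite by row-finiteness. No issues.
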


The following is immediate by Lemma~\ref{lem:indep} and shows that the categories underlying higher rank graphs are in
fact strongly finitely aligned.

\begin{lemma}\label{lem:code} Let $d \colon C \rightarrow \mathbb{N}^{k}$ be a $k$-graph.
Let $a,b \in C$. If $a \vee b$ is non-empty and finite it is a code.
\end{lemma}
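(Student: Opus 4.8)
The plan is to observe that the statement is almost immediate from the way $a\vee b$ was just defined together with Lemma~\ref{lem:indep}. Recall that, by definition, $a\vee b$ is the set of all $e\in aC\cap bC$ with $d(e)=d(a)\vee d(b)$; in particular every element of $a\vee b$ has one and the same \degree{}, namely $d(a)\vee d(b)$.

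First I would invoke Lemma~\ref{lem:indep}: if $e,e'\in a\vee b$ are distinct, then since $d(e)=d(e')=d(a)\vee d(b)$, the elements $e$ and $e'$ are independent (there is no $x,y$ with $ex=e'y$). Thus $a\vee b$ is a set of pairwise independent elements of $C$.

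Finally, recall that a \emph{code} is by definition a \emph{finite} subset of $C$ whose distinct elements are pairwise independent. Since the hypothesis of the lemma assumes that $a\vee b$ is finite, and we have just shown its distinct elements are independent, $a\vee b$ is a code. This completes the argument.

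There is essentially no obstacle here; the only point requiring care is the finiteness clause in the definition of a code, which is why the hypothesis explicitly assumes $a\vee b$ is finite — without that assumption one would only get that $a\vee b$ is a (possibly infinite) set of pairwise independent elements. (In the applications, finiteness will follow from row finiteness via Lemma~\ref{lem:finitely-aligned-rf} and Lemma~\ref{lem:meets}, but that is not needed for this lemma.)
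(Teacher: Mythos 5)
Your argument is exactly the one the paper intends: the paper states this lemma as immediate from Lemma~\ref{lem:indep}, since all elements of $a \vee b$ have the common \degree{} $d(a) \vee d(b)$ and hence distinct ones are independent, with finiteness supplied by hypothesis. The proposal is correct and matches the paper's (unwritten) proof.
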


We now show how to construct a group from a higher rank graph $C$ such that the following hold:
\begin{itemize}
\item $C$ has a finite number of identities.
\item $C$ has no sources.
\item $C$ is row-finite.
\end{itemize}
By Lemma~\ref{lem:important-hrg}, the category $C$ is conical and cancellative;
by Lemma~\ref{lem:mc} and Lemma~\ref{lem:containsmc}, condition (MC) holds;
by Lemma~\ref{lem:fa}, Lemma~\ref{lem:meets}, Lemma~\ref{lem:finitely-aligned-rf}, Lemma~\ref{lem:code},  it is strongly finitely aligned.
We may therefore define the group $\mathscr{G}(C)$ in the fashion of Section~3.3.

Using Lemma~\ref{lem:products-of-hrg},
Lemma~\ref{lem:products-no-sources},
and
Lemma~\ref{lem:products-row-finite},
examples of groups satisfying these conditions are easy to construct. 
Let $A_{1}, \ldots, A_{k}$ be finite alphabets.
Then there is a group $\mathscr{G}(A_{1}^{\ast} \times \ldots \times A_{k}^{\ast})$ constructed as above.
We refer the reader to \cite{LV2019b} for more on these groups in the case where each alphabet contains at least two elements.
More generally, let $G$ be a finite directed graph such that the in-degree of each vertex is at least 1.
Then we may form the group $\mathscr{G}(G)$, a class of which is studied in \cite{JL}.
If $G_{1}, \ldots, G_{k}$ is a set of finite directed graphs each of which satisfies the condition that the in-degree of each vertex is at least 1, 
then $G_{1}^{\ast} \times \ldots \times G_{k}^{\ast}$
is a higher rank graph satisfying the above conditions and so we may form the group    
 $\mathscr{G}(G_{1}^{\ast} \times \ldots \times G_{k}^{\ast})$.

\section{Our group is a group of units of a distributive inverse $\wedge$-monoid}

In this section, we shall prove that the group $\mathscr{G}(C)$ 
is a group of units of a distributive inverse $\wedge$-semigroup.
To show this, we shall reprove some results from \cite[Section~9]{LV2019b} in a slightly more general setting.
We recall first a definition due to Daniel Lenz \cite{Lenz}.

Let $S$ be an inverse semigroup with zero.
Define the relation $\equiv$ on $S$ as follows:
$s \equiv t$ if and only if for each $0 < x \leq s$ we have that $x^{\downarrow} \cap t^{\downarrow} \neq 0$
and for each $0 < y \leq t$ we have that $y^{\downarrow} \cap s^{\downarrow} \neq 0$.
Then $\equiv$ is a $0$-restricted congruence on $S$.
We denote the $\equiv$-class of $a$ by $[a]$.
We dub $\equiv$ the {\em Lenz congruence}.

It is easy to check that if $a,b \leq c$ then $a \sim b$, 
it follows by Lemma~\ref{lem:compatibility-meets} 
that if $a \sim b$ then $a \wedge b$ exists.
We shall use this below to guarantee that the meet of the two elements that we claim exists really does exist. 
Let $a \leq b$.
We say that $a$ is {\em essential in} $b$ if
$0 < x \leq b$ implies that $x \wedge a \neq 0$.
In this case, we write $a \leq_{e} b$.
Clearly,
$$a \leq_{e} b \Leftrightarrow a \equiv b \text{ and } a \leq b.$$
Our use of the word `essential' is explained in the next lemma.

\begin{lemma}\label{lem:essential-equiv} Let $T$ be an inverse monoid.
Then $e$ is an essential idempotent if and only if $e \equiv 1$.
\end{lemma}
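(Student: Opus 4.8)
The plan is to unwind the two definitions directly, relying on two elementary observations about an inverse monoid $T$: the idempotents of $T$ are precisely the elements $\le 1$ (if $a \le 1$ then $a = 1\cdot a^{-1}a = a^{-1}a$), and any element lying below an idempotent is itself an idempotent (if $a \le f$ with $f$ idempotent, then $a = f a^{-1} a$ is a product of commuting idempotents, hence idempotent). I would also tacitly assume $T$ is non-trivial, so that $1 \neq 0$.

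For the forward implication, suppose $e$ is an essential idempotent; then $e \le 1$, so it remains to verify the two clauses in the definition of $e \equiv 1$. For any $0 < x \le e$ we trivially have $x \in x^{\downarrow} \cap e^{\downarrow}$, so this intersection contains a non-zero element. For any $0 < y \le 1$, the element $y$ is an idempotent, so $ey \neq 0$ by essentiality of $e$; moreover $ey$ is an idempotent below both $e$ and $y$, hence a non-zero element of $y^{\downarrow} \cap e^{\downarrow}$. Therefore $e \equiv 1$.

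For the converse, suppose $e$ is an idempotent with $e \equiv 1$. Since $\equiv$ is $0$-restricted and $1 \neq 0$, we have $e \neq 0$. Let $f$ be any non-zero idempotent, so $0 < f \le 1$. By $e \equiv 1$ there is a non-zero $z$ with $z \le e$ and $z \le f$; by the observation above $z$ is an idempotent, and being below both $e$ and $f$ it lies below $ef$ in the semilattice of idempotents, so $ef \neq 0$. As $f$ was arbitrary, $e$ is essential.

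There is no real obstacle here: the statement is a definition chase. The one point worth a moment's care is the observation that any non-zero common lower bound of $e$ and $f$ in the natural partial order is automatically an idempotent, and hence witnesses $ef \neq 0$; and that the degenerate case $1 = 0$ must be excluded (or handled separately) for the equivalence to hold.
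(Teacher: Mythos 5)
Your proof is correct and takes essentially the same route as the paper's (which simply notes that essentiality gives $ef\neq 0$ for all non-zero idempotents $f$ and declares both directions immediate); you have merely filled in the definition-chase that the paper leaves to the reader, including the useful observations that the elements below $1$ are exactly the idempotents and that any non-zero common lower bound of $e$ and $f$ is an idempotent below $ef$. Your remark about the degenerate case $1=0$ is a fair point of care that the paper glosses over.
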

\begin{proof} Suppose that $e$ is an essential idempotent.
Then, by definition, for any non-zero idempotent $f$ we have that $ef \neq 0$.
This proves that $e \equiv 1$.
The proof of the converse is immediate.
\end{proof}

Let $S$ be an inverse semigroup and
let $\{a_{1}, \ldots, a_{m}\} \subseteq a^{\downarrow}$.
We say that $\{a_{1}, \ldots, a_{m}\}$  is a {\em tight cover} of $a$ if for every $0 < z \leq a$
we have that $z \wedge a_{i} \neq 0$ for some $i$.

\begin{lemma}\label{lem:blackadder} Let $S$ be a distributive inverse semigroup.
\begin{enumerate} 

\item $\{a_{1}, \ldots, a_{m}\}$ is a tight cover of $a$ if and only if
$b \leq_{e} a$ where $b = \bigvee_{i=1}^{m} a_{i}$.

\item $\{\bigvee_{i=1}^{m} a_{i}, b_{1}, \ldots, b_{n}\}$ is a tight cover of $s$
if and only if 
$\{a_{1}, \ldots, a_{m}, b_{1}, \ldots, b_{n}\}$ is a tight cover of $s$.

\end{enumerate}
\end{lemma}
\begin{proof} 
(1) Use \cite[Lemma 2.5]{Lawson2016}.

(2) Suppose that $\{\bigvee_{i=1}^{m} a_{i}, b_{1}, \ldots, b_{n}\}$ is a tight cover of $s$.
We prove that $\{a_{1}, \ldots, a_{m}, b_{1}, \ldots, b_{n}\}$ is a tight cover of $s$.
Let $0 < z \leq s$.
The only case that need concern us is where $(\bigvee_{i=1}^{m} a_{i}) \wedge z \neq 0$.
We now use \cite[Lemma 2.5]{Lawson2016} to deduce that $a_{i} \wedge z \neq \varnothing$ for some $i$.
The proof of the converse is now straightforward.
\end{proof}

The proof of the following can be deduced using the proof of \cite[Lemma~9.7]{LV2019b}.

\begin{lemma}\label{lem:red-panda} Let $T$ be an inverse monoid in which $\equiv$ is idempotent-pure
and suppose that $T^{e}$ is $E$-unitary.
Then the group of units of $T/{\equiv}$ is isomorphic to $T^{e}/\sigma$.
\end{lemma}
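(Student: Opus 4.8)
The plan is to realise the isomorphism through the obvious candidate map $\phi \colon T^{e} \to T/{\equiv}$, $\phi(s) = [s]$, where $[s]$ denotes the $\equiv$-class. First I would check that $\phi$ actually lands in the group of units $\mathsf{U}(T/{\equiv})$: if $s \in T^{e}$ then $\mathbf{d}(s)$ and $\mathbf{r}(s)$ are essential idempotents, so Lemma~\ref{lem:essential-equiv} gives $\mathbf{d}(s) \equiv 1 \equiv \mathbf{r}(s)$, whence $[s]^{-1}[s] = [\mathbf{d}(s)] = [1] = [\mathbf{r}(s)] = [s][s]^{-1}$; thus $[s]$ is invertible with $[s]^{-1} = [s^{-1}]$. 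That $\phi$ is a homomorphism is immediate since $\equiv$ is a congruence. For surjectivity, take $[t] \in \mathsf{U}(T/{\equiv})$; as $[t]$ is invertible, its monoid inverse coincides with its inverse-semigroup inverse $[t]^{-1} = [t^{-1}]$, so $[\mathbf{r}(t)] = [t][t]^{-1} = [1]$ and $[\mathbf{d}(t)] = [1]$, i.e. $\mathbf{r}(t)$ and $\mathbf{d}(t)$ are essential idempotents by Lemma~\ref{lem:essential-equiv}; this says exactly that $t \in T^{e}$, so $[t] = \phi(t)$. Hence $\phi$ is a surjective homomorphism from $T^{e}$ onto a group, so its kernel congruence is a group congruence and therefore contains $\sigma$.

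It remains to identify $\ker\phi$ with $\sigma$, for which I would prove both containments directly. If $s \equiv s'$ with $s, s' \in T^{e}$, then since $\equiv$ is idempotent-pure Lemma~\ref{lem:new-characterization} produces $c$ with $c \leq_{e} s$ and $c \leq_{e} s'$; $c \ne 0$ because $\equiv$ is $0$-restricted and $s \ne 0$, and I claim $c \in T^{e}$. Indeed, by the lemma relating $\leq_{e}$ to domains, $\mathbf{d}(c) \leq_{e} \mathbf{d}(s)$, and $\mathbf{d}(s) \equiv 1$, so $\mathbf{d}(c) \equiv 1$ is essential; applying the same reasoning to $c^{-1} \leq_{e} s^{-1}$ (which holds because $c \le s$ gives $c^{-1} \le s^{-1}$ and $c \equiv s$ gives $c^{-1} \equiv s^{-1}$) shows $\mathbf{r}(c) = \mathbf{d}(c^{-1})$ is essential, so $c \in T^{e}$. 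Since $c \in T^{e}$ with $c \le s, s'$, this witnesses $s \mathbin{\sigma} s'$. Conversely, if $s \mathbin{\sigma} s'$ there is $z \in T^{e}$ with $z \le s, s'$; then $\mathbf{d}(z) \le \mathbf{d}(s)$ and both are essential, so $\mathbf{d}(z) \equiv 1 \equiv \mathbf{d}(s)$, giving $\mathbf{d}(z) \leq_{e} \mathbf{d}(s)$ and hence $z \leq_{e} s$ by the domain lemma, and likewise $z \leq_{e} s'$, so $s \equiv z \equiv s'$. Thus $\ker\phi = \sigma$, and the homomorphism theorem yields $T^{e}/\sigma \cong \mathsf{U}(T/{\equiv})$.

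All the steps are elementary once the three ingredients — Lemma~\ref{lem:essential-equiv}, Lemma~\ref{lem:new-characterization} (which is where idempotent-purity of $\equiv$ is used), and the characterisation $a \leq_{e} b \Leftrightarrow \mathbf{d}(a) \leq_{e} \mathbf{d}(b)$ — are in hand, so I do not expect a genuine obstacle. The one point that needs care is checking that the witnessing elements $c$ (respectively $z$) belong to $T^{e}$, so that $\sigma$ is being computed inside $T^{e}$ (which, having no zero, also makes those witnesses automatically nonzero). Here the hypothesis that $T^{e}$ is $E$-unitary is convenient, since by Proposition~\ref{prop:four} it permits the alternative description of $\sigma$ on $T^{e}$ as the compatibility relation and an appeal to Lemma~\ref{lem:five}, though the route sketched above does not strictly need it.
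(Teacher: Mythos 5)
Your argument is correct and complete. The map $s \mapsto [s]$ lands in the units by Lemma~\ref{lem:essential-equiv}; it is onto the units because every representative of a unit class has $\mathbf{d}(t) \equiv 1 \equiv \mathbf{r}(t)$, hence essential domain and range; and both containments identifying $\ker\phi$ with $\sigma$ check out --- in particular you are right that the witness $c$ supplied by Lemma~\ref{lem:new-characterization} lies in $T^{e}$, via the equivalence $a \leq_{e} b \Leftrightarrow \mathbf{d}(a) \leq_{e} \mathbf{d}(b)$ applied to $c \leq_e s$ and to $c^{-1} \leq_e s^{-1}$. The paper prints no proof of this lemma (it defers to the proof of Lemma~9.7 of \cite{LV2019b}), but the route its hypotheses are set up for differs from yours in exactly the place you flag: with $T^{e}$ assumed $E$-unitary, Proposition~\ref{prop:four} gives $\sigma = {\sim}$ on $T^{e}$, and idempotent-purity of $\equiv$ gives ${\equiv} \subseteq {\sim}$ (Lemma~\ref{lem:idpt-pure-characterizations}), so the containment ${\equiv}|_{T^{e}} \subseteq \sigma$ comes in one line; the reverse containment is then the same essential-idempotent computation you also perform. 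Your version replaces the appeal to $E$-unitarity by the explicit meet witness from Lemma~\ref{lem:new-characterization} together with the verification that it is essential, which shows --- as you observe --- that the $E$-unitarity hypothesis is not actually needed once idempotent-purity of $\equiv$ is assumed. That is a genuine, if small, sharpening; the cost is the extra bookkeeping to check that the witness lies in $T^{e}$, which you carry out correctly.
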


The following was proved as \cite[Lemma~9.12]{LV2019b}.

\begin{lemma}\label{lem:idpt-pure-dist} Let $S$ be a distributive inverse semigroup.
If $\rho$ is idempotent-pure then $S/\rho$ is a distributive inverse semigroup and the natural map from $S$ to $S/\rho$
is a morphism of distributive inverse semigroups.
If, in addition, $S$ is a $\wedge$-semigroup then $S/\rho$ is a $\wedge$-semigroup
and the morphism preserves meets.
\end{lemma}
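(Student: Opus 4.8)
The plan is to check the three requirements for $S/\rho$ to be a distributive inverse semigroup with quotient map $\pi \colon S \to S/\rho$ a morphism of distributive inverse semigroups: that compatible pairs in $S/\rho$ have joins, that multiplication distributes over these joins, and that $\pi$ preserves them. First observe that since $\rho$ is idempotent-pure we have $\rho \subseteq {\sim}$ by Lemma~\ref{lem:idpt-pure-characterizations}, and that $\pi$ is idempotent-pure as a homomorphism: if $\pi(a)$ is an idempotent then $\pi(a) = \pi(a)\pi(a)^{-1} = \pi(aa^{-1})$, so $a \mathbin{\rho} aa^{-1}$ and hence $a$ is an idempotent. By Lemma~\ref{lem:idpt-pure-characterizations}(2), $\pi$ therefore reflects compatibility: $\pi(a) \sim \pi(b)$ implies $a \sim b$. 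Consequently, whenever $\pi(a) \sim \pi(b)$ in $S/\rho$ the join $a \vee b$ exists in $S$, and the candidate for $\pi(a) \vee \pi(b)$ is $\pi(a \vee b)$.

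The crux is to show that $\rho$ respects joins: if $a_1 \mathbin{\rho} b_1$, $a_2 \mathbin{\rho} b_2$, and both $a_1 \vee a_2$ and $b_1 \vee b_2$ exist in $S$, then $(a_1 \vee a_2) \mathbin{\rho} (b_1 \vee b_2)$. The key point is that the four elements $a_1, a_2, b_1, b_2$ are pairwise compatible: $a_i \sim b_i$ since $\rho \subseteq {\sim}$; $a_1 \sim a_2$ and $b_1 \sim b_2$ since the relevant joins exist; and for a mixed pair, $a_1^{-1}b_2 \mathbin{\rho} a_1^{-1}a_2 \in \mathsf{E}(S)$, so by idempotent-purity $a_1^{-1}b_2 \in \mathsf{E}(S)$, and likewise $a_1 b_2^{-1} \in \mathsf{E}(S)$, giving $a_1 \sim b_2$ (the pair $a_2, b_1$ being identical). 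Hence $w := a_1 \vee a_2 \vee b_1 \vee b_2$ exists. Now $a_1 \vee a_2 \le w$ gives $a_1 \vee a_2 = w\,\mathbf{d}(a_1 \vee a_2)$, and similarly for $b_1 \vee b_2$, so since $\rho$ is a congruence it suffices to prove $\mathbf{d}(a_1 \vee a_2) \mathbin{\rho} \mathbf{d}(b_1 \vee b_2)$. A routine computation (expand $(x^{-1} \vee y^{-1})(x \vee y)$ and note $x^{-1}y, y^{-1}x \le \mathbf{d}(x)\mathbf{d}(y)$) gives $\mathbf{d}(x \vee y) = \mathbf{d}(x) \vee \mathbf{d}(y)$ for compatible $x, y$, reducing the problem to idempotents: $e_i \mathbin{\rho} f_i$ (with $e_i, f_i \in \mathsf{E}(S)$) implies $e_1 \vee e_2 \mathbin{\rho} f_1 \vee f_2$. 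This holds because $(e_1 \vee e_2)(f_1 \vee f_2) = \bigvee_{i,j} e_i f_j$, and since $e_i f_i \mathbin{\rho} e_i$ while $e_1 f_2, e_2 f_1 \le e_1 \vee e_2$, the right-hand side is $\rho$-related to $e_1 \vee e_2$; by symmetry it is also $\rho$-related to $f_1 \vee f_2$.

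With this in hand, $\pi(a \vee b)$ depends only on $\pi(a)$ and $\pi(b)$, dominates both, and is least among upper bounds: if $\pi(a), \pi(b) \le \pi(c)$ then $a \mathbin{\rho} c\,\mathbf{d}(a)$ and $b \mathbin{\rho} c\,\mathbf{d}(b)$, the join $c\,\mathbf{d}(a) \vee c\,\mathbf{d}(b)$ exists and lies below $c$, and by the previous paragraph it is $\rho$-related to $a \vee b$, so $\pi(a \vee b) \le \pi(c)$. Hence $S/\rho$ has compatible joins and $\pi$ preserves them, and distributivity of multiplication over these joins in $S/\rho$ follows from that in $S$ together with surjectivity of $\pi$; thus $\pi$ is a morphism of distributive inverse semigroups. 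For the last sentence, suppose $S$ is a $\wedge$-semigroup. Given $a, b$, the meet $a \wedge b$ exists in $S$; it is clearly a lower bound of $\pi(a)$ and $\pi(b)$, and if $\pi(c) \le \pi(a), \pi(b)$ then $c \mathbin{\rho} a\,\mathbf{d}(c)$ and $c \mathbin{\rho} b\,\mathbf{d}(c)$, so $a\,\mathbf{d}(c) \sim b\,\mathbf{d}(c)$, their meet $a\,\mathbf{d}(c) \wedge b\,\mathbf{d}(c)$ (which exists and is given by Lemma~\ref{lem:compatibility-meets}(2)) lies below $a \wedge b$, and a short partial-order computation shows it is $\rho$-related to $c$; hence $\pi(c) \le \pi(a \wedge b)$, so $\pi(a \wedge b) = \pi(a) \wedge \pi(b)$ and $S/\rho$ is a $\wedge$-semigroup. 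The one genuinely delicate point is the ``$\rho$ respects joins'' statement of the second paragraph — specifically, the use of idempotent-purity to pack all four elements into a single compatible family; the rest is bookkeeping with the natural partial order.
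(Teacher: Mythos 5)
Your overall strategy is reasonable, and most of the individual steps are sound: the observation that $\pi$ is idempotent-pure and reflects compatibility, the packing of $a_1,a_2,b_1,b_2$ into a single compatible family, the reduction via $a_1\vee a_2 = w\,\mathbf{d}(a_1\vee a_2)$ and $\mathbf{d}(x\vee y)=\mathbf{d}(x)\vee\mathbf{d}(y)$ to the case of idempotents, and the treatment of meets at the end. The genuine gap is the step you dismiss as bookkeeping: the claim that $e_i\mathbin{\rho} f_i$ (idempotents) implies $e_1\vee e_2\mathbin{\rho} f_1\vee f_2$. The justification offered --- ``$e_if_i\mathbin{\rho} e_i$ while $e_1f_2,e_2f_1\leq e_1\vee e_2$, hence $\bigvee_{i,j}e_if_j$ is $\rho$-related to $e_1\vee e_2$'' --- is not a valid inference: knowing that each joinand is either $\rho$-related to one of $e_1,e_2$ or merely lies below $e_1\vee e_2$ says nothing about the $\rho$-class of the join unless you already know that $\rho$ respects joins, which is exactly what is being proved. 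Worse, idempotent-purity cannot close this gap, because on the idempotents it carries no information: every congruence on a semilattice is vacuously idempotent-pure. Concretely, let $S$ be the lattice of divisors of $12$ under $\gcd$ (a distributive inverse semigroup all of whose elements are idempotent, with joins given by $\operatorname{lcm}$), and let $\rho$ be the congruence whose only nontrivial class is $\{2,4\}$. Then $\rho$ is idempotent-pure (and even $0$-restricted), $4\mathbin{\rho}2$ and $3\mathbin{\rho}3$, but $4\vee 3=12$ is not $\rho$-related to $2\vee 3=6$; in the quotient one finds $\pi(4\vee 3)=[12]\neq[6]=\pi(4)\vee\pi(3)$. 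So the inference in your second paragraph fails, and with it the proof of well-definedness of joins in $S/\rho$.

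What makes the conclusion hold in the situation where the paper actually applies this lemma is that the congruence there is the Lenz congruence $\equiv$, for which Lemma~\ref{lem:new-characterization} supplies extra structure beyond idempotent-purity: $a\equiv b$ if and only if there exists $c\leq_{e} a,b$. With that characterization the join step goes through: if $c_i\leq_{e} a_i,b_i$ for $i=1,2$, then $c_1\vee c_2$ exists and, splitting any $0<x\leq a_1\vee a_2$ as $a_1\mathbf{d}(x)\vee a_2\mathbf{d}(x)$ and using $c_i\leq_{e}a_i$, one checks that $c_1\vee c_2\leq_{e} a_1\vee a_2$ and likewise $c_1\vee c_2\leq_{e} b_1\vee b_2$, whence $a_1\vee a_2\equiv b_1\vee b_2$. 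So your argument needs either this additional input (an essential congruence together with the $\leq_{e}$ characterization of $\rho$-equivalence), or an explicit hypothesis that the restriction of $\rho$ to $\mathsf{E}(S)$ respects joins; idempotent-purity by itself does not suffice, and the ``bookkeeping'' step is where the real content of the lemma lives.
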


The goal now is to show that $\equiv$ is idempotent-pure in the case of interest to us.

\begin{lemma} In an inverse semigroup, suppose that $a \leq b$.
Then $a \leq_{e} b$ if and only if $\mathbf{d}(a) \leq_{e} \mathbf{d}(b)$.
\end{lemma}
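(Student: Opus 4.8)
The plan is to reduce the statement to a routine verification by moving everything to the idempotent level via the maps $\mathbf{d}$ and $\mathbf{r}$. First I would recall the standard fact that in any inverse semigroup the natural partial order is compatible with the operations $\mathbf{d}$ and $\mathbf{r}$, so that $a \leq b$ forces $\mathbf{d}(a) \leq \mathbf{d}(b)$; thus in the situation at hand $\mathbf{d}(a) \leq \mathbf{d}(b)$ is automatic, and only the essentiality has to be transferred. The key observation I would exploit is that the map $x \mapsto \mathbf{d}(x)$ restricts to an order isomorphism between $b^{\downarrow}$ and $\mathbf{d}(b)^{\downarrow}$: the inverse is given by $e \mapsto be$ for $e \leq \mathbf{d}(b)$, and one checks that $\mathbf{d}(be) = e$ and that $x = b\mathbf{d}(x)$ whenever $x \leq b$. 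Moreover this bijection sends $0$ to $0$ and, being an order isomorphism, sends the meet $x^{\downarrow} \cap a^{\downarrow}$ (interpreted as: does there exist a nonzero common lower bound) to $\mathbf{d}(x)^{\downarrow} \cap \mathbf{d}(a)^{\downarrow}$, because a nonzero common lower bound of $x$ and $a$ corresponds under the isomorphism to a nonzero common lower bound of $\mathbf{d}(x)$ and $\mathbf{d}(a)$, and conversely.

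With that order isomorphism in hand the proof is essentially immediate in both directions. Suppose $a \leq_e b$. Take any idempotent $f$ with $0 < f \leq \mathbf{d}(b)$. Then $bf \leq b$ and $bf \neq 0$, so by essentiality there is a nonzero $z$ with $z \leq bf$ and $z \leq a$; applying $\mathbf{d}$ gives $0 < \mathbf{d}(z) \leq f$ and $\mathbf{d}(z) \leq \mathbf{d}(a)$, so $f^{\downarrow} \cap \mathbf{d}(a)^{\downarrow} \neq 0$. Hence $\mathbf{d}(a) \leq_e \mathbf{d}(b)$. Conversely, suppose $\mathbf{d}(a) \leq_e \mathbf{d}(b)$. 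Take any $x$ with $0 < x \leq b$. Then $0 < \mathbf{d}(x) \leq \mathbf{d}(b)$, so there is a nonzero idempotent $g$ with $g \leq \mathbf{d}(x)$ and $g \leq \mathbf{d}(a)$. Then $ag \leq a$ and $ag \neq 0$; also $ag = a g$ where $g \leq \mathbf{d}(x)$, and I claim $ag \leq x$: indeed $g \leq \mathbf{d}(a)$ and $g \leq \mathbf{d}(x)$, and using $a \leq b$ one has $ag = bg = b\mathbf{d}(x)g = xg \leq x$. Thus $ag$ is a nonzero common lower bound of $x$ and $a$, giving $x^{\downarrow} \cap a^{\downarrow} \neq 0$, so $a \leq_e b$.

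The main obstacle, such as it is, will be making precise the little identities that glue the two halves together — specifically the claims $x = b\mathbf{d}(x)$ for $x \leq b$ and $ag = xg$ in the converse direction — each of which follows from the defining relation $x = b\mathbf{d}(x)$ of the natural partial order together with $a = b\mathbf{d}(a)$, but which must be spelled out carefully because the notation $x^{\downarrow} \cap a^{\downarrow} \neq 0$ really means "there exists a nonzero common lower bound" rather than an intersection of sets. I would state once, perhaps as a preliminary sentence, that $x \mapsto \mathbf{d}(x)$ is an order isomorphism $b^{\downarrow} \to \mathbf{d}(b)^{\downarrow}$ preserving and reflecting the zero, and then both implications become one-line consequences of the definition of $\leq_e$.
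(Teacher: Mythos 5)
Your proof is correct and follows essentially the same route as the paper's: both arguments transfer essentiality back and forth through the correspondence $x \mapsto \mathbf{d}(x)$ between $b^{\downarrow}$ and $\mathbf{d}(b)^{\downarrow}$, testing against elements of the form $bf$ in one direction and $\mathbf{d}(x)$ in the other. The only cosmetic difference is that the paper invokes the compatibility lemma (any two elements below $b$ are compatible, so their meet exists and has domain equal to the product of their domains) where you exhibit the nonzero common lower bounds $\mathbf{d}(z)$ and $ag = xg$ explicitly.
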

\begin{proof} Suppose that $a \leq_{e} b$.
We prove that $\mathbf{d}(a) \leq_{e} \mathbf{d}(b)$.
Let $0 < f \leq  \mathbf{d}(a)$.
Then $0 < bf \leq b$.
Now $a,bf \leq b$ and so $a \sim bf$.
By assumption, $a \wedge bf \neq 0$.
Thus $\mathbf{d}(a) f \leq 0$.
We have therefore proved that $\mathbf{d}(a) \leq_{e} \mathbf{d}(b)$.
Conversely, suppose that $\mathbf{d}(a) \leq_{e} \mathbf{d}(b)$.
Let $0 < x \leq b$.
Then $0 < \mathbf{d}(x) \leq \mathbf{d}(b)$.
By assumption, $\mathbf{d}(x)\mathbf{d}(a) \neq 0$.
But $x,a \leq b$ implies that $x \sim a$.
It follows that $x \wedge a$ exists and is non-zero.
We have therefore proved that $a \leq_{e} b$.
 \end{proof}

Let $\theta \colon S \rightarrow T$ be a homomorphism.
We say that it is {\em essential} if $a \leq_{e} b$ implies that $\theta (a) = \theta (b)$.
We say that a congruence $\rho$ on a semigroup is {\em essential} if $a \leq_{e} b$ implies that $a \mathbin{\rho} b$.

\begin{lemma}\label{lem:important-tool} Let $S$ be an inverse semigroup.
If $\rho$ is any $0$-restricted, idempotent-pure essential congruence on $S$
then $a \, \rho \, b$ implies that $a \wedge b$ is defined and $a \wedge b \leq_{e} a,b$.
\end{lemma}
\begin{proof}
Suppose that $a \, \rho \, b$.
Then $a \sim b$ by Lemma~\ref{lem:idpt-pure-characterizations}
and so $a \wedge b$ exists by Lemma~\ref{lem:compatibility-meets}.
We prove that $a \wedge b \leq_{e} a$;
the fact that $a \wedge b \leq_{e} b$ follows by symmetry.
Since $a \sim b$ we have that $a \wedge b = ab^{-1}b$.
Let $0 < x \leq b$.
Then $x  \sim a \wedge b$ since $x, a \wedge b \leq b$.
Thus $(a \wedge b) \wedge x = (a \wedge b)x^{-1}x$.
But $(ab^{-1}b)x^{-1}x \, \rho \, x$.
We are given that $x \neq 0$ and so $(a \wedge b)\wedge x \neq 0$.
The claim now follows.
\end{proof}

The following result links $\leq_{e}$ and $\equiv$;
observe that $\equiv$ is $0$-restricted and essential.

\begin{lemma}\label{lem:new-characterization} Let $S$ be an inverse semigroup
in which $\equiv$ is idempotent-pure.
Then $a \equiv b$ if and only if there exists $c \leq_{e} a,b$.
\end{lemma}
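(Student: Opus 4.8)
The plan is to derive both implications almost directly from the tools already assembled in the excerpt, so this is a short argument resting on Lemma~\ref{lem:important-tool} and the displayed equivalence $a \leq_{e} b \Leftrightarrow a \equiv b \text{ and } a \leq b$.

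For the forward direction, suppose $a \equiv b$. First I would check that $\equiv$ satisfies the hypotheses of Lemma~\ref{lem:important-tool}: it is a $0$-restricted congruence (noted when the Lenz congruence was introduced), it is idempotent-pure by the standing assumption of this lemma, and it is essential because the displayed equivalence shows $a \leq_{e} b$ implies $a \equiv b$. Applying Lemma~\ref{lem:important-tool} with $\rho = {\equiv}$ then gives that $a \wedge b$ is defined and $(a \wedge b) \leq_{e} a$ and $(a \wedge b) \leq_{e} b$; so $c := a \wedge b$ is the required element.

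For the converse, suppose there is an element $c$ with $c \leq_{e} a$ and $c \leq_{e} b$. By the displayed equivalence, $c \leq_{e} a$ gives in particular $c \equiv a$, and likewise $c \equiv b$. Since $\equiv$ is a congruence, hence in particular an equivalence relation, symmetry and transitivity yield $a \equiv c \equiv b$, i.e. $a \equiv b$.

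There is essentially no hard step here: the only thing to be careful about is confirming that $\equiv$ is $0$-restricted, idempotent-pure, and essential so that Lemma~\ref{lem:important-tool} genuinely applies, and the essentiality is exactly what the displayed biconditional provides. The converse is purely formal from transitivity of the congruence.
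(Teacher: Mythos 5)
Your proof is correct and follows essentially the same route as the paper: the converse direction is the immediate one via the displayed biconditional and transitivity of $\equiv$, and the forward direction is an application of Lemma~\ref{lem:important-tool} with $\rho = {\equiv}$, taking $c = a \wedge b$. The only difference is that you spell out the verification that $\equiv$ is $0$-restricted, idempotent-pure and essential, which the paper leaves implicit.
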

\begin{proof} It is immediate that if there exists $c \leq_{e} a,b$ then $a \equiv b$.
The converse follows by  Lemma~\ref{lem:important-tool}.
\end{proof}

The next result characterizes the congruence $\equiv$ in the case where it is idempotent-pure.

\begin{proposition}\label{prop:uniqueness}
Let $S$ be an inverse semigroup on which $\equiv$ is idempotent-pure.
Then $\equiv$ is the unique $0$-restricted, idempotent-pure essential congruence on $S$.
\end{proposition}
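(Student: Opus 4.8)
The plan is to show that $\equiv$ is itself a $0$-restricted, idempotent-pure, essential congruence, and then that \emph{any} congruence $\rho$ with these three properties must coincide with $\equiv$, via a two-sided inclusion argument built on Lemmas~\ref{lem:important-tool} and~\ref{lem:new-characterization}.

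First I would record that $\equiv$ has the three required properties. It is $0$-restricted by the remark made just after its definition; it is idempotent-pure by the standing hypothesis on $S$; and it is essential because $a \leq_e b$ forces $a \equiv b$ — this is the content of the displayed equivalence $a \leq_e b \Leftrightarrow (a \equiv b \text{ and } a \leq b)$. So there is at least one $0$-restricted, idempotent-pure, essential congruence on $S$, namely $\equiv$, and uniqueness is not vacuous.

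Next, let $\rho$ be an arbitrary $0$-restricted, idempotent-pure, essential congruence on $S$. For the inclusion $\equiv\, \subseteq\, \rho$: suppose $a \equiv b$; since $\equiv$ is idempotent-pure, Lemma~\ref{lem:new-characterization} produces $c$ with $c \leq_e a$ and $c \leq_e b$, and then essentiality of $\rho$ gives $a \mathbin{\rho} c \mathbin{\rho} b$, hence $a \mathbin{\rho} b$. For the reverse inclusion $\rho\, \subseteq\, \equiv$: suppose $a \mathbin{\rho} b$; then Lemma~\ref{lem:important-tool} applies (precisely because $\rho$ is $0$-restricted, idempotent-pure and essential) and shows that $a \wedge b$ exists with $(a \wedge b) \leq_e a$ and $(a \wedge b) \leq_e b$; applying Lemma~\ref{lem:new-characterization} once more, now with $c = a \wedge b$, yields $a \equiv b$. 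Therefore $\rho = \equiv$, which is exactly the claimed uniqueness.

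I do not expect a genuine obstacle: the statement is a formal consequence of the two preceding lemmas. The only point needing a little care is to make sure the hypothesis of Lemma~\ref{lem:new-characterization} — that $\equiv$ is idempotent-pure on $S$ — is in force, which is exactly the standing assumption, and to note that this lemma is invoked in both directions of the argument (once via $c \leq_e a,b$ coming from Lemma~\ref{lem:new-characterization} itself, and once via $c = a\wedge b$ coming from Lemma~\ref{lem:important-tool}).
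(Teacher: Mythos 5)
Your proof is correct and follows essentially the same route as the paper: both directions of the inclusion argument rest on Lemma~\ref{lem:important-tool} and Lemma~\ref{lem:new-characterization} exactly as in the paper's proof, with only the cosmetic difference that for $\equiv\,\subseteq\,\rho$ you invoke Lemma~\ref{lem:new-characterization} to produce $c \leq_{e} a,b$ while the paper applies Lemma~\ref{lem:important-tool} to $\equiv$ itself to get $a \wedge b \leq_{e} a,b$.
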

\begin{proof} By definition $\equiv$ is $0$-restricted, it is idempotent-pure by assumption and it is an essential congruence
by virtue of its definition.
Let $\rho$ be any $0$-restricted, idempotent-pure essential congruence on $S$.
We shall prove that $\rho \, = \, \equiv$.
Let $a \, \rho \, b$.
By Lemma~\ref{lem:important-tool} there exists $x \leq_{e} a,b$, and so
Lemma~\ref{lem:new-characterization} gives $a \equiv b$.
We have therefore shown that $\mathbin{\rho} \subseteq \mathbin{\equiv}$.
We now prove the reverse inclusion.
Let $a \equiv b$.
Then Lemma~\ref{lem:important-tool} shows that $a \wedge b$ is defined and
$a \wedge b \leq_{e} a,b$ and hence $a \mathbin{\rho} b$ because $\rho$ is an essential
congruence.\end{proof}

The following is now immediate by Lemma~\ref{lem:new-characterization}.

\begin{lemma}\label{lem:cheese} Let $S$ be an inverse semigroup on which $\equiv$ is idempotent-pure.
Let $\rho$ any congruence on $S$ such that if $a \leq_{e} b$ then $\rho (a) = \rho (b)$.
Then $\equiv$ is contained in $\rho$.
\end{lemma}

The question is, therefore, for which inverse semigroups is $\equiv$ idempotent-pure.
We refer the reader back to Lemma~\ref{lem:oreo} to justify the following.

\begin{proposition}\label{prop:panda} Let $S$ be a distributive inverse semigroup.
Let $\mathcal{B}$ be a subset of $S$ having the following properties:
\begin{itemize}
\item Each element of $S$ is a finite join of elements from $\mathcal{B}$.
\item If $a \leq \bigvee_{i=1}^{m} a_{i}$ where $a,a_{i} \in \mathcal{B}$ then $a \leq a_{i}$ for some $i$.
\item If $a \leq b$, where $a,b \in \mathcal{B}$ and $a$ is a non-zero idempotent, then
$b$ is an idempotent.
\end{itemize}
Then $\equiv$ is idempotent-pure on $S$.
\end{proposition}
\begin{proof} Suppose that $a \, \equiv \, e$ where $e$ is a non-zero idempotent.
We need to prove that $a$ is an idempotent.
We use the fact that every element of $S$ is a join of elements in $\mathcal{B}$.
Then $\left(  \bigvee_{i=1}^{m} a_{i} \right) \equiv \left( \bigvee_{j=1}^{n} e_{j} \right)$
where $a_{i}, e_{j} \in \mathcal{B}$.
We assume all elements are non-zero.
For each $i$, we have that $a_{i} \leq  \bigvee_{i=1}^{m} a_{i}$.
Thus, from the definition of $\equiv$, there is a non-zero element $z$ in $S$
such that $z \leq a_{i}$ and $z \leq  \bigvee_{j=1}^{n} e_{j}$.
The element $z$ is an idempotent and belongs to $S$.
Thus $z$ is a join of non-zero idempotents from $\mathcal{B}$.
So, there is a non-zero idempotent $e \in \mathcal{B}$ such that $e \leq z$
In particular, $e \leq a_{i}$.
It follows from our assumptions that $a_{i}$ is itself an idempotent.
It follows that all of $a_{1}, \ldots , a_{m}$ are idempotents and so $a$ is an idempotent,
as required.
\end{proof}

The following refines Lemma~\ref{lem:cheese}.

\begin{lemma} Let $S$ be a distributive inverse semigroup.
Let $\mathcal{B}$ be a subset of $S$ having the following properties:
\begin{itemize}
\item Each element of $S$ is a finite join of elements from $\mathcal{B}$.
\item If $a \leq \bigvee_{i=1}^{m} a_{i}$ where $a,a_{i} \in \mathcal{B}$ then $a \leq a_{i}$ for some $i$.
\end{itemize}
Let $\theta \colon S \rightarrow T$ be a morphism of distributive inverse semigroups such that $b \leq_{e} a$, where $a \in \mathcal{B}$,
implies that $\theta (b) = \theta (a)$.
Then $\equiv$ is contained in the kernel of $\theta$.
\end{lemma}
\begin{proof} Suppose that $a \leq_{e} b$ where $b = \bigvee_{i} b_{i}$ such that $b_{i} \in \mathcal{B}$.
Then $a \wedge b_{i} \leq_{e} b_{i}$ for all $i$.
Observe that $a \wedge b_{i}$ is algebraically defined since $a \sim b_{i}$ by Lemma~\ref{lem:compatibility-meets}.
It follows that $\theta (b_{i}) \leq \theta (a)$ for all $i$ giving $\theta (b) \leq \theta (a)$.
But $\theta (a) \leq \theta (b)$ and so $\theta (a) = \theta (b)$.
\end{proof}

Let $C$ be a higher rank graph with a finite number of identities which has no sources and is row finite.
By Lemmas~\ref{lem:joinbasic}, \ref{lem:oreo}~and~Lemma~\ref{lem:key-property},
the inverse monoid $\mathsf{R}(C)$ satisfies the conditions of Proposition~\ref{prop:panda}
and so $\equiv$ is idempotent-pure.
By Proposition~\ref{prop:dis}, $\mathsf{R}(C)$ is a distributive inverse $\wedge$-monoid and so
by Lemma~\ref{lem:idpt-pure-dist}, it follows that $\mathsf{R}(C)/{\equiv}$ is a distributive
inverse $\wedge$-monoid.
By Proposition~\ref{prop:three}, $\mathsf{R}(C)^{e}$ is $E$-unitary.
Thus by Lemma~\ref{lem:red-panda}, the group of units of $\mathsf{R}(C)/{\equiv}$
is isomorphic to $\mathscr{G}(C)$.
We have therefore proved the following.

\begin{theorem}\label{them:israel}
Let $C$ be a higher rank graph with a finite number of identities which has no sources and is row finite.
Then the group $\mathscr{G}(C)$, defined at the end of Section~4, is isomorphic with the group of units of $\mathsf{R}(C)/{\equiv}$,
which is a distributive inverse $\wedge$-monoid.
\end{theorem}

\noindent
{\bf Definition. }Let $C$ be a higher rank graph with a finite number of identities which has no sources and is row finite.
Put $\mathsf{B}(C) = \mathsf{R}(C)/{\equiv}$.

\section{Our group is a group of units of a Boolean inverse $\wedge$-monoid}

In this section, we shall prove that the group $\mathscr{G}(C)$ is isomorphic to the group of units of a Boolean inverse $\wedge$-monoid.
This will enable us to show easily that $\mathscr{G}(C)$ is a topological full group.
Specifically, we shall prove that $\mathsf{B}(C)$ is Boolean.
To do this, we need the concept of a tight filter in an inverse semigroup.
The proper filter $A$ is {\em tight} if for every $a \in A$ and every tight cover
$\{a_{1}, \ldots, a_{m}\}$ of $a$, there exists $i \leq m$ such that $a_{i} \in A$.

\begin{lemma}\label{lem:beverly} Let $S$ be a distributive inverse monoid.
\begin{enumerate}
\item Every ultrafilter is a tight filter.
\item Every tight filter is a prime filter.
\end{enumerate}
\end{lemma}
\begin{proof} 
(1) This follows by \cite[Proposition~5.10]{LL}.

(2) Let $A$ be a tight filter and suppose that $\bigvee_{i=1}^{m} a_{i} \in A$.
Put $a = \bigvee_{i=1}^{m} a_{i}$ so that $a \in A$. 
We claim that $\{a_{1}, \ldots, a_{m}\}$ is a tight cover of $a$.
To see why, let $0 < z \leq a$.
Then $z \wedge a \neq 0$.
It follows, by \cite[Lemma 2.5]{Lawson2016} that $z \wedge a_{i} \neq 0$ for some $i$. 
Since $A$ is a tight filter it follows that $a_{i} \in A$ for some $i$.
\end{proof}

We can characterize the tight filters amongst the prime filters in the case interesting to us.

\begin{lemma}\label{lem:isla} 
Let $S$ be a distributive inverse semigroup in which $\equiv$ is idempotent-pure.
Let $X$ be a prime filter of $S$.
Then:
\begin{enumerate}
\item $X$ is a tight filter if and only if  $x \in X$ and $y \leq_{e} x$ implies that $y \in X$.
\item $X$ is a tight filter if and only if  $x \in X$ and $y \equiv x$ implies that $y \in X$.
\end{enumerate}
\end{lemma}
\begin{proof} 
(1) Suppose first that $X$ is a tight filter and that  $x \in X$ and $y \leq_{e} x$ .
Then $\{y\}$ is a tight cover of $x$.
It follows that $y \in X$.
Conversely, suppose that  $x \in X$ and $y \leq_{e} x$ implies that $y \in X$.
Using \cite[Lemma~2.5(4)]{Lawson2016}, if $\{a_{1}, \ldots, a_{m}\}$ is a tight cover of $a$
then $\bigvee_{i=1}^{m} a_{i} \leq_{e} a$.
It follows that  $\bigvee_{i=1}^{m} a_{i} \in X$.
But $X$ is a prime filter and so $a_{i} \in X$ for some $i$.
It follows that $X$ is a tight filter.

(2) Suppose that $X$ is a tight filter and that $x \in X$ and $y \equiv x$.
By  Lemma~\ref{lem:new-characterization}, there exists $z \leq_{e} x,y$.
By (1), we have that $z \in X$ and cince $z \leq y$ and $X$ is a filter it follows that $y \in X$.
Conversely, suppose that   $x \in X$ and $y \equiv x$ implies that $y \in X$.
The proof that $X$ is a tight filter follows from the same argument used in the proof of (1).
\end{proof}

To prove that a distributive inverse semigroup is Boolean,
we have to prove, by \cite[Lemma~3.20]{LL}, that every prime filter is an ultrafilter.
By Lemma~\ref{lem:idpt-pure-dist}, if $S$ is distributive and $\equiv$ is idempotent-pure
then $S/{\equiv}$ is distributive.
The following theorem is now relevant.

\begin{theorem}\label{them:seven} Let $S$ be a distributive inverse semigroup on which $\equiv$ is idem\-potent-pure.
Then $S/{\equiv}$ is Boolean if and only if every tight filter in $S$ is an ultrafilter.
\end{theorem}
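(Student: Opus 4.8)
The plan is to deduce the theorem from the characterization of Boolean distributive inverse semigroups in terms of prime filters and ultrafilters: by \cite[Lemma~3.20]{LL}, a distributive inverse semigroup is Boolean precisely when every prime filter is an ultrafilter. Since $\equiv$ is idempotent-pure, Lemma~\ref{lem:idpt-pure-dist} tells us that $S/{\equiv}$ is a distributive inverse semigroup and the quotient map $q \colon S \to S/{\equiv}$ is a morphism of distributive inverse semigroups, so the task reduces to relating the filters of $S/{\equiv}$ to the filters of $S$. The natural device is to pull filters back along $q$: for a (proper) filter $A$ of $S/{\equiv}$, set $q^{-1}(A) = \{s \in S : [s] \in A\}$; this is a proper filter of $S$ which, by Lemma~\ref{lem:mars}, is saturated under $\equiv$ (and under $\leq_e$). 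Conversely, for a filter $X$ of $S$ which is $\equiv$-saturated, $q(X)$ is a filter of $S/{\equiv}$, and these two operations are mutually inverse bijections between filters of $S/{\equiv}$ and $\equiv$-saturated filters of $S$. The key point to verify is that this correspondence matches up prime filters with prime filters and ultrafilters with ultrafilters.

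First I would check the easy direction: suppose every tight filter of $S$ is an ultrafilter, and let $B$ be a prime filter of $S/{\equiv}$. Then $X := q^{-1}(B)$ is a proper filter of $S$. I claim $X$ is tight: if $x \in X$ and $\{a_1,\dots,a_m\}$ is a tight cover of $x$, then by Lemma~\ref{lem:nanaimo} $\bigvee_i a_i \leq_e x$, so $q$ being essential (it kills $\leq_e$, by Lemma~\ref{lem:mars}(1) this is exactly the $\equiv$-saturation) gives $\bigvee_i [a_i] = [x] \in B$; since $B$ is prime, some $[a_i] \in B$, i.e.\ $a_i \in X$. So $X$ is tight, hence an ultrafilter of $S$ by hypothesis. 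Now I must argue that a maximal $\equiv$-saturated filter corresponds to a maximal filter downstairs — but since $X$ is already an ultrafilter of $S$, it is certainly maximal among $\equiv$-saturated filters, and under the bijection $B = q(X)$ is then an ultrafilter of $S/{\equiv}$. This shows every prime filter of $S/{\equiv}$ is an ultrafilter, so $S/{\equiv}$ is Boolean by \cite[Lemma~3.20]{LL}.

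For the converse, suppose $S/{\equiv}$ is Boolean and let $X$ be a tight filter of $S$. By Lemma~\ref{lem:tight}, $X$ is a prime filter; by Lemma~\ref{lem:mars}, $X$ is $\equiv$-saturated, so $q(X)$ is a well-defined filter of $S/{\equiv}$, and it is proper since $\equiv$ is $0$-restricted. One checks $q(X)$ is prime: if $[a] \vee [b] \in q(X)$, then (lifting the join, using that $q$ preserves joins and that $X$ is $\equiv$-saturated) $a \vee b \in X$ up to $\equiv$, so $a \in X$ or $b \in X$ by primeness of $X$, whence $[a] \in q(X)$ or $[b] \in q(X)$. Since $S/{\equiv}$ is Boolean, $q(X)$ is an ultrafilter of $S/{\equiv}$, and pulling back, $X = q^{-1}(q(X))$ is maximal among $\equiv$-saturated proper filters of $S$. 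Finally, since any proper filter containing $X$ is automatically $\equiv$-saturated once we know $X$ is tight — here one uses that any filter above a tight filter is again tight, so above a tight filter every filter is $\equiv$-saturated by Lemma~\ref{lem:mars} — we conclude $X$ is an ultrafilter of $S$.

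The main obstacle I anticipate is the bookkeeping around the filter correspondence, specifically making precise that "maximal $\equiv$-saturated filter" equals "maximal filter" in each direction: downstairs this is automatic, but upstairs one must know that every proper filter lying above a tight filter is itself $\equiv$-saturated, which requires checking that the tightness property (and hence $\equiv$-saturation via Lemma~\ref{lem:mars}) is inherited upward in the filter order. Once that monotonicity is in hand, the two halves go through as sketched; the rest is the routine verification that $q^{-1}$ and $q$ are inverse bijections between $\equiv$-saturated filters of $S$ and filters of $S/{\equiv}$, preserving primeness in both directions, which follows from $q$ being a surjective morphism of distributive inverse semigroups that is $0$-restricted and essential.
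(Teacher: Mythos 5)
Your overall strategy---reduce to the statement that every prime filter of $S/{\equiv}$ is an ultrafilter via \cite[Lemma~3.20]{LL}, and set up a correspondence between filters of $S/{\equiv}$ and $\equiv$-saturated filters of $S$ matching prime filters with tight filters and ultrafilters with ultrafilters---is the same as the paper's, and your forward direction goes through essentially as sketched. But there is a genuine gap at exactly the point you flag as the main obstacle in the converse direction: the claim that ``any filter above a tight filter is again tight'' is false in general, even under the hypotheses of the theorem. Take $S$ to be the distributive inverse semigroup consisting of $0$, a top identity $1$, and all finite joins of pairwise orthogonal idempotents $e_1, e_2, \dots$ lying below $1$ (everything is idempotent, so $\equiv$ is trivially idempotent-pure). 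The filter $\{1\}$ is tight, because any tight cover of $1$ must contain $1$: a finite join of the $e_i$ misses all but finitely many of the atoms $e_k$. But $(e_1 \vee e_2)^{\uparrow}$ is a proper filter containing $\{1\}$ that is not tight, since $\{e_1, e_2\}$ is a tight cover of $e_1 \vee e_2$ and neither $e_1$ nor $e_2$ lies in $(e_1 \vee e_2)^{\uparrow}$. So you cannot conclude that every proper filter containing your tight filter $X$ is $\equiv$-saturated, and the last step of your converse direction does not close. (The claim does become true once one knows that tight filters are ultrafilters---but that is precisely what you are trying to prove, so using it here is circular.)

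The repair is small and is what the paper does: to show that the tight filter $X$ is an ultrafilter it suffices to compare it with a single ultrafilter $Y \supseteq X$ supplied by Zorn's lemma. By \cite[Proposition~5.10]{LL} every ultrafilter is tight, hence $\equiv$-saturated by Lemma~\ref{lem:mars}, so both $X$ and $Y$ lie in the domain of your correspondence. Then $q(X) \subseteq q(Y)$ are proper filters of $S/{\equiv}$ with $q(X)$ prime, hence an ultrafilter because $S/{\equiv}$ is Boolean; thus $q(X) = q(Y)$, and pulling back along $q^{-1}$ and using that both $X$ and $Y$ are $\equiv$-saturated gives $X = Y$. One further point you defer as routine but that deserves a line: showing that $q^{-1}$ of a proper filter of $S/{\equiv}$ is downward directed is where idempotent-purity of $\equiv$ is actually used---a common lower bound $c$ of $q(s)$ and $q(t)$ lifts because $\theta(se) = \theta(te)$ for a suitable idempotent $e$, whence $se \sim te$ and the meet $se \wedge te$ exists and maps to $c$. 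With those two adjustments your argument coincides with the paper's proof.
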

\begin{proof} Put $T = S/{\equiv}$ and let $\theta \colon S \rightarrow T$ be the associated natural map.
Suppose first that $T$ is Boolean.
Then every prime filter of $T$ is an ultrafilter in $T$.
Let $A \subseteq S$ be a tight filter.
We shall prove that it is an ultrafilter in $S$.
Observe that $A' = \theta (A)^{\uparrow}$ is a proper filter in $T$ because $\theta$ is $0$-restricted.
We shall prove that it is a prime filter in $T$.
Suppose that $a' \vee b' \in A$.
Then there is $a \in A$ such that $\theta (a) \leq a' \vee b'$.
Put $e' = \mathbf{d} (\theta (a))$.
Then $\theta (a) = a'e' \vee b'e'$.
Put $a'' = a'e'$ and $b'' = b'e'$.
Then $\theta (a) = a'' \vee b''$ where $a'' \leq a'$ and $b'' \leq b'$.
Let $a_{1}$ and $b_{1}$ be any elements of $S$ such that $\theta (a_{1}) = a''$ and $\theta (b_{1}) = b''$.
Since the map $\theta$ is idempotent-pure we have that $a_{1} \sim b_{1}$.
It follows that $a_{1} \vee b_{1}$ is defined and  $\theta (a_{1} \vee b_{1}) = \theta (a)$.
But $a \in A$ so that by Lemma~\ref{lem:isla}, it follows that $a_{1} \vee b_{1} \in A$.
We proved in Lemma~\ref{lem:beverly} that every tight filter was a prime filter.
It follows that $a_{1} \in A$ or $b_{1} \in A$.
Without loss of generality, assume that $a_{1} \in A$.
Then $a'' = \theta (a_{1}) \in \theta (A)$
and so $a' \in A'$.
We have therefore proved that $A'$ is a prime filter.
By assumption, $A'$ is an ultrafilter.
We shall prove that $A$ is an ultrafilter.
Suppose that $A \subseteq B$ where $B$ is any proper filter.
Then $\theta (A)^{\uparrow} \subseteq \theta (B)^{\uparrow}$.
By assumption, $\theta (A)^{\uparrow} = \theta (B)^{\uparrow}$.
Now, let $b \in B$.
Then $\theta (b) \in  \theta (B)^{\uparrow}$.
Thus $\theta (b) \in  \theta (A)^{\uparrow}$.
It follows that there is $a \in A$ such that $\theta (a) \leq \theta (b)$.
Put $e' = \mathbf{d}(\theta (a))$ and let $e$ be any idempotent such that $\theta (e) = e'$.
Then $\theta (a) = \theta (be)$.
 But $a \in A$ and so $be \in A$ by  Lemma~\ref{lem:isla}.
But $A$ is a filter and so $b \in A$.
We have prove that $A = B$ and so $A$ is an ultrafilter.

Conversely, suppose that every tight filter in $S$ is an ultrafilter.
We prove that $T$ is Boolean.
Let $A'$ be a prime filter in $T$.
Put $A = \theta^{-1}(A')$.
We prove first that $A$ is a proper filter.
This set does not contain zero because $\theta$ is $0$-restricted,
Suppose that $a \leq b$ where $a \in A$.
Then $\theta (a) \leq \theta (b)$.
By assumption, $\theta (a) \in A'$.
But $A'$ is a filter and so $\theta (b) \in A'$.
It follows that $b \in A$.
Let $a,b \in A$.
Then $\theta (a), \theta (b) \in A'$.
By assumption, there exists $c' \in A'$ such that $c' \leq \theta (a), \theta (b)$.
It follows that $\theta (a) \mathbf{d}(c') = \theta (b)\mathbf{d}(c')$.
Let $e$ be any idempotent such that $\theta (e) = \mathbf{d}(c')$. 
Then $\theta (ae) = \theta (be)$.
But $\theta$ is idempotent pure and so $ae \sim be$.
By Lemma~\ref{lem:compatibility-meets}, $d = ae \wedge be$ exists and is defined purely algebraically.
Thus $\theta (d) = c$.
It follows that $d \in A$.
On the other hand, $d \leq a,b$.
We now prove that $A$ is a tight filter.
Suppose that $a \in A$ and $\{a_{1}, \ldots ,a_{m}\} \subseteq a^{\downarrow}$ is a tight cover.
Then $\bigvee_{i=1}^{m} a_{i} \leq_{e} a$.
We now use Lemma~\ref{lem:isla}, to deduce that
$\bigvee_{i=1}^{m} \theta (a_{i}) = \theta (a)$.
By assumption, $A'$ is a prime filter and so $\theta (a_{i}) \in A$ for some $i$.
This means that $a_{i} \in A$ for some $i$.
We have therefore proved that $A$ is a tight filter.
By assumption it is an ultrafilter.
Thus $\theta^{-1}(A')$ is an ultrafilter.
We now prove that $A'$ is an ultrafilter.
Suppose that $A' \subseteq B'$ where $B'$ is a proper filter.
Then $\theta^{-1}(A') \subseteq \theta^{-1}(B')$.
But, as above,  $\theta^{-1}(B')$ is a proper filter.
It follows that $\theta^{-1}(A') = \theta^{-1}(B')$.
Thus $A' = B'$ and so $A'$ is also an ultrafilter.
\end{proof}



The following result shows that to check whether every tight filter is an ultrafilter,
it is enough to restrict attention to the distributive lattice of idempotents.

\begin{proposition}\label{prop:idempotents} Let $S$ be a distributive inverse semigroup.
Then each tight filter in $S$ is an ultrafilter in $S$
if and only if
each tight filter in $\mathsf{E}(S)$ is an ultrafilter in $\mathsf{E}(S)$.
\end{proposition}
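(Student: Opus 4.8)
The plan is to pass through the idempotent filter attached to a filter: for a proper filter $A$ in $S$ write $A_e := \mathbf{d}(A) \cap \mathsf{E}(S)$, which (as recorded just before Lemma~\ref{lem:rioja}) is a proper filter in $\mathsf{E}(S)$. I will show three things: (a) $A$ is an ultrafilter in $S$ if and only if $A_e$ is an ultrafilter in $\mathsf{E}(S)$; (b) $A$ is a tight filter in $S$ if and only if $A_e$ is a tight filter in $\mathsf{E}(S)$; (c) the assignment $A \mapsto A_e$ hits every (proper) filter of $\mathsf{E}(S)$, because if $F$ is a proper filter of $\mathsf{E}(S)$ then its upward closure $F^{\uparrow}$ in $S$ is a proper filter of $S$ with $(F^{\uparrow})_e = F$ (using Lemma~\ref{lem:rioja} and that $e \le a$ with $e$ idempotent forces $e \le \mathbf{d}(a)$). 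Granting (a)--(c), the proposition is a two-line chase: if every tight filter in $S$ is an ultrafilter and $F$ is a tight filter in $\mathsf{E}(S)$, then $F^{\uparrow}$ is tight in $S$ (by (b),(c)), hence an ultrafilter in $S$, hence $F = (F^{\uparrow})_e$ is an ultrafilter in $\mathsf{E}(S)$ (by (a)); conversely, if every tight filter in $\mathsf{E}(S)$ is an ultrafilter and $A$ is tight in $S$, then $A_e$ is tight in $\mathsf{E}(S)$ (by (b)), hence an ultrafilter, hence $A$ is an ultrafilter in $S$ (by (a)).

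\textbf{The ultrafilter correspondence (a).} This is essentially standard non-commutative Stone duality and can be extracted from \cite{Lawson2010, Lawson2012}; I would nonetheless give the short direct argument. Using $A = (a\mathbf{d}(A))^{\uparrow}$ for a fixed $a \in A$: if $A_e$ is an ultrafilter and $A \subseteq B$ with $B$ proper, then $\mathbf{d}(A) \subseteq \mathbf{d}(B)$, so $A_e = B_e$; for $b \in B$ choose $c \in B$ with $c \le a,b$, note $\mathbf{d}(c) \in B_e = A_e \subseteq \mathbf{d}(A)$ and $c = a\mathbf{d}(c) \in a\mathbf{d}(A) \subseteq A$, whence $b \in A$; so $A = B$. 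Conversely, if $A$ is an ultrafilter and $A_e \subseteq F$ with $F$ a proper filter of $\mathsf{E}(S)$, then $B := \{x : af \le x \text{ for some idempotent } f \in F,\ f \le \mathbf{d}(a)\}^{\uparrow}$ is a proper filter of $S$ (properness since $af = 0$ forces $f = \mathbf{d}(af) = 0$) containing $A$, hence $A = B$, and a short computation of $\mathbf{d}(B)$ together with Lemma~\ref{lem:rioja} gives $A_e = B_e = F$.

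\textbf{The tight correspondence (b): the crux.} The key technical point is the translation of tight covers: \emph{$\{a_1,\dots,a_m\}$ is a tight cover of $a$ in $S$ if and only if $\{\mathbf{d}(a_1),\dots,\mathbf{d}(a_m)\}$ is a tight cover of $\mathbf{d}(a)$ in $\mathsf{E}(S)$.} Indeed, by Lemma~\ref{lem:nanaimo} the first statement says $\bigvee_i a_i \le_e a$; since the $a_i$ lie below $a$ they are pairwise compatible, so $\mathbf{d}$ preserves this join, and by the unlabelled lemma following the definition of $\le_e$ (which says $a \le_e b \Leftrightarrow \mathbf{d}(a) \le_e \mathbf{d}(b)$) this is equivalent to $\bigvee_i \mathbf{d}(a_i) \le_e \mathbf{d}(a)$, which by Lemma~\ref{lem:nanaimo} in $\mathsf{E}(S)$ is the second statement. (One also needs the routine fact that, for idempotents, $\le_e$ computed in $S$ and in $\mathsf{E}(S)$ agree, and that multiplying a relation $f \le_e g$ by a fixed idempotent $h$ yields $hf \le_e hg$.) With this in hand: if $A$ is tight in $S$, $e \in A_e$, and $\{e_1,\dots,e_m\}$ is a tight cover of $e$ in $\mathsf{E}(S)$, pick $a \in A$ with $\mathbf{d}(a) \le e$ (Lemma~\ref{lem:rioja}); then $f_i := e_i\mathbf{d}(a) \le \mathbf{d}(a)$ forms a tight cover of $\mathbf{d}(a)$, so $\{af_i\}$ is a tight cover of $a$, so some $af_i \in A$, giving $f_i = \mathbf{d}(af_i) \in A_e$ and hence $e_i \in A_e$ by upward closure. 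Conversely, if $A_e$ is tight in $\mathsf{E}(S)$, $a \in A$, and $\{b_1,\dots,b_m\}$ is a tight cover of $a$, then $\{\mathbf{d}(b_i)\}$ is a tight cover of $\mathbf{d}(a) \in A_e$, so some $\mathbf{d}(b_i) \in A_e \subseteq \mathbf{d}(A)$, and then $b_i = a\mathbf{d}(b_i) \in a\mathbf{d}(A) \subseteq A$.

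\textbf{Assembly and the main obstacle.} With (a), (b), (c) established, the biconditional follows by the chase in the first paragraph. The only genuinely delicate step is (b): because a tight cover of an element $a \in S$ consists of general (non-idempotent) elements, one cannot argue purely inside $\mathsf{E}(S)$, and the proof must shuttle between covers of $a$ and covers of $\mathbf{d}(a)$ (via the restrictions $a \mapsto af_i$ with $f_i \le \mathbf{d}(a)$) and between membership in $A$ and membership in $A_e$ (via $A = (a\mathbf{d}(A))^{\uparrow}$). Everything else is either bookkeeping or already packaged in Lemma~\ref{lem:nanaimo}, Lemma~\ref{lem:rioja}, and the structural facts on $\mathbf{d}(A)$ recalled before Lemma~\ref{lem:rioja}.
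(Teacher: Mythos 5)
Your proof is correct and follows essentially the same route as the paper: both reduce the statement to the correspondence $A \leftrightarrow \mathbf{d}(A) \cap \mathsf{E}(S)$ and the fact that this correspondence preserves and reflects both tightness and maximality. The only differences are matters of detail — you prove the tight-cover translation directly from Lemma~\ref{lem:nanaimo} and the unlabelled lemma on $\leq_{e}$ and $\mathbf{d}$ where the paper cites \cite[Lemma~5.9]{LL}, and you make explicit the surjectivity step $F \mapsto F^{\uparrow}$ that the paper's ``the proof of the converse is now straightforward'' leaves implicit.
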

\begin{proof} We first prove first that $A$ is a prime filter in $S$ if and only if $\mathsf{E}(\mathbf{d}(A))$
is a prime filter in $S$.
Suppose that $A$ is a prime filter in $S$.
Observe that $\mathbf{d}(A)$ is a proper filter in $S$.
We prove that $\mathbf{d}(A)$ is a prime filter in $S$.
Let $x \vee y \in \mathbf{d}(A)$.
Then $a^{-1}a \leq x \vee y$ for some $a \in A$.
We have that $a^{-1}a = xa^{-1}a \vee ya^{-1}a$.
Whence $a = axa^{-1}a \vee aya^{-1}a$.
But $A$ is prime.
Without loss of generality, we may assume that $axa^{-1}a \in A$.
Thus $ax \in A$.
Now, $a^{-1}ax \in \mbox{d}(A)$.
It follows that $x \in \mathbf{d}(A)$.
We now prove the converse.
Suppose that $\mbox{d}(A)$ is prime.
Let $x \vee y \in A$.
Then $\mathbf{d}(x) \vee \mathbf{d}(y) \in \mathbf{d}(A)$.
But $\mathbf{d}(A)$ is prime.
Without loss of generality, we may assume that $\mathbf{d}(x) \in \mathbf{d}(A)$.
It follows that $(x \vee y)\mathbf{d}(x) \in A$ and so $x \in A$.
It is routine to check that $A$ is an ultrafilter if and only if $\mathbf{d}(A)$ is an ultrafilter.
The fact that $A$ is a tight filter if and only if $\mathbf{d}(A)$ is a tight filter follows by \cite[Lemma~5.9(1)]{LL}.
By \cite[Lemma 5.9(2)]{LL}, we have that $\mathbf{d}(A)$ is a tight filter (respectively, ultrafilter) if and only if $\mathsf{E}(\mathbf{d}(A))$ is a tight filter (respectively, ultrafilter)
in $\mathsf{E}(S)$.
We can now prove the proposition.
Suppose that each tight filter in $\mathsf{E}(S)$ is an ultrafilter in $\mathsf{E}(S)$.
Let $A$ be a tight filter in $S$.
Then $\mathbf{d}(A)$ is a tight filter in $S$.
Thus $\mathsf{E}(\mathbf{d}(A))$ is a tight filter in $\mathsf{E}(S)$.
By assumption, $\mathsf{E}(\mathbf{d}(A))$ is an ultrafilter in $\mathsf{E}(S)$.
Thus $\mathbf{d}(A)$ is an ultrafilter in $S$, and so $A$ is an ultrafilter in $S$.
The proof of the converse is now straightforward.
\end{proof}

Let $C$ be a higher rank graph with a finite number of identities which has no sources and is row finite.
Using Theorem~\ref{them:seven} and Proposition~\ref{prop:idempotents},
to prove that $\mathsf{B}(C)$ is Boolean, we need to prove that every tight filter in $\mathsf{E}(\mathsf{R}(C))$
is an ultrafilter in $\mathsf{E}(\mathsf{R}(C))$.
To do this, we shall relate filters in  $\mathsf{E}(\mathsf{R}(C))$ to appropriate subsets of $C$.
It is here that we make particular use of ideas to be found in \cite{JS2014}.

Let $C$ be a category.
Let $A \subseteq C$ be a non-empty subset.
We say that $A$ is a {\em prefilter} if $a,b \in A$ then there $aC \cap bC \cap A \neq \varnothing$.
Observe that in a prefilter each pair of elements is dependent but that something stronger is true:
namely, for all $a,b \in A$ there exist $u, v \in C$ such that $au = bv \in A$.
A prefilter $A$ is called a {\em filter} if it satisfies the additional condition that if  $a = xy$ and $a \in A$ then $x \in A$.
We call $x$ a {\em prefix} of $a$.
If $X$ is any non-empty subset of $C$ define
$\mathsf{Pref}(X)$ to be all elements $x \in C$ such that $a = xu$ for some $a \in X$ and $u \in C$;
thus $\mathsf{Pref}(X)$ is the set of all prefixes of elements of $X$.
Observe that $X \subseteq \mathsf{Pref}(X)$ .
We shall need the following \cite[Lemma 7.3]{JS2014} which we state using the language introduced here.

\begin{lemma}\label{lem:spielberg} 
Let $C$ be a countable, finitely aligned conical category.
Let $A$ be a prefilter in $C$ and let $x$ be any element which is dependent on every element of $A$.
Then there is a prefilter $B$ such that $A \subseteq B$ and $x \in B$.
\end{lemma}

The following is simply a version of \cite[Remark 7.2]{JS2014}.

\begin{lemma}\label{lem:lois} Let $C$ be a category.
If $A$ is a prefilter then $\mathsf{Pref}(A)$ is a filter.
\end{lemma}
\begin{proof} Let $x,y \in \mathsf{Pref}(A)$.
Then, by definition, there exist $a,b \in A$ such that $a = xu$ and $b = yv$ for some $u,v \in C$.
Since, by assimption, $A$ is a prefilter we can find $p,q \in C$ such that $ap = bq \in A$.
Thus $x(up) = y(vq) \in A \subseteq \mathsf{Pref}(A)$. 
We have therefore proved that $\mathsf{Pref}(A)$ is a prefilter.
It is a filter by virtue of its very construction.
\end{proof}

The proof of the following is by induction.

\begin{lemma}\label{lem:anne} Let $A$ be a filter in the category $C$.
Then for any finite non-empty subset $\{a_{1}, \ldots, a_{m}\} \subseteq A$
there  are elements $u_{1}, \ldots, u_{m}$ such that
$a_{1}u_{1} = \ldots = a_{m}u_{m} \in A$.
\end{lemma}

\begin{lemma}\label{lem:filter-property} Let $C$ be a finitely aligned category.
Let $A$ be a filter in $C$ and let $a,b \in A$.
Suppose that $\varnothing \neq aC \cap bC = \{c_{1}, \ldots, c_{n}\}C$.
Then $c_{i} \in A$ for some $i$.
\end{lemma}
\begin{proof} Let $A$ be a filter where $a,b \in A$.
Then $a$ and $b$ are dependent.
Thus, by definition, there exist $u,v \in C$ such that $au = bv$.
Put $z = au = bv$.
Then $z \in aC \cap bC$.
It follows that $z = c_{i}p$ for some $i$ and some $p \in C$.
But $z \in A$ and $A$ is a filter and so $c_{i} \in A$.
\end{proof}

The following lemma is used below.

\begin{lemma}\label{lem:nsw} Let $S$ be a distributive inverse semigroup.
Let $\mathcal{B} \subseteq S$ be such that every element of $S$ is a finite join of elements of $\mathcal{B}$
and if $a \leq \bigvee_{i=1}^{m} a_{i}$ where $a,a_{i} \in \mathcal{B}$ then $a \leq a_{i}$ for some $i$.
Let $A$ be a proper filter such that for each $a \in A$, where $a \in \mathcal{B}$, where
$\{a_{1}, \dots, a_{m}\}$ is a tight cover of  $a$,
and  where $a_{1}, \ldots, a_{m} \in \mathcal{B}$, implies that $a_{i} \in A$ for some $i$.
Then $A$ is a tight filter.
\end{lemma}
\begin{proof} Let $s \in A$ such that $\{b_{1}, \ldots, b_{n}\}$ is a tight cover of $s$.
We shall prove that $b_{i} \in A$ for some $i$.
By Lemma~\ref{lem:blackadder}, we can assume that $b_{1}, \ldots, b_{n} \in \mathcal{B}$.
Let $0 < t \leq s$.
By assumption, we can write $t = \bigvee_{j=1}^{n} a_{j}$ where $a_{j} \in \mathcal{B}$.
In particular, $0 < a_{1} \leq s$.
By assumption, there is some $b_{i}$ such that $a_{1} \wedge b_{i} \neq 0$.
Using \cite[Lemma 2.5]{Lawson2016} again, we deduce that $t \wedge b_{i} \neq 0$.
\end{proof}

We shall now invoke the notion of a large subset introduced in Section~3.1
Let $A \subseteq C$ be a filter.
We say that it is {\em tight} if the following condition holds:
if $a \in A$ and $\{a_{1}, \ldots, a_{m}\}$ is a large subset in $aC$, then $a_{i} \in A$ for some $i$.

\begin{lemma}\label{lem:correspondence} Let $C$ be a finitely aligned cancellative conical category with a finite number of identities.
Then $\{a_{1}a_{1}^{-1}, \ldots, a_{m}a_{m}^{-1}\}$ is a tight cover of $aa^{-1}$ 
if and only if $\{a_{1}, \ldots, a_{m}\}$ is a large subset in $aC$.
\end{lemma}
\begin{proof} There is no loss of generality in working with basic morphisms.
Suppose that  $\{a_{1}, \ldots, a_{m}\}$  is a large subset in $aC$.
Let $0 < bb^{-1} \leq aa^{-1}$.
Then $b = ap$ for some $p$.
Thus $b \in aC$.
It follows that $b$ is comparable with some element $a_{i}$.
Thus $bu = a_{i}v$ for some $u,v \in C$.
Put $z = bu = a_{i}v$.
Then $zz^{-1} \leq bb^{-1}, a_{i}a_{i}^{-1}$ and, of course, is non-zero.
We have proved that 
$\{a_{1}a_{1}^{-1}, \ldots, a_{m}a_{m}^{-1}\} \subseteq (aa^{-1})^{\downarrow}$ is a tight cover. 
Conversely, suppose that $\{a_{1}a_{1}^{-1}, \ldots, a_{m}a_{m}^{-1}\} \subseteq (aa^{-1})^{\downarrow}$ is a tight cover.
Let $b \in aC$.
Then $b = ap$.
Thus $bb^{-1} \leq aa^{-1}$.
It follows that there is $zz^{-1} \leq bb^{-1}, a_{i}a_{i}^{-1}$ for some $i$.
Thus $z = bu = a_{i}v$.
It follows that $\{a_{1}, \ldots, a_{m}\}$ is a large subset of $aC$.
\end{proof}

The following result is key to our proof that $\mathsf{B}(C)$ is Boolean.
we relate diffent kinds of filters in $\mathsf{E}(\mathsf{R}(C))$ to different kinds of filters in $C$.
By Proposition~\ref{prop:idempotents}, it is enough for us to regard filters inside the semilattice of idempotents.

\begin{proposition}\label{prop:mo} Let $C$ be a strongly finitely aligned cancellative conical category with a finite number of identities.
Given a filter $A \subseteq C$, define
$$
\mathcal{P}(A) = \{xx^{-1} \colon x \in A\}^{\uparrow} \cap \mathsf{E}(\mathsf{R}(C)).
$$
The inverse of $\mathcal{P}$ is given by
$$
\mathcal{F}(P) = \{x \in C \colon xx^{-1} \in P \}
$$
for each prime filter $P$ in $\mathsf{E}(\mathsf{R}(C))$.
Then $\mathcal{P}$ determines a bijective correspondence between:
\begin{itemize}
\item Filters in $C$ and prime filters in $\mathsf{E}(\mathsf{R}(C))$.
\item Tight filters in $C$ and tight filters in  $\mathsf{E}(\mathsf{R}(C))$.
\item Maximal filters in $C$ and ultrafilters in  $\mathsf{E}(\mathsf{R}(C))$.
\end{itemize}
\end{proposition}
\begin{proof} {\em With each filter in $C$, we associate a prime filter in $\mathsf{E}(\mathsf{R}(C))$.}
Let $A \subseteq C$ be a filter in $C$. 
We claim that  $\mathcal{P}(A)$ is a proper filter in $\mathsf{E}(\mathsf{R}(S))$.
Let $e, f \in \mathcal{P}(A)$.
Then there exist $x,y \in A$ such that 
$xx^{-1} \leq e$ and $yy^{-1} \leq f$.
However, 
because of Lemma~\ref{lem:key-property}, whenever we have a relation $xx^{-1} \leq \bigvee_{i=1}^{m} a_{i}a_{i}^{-1}$ then,
in fact, $xx^{-1} \leq a_{i}a_{i}^{-1}$ for some $i$.
We may therefore assume, without loss of generality, that $e = aa^{-1}$ and $f = bb^{-1}$ for some
$aa^{-1}, bb^{-1} \in \mathcal{P}(A)$.
It follows that $x = ap$ and $y = bq$ for some $p,q \in C$.
But $A$ is a filter and $a,b \in A$.
By assumption,  $aC \cap bC \cap A \neq \varnothing$.
Thus $au = bv \in A$ for some $u,v \in C$.
Put $z = au = bv$.
Then $zz^{-1} \in \mathcal{P}(C)$ and $zz^{-1} \leq aa^{-1}, bb^{-1}$
and so $zz^{-1} \leq e,f$.
It is now clear that  $\mathcal{P}(A)$ is a filter in $\mathsf{E}(\mathsf{R}(S))$,
and it is a proper filter by construction.
We prove that $\mathcal{P}(A)$ is, in fact, a prime filter.
Suppose that $\bigvee_{i=1}^{m} x_{i}x_{i}^{-1} \in  \mathcal{P}(A)$.
Then $xx^{-1} \leq \bigvee_{i=1}^{m} x_{i}x_{i}^{-1}$ for some $x \in A$.
Thus by Lemma~\ref{lem:key-property}, we have that $xx^{-1} \leq x_{i}x_{i}^{-1}$ for some $i$.
Hence $x = x_{i}p$ for some $p \in C$.
Since $x \in A$ it follows that $x_{i} \in A$ and so $x_{i}x_{i}^{-1} \in  \mathcal{P}(A)$.
{\em With each prime filter in $\mathsf{E}(\mathsf{R}(C))$, we associate a filter in $C$.}
Let $P$ be a prime filter in $\mathsf{E}(\mathsf{R}(C))$.
Define $\mathcal{F}(P) = \{ x \in C \colon xx^{-1} \in P\}$.
Observe that we need $P$ to be a prime filter, in order that $\mathcal{F}(P)$ be non-empty
since every idempotent of $\mathsf{P}(C)$ is of the form $\bigvee_{i=1}^{m} x_{i}x_{i}^{-1}$.
Let $x,y \in \mathcal{F}(P)$.
Then $xx^{-1}, yy^{-1} \in P$.
There is an element of $P$ below these two elements;
thus $\bigvee_{i=1}^{m} x_{i}x_{i}^{-1} \in P$ and  $\bigvee_{i=1}^{m} x_{i}x_{i}^{-1} \leq xx^{-1},yy^{-1}$.
Because $P$ is a prime filter, we must have $x_{i}x_{i}^{-1} \in P$ for some $i$.
But $x_{i}x_{i}^{-1} \in P$ and $x_{i}x_{i}^{-1} \leq xx^{-1},yy^{-1}$.
It follows that $x_{i} = xp$ and $x_{i} = ys$ for some $p,s \in C$.
It follows that $xp = ys \in \mathcal{F}(P)$.
we have therefore shown that $\mathcal{F}(P)$ is a prefilter.
We now prove that it is a filter.
Suppose that $x \in  \mathcal{F}(P)$ and $x = yu$.
Then $xx^{-1} \leq yy^{-1}$.
But $P$ is a filter and so $yy^{-1} \in P$ and so $y \in \in \mathcal{F}(P)$.
We have therefore proved that $\mathcal{F}(P)$ is a filter in $C$.
{\em The maps $A \mapsto \mathcal{P}(A)$ and $P \mapsto \mathcal{F}(P)$
are mutually inverse and order preserving.}
It is evident that these two constructions are order preserving.
We now prove that they are mutually inverse.
Clearly, $A \subseteq \mathcal{F}(\mathcal{P}(A))$.
Let $x \in \mathcal{F}(\mathcal{P}(A))$.
Then $yy^{-1} \leq xx^{-1}$ where $y \in A$.
Then $y = xp$.
Thus $x \in A$.
It follows that $A = \mathcal{F}(\mathcal{P}(A))$.
Clearly, $\mathcal{P}(\mathcal{F}(P)) \subseteq P$.
Let $\bigvee_{i=1}^{m} x_{i}x_{i}^{-1} \in P$.
Then $x_{i}x_{i}^{-1} \in P$ for some $i$.
Thus $x_{i} \in \mathcal{F}(P)$ and so $\bigvee_{i=1}^{m} x_{i}x_{i}^{-1} \in \mathcal{P}(\mathcal{F}(P))$.
It follows that $\mathcal{P}(\mathcal{F}(P)) = P$.

We have therefore established an order isomorphism between the set of
filters in $C$ and the set of prime filters in  $\mathsf{E}(\mathsf{R}(C))$.

Suppose, now, that $A$ is a maximal filter in $C$.
We claim that $\mathcal{P}(A)$ is an ultrafilter in $\mathsf{E}(\mathsf{R}(C))$.
There are two ways to prove this.
First, we use Zorn's lemma.
By Lemma~\ref{lem:proper-filters-uf}, we can suppose that  
$\mathcal{P}(A) \subseteq U$ where $U$ is an ultrafilter in $\mathsf{E}(\mathsf{R}(C))$.
But ultrafilters are prime by Lemma~\ref{lem:beverly}.
Thus we may apply $\mathcal{F}$ to $U$ and we get
$A \subseteq \mathcal{F}(U)$.
However, we are assuming that $A$ is a maximal filter and $\mathcal{F}(U)$ is a filter.
It follows that $A = \mathcal{F}(U)$.
Now apply $\mathcal{P}$ to both sides to deduce that  $\mathcal{P}(A) = U$.
Our second proof uses Lemma~\ref{lem:uf-exel};
we shall prove that $\mathcal{P}(A)$ is an ultrafilter using this result.
Suppose that $e = \left( \bigvee_{i=1}^{m} x_{i}x_{i}^{-1} \right) \wedge \mathcal{P}(A) \neq 0$.
Assume first that for each $x_{i}x_{i}^{-1}$ where $1 \leq i \leq m$ there is $a_{i} \in A$
such that $x_{i}x_{i}^{-1} \wedge a_{i}a_{i}^{-1} = 0$.
Then $x_{i}C \cap a_{i}C = \varnothing$.
By assumption, $a_{1}, \ldots, a_{m} \in A$.
But $A$ is a filter in $C$.
Thus, by Lemma~\ref{lem:anne}, there exist elements $u_{1},\ldots, u_{m} \in C$
such that $z = a_{1}u_{1} = \ldots = a_{m}u_{m} \in A$.
Thus $zz^{-1} \in \mathcal{P}(A)$.
It follows that $zC \cap x_{i}C \neq \varnothing$ for some $i$.
But then $a_{i}u_{i}C \cap x_{i}C \neq \varnothing$  which is a contradiction.
There is therefore an $x_{i}$ which is dependent on every element of $A$.
But $A$ is a maximal filter.
Thus $x_{i} \in A$ by Lemma~\ref{lem:spielberg}.
It follows that $x_{i}x_{i}^{-1} \in  \mathcal{P}(A)$ and so $e \in  \mathcal{P}(A)$.
We deduce that $\mathcal{P}(A)$ is an ultrafilter.
We now go in the opposite direction.
Suppose that $U$ is an ultrafilter in $\mathsf{E}(\mathsf{R}(C))$.
then $\mathcal{F}(U)$ is a filter in $C$.
We claim that it is maximal.
Suppose that $\mathcal{F}(U) \subseteq A$, where $A$ is a filter in $C$.
Then $U \subseteq \mathcal{P}(A)$.
But $U$ is an ultrafilter.
It follows that $U = \mathcal{P}(A)$.
Now apply $\mathcal{F}$ to both sides.
We get that $\mathcal{F}(U) = A$,
which proves that $\mathcal{F}(U)$ is a maximal filter.

We conclude by showing that under this correspondence tight filters correspond to tight filters.
We shall use Lemma~\ref{lem:nsw}.
Suppose now that $A$ is a tight filter in $C$.
We prove that $\mathcal{P}(A)$ is a tight filter in $\mathsf{E}(\mathsf{R}(C))$.
Let $\{a_{1}a_{1}^{-1}, \ldots, a_{m}a_{m}^{-1}\}$ be a tight cover of $xx^{-1}$ where $xx^{-1} \in \mathcal{P}(A)$.
Then $\{a_{1}, \ldots, a_{m}\}$ is large in $xC$.
By assumption, $a_{i} \in A$ for some $i$.
Thus $a_{i}a_{i}^{-1} \in \mathcal{P}(A)$ for some $i$.
It follows that  $\mathcal{P}(A)$ is a tight filter in $\mathsf{E}(\mathsf{R}(C))$.
We now prove the converse.
Let  $P$ be a tight filter in $\mathsf{E}(\mathsf{R}(C))$.
We prove that $\mathcal{F}(P)$ is a tight filter in $C$.
Let $x \in \mathcal{F}(P)$ and suppose that $\{a_{1}, \ldots, a_{m}\}$ is large in $xC$.
Then by Lemma~\ref{lem:correspondence}, we have that
$\{a_{1}a_{1}^{-1}, \ldots, a_{m}a_{m}^{-1}\}$ is a tight cover of $xx^{-1}$.
But $xx^{-1} \in P$ and $P$ is a tight filter and so $a_{i}a_{i}^{-1} \in P$ for some $i$.
It follows that $a_{i} \in \mathsf{F}(P)$ for some $i$, as required.
\end{proof}

Let $C$ be a higher rank graph.
By Theorem~\ref{them:seven},
to prove that $\mathsf{B}(C)$ is Boolean, we have to prove that every tight filter in $\mathsf{R}(C)$ is an ultrafilter.
By Proposition~\ref{prop:idempotents}, to prove this we need to prove that every tight filter in
$\mathsf{E}(\mathsf{R}(C))$ is an ultrafilter in $\mathsf{E}(\mathsf{R}(C))$.
To do this, by Proposition~\ref{prop:mo}, we need to prove that every tight filter in $C$ is a maximal filter in $C$.

To that end, we make the following definition.
A subset $A \subseteq C$ is said to be {\em good}\footnote{For want of a better word.} if it has the following two properties:
\begin{enumerate}
\item Any two elements of $A$ are dependent.
\item For each $\mathbf{m} \in \mathbb{N}^{k}$
there exists $a \in A$ such that $d(a) = \mathbf{m}$.
\end{enumerate}

\begin{remark}\label{rem:good-properties}
{\em If $A$ is a good subset of $C$ then there is an identity $e$ in $C$ such that
$A \subseteq eC$. This follows from the fact that any two elements in $A$ are dependent.
In fact, $e \in A$ since $e$ is the unique element of $A$ such that $d(e) = \mathbf{0}$.
In addition, we have the following.
If $a,b \in C$ and $d(a) = d(b)$ then, in fact, $a = b$ since,
being dependent, we have that $au = bv$ for some $u,v \in C$ and then we apply
Lemma~\ref{lem:levi}.}
\end{remark}

\begin{lemma}\label{lem:good-filter} Let $C$ be a finitely aligned $k$-graph which has a finite number of identities is row finite and has no sources.
Then good subsets are the same thing as tight filters.
\end{lemma}
\begin{proof} 
{\em Good subsets are tight filters. }We begin by proving that good subsets are filters.
Let $A$ be a good subset of $C$ and let $x,y \in A$.
Because $x$ and $y$ are dependent, we can find $u,v \in C$ such that  $xu = yv$.
Put $z = xu = yv$.
Let $z' \in A$ such that $d(z') = d(z)$.
Since $z',x \in A$ we have that $z's = xt$ for some $s,t \in C$.
Now, $d(z') = d(z) = d(x) + d(u)$ thus $d(z') \geq d(x)$.
It follows by  Lemma~\ref{lem:levi} that $z' = xp$ for some $p \in C$.
Likewise, $z' = yq$ for some $q \in C$.
Thus $z' = xu = yq$.
It follows that $z' \in xC \cap yC \cap A$.
Let $x = yz$ where $x \in A$.
Let $y' \in A$ be the unique element such that $d(y') = d(y)$.
Since $x,y' \in A$ we have that $xu = y'v$ for some $u,v \in C$.
But then $yzu = y'v$.
However, $d(y) = d(y')$, thus by Lemma~\ref{lem:levi}, it follows that $y = y'$ and so $y \in A$, as claimed.
We have therefore proved that every good subset is a filter.
We can now prove that good subsets are tight filters.
Let $A$ be a good subset.
Let $a \in A$ and suppose that $\{a_{1}, \ldots, a_{m}\}$ is large in $aC$.
Observe that $d(a_{i}) \geq d(a)$ for all $i$
since $a_{i} = ap_{i}$ for some $p_{i} \in C$.
Put $\mathbf{m} = \bigvee_{i=1}^{m} d(a_{i})$.
By assumption, there is a unique $z \in A$ such that $d(z) = \mathbf{m}$.
But $d(z) \geq d(a)$ and, since $a,z \in A$, they are comparable.
It follows by Lemma~\ref{lem:levi} that $z = as$ for some $s \in C$.
By assumption, $z$ is comparable with some element $a_{i}$.
But $d(z) \geq d(a_{i})$.
Thus $z = a_{i}t$ for some $t$ by Lemma~\ref{lem:levi}.
But $A$ is a filter, $z \in A$ and so $a_{i} \in A$, as required.
{\em Tight filters are good subsets. }Let $A$ be a tight filter.
We prove that it is a good subset.
We only need to prove one thing.
Let $\mathbf{m} \in \mathbb{N}^{k}$ be arbitrary.
Let $a \in A$ be arbitrary.
If $d(a) \geq \mathbf{m}$ then $a = ya'$ where $d(y) = \mathbf{m}$.
But $A$ is a filter, and so $y \in A$ and we are done.
In what follows we may therefore assume that $d(a) \ngeq \mathbf{m}$.
Put $\mathbf{n} = d(a) \vee \mathbf{m}$,
and so $\mathbf{n} > d(a)$.
Let $p_{1}, \ldots, p_{s} \in C$ be all the elements such that
$d(ap_{i}) = \mathbf{n}$.
(We assume that there are only finitely many
which is fine since they all have the same degree $\mathbf{n} - \mathbf{m}$.)
We shall prove that $\{ap_{1}, \ldots, ap_{s}\}$ is large in $aC$.
The elements $p_{1}, \ldots, p_{s}$ have the same range, $e$ say,
and they are all the elements in $eC$ with degree $\mathbf{n} - \mathbf{m}$.
Let $z \in aC$ be arbitrary.
Then $z = ap$ for some $p \in C$.
Observe that $p \in eC$.
Choose $u$ such that $d(pu) \geq \mathbf{n} - \mathbf{m}$.
Then $pu = p_{k}v$ for some $v \in C$ and some $k$.
Thus $apu = ap_{k}v$ and so $zu = ap_{k}v$.
We have proved that $z$ is dependent on $ap_{k}$. 
Thus $\{ap_{1}, \ldots, ap_{s}\}$ is large in $aC$.
It follows that $ap_{j} \in A$ for some $j$ since $A$ is a tight filter.
But $d(ap_{j}) = \mathbf{n} = \mathbf{m} + (\mathbf{n} - \mathbf{m})$.
Thus $A$ contains an element of degree $\mathbf{m}$ by the (UFP) and the properties of filters.
\end{proof}

\begin{lemma}\label{lem:good-filter-more} 
Let $C$ be a finitely aligned $k$-graph which has a finite number of identities is row finite and has no sources.
Then good subsets are the same thing as maximal filters.
\end{lemma}
\begin{proof} {\em Good subsets are maximal filters. }Let $A$ be a good subset and suppose that $A \subseteq B$ where $B$ is a filter.
Let $b \in B$.
Then, by assumption, there exists $a \in A$ such that $d(a) = d(b)$.
But $a,b \in B$ and $B$ is a filter and so $a$ and $b$ are comparable.
It follows by Lemma~\ref{lem:levi} that $a = b$.
Thus $b \in A$.
It follows that $A = B$ and so $A$ is maximal.
{\em Maximal filters are good subsets. }
We prove this result directly although it follows from Proposition~\ref{prop:mo}.
In fact, we prove that maximal filters are tight filters.
Let $A$ be a maximal filter.
Let $a \in A$ and suppose that $\{a_{1}, \ldots, a_{m}\}$ is large in $aC$.
For the sake of argument, suppose that none of $a_{1}, \ldots a_{m}$ belong to $A$.
We can actually assume that none of these elements is dependent on all the elements of $A$.
For, suppose that $a_{i}$ is dependent on every element of $A$.
Then, by Lemma~\ref{lem:spielberg}, there would be a filter $B$ that contains $A$ as a subset and $a_{i}$ as an element.
But $A$ is a maximal filter.
This implies that $a_{i} \in A$. which contradicts our assumption.
It follows that for each $a_{i} \notin A$, there is some $b_{i} \in A$
so that $a_{i}C \cap b_{i}C = \varnothing$.
Now, $a,b_{1}, \ldots, b_{m} \in A$.
Thus by Lemma~\ref{lem:anne},
we can find elements $u,u_{1}, \ldots, u_{m}$ such that
$au = b_{1}u_{1} = b_{2}u_{2} = \ldots = b_{m}u_{m} \in A$.
Put $z = au$.
We now use the fact that  $\{a_{1}, \ldots, a_{m}\}$ is large in $aC$.
It follows that $zs = a_{i}t$ for some $s,t$.
Thus $b_{i}u_{i}s = a_{i}t$.
But this says that $b_{i}$ and $a_{i}$ are dependent which is a contradiction.
It follows that some $a_{j} \in A$ and so $A$ is a tight filter.
\end{proof}

By
Theorem~\ref{them:seven},
Lemma~\ref{lem:good-filter},
Lemma~\ref{lem:good-filter-more},
and Proposition~\ref{prop:idempotents}
we deduce that $\mathsf{R}(C)/{\equiv}$ is a Boolean inverse monoid.
If we take into account Theorem~\ref{them:israel}, then we have proved the following.

\begin{theorem}\label{them:first-main} Let $C$ be a higher rank graph with a finite number of identities which has no sources and is row finite.
Then $\mathsf{B}(C)$ is a Boolean inverse monoid whose group of units is isomorphic to $\mathscr{G}(C)$.
\end{theorem}

\begin{examples} \mbox{}
{\em 
\begin{enumerate}

\item The monoid $\mathbb{N}$ under addition is isomorphic to the free monoid on one generator.
This monoid does not satisfy the conditions of \cite{LV2019b} but does satisfy the conditions of our paper.
The monoid $\mathsf{R}(\mathbb{N})$ is the bicyclic monoid with an adjoined zero.
It is a distributive inverse monoid since
the only way for a finite set of elements of the bicyclic monoid to be compatible is 
if they are $\leq$-related.
Thus, there will always be a largest element in any finite compatible subset
and this will be the join of the compatible subset.
The non-zero elements of our monoid are all essential and so the group is simply the 
maximum group homomorphic image of the bicyclic monoid which is $\mathbb{Z}$.
Observe that in this case, the Boolean inverse monoid is simply a group with an adjoined zero.
Thus the Boolean inverse monoids we obtain need not be particularly interesting.
For the bicyclic monoid and its properties, see \cite[Section 3.4, Section 5.4]{Lawson1998}. 

\item We now consider the free monoid on two generators.
Put $A = \{a,b\}$.
Then we want to describe the group associated with $A^{\ast}$.
This is the Thompson group $V$ as described in \cite{Lawson2007, Lawson2007b, Lawson2021, LS}.
It is the group of units of what we term the {\em Cuntz inverse monoid} $C_{2}$;
this is obtained by taking the distributive completion of the polycyclic monoid on two generators
and then factoring out by the congruence $\equiv$.

\item We can generalize the above two examples as follows.
Let $G$ be a finite directed graph where the in-degree if every vertex is at least 1.
Let $C$ be the free category generated by $G$.
The key property of free categories, like $C$, is that they are {\em rigid}:
this means that if $xC \cap yC \neq \varnothing$ then $xC \subseteq yC$ or $yC \subseteq xC$.
We shall use $d(x)$ to denote the length of the path $x$.
We can use a simple argument to prove that tight filters in $C$ are always maximal filters.
The maximal filters in $C$ are precisely the infinite filters (as can easily be proven).
Now, let $A$ be a tight filter.
Suppose that $x \in A$.
Let the identity at the domain of $x$ be $e$.
Let the edges into $e$ be $\{a_{1}, \ldots, a_{n}\}$, 
where, by assumption,  $n \geq 1$.
We claim that $\{xa_{1}, \ldots, xa_{n}\}$ is a large subset of $xC$.
Let $z \in xC$.
Without loss of generality,
we may assume that $z = xp$, where $p$ has length at least 1.
Clearly, $p$ must begin with one of the edges $a_{1}, \ldots, a_{n}$.
Without loss of generality, suppose that $p = a_{1}p'$ where $p' \in C$.
It follows that $z = xa_{1}p'$ and so is dependent on $xa_{1}$.
But, we have assumed that $A$ is a tight filter.
It follows that if $x \in A$ then $xa_{1} \in A$.
We deduce that every tight filter is infinite.
Thus every tight filter is a maximal filter.
\end{enumerate}

}
\end{examples}

\section{Properties of the Boolean inverse monoid $\mathsf{B}(C)$}

In this section, we shall determine when $\mathsf{B}(C)$ is simple as a Boolean inverse monoid.

\subsection{Aperiodicity}

Let $C$ be a $k$-graph.
We say that $C$ is {\em aperiodic}, following \cite{LS2010}, if for all $a,b \in C$ such that $a \neq b$ and $\mathbf{d}(a) = \mathbf{d}(b)$ there exists
an element $u \in C$ such that $au$ and $bu$ are independent.\\

We shall expore the algebraic meaning of aperiodicity in this section.
Let $S$ be a Boolean inverse monoid.
There is an action of $\mathsf{U}(S)$, the group of units of $S$, on $\mathsf{E}(S)$, the Boolean algebra of idempotents of $S$, given by $e \mapsto geg^{-1}$.
We call this the {\em natural action}.
The following was proved as \cite[Proposition~3.1]{Lawson2017}.

\begin{lemma}\label{lem:faithful} Let $S$ be a Boolean inverse monoid.
Then the natural action is faithful if and only if $S$ is fundamental.
\end{lemma}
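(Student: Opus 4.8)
The plan is to prove the two implications of the biconditional separately, invoking the Boolean structure of $S$ only in the harder direction; everything hinges on the identity $geg^{-1} = e$ being equivalent to $ge = eg$ for idempotents $e$.

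Suppose first that $S$ is fundamental, and let $g \in \mathsf{U}(S)$ lie in the kernel of the natural action, so that $geg^{-1} = e$, equivalently $ge = eg$, for every $e \in \mathsf{E}(S)$. Then $g$ commutes with all idempotents, so fundamentality forces $g \in \mathsf{E}(S)$; but the only idempotent unit of a monoid is the identity, so $g = 1$ and the action is faithful. This direction needs nothing beyond the definitions.

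For the converse, assume the natural action is faithful and let $s \in S$ commute with every idempotent; the goal is to show $s \in \mathsf{E}(S)$. The first step is to observe that $\mathbf{d}(s) = \mathbf{r}(s)$: applying the commuting hypothesis to the idempotent $\mathbf{d}(s)$ gives $s = s\mathbf{d}(s) = \mathbf{d}(s)s$, whence $\mathbf{r}(s) = ss^{-1} = \mathbf{d}(s)ss^{-1}\mathbf{d}(s) = \mathbf{r}(s)\mathbf{d}(s)$, using that idempotents commute, so $\mathbf{r}(s) \le \mathbf{d}(s)$; the symmetric argument applied to $\mathbf{r}(s)$ gives $\mathbf{d}(s) \le \mathbf{r}(s)$. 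Write $e = \mathbf{d}(s) = \mathbf{r}(s)$ and let $\bar{e}$ be its complement in the Boolean algebra $\mathsf{E}(S)$. Since $\mathbf{d}(s)\bar{e} = 0 = \mathbf{r}(s)\bar{e}$ we have $s \perp \bar{e}$, so the orthogonal join $g := s \vee \bar{e}$ exists and satisfies $\mathbf{d}(g) = e \vee \bar{e} = 1 = \mathbf{r}(g)$; hence $g \in \mathsf{U}(S)$ with $g^{-1} = s^{-1} \vee \bar{e}$. The crux is to check that $g$ acts trivially on $\mathsf{E}(S)$: for $f \in \mathsf{E}(S)$, distributing the product over the join yields $gfg^{-1} = sfs^{-1} \vee sf\bar{e} \vee \bar{e}fs^{-1} \vee \bar{e}f\bar{e}$; the two middle terms vanish because $sf\bar{e} = sef\bar{e} = sf(e\bar{e}) = 0$ and likewise $\bar{e}fs^{-1} = 0$, we have $\bar{e}f\bar{e} = f\bar{e}$, and $sfs^{-1} = fss^{-1} = fe$ since $s$ commutes with $f$; so $gfg^{-1} = fe \vee f\bar{e} = f(e \vee \bar{e}) = f$. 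Faithfulness now forces $g = 1$, so $s \le s \vee \bar{e} = 1$, and an element below the identity of an inverse monoid is an idempotent. Hence $s \in \mathsf{E}(S)$ and $S$ is fundamental.

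The main obstacle is the converse implication, and within it the observation that commuting with all idempotents already forces $\mathbf{d}(s) = \mathbf{r}(s)$, so that $s$ can be completed to a genuine unit of $S$ by joining on the complementary idempotent $\bar{e}$; the remaining verification that this unit acts as the identity on $\mathsf{E}(S)$ is elementary but slightly fiddly, requiring care with orthogonal joins, distributivity of multiplication over joins, and the commutativity of idempotents. Everything else is routine inverse-semigroup manipulation.
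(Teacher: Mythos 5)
Your proof is correct. Note that the paper does not actually prove this lemma itself --- it simply cites Proposition~3.1 of \cite{Lawson2017} --- and your argument is essentially the standard proof of that cited result: the forward direction is immediate from the definitions, and for the converse you extend an element $s$ commuting with all idempotents to a genuine unit $g = s \vee \bar{e}$ by first checking $\mathbf{d}(s)=\mathbf{r}(s)=e$ and adjoining the complementary idempotent, verify that $g$ fixes every idempotent under conjugation, and invoke faithfulness. All the ingredients you use (existence of orthogonal joins, distributivity of multiplication over compatible joins, existence of complements in $\mathsf{E}(S)$) are available from the paper's definition of a Boolean inverse monoid, and each computation checks out.
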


\begin{remark}{\em The importance of Lemma~\ref{lem:faithful} is that it guarantees that
the group of units has a faithful representation in the group of automorphisms of the Boolean algebra.}
\end{remark}

The following is a souped up version of \cite[Lemma~3.2]{Lawson2017}.
In the proof of the lemma below, we use the result that in a Boolean algebra
$e \leq f$ if and only if $e\bar{f} = 0$.

\begin{lemma}\label{lem:plum}
Let $S$ be a Boolean inverse monoid.
Let $e$ be an idempotent and $a$ an element such that $ea \neq ae$.
Then there is a non-zero idempotent $f \leq e$ such that $f \perp afa^{-1}$.
\end{lemma}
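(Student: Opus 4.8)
The plan is to exploit the fact that in a Boolean inverse monoid every element decomposes (via its fixed-point part and the complement) and to reduce the problem to the idempotent $e$ itself. First I would invoke Lemma~\ref{lem:plum}'s hypothesis $ea \neq ae$ to locate a ``witness'': since $ea$ and $ae$ are distinct, and both are $\le a$ (in the natural partial order, as $ea, ae \le a$), there must be a non-zero idempotent below $\mathbf{d}(ea) \vee \mathbf{d}(ae)$ on which the two elements disagree. More precisely, I would consider the idempotent $g = \mathbf{d}(ea) \wedge \overline{\mathbf{d}(ae)}$ together with its counterpart $\mathbf{d}(ae)\wedge\overline{\mathbf{d}(ea)}$; at least one of these is non-zero, for otherwise $\mathbf{d}(ea) = \mathbf{d}(ae)$, and since $ea \le a$ and $ae \le a$ share the same domain idempotent they would be equal, contradicting $ea\neq ae$. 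Replacing $a$ by $a^{-1}$ if necessary (which swaps $ea$ and $ae$ up to inverses and preserves the non-commutation), we may assume $g := \mathbf{d}(ea)\wedge\overline{\mathbf{d}(ae)} \neq 0$.

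Next I would set $f := a g a^{-1}$... no: rather, I want $f \le e$. So instead take $f' := \mathbf{r}(ag)$ and then cut down: note $ag \le ea$ since $g \le \mathbf{d}(ea)$, so $\mathbf{r}(ag) \le \mathbf{r}(ea) = e\mathbf{r}(ea) \le e$; thus $f := \mathbf{r}(ag)$ is an idempotent with $f \le e$. It remains to check $f \perp a f a^{-1}$, i.e. $f \cdot afa^{-1} = 0$, equivalently (conjugating) that $\mathbf{d}(fa) \perp \mathbf{d}(a f a^{-1}\cdot\text{something})$; the cleaner route is to show directly that $\mathbf{d}(af)$ is orthogonal to $f$. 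Here is where $g \le \overline{\mathbf{d}(ae)}$ does the work: $g \mathbf{d}(ae) = 0$ means, after left-multiplying by $a$ and using that $af$ restricted appropriately lands back in the region controlled by $g$, that the element $af$ has domain disjoint from where it would have to be for $af = fa$. I would track through: $\mathbf{d}(af) = a^{-1}fa$, and $f = \mathbf{r}(ag) = agg^{-1}a^{-1}=aga^{-1}$ (using $g$ idempotent), so $\mathbf{d}(af) = a^{-1}(aga^{-1})a = (a^{-1}a)g(a^{-1}a)$; then compute $f\cdot\mathbf{d}(af)$ and show it equals $aga^{-1}\cdot a^{-1}aga^{-1}a$, which vanishes precisely because $g \le \overline{\mathbf{d}(ae)}$ forces the relevant product of conjugates of $g$ to be zero. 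The Boolean-algebra identity ``$e\le f \iff e\bar f = 0$'' flagged in the statement is exactly what converts the inequality $g \le \overline{\mathbf{d}(ae)}$ into the orthogonality $g\mathbf{d}(ae)=0$ and back again at the end.

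The main obstacle I anticipate is the bookkeeping in the last paragraph: keeping straight which conjugate of $g$ appears where, and verifying that the orthogonality $f \perp afa^{-1}$ really follows from $g\mathbf{d}(ae) = 0$ rather than from some stronger statement I have not established. The key algebraic fact making this go through is that $ea = ae$ on an idempotent $h \le e$ is equivalent to $ah a^{-1}$ being comparable with (indeed equal to the appropriate restriction of) $h$ again; so disagreement of $ea$ and $ae$ localizes to an idempotent $g$ where $aga^{-1}$ is forced \emph{off} its ``expected'' position, and being in a Boolean inverse monoid we may take complements to turn ``not below'' into ``orthogonal to a piece''. If the direct computation proves unwieldy, the fallback is to pass to the fundamental (Munn) representation guaranteed once we know, via Lemma~\ref{lem:faithful}, how the group of units acts on $\mathsf{E}(S)$, and argue about the clopen sets on which the partial homeomorphism induced by $a$ moves points of the Stone space of $\mathsf{E}(S)$; there $ea\neq ae$ says $a$ does not preserve the clopen set corresponding to $e$ setwise, whence a sub-clopen $f\le e$ is pushed entirely outside itself, which is the topological incarnation of $f\perp afa^{-1}$.
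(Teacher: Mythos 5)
Your strategy is sound and, once unwound, is essentially the paper's own argument in conjugated form: the paper splits into the cases $e\overline{(aea^{-1})}\neq 0$ and $aea^{-1}\overline{e}\neq 0$ (at least one must hold, else $ea=ae$), taking $f=e\overline{(aea^{-1})}$ in the first case and $f=a^{-1}(aea^{-1}\overline{e})a$ in the second; your two candidate idempotents $\mathbf{d}(ea)\wedge\overline{\mathbf{d}(ae)}$ and $\mathbf{d}(ae)\wedge\overline{\mathbf{d}(ea)}$ are precisely the $a$-conjugates of these, and your ``replace $a$ by $a^{-1}$'' step plays the role of the paper's case split (and does transfer correctly: if $f\leq e$ satisfies $f\cdot a^{-1}fa=0$, conjugating by $a$ gives $afa^{-1}\cdot f\cdot aa^{-1}=afa^{-1}\cdot f=0$). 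Your opening reduction is also correct: if both candidates vanish then $\mathbf{d}(ea)=\mathbf{d}(ae)$, and since $ea=a\,\mathbf{d}(ea)$ and $ae=a\,\mathbf{d}(ae)$ are both restrictions of $a$, this forces $ea=ae$.

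However, the concluding verification as written is garbled and would not survive being written out. You need $f\cdot\mathbf{r}(af)=f\cdot afa^{-1}=0$, but you compute $f\cdot\mathbf{d}(af)$ instead, and moreover you assert $\mathbf{d}(af)=a^{-1}fa$, which is actually $\mathbf{d}(fa)$; the displayed product $aga^{-1}\cdot a^{-1}aga^{-1}a$ is not the quantity that must vanish. The correct computation is short and you should replace the hand-waving with it: with $g=\mathbf{d}(ea)\wedge\overline{\mathbf{d}(ae)}$ and $f=\mathbf{r}(ag)=aga^{-1}$, note that $g\leq\mathbf{d}(ea)\leq a^{-1}a$ and $g\perp\mathbf{d}(ae)=a^{-1}a\,e$ together give $ge=g(a^{-1}a)e=g\,\mathbf{d}(ae)=0$; since $f\leq\mathbf{r}(ea)\leq e$ we get $gf=gef=0$; hence
\[
f\cdot afa^{-1}=aga^{-1}\cdot a\,(aga^{-1})\,a^{-1}=a\,(g\,a^{-1}a)\,(aga^{-1})\,a^{-1}=a\,(g\cdot f)\,a^{-1}=0.
\]
You should also record that $f\neq 0$ (if $aga^{-1}=0$ then $g=a^{-1}(ag)=a^{-1}(aga^{-1})a=0$), since the lemma is used to produce an infinitesimal, which is non-zero by definition; the paper checks this explicitly in its second case. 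The fallback via the Munn representation is unnecessary once the displayed computation is in place.
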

\begin{proof} There are two cases.
{\bf Case~1}: Suppose that $e \overline{(aea^{-1})} \neq 0$.
Put $f = e \overline{(aea^{-1})} \leq e$.
Then
$f(afa^{-1}) =  e \overline{(aea^{-1})}a(e \overline{(aea^{-1})})a^{-1}$.
Thus
$$f(afa^{-1})
=  e \overline{(aea^{-1})}a(ea^{-1} a\overline{(aea^{-1})})a^{-1}
=  e \overline{(aea^{-1})}(aea^{-1})a \overline{(aea^{-1})} a^{-1}
= 0,$$
and the result is proved.
{\bf Case~2}: Now, suppose that $e \overline{(aea^{-1})} = 0$.
Then $e \leq aea^{-1}$ and so $ea \leq ae$.
For the sake of a contradiction, suppose that $aea^{-1}\overline{e} = 0$.
Then $aea^{-1} \leq e$ and so $ae \leq ea$.
It follows that $ea = ae$, which is a contradiction.
Thus, in fact,  $aea^{-1}\overline{e} \neq 0$.
Put $h = aea^{-1}\overline{e}$ and $f = a^{-1}ha$.
Suppose that $f = 0$.
Then $aha^{-1} = 0$.
But $aha^{-1} = h \neq 0$.
It follows that $f \neq 0$.
Also,
$f = a^{-1}ha = a^{-1}aea^{-1}\overline{e}a = ea^{-1}\overline{e}a \leq e$.
Finally, by direct computation, $afa^{-1}f = 0$.
\end{proof}

The following is now immediate by the above lemma when we put $b = af$.

\begin{corollary}\label{cor:plum} Suppose that $ae \neq ea$.
Then there is an infinitesimal $b$ such that $b \leq a$ and $b^{-1}b \leq e$.
\end{corollary}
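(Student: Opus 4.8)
The plan is to read the statement directly off Lemma~\ref{lem:plum}. Since $ae\neq ea$ (which is the same condition as $ea\neq ae$), that lemma supplies an idempotent $f\leq e$ with $f\perp afa^{-1}$, and I would set $b:=af$. It then remains to check the three requirements on $b$: that $b\leq a$, that $b^{-1}b\leq e$, and that $b$ is an infinitesimal.

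The first two are pure bookkeeping. Since $f$ is an idempotent, $\mathbf{d}(af)=fa^{-1}af=(a^{-1}a)f$, so $a\cdot\mathbf{d}(af)=af$, i.e. $b=af\leq a$; and $b^{-1}b=\mathbf{d}(af)=(a^{-1}a)f\leq f\leq e$. For infinitesimality I must show $b^{2}=0$ and $b\neq 0$. Because $f$ and $afa^{-1}$ are idempotents, $f\perp afa^{-1}$ says exactly that $fafa^{-1}=0$; right-multiplying by $a$ gives $faf\cdot a^{-1}a=0$, and since $\mathbf{d}(faf)=\mathbf{d}\bigl(f\cdot(af)\bigr)\leq\mathbf{d}(af)=(a^{-1}a)f\leq a^{-1}a$, this forces $faf=faf\cdot\mathbf{d}(faf)\leq faf\cdot a^{-1}a=0$, hence $faf=0$. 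Therefore $b^{2}=afaf=a(faf)=0$. One can equally record infinitesimality through $\mathbf{d}(b)\perp\mathbf{r}(b)$ and \cite[Lemma~2.5]{Lawson2017}, using $\mathbf{d}(b)=(a^{-1}a)f$, $\mathbf{r}(b)=afa^{-1}$ and $(a^{-1}a)f\cdot afa^{-1}=(a^{-1}a)(fafa^{-1})=0$.

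The step I expect to be the real obstacle is establishing $b=af\neq 0$, which is forced on us by the word ``non-zero'' in the definition of an infinitesimal and which is \emph{not} a formal consequence of the bare statement of Lemma~\ref{lem:plum}. I would therefore argue through the two cases of that lemma's proof: in Case~2 the idempotent produced is $f=a^{-1}ha\leq a^{-1}a$, so $b^{-1}b=(a^{-1}a)f=f\neq 0$ directly; in Case~1, where $f=e\,\overline{aea^{-1}}$, one must use the standing hypothesis $ae\neq ea$ to exclude $(a^{-1}a)f=0$ (equivalently $b=0$), or else pass to a smaller choice of $f$ lying below $a^{-1}a$. Once $b\neq 0$ is secured the corollary is immediate, everything else being routine manipulation on top of Lemma~\ref{lem:plum} with $b=af$.
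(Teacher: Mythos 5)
Your construction is precisely the paper's: the entire official proof of Corollary~\ref{cor:plum} is the single remark that the result ``is now immediate by the above lemma when we put $b=af$'', and your verifications that $b=af\leq a$, that $b^{-1}b=(a^{-1}a)f\leq e$ and that $b^{2}=a(faf)=0$ are all correct and are exactly the computations the paper is leaving to the reader. The step you single out as the real obstacle --- that nothing guarantees $af\neq 0$ --- is also exactly the right thing to worry about, and the paper does not address it at all.

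Unfortunately your proposed repair of Case~1 cannot be carried out, because the obstacle is genuine: the corollary is false as stated, not merely under-proved. Take any Boolean inverse monoid containing an infinitesimal $a$ (for instance $S=I(\{1,2\})$ with $a$ the partial bijection of domain $\{1\}$ and image $\{2\}$) and put $e=aa^{-1}$. Then $ea=(aa^{-1})a=a\neq 0$ while $\mathbf{d}(ae)=e(a^{-1}a)=\mathbf{r}(a)\mathbf{d}(a)=0$, so $ae=0\neq ea$ and the hypothesis holds; here $aea^{-1}=0$, so one is in Case~1 with $f=e\,\overline{aea^{-1}}=e$ and $af=ae=0$. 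No smaller choice of $f$ helps: any $b$ with $b\leq a$ and $b^{-1}b\leq e$ satisfies $b^{-1}b\leq (a^{-1}a)(aa^{-1})=0$, hence $b=0$, so there is \emph{no} infinitesimal $b\leq a$ with $b^{-1}b\leq e$ at all. (More generally $b\leq a$ and $b^{-1}b\leq e$ force $b=(ae)(b^{-1}b)$, so $ae\neq 0$ is a necessary condition the hypothesis does not supply; and even assuming $ae\neq 0$ one can cook up similar failures in $I(\{1,2,3\})$.) The good news is that the only use of the corollary, in the proof of Proposition~\ref{prop:infinitesimal}, needs just the weaker conclusion that a non-idempotent element of a fundamental Boolean inverse monoid lies above some infinitesimal, with no constraint tying $\mathbf{d}(b)$ to $e$; in the example above $b=a$ itself does the job. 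So the honest options are to weaken the conclusion to ``there is an infinitesimal $b\leq a$'' (proved by choosing $e$ appropriately, not for an arbitrary non-commuting $e$), or to add the hypothesis that the $f$ supplied by Lemma~\ref{lem:plum} satisfies $af\neq 0$ --- which, as you correctly observe, is automatic in Case~2 since there $f\leq a^{-1}a$, but not in Case~1.
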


\begin{proposition}\label{prop:infinitesimal} Let $S$ be a Boolean inverse monoid.
Then the following are equivalent:
\begin{enumerate}
\item $S$ is fundamental
\item Each non-idempotent element of $S$ is above an infinitesimal.
\end{enumerate}
\end{proposition}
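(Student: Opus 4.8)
The plan is to prove the two implications separately, using the machinery built in Lemma~\ref{lem:plum} and Corollary~\ref{cor:plum} for one direction and Lemma~\ref{lem:faithful} together with a direct argument for the other.

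First I would prove $(1)\Rightarrow(2)$. Suppose $S$ is fundamental and let $a\in S$ be a non-idempotent element. Since $S$ is fundamental, by Lemma~\ref{lem:faithful} the natural action of $\mathsf{U}(S)$ on $\mathsf{E}(S)$ is faithful; more to the point, because $a$ is not an idempotent, $a$ does not commute with every idempotent of $S$ (this is essentially the definition of fundamental, transported to the element $a$ via the idempotent $\mathbf{d}(a)$, or one argues directly that a non-idempotent commuting with all idempotents would contradict fundamentality). Hence there is an idempotent $e$ with $ea\neq ae$. Then Corollary~\ref{cor:plum} immediately supplies an infinitesimal $b$ with $b\leq a$, which is exactly what we want. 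The one point needing care here is justifying that a non-idempotent element of a fundamental inverse semigroup must fail to commute with some idempotent — this is where I would either invoke the definition of fundamental directly (the elements commuting with all idempotents are precisely the idempotents, so a non-idempotent cannot be among them) or, if one wants the element-level statement rather than the monoid-level one, note that $a$ commuting with all idempotents forces $a$ to be in the centralizer of $\mathsf{E}(S)$, hence idempotent.

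For $(2)\Rightarrow(1)$ I would argue contrapositively: suppose $S$ is not fundamental, and produce a non-idempotent element lying above no infinitesimal. If $S$ is not fundamental there is a non-idempotent $a$ commuting with every idempotent of $S$. I claim no infinitesimal lies below such an $a$. Indeed, suppose $b\leq a$ with $b^2=0$, i.e. $\mathbf{d}(b)\perp\mathbf{r}(b)$ (using \cite[Lemma~2.5]{Lawson2017}). Write $f=\mathbf{d}(b)$. Since $a$ commutes with the idempotent $f$ we have $af=fa$, and $b = a f$ (because $b\leq a$ means $b = a\,\mathbf{d}(b) = af$). Then $\mathbf{r}(b) = bb^{-1} = af a^{-1}\cdot \ldots$ — more cleanly, $\mathbf{r}(b) = b b^{-1} = (af)(af)^{-1} = a f f a^{-1} = a f a^{-1} = f a a^{-1} = f\,\mathbf{r}(a)$, using $af=fa$. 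But also $\mathbf{r}(b)\le \mathbf{r}(a)$ forces... and $\mathbf{r}(b)\le f$ as well since $\mathbf{r}(b) = f\mathbf{r}(a)\le f = \mathbf{d}(b)$; combined with $\mathbf{d}(b)\perp\mathbf{r}(b)$ this gives $\mathbf{r}(b) = \mathbf{r}(b)\mathbf{d}(b) = \mathbf{r}(b)f = \mathbf{r}(b)$, hence $\mathbf{d}(b)\mathbf{r}(b)=\mathbf{r}(b)\ne 0$ unless $\mathbf{r}(b)=0$, i.e. $b=0$, contradicting that $b$ is an infinitesimal (which is by definition non-zero). Hence $a$ is a non-idempotent above no infinitesimal, so $(2)$ fails. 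I expect the arithmetic in this direction to be the main obstacle: one must be careful that $b\le a$ really does give $b=a\mathbf{d}(b)$, and then chase the orthogonality condition $\mathbf{d}(b)\perp\mathbf{r}(b)$ through the commutation $af=fa$ to force $b=0$. The key identity driving everything is that an infinitesimal below $a$ would have to have the form $a$ restricted to an idempotent $f$ below its domain, and commutativity of $a$ with $f$ then pins $\mathbf{r}(b)$ below $f=\mathbf{d}(b)$, colliding with the infinitesimal condition.

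Assembling the two implications gives the equivalence. Note the argument uses that $S$ is a monoid only insofar as Lemma~\ref{lem:plum} and Corollary~\ref{cor:plum} are stated for Boolean inverse monoids; the $(2)\Rightarrow(1)$ direction is in fact purely a statement about inverse semigroups with zero.
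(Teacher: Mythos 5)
Your proposal is correct and follows essentially the same route as the paper: the direction $(1)\Rightarrow(2)$ is the paper's argument verbatim (non-idempotent plus fundamental gives a non-commuting idempotent, then Corollary~\ref{cor:plum}), and your contrapositive treatment of $(2)\Rightarrow(1)$ is just a repackaging of the paper's computation, which shows directly that an infinitesimal $b\leq a$ forces $a\mathbf{d}(b)\neq\mathbf{d}(b)a$ via $b=ab^{-1}b$ and $b^{2}=0$. The only cosmetic blemish is the circular-looking line $\mathbf{r}(b)=\mathbf{r}(b)\mathbf{d}(b)=\mathbf{r}(b)f=\mathbf{r}(b)$, which should simply read: since $\mathbf{r}(b)\leq\mathbf{d}(b)$ and $\mathbf{r}(b)\mathbf{d}(b)=0$, we get $\mathbf{r}(b)=0$ and hence $b=0$.
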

\begin{proof} (1)$\Rightarrow$(2).
Let $a$ be a non-idempotent element.
Then, since $S$ is fundamental, there is an idempotent $e$ such that $ae \neq ea$.
Thus, by Corollary~\ref{cor:plum}, there is an infinitesimal $b \leq a$.

(2)$\Rightarrow$(1).
Let $a$ be a non-idempotent element.
Then $b \leq a$ where $b$ is an infinitesimal.
In particular, $b \neq 0$.
Suppose that $a$ commutes with $b^{-1}b$.
Then $ab^{-1}b = b^{-1}ba$.
But $ab^{-1}b = b$ because $b \le a$, and so $b = b^{-1}ba$.
It follows that $b^{2} = ba$ and so $ba = 0$.
But $b = ab^{-1}b = b^{-1}ba = 0$, which is a contradiction.
We have shown that $a$ cannot commute with $b^{-1}b$.
\end{proof}

\begin{lemma}\label{lem:needful} Let $C$ be a $k$-graph.
Then $C$ is aperiodic if and only if each non-idempotent basic morphism $ab^{-1}$ of $\mathsf{R}(C)$ lies above an infinitesimal basic morphism.
\end{lemma}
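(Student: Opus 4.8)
The plan is to recognise this lemma as essentially a translation of aperiodicity into the language of basic morphisms, using the descriptions of the natural partial order and of orthogonality among basic morphisms from Lemma~\ref{lem:oreo}, together with the characterisation that $a$ is an infinitesimal if and only if $\mathbf{d}(a) \perp \mathbf{r}(a)$.

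First I would collect the relevant facts. A basic morphism $ab^{-1}$ (which presupposes $\mathbf{d}(a)=\mathbf{d}(b)$) is never zero, since it is a bijective morphism between the non-empty right ideals $bC$ and $aC$; and by Lemma~\ref{lem:oreo}(1) it is idempotent precisely when $a=b$, so a ``non-idempotent basic morphism'' is exactly one with $a\neq b$ and $\mathbf{d}(a)=\mathbf{d}(b)$. By Lemma~\ref{lem:oreo}(1) again, a basic morphism $xy^{-1}$ satisfies $xy^{-1}\leq ab^{-1}$ if and only if $(x,y)=(as,bs)$ for some $s\in C$. Since $\mathbf{d}(xy^{-1})=yy^{-1}$ and $\mathbf{r}(xy^{-1})=xx^{-1}$, Lemma~\ref{lem:oreo}(2) shows that $xy^{-1}$ is an infinitesimal exactly when $x$ and $y$ are incomparable, i.e. not comparable in the sense of the aperiodicity definition. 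Combining these observations, a non-idempotent basic morphism $ab^{-1}$ lies above an infinitesimal basic morphism if and only if there is an $s\in C$ with $\mathbf{r}(s)=\mathbf{d}(a)=\mathbf{d}(b)$ such that $as$ and $bs$ are not comparable.

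With this equivalence in hand, both directions are immediate. If $C$ is aperiodic and $ab^{-1}$ is a non-idempotent basic morphism, then $a\neq b$ and $\mathbf{d}(a)=\mathbf{d}(b)$, so aperiodicity supplies $u\in C$ with $au$ and $bu$ not comparable; then $(au)(bu)^{-1}$ is a basic morphism (as $\mathbf{d}(au)=\mathbf{d}(u)=\mathbf{d}(bu)$), lies below $ab^{-1}$, and is an infinitesimal. Conversely, given the condition on basic morphisms, take any $a\neq b$ with $\mathbf{d}(a)=\mathbf{d}(b)$; then $ab^{-1}$ is non-idempotent, so it lies above some infinitesimal basic morphism $xy^{-1}$, and writing $(x,y)=(as,bs)$ we find that $as$ and $bs$ are not comparable, so $u=s$ witnesses aperiodicity for the pair $a,b$. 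The only care needed is the routine bookkeeping that the products $au,bu$ (resp. $as,bs$) are defined and that the relevant expressions really are basic morphisms, all of which follows from $\mathbf{r}(u)=\mathbf{d}(a)=\mathbf{d}(b)$; I expect no genuine obstacle here, since the substance is entirely contained in Lemma~\ref{lem:oreo} and the infinitesimal criterion.
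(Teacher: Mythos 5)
Your proposal is correct and follows essentially the same route as the paper: both directions rest on Lemma~\ref{lem:oreo} (the description of the order $xy^{-1}\leq ab^{-1}$ via $(x,y)=(as,bs)$ and the identification of orthogonality of $xx^{-1}$, $yy^{-1}$ with incomparability of $x$ and $y$), combined with the criterion that an element is an infinitesimal exactly when its domain and range idempotents are orthogonal. The preliminary bookkeeping you flag (non-vanishing of basic morphisms, idempotency iff $a=b$, definedness of $au$ and $bu$) is all sound and matches what the paper leaves implicit.
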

\begin{proof} Suppose that $C$ is aperiodic.
Let $ab^{-1}$ be a non-idempotent basic morphism.
Thus $a \neq b$ and $\mathbf{d}(a) = \mathbf{d}(b)$.
By assumption, there is an element $u \in C$ such that $au$ and $bu$ are independent.
Observe that $(au)(bu)^{-1} \leq ab^{-1}$ by Lemma~\ref{lem:oreo}.
To say that $au$ and $bu$ are independent means precisely that $(au)(au)^{-1} \perp (bu)(bu)^{-1}$ by Lemma~\ref{lem:oreo}.
Thus $(au)(bu)^{-1}$ is an infinitesimal and is below $ab^{-1}$.

Let $a \neq b$ and $\mathbf{d}(a) = \mathbf{d}(b)$.
Then $ab^{-1}$ is a non-idempotent basic morphism.
By assumption, there exists an infinitesimal $cd^{-1}$ such that $cd^{-1} \leq ab^{-1}$.
By Lemma~\ref{lem:oreo}, we have that $c = au$ and $d = bu$ for some $u \in C$.
By assumption, $(au)(au)^{-1} \perp (bu)(bu)^{-1}$ and so $au$ and $bu$ are incomparable.
We have therefore proved that $C$ is aperiodic.
\end{proof}

\begin{lemma}\label{lem:juice} Let $S$ be a Boolean inverse semigroup.
Then $\bigvee_{j=1}^{n} a_{j}$ an infinitesimal implies that each $a_{j}$ is an infinitesimal.
\end{lemma}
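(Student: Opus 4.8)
The plan is to show that the infinitesimal property is inherited by any element lying below an infinitesimal, and then to note that each summand $a_j$ lies below the join $a := \bigvee_{j=1}^{n} a_{j}$. First I would use the characterization already recalled in the paper (via \cite[Lemma~2.5]{Lawson2017}): a non-zero element $s$ of an inverse semigroup with zero is an infinitesimal if and only if $\mathbf{d}(s) \perp \mathbf{r}(s)$, that is, $\mathbf{d}(s)\mathbf{r}(s) = 0$. Applying this to the hypothesis that $a$ is an infinitesimal gives $\mathbf{d}(a)\mathbf{r}(a) = 0$.

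Next I would push $\mathbf{d}$ and $\mathbf{r}$ down along the inequality $a_{j} \leq a$. From the definition of the natural partial order, $a_{j} = a\,\mathbf{d}(a_{j})$, whence $\mathbf{d}(a_{j}) = a_{j}^{-1}a_{j} = \mathbf{d}(a_{j})\,a^{-1}a\,\mathbf{d}(a_{j}) = \mathbf{d}(a_{j})\mathbf{d}(a)$, i.e. $\mathbf{d}(a_{j}) \leq \mathbf{d}(a)$ in $\mathsf{E}(S)$; and since $a_{j} \leq a$ implies $a_{j}^{-1} \leq a^{-1}$, the same argument applied to $a_{j}^{-1} \leq a^{-1}$ gives $\mathbf{r}(a_{j}) = \mathbf{d}(a_{j}^{-1}) \leq \mathbf{d}(a^{-1}) = \mathbf{r}(a)$. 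Therefore
\[
\mathbf{d}(a_{j})\mathbf{r}(a_{j}) = \mathbf{d}(a_{j})\mathbf{d}(a)\mathbf{r}(a)\mathbf{r}(a_{j}) = \mathbf{d}(a_{j})\cdot 0 \cdot \mathbf{r}(a_{j}) = 0,
\]
so $\mathbf{d}(a_{j}) \perp \mathbf{r}(a_{j})$, which is to say $a_{j}^{2} = 0$.

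Finally, since $a_{j}$ is one of the (non-zero) summands appearing in the join $a$, it is itself non-zero, and $a_{j}^{2}=0$ together with $a_{j} \neq 0$ says precisely that $a_{j}$ is an infinitesimal, again by \cite[Lemma~2.5]{Lawson2017}. The one point deserving a word of care is exactly this last one: the statement tacitly assumes the $a_{j}$ are genuine non-zero terms of the join (a zero summand could simply be discarded), since by definition an infinitesimal is required to be non-zero. Beyond this there is no real obstacle; in particular neither distributivity nor the Boolean structure of $S$ is needed, only the existence of the join, because the entire content is the downward inheritance of the orthogonality $\mathbf{d}(a) \perp \mathbf{r}(a)$ to every element below $a$, and each $a_{j}$ is below $\bigvee_{i=1}^{n} a_{i}$.
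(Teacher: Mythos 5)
Your proof is correct, but it takes a different route from the paper's. The paper simply squares the join: in a Boolean (hence distributive) inverse semigroup, multiplication distributes over compatible joins, so $0 = \bigl(\bigvee_{j} a_{j}\bigr)^{2} = \bigvee_{i,j} a_{j}a_{i}$, and in particular each $a_{j}^{2} = 0$. You instead prove the stronger, purely order-theoretic fact that any non-zero element below an infinitesimal is an infinitesimal, via the characterization $a$ infinitesimal $\Leftrightarrow$ $\mathbf{d}(a) \perp \mathbf{r}(a)$ and the monotonicity of $\mathbf{d}$ and $\mathbf{r}$ along the natural partial order; then you apply this to $a_{j} \leq \bigvee_{i} a_{i}$. (An even shorter version of your idea: the natural partial order is compatible with multiplication, so $a_{j} \leq a$ gives $a_{j}^{2} \leq a^{2} = 0$ directly.) What your argument buys is generality: it uses neither distributivity nor the Boolean hypothesis, only that the join exists, so the lemma holds in any inverse semigroup with zero. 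What the paper's argument buys is brevity in the setting actually needed. Your caveat about zero summands is well taken and applies equally to the paper's proof, since an infinitesimal is by definition non-zero; the statement is to be read with zero terms discarded.
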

\begin{proof} By assumption, $\bigvee_{j=1}^{n} \bigvee_{i=1}^{n} a_{j}a_{i} = 0$.
In particular, $a_{j}^{2} = 0$ for each $1 \leq j \leq n$.
\end{proof}

The following theorem provides the algebraic meaning of being fundamental.

\begin{theorem}\label{them:fundamental} Let $C$ be a strongly finitely aligned higher rank graph with a finite number of identities which has no sources and is row finite.
Then the following are equivalent:
\begin{enumerate}
\item $\mathsf{B}(C)$ is fundamental.
\item $C$ is aperiodic.
\end{enumerate}
\end{theorem}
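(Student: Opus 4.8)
The strategy is to run both implications through the dictionary between $\mathsf{B}(C) = \mathsf{R}(C)/{\equiv}$ and $\mathsf{R}(C)$, using Proposition~\ref{prop:infinitesimal} to restate ``fundamental'' as ``every non-idempotent element lies above an infinitesimal'', and Lemma~\ref{lem:needful} to restate ``aperiodic'' as ``every non-idempotent basic morphism of $\mathsf{R}(C)$ lies above an infinitesimal basic morphism''. Throughout I will use three facts already established: $\mathsf{B}(C)$ is a Boolean inverse monoid (Theorem~\ref{them:first-main}); the congruence $\equiv$ on $\mathsf{R}(C)$ is idempotent-pure (Proposition~\ref{prop:panda}, via Lemmas~\ref{lem:joinbasic},~\ref{lem:oreo} and~\ref{lem:key-property}) and $0$-restricted (by its definition); and the quotient map $\pi\colon \mathsf{R}(C)\to\mathsf{B}(C)$ is a surjective morphism of inverse monoids, hence order-preserving and compatible with products.

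For $(2)\Rightarrow(1)$ I would assume $C$ aperiodic and check the criterion of Proposition~\ref{prop:infinitesimal}. Given a non-idempotent $t\in\mathsf{B}(C)$, pick $a\in\mathsf{R}(C)$ with $\pi(a)=t$; as $\equiv$ is idempotent-pure, $a$ is not idempotent. By Lemma~\ref{lem:joinbasic}, $a=\bigvee_{i=1}^{n}b_i$ is a finite join of basic morphisms, and since a finite join of (commuting) idempotents is idempotent, some $b_i$ is a non-idempotent basic morphism. By Lemma~\ref{lem:needful} there is an infinitesimal basic morphism $cd^{-1}\le b_i\le a$, so $\pi(cd^{-1})\le t$; and $\pi(cd^{-1})$ is an infinitesimal of $\mathsf{B}(C)$, being non-zero ($cd^{-1}\neq 0$ and $\equiv$ is $0$-restricted) with $\pi(cd^{-1})^{2}=\pi((cd^{-1})^{2})=\pi(0)=0$. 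Thus $t$ lies above an infinitesimal and $\mathsf{B}(C)$ is fundamental.

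For $(1)\Rightarrow(2)$ I would assume $\mathsf{B}(C)$ fundamental and check the criterion of Lemma~\ref{lem:needful}. Let $w=ab^{-1}$ be a non-idempotent basic morphism; then $\pi(w)$ is non-idempotent, so by Proposition~\ref{prop:infinitesimal} there is an infinitesimal $\tau\in\mathsf{B}(C)$ with $\tau\le\pi(w)$. Choosing $s$ with $\pi(s)=\tau$, set $s':=w\,s^{-1}s$: this lies below $w$ in $\mathsf{R}(C)$ (it is $w$ times an idempotent), and $\pi(s')=\pi(w)\pi(s)^{-1}\pi(s)=\tau$ since $\tau\le\pi(w)$. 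The key point is that $s'$ is now a genuine infinitesimal of $\mathsf{R}(C)$: it is non-zero because $\pi(s')=\tau\neq 0$ and $\equiv$ is $0$-restricted, and from $(s')^{2}\equiv 0$ the $0$-restrictedness of $\equiv$ forces $(s')^{2}=0$. Writing $s'=\bigvee_{j=1}^{m}c_jd_j^{-1}$ as a join of basic morphisms (Lemma~\ref{lem:joinbasic}; here $m\ge 1$ as $s'\neq 0$), compatibility of the natural partial order with multiplication gives $(c_jd_j^{-1})^{2}\le (s')^{2}=0$ for every $j$, so each $c_jd_j^{-1}$ is an infinitesimal basic morphism with $c_jd_j^{-1}\le s'\le w=ab^{-1}$. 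By Lemma~\ref{lem:needful}, $C$ is aperiodic.

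The routine ingredients are that a finite join of idempotents is idempotent and that $w\,s^{-1}s\le w$. The step needing care, which I expect to be the main (if mild) obstacle, is the descent in $(1)\Rightarrow(2)$: replacing the abstract infinitesimal $\tau\le\pi(w)$ by a concrete element $s'\le w$ of $\mathsf{R}(C)$, arguing via $0$-restrictedness that $s'$ is itself infinitesimal, and then extracting a basic-morphism component from it — this is exactly where it matters that $\equiv$ is an idempotent-pure, $0$-restricted congruence rather than an arbitrary one.
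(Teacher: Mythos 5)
Your proof is correct and follows essentially the same route as the paper's: for (2)$\Rightarrow$(1) you decompose a non-idempotent preimage into basic morphisms and combine Lemma~\ref{lem:needful} with Proposition~\ref{prop:infinitesimal}, and for (1)$\Rightarrow$(2) you pull the infinitesimal $\tau\le\pi(ab^{-1})$ back to $(ab^{-1})s^{-1}s\le ab^{-1}$ in $\mathsf{R}(C)$ using that $\equiv$ is idempotent-pure and $0$-restricted, which is exactly the paper's step with $(ab^{-1})(\bigvee_j d_jd_j^{-1})$. The only cosmetic difference is that you verify each basic component of $s'$ squares to zero directly from compatibility of the natural partial order with multiplication, where the paper invokes Lemma~\ref{lem:juice} at the level of $\mathsf{B}(C)$ first.
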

\begin{proof} (1)$\Rightarrow$(2).
We shall use Lemma~\ref{lem:needful} to prove that $C$ is aperiodic.
Let $ab^{-1}$ be a non-idempotent basic morphism of $\mathsf{R}(C)$.
Then $[ab^{-1}]$ is not an idempotent since $\equiv$ is idempotent-pure.
Thus, by Proposition~\ref{prop:infinitesimal}, $[\bigvee_{j=1}^{n}c_{j}d_{j}^{-1}] \leq [ab^{-1}]$
where $[\bigvee_{j=1}^{n}c_{j}d_{j}^{-1}]$ is an infinitesimal.
Thus by Lemma~\ref{lem:juice}, each  $[c_{j}d_{j}^{-1}]$ is an infinitesimal.
But then each $c_{j}d_{j}^{-1}$ is an infinitesimal, since $\equiv$ is $0$-restricted.
Observe that $(ab^{-1})(\bigvee_{j=1}^{n}d_{j}d_{j}^{-1})$ maps to $[\bigvee_{j=1}^{n}c_{j}d_{j}^{-1}]$.
Thus $(ab^{-1})(\bigvee_{j=1}^{n}d_{j}d_{j}^{-1})$ is an infinitesimal and lies below $ab^{-1}$.
By relabelling if necessary, we can assume that $(ab^{-1})d_{1}d_{1}^{-1}$ is non-zero and an infinitesimal in $\mathsf{R}(C)$ and lies below $ab^{-1}$.
But  $(ab^{-1})d_{1}d_{1}^{-1}$ is a join of basic morphisms and each of these basic morphisms must be either zero or an infinitesimal.
Pick a non-zero such basic morphism $cd^{-1}$.
Then $cd^{-1} \leq ab^{-1}$ and is an infinitesimal.
It follows by Lemma~\ref{lem:needful} that $C$ is aperiodic.

(2)$\Rightarrow$(1).
Let $[\bigvee_{i=1}^{m} a_{i}b_{i}^{-1}]$ be a non-idempotent element of $\mathsf{B}(C)$.
Then the element $\bigvee_{i=1}^{m} a_{i}b_{i}^{-1}$ is a non-idempotent in $\mathsf{R}(C)$ since $\equiv$ is idempotent-pure.
It follows that for some $i$ we have that $a_{i} \neq b_{i}$.
By Lemma~\ref{lem:needful}, there is an infinitesimal basic morphism $cd^{-1} \leq a_{i}b_{i}^{-1}$.
Thus $[cd^{-1}] \leq [\bigvee_{i=1}^{m} a_{i}b_{i}^{-1}]$.
Because $cd^{-1}$ is non-zero it follows that $[cd^{-1}]$ is non-zero.
We have therefore proved that each non-idempotent element of $\mathsf{B}(C)$ is above an infinitesimal.
It follows by  Proposition~\ref{prop:infinitesimal}, that $\mathsf{B}(C)$ is fundamental.
\end{proof}

\subsection{Cofinality}
Let $C$ be a conical cancellative category.
Define $C$ to be {\em cofinal} if for all $e,f \in C_{o}$ there exists a large subset $X$ of $fX$
such that $eC\mathbf{d}(x) \neq \varnothing$ for all $x \in X$.
this definition is from \cite{LS2010}.\\

We first of all convert this definition into one that is more useful for applying semigroup theory.

\begin{lemma}\label{lem:simple} Let $C$ be a finitely aligned $k$-graph.
The following are equivalent:
\begin{enumerate}

\item $C$ is cofinal.

\item For all idempotents in $\mathsf{R}(C)$ of the form $ee^{-1}$ and $ff^{-1}$, where $e,f \in C_{o}$,
there exists a set of basic morphisms $\{a_{1}x_{1}^{-1}, \ldots, a_{n}x_{n}^{-1}\}$ such that 
$\{x_{1}x_{1}^{-1}, \ldots, x_{n}x_{n}^{-1}\}$ is a tight cover of $ff^{-1}$ and
$\{a_{1}a_{1}^{-1}, \ldots, a_{n}a_{n}^{-1}\} \leq ee^{-1}$.
\end{enumerate}
\end{lemma}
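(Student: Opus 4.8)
The plan is to unwind both conditions into combinatorial statements about the $k$-graph $C$ and match them up via the dictionary between basic morphisms $ab^{-1}$ and pairs $(a,b)$ with $\mathbf d(a)=\mathbf d(b)$ established in Lemma~\ref{lem:oreo}, and via the dictionary between tight covers of $aa^{-1}$ and large subsets of $aC$ established in Lemma~\ref{lem:correspondence}. Recall that cofinality of $C$ says: for all $e,f\in C_o$ there is a large subset $X\subseteq fC$ with $eC\,\mathbf d(x)\ne\varnothing$ for all $x\in X$; note $X$ large in $fC$ is the same (by definition of large in $fC$, plus Lemma~\ref{lem:mc}/the no-sources hypothesis is \emph{not} assumed here so I must be careful) as $X$ being exhaustive in $fC$. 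First I would observe that $\{x_1x_1^{-1},\dots,x_nx_n^{-1}\}$ is a tight cover of $ff^{-1}$ in $\mathsf R(C)$ exactly when $\{x_1,\dots,x_n\}\subseteq fC$ is large in $fC$: this is the $a=f$ (an identity) case of Lemma~\ref{lem:correspondence}, since $fC=\mathbf r(f)C$ and every element of $fC$ has range $f$. So the tight-cover data in (2) is precisely a large subset $X=\{x_1,\dots,x_n\}$ of $fC$.

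Next I would translate the second half of (2). An idempotent $a_ia_i^{-1}\le ee^{-1}$ means, by Lemma~\ref{lem:oreo}(1) (the idempotent case), that $a_i=e a_i'$ for some $a_i'\in C$, i.e.\ simply $a_i\in eC$ with $\mathbf r(a_i)=e$; and the basic morphism $a_ix_i^{-1}$ is well-defined precisely when $\mathbf d(a_i)=\mathbf d(x_i)$. Conversely, given a large $X\subseteq fC$ with the property that for each $x\in X$ there exists $a\in eC$ with $\mathbf d(a)=\mathbf d(x)$ — which is exactly the statement $eC\cap C_{?}\ne\varnothing$ reformulated — we recover the required $a_ia_i^{-1}\le ee^{-1}$ and well-defined $a_ix_i^{-1}$. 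So after these translations, condition (2) reads: \emph{for all $e,f\in C_o$ there is a large subset $X\subseteq fC$ such that for every $x\in X$ there is an arrow $a$ with $\mathbf r(a)=e$ and $\mathbf d(a)=\mathbf d(x)$.} Now I would note that ``there is an arrow $a$ with $\mathbf r(a)=e$ and $\mathbf d(a)=\mathbf d(x)$'' is by definition the same as ``$eC\cap C\mathbf d(x)\ne\varnothing$'' which is exactly ``$eC\,\mathbf d(x)\ne\varnothing$'' in the notation of the cofinality definition (here $\mathbf d(x)$ is an identity, $eC\mathbf d(x)$ is the set of arrows from $\mathbf d(x)$ to a point receiving from $e$, i.e.\ arrows $a$ with $\mathbf r(a)=e$, $\mathbf d(a)=\mathbf d(x)$). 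This makes (1) and (2) literally the same statement, so the proof is: (1)$\Leftrightarrow$(2) by applying Lemma~\ref{lem:correspondence} to pass between tight covers and large subsets, and Lemma~\ref{lem:oreo} to identify which basic morphisms $a_ix_i^{-1}$ are well-defined and which idempotents lie below $ee^{-1}$.

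The main obstacle I anticipate is bookkeeping around the identity-versus-arrow subtleties: making sure that ``$X$ large in $fC$'' really does correspond to a tight cover of $ff^{-1}$ and not of something larger (the point being that $fC$ itself is the whole principal right ideal of the identity $f$, so a tight cover of $ff^{-1}$ in $\mathsf R(C)$ consists of idempotents below $ff^{-1}$, i.e.\ of the form $xx^{-1}$ with $x\in fC$, and largeness in $fC$ is precisely the tight-cover condition), and making sure the condition ``$a_ix_i^{-1}$ is a well-defined basic morphism'' is handled correctly — it requires $\mathbf d(a_i)=\mathbf d(x_i)$, which is exactly the content of $eC\,\mathbf d(x_i)\ne\varnothing$ when combined with $a_i\in eC$. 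I would write the forward and reverse implications separately but symmetrically: given cofinality data $X\subseteq fC$, for each $x\in X$ pick $a_x\in eC$ with $\mathbf d(a_x)=\mathbf d(x)$ (possible by $eC\,\mathbf d(x)\ne\varnothing$), set the $a_i$ to be these $a_x$, and verify $a_ia_i^{-1}\le ee^{-1}$ (clear since $\mathbf r(a_i)=e$) and $a_ix_i^{-1}$ well-defined (clear since domains agree); conversely, given the data of (2), the set $X=\{x_1,\dots,x_n\}$ is large in $fC$ by Lemma~\ref{lem:correspondence}, and for each $i$ the existence of $a_i$ with $\mathbf r(a_i)=e$, $\mathbf d(a_i)=\mathbf d(x_i)$ gives $eC\,\mathbf d(x_i)\ne\varnothing$, which is cofinality. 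No hard analysis is involved; the work is entirely in lining up the three dictionaries (Lemma~\ref{lem:correspondence}, Lemma~\ref{lem:oreo}, and the definition of cofinality) cleanly.
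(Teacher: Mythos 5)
Your proof is correct. The paper in fact offers no proof of this lemma at all --- it is stated as an immediate consequence of the definitions --- and your argument supplies exactly the intended routine translation: Lemma~\ref{lem:correspondence} (with $a=f$ an identity) converts the tight cover $\{x_{1}x_{1}^{-1},\ldots,x_{n}x_{n}^{-1}\}$ of $ff^{-1}$ into a large subset $\{x_{1},\ldots,x_{n}\}$ of $fC$, Lemma~\ref{lem:oreo}(1) identifies $a_{i}a_{i}^{-1}\leq ee^{-1}$ with $a_{i}\in eC$, and the definition of a basic morphism identifies well-definedness of $a_{i}x_{i}^{-1}$ with $\mathbf{d}(a_{i})=\mathbf{d}(x_{i})$, i.e.\ with $eC\mathbf{d}(x_{i})\neq\varnothing$. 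The one point worth flagging --- an imprecision in the paper's definition rather than in your argument --- is that a tight cover is finite by definition, whereas the paper's notion of a `large' subset $X\subseteq fC$ carries no finiteness requirement; for (1)$\Rightarrow$(2) one must read `large subset' in the definition of cofinality as `finite large subset' (as in the cited source \cite{LS2010}), since in a non-row-finite $k$-graph an infinite exhaustive set need not contain a finite exhaustive subset. With that reading, both of your directions go through exactly as you describe.
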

\begin{proof} By Lemma~\ref{lem:correspondence}, $X$ is a large subset of $fX$
if and only if $\{xx^{-1} \colon x \in X\}$ is a tight cover of $ff^{-1}$.
Let $i$ and $j$ be identities of the category $C$.
To say that $a \in iCj$ is equivalent to saying that $aa^{-1} \leq ii^{-1}$
and $\mathbf{d}(x)\mathbf{d}(x)^{-1} = jj^{-1}$.
Thus cofinality transltaes into the following:
for all idempotents of the form $ee^{-1}$ and $ff^{-1}$, where $e,f \in C_{o}$,
there exists a tight cover $\{x_{1}x_{1}^{-1}, \ldots, x_{n}x_{n}^{-1}\}$ of $ff^{-1}$ 
and elements $a_{1},\ldots, a_{n}$ such that
$a_{i}a_{i}^{-1} \leq ee^{-1}$ where $\mathbf{d}(a_{i})\mathbf{d}(a_{i})^{-1} = \mathbf{d}(x_{i})\mathbf{d}(x_{i})^{-1}$
--- this latter condition is equivalent to $a_{i}x_{i}^{-1}$ being a well-defined basic morphism.
\end{proof}

\begin{lemma}\label{lem:orange} Let $C$ be a $k$-graph.
Then each non-trivial semigroup ideal of $\mathsf{B}(C)$ contains an idempotent of the form $[ee^{-1}]$ where $e$ is an identity of $C$.
\end{lemma}
\begin{proof}
Let $I$ be a non-trivial semigroup ideal in $\mathsf{B}(C)$.
Then $I$ is an order ideal.
Thus $[xx^{-1}] \in \mathsf{B}(C)$ for some $x \in C$.
Let $e = \mathbf{d}(x)$.
Then $ex^{-1}$ is a well-defined basic morphism and so belongs to $\mathsf{R}(C)$.
Observe that 
$\mathbf{d}(ex^{-1}) = xx^{-1}$
and 
$\mathbf{r}(ex^{-1}) = ee^{-1}$.
It follows that $[ex^{-1}] \in I$ and so $[ee^{-1}] \in I$.
\end{proof}

\begin{lemma}\label{lem:sasha} Let $S$ be a distributive inverse monoid in which $\equiv$ is idempotent pure.
Put $T = S/\equiv$ and denote the $\equiv$-congruence class containing $a$ by $[a]$.
Let $e$ and $f$ be any idempotents in $S$.
If $[f] \preceq [e]$ in $T$ then there is a set $X$ in $S$ such that
$\{\mathbf{d}(x) \colon x \in X\}$ is a tight cover of $f$
and $\mathbf{r}(x) \leq e$ for each $x \in X$.
\end{lemma}
\begin{proof} We are given that $[f] \preceq [e]$ in $T$.
Thus there is a pencil $\mathbf{X} = \{\mathbf{x}_{1},\ldots,\mathbf{x}_{m}\}$ in $T$ 
such that 
$[f] = \bigvee_{i = 1}^{m} \mathbf{d}(\mathbf{x}_{i})$
and
$\mathbf{r}[x_{i}] \leq [e]$ for $1 \leq i \leq m$.
Let $\mathbf{x}_{i} = [x_{i}]$.
Put $X = \{ex_{1}f,\ldots, ex_{m}f\}$.
Observe that $\mathbf{d}(ex_{i}f) \leq f$ and $\mathbf{r}(ex_{i}f) \leq e$.
We claim that $\mathbf{d}(X)$ is a tight cover of $f$.
Let $0 < h \leq f$.
Then $[0] < [h] \leq [f]$ since $\equiv$ is $0$-restricted.
Observe that $\bigvee_{i=1}^{m} [fx_{i}^{-1}ex_{i}] = [f]$.
Now use \cite[Lemma 2.5]{Lawson2016}, to deduce that
$[h] \wedge  [fx_{i}^{-1}ex_{i}] \neq [0]$ for some $i$.
But we are working with idempotents.
It follows that $[fx_{i}^{-1}ex_{i}h] \neq [0]$ and so,
since $\equiv$ is $0$-restricted,
we have that $fx_{i}^{-1}ex_{i}h \neq 0$ for some $i$.
\end{proof}

We can now prove our main result about cofinality.

\begin{theorem}\label{them:zero-simplifying}
Let $C$ be a strongly finitely aligned higher rank graph with a finite number of identities which has no sources and is row finite.
Then the following are equivalent:
\begin{enumerate}
\item $C$ is cofinal.
\item $\mathsf{B}(C)$ is $0$-simplifying.
\end{enumerate}
\end{theorem}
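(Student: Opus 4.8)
The plan is to reduce both implications to statements about \emph{pencils} between the idempotents $[ee^{-1}]$, $e\in C_o$, of $\mathsf{B}(C)$: Lemma~\ref{lem:orange} and Lemma~\ref{lem:cake} describe the additive ideals in terms of such idempotents, and Lemma~\ref{lem:simple} packages cofinality on the inverse-semigroup side. Recall from Theorem~\ref{them:first-main} that $\mathsf{B}(C)$ is a Boolean inverse monoid, so additive ideals, $0$-simplifying, and the relation $\preceq$ all make sense. Throughout write $\theta\colon\mathsf{R}(C)\to\mathsf{B}(C)$ for the quotient map: by Lemma~\ref{lem:idpt-pure-dist} it is a surjective morphism of distributive inverse $\wedge$-monoids, hence preserves compatible joins (and all finite families of idempotents are compatible); it is $0$-restricted and idempotent-pure; and since $a\leq_{e}b$ implies $a\equiv b$, it is essential. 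I will also use that the identity of $\mathsf{B}(C)$ is $[1_{C}]=\bigvee_{f\in C_o}[ff^{-1}]$, a finite orthogonal join since $C_o$ is finite and $1_C=\bigvee_{f\in C_o}ff^{-1}$ in $\mathsf{R}(C)$.

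$(1)\Rightarrow(2)$. Assume $C$ is cofinal and let $I$ be a non-trivial additive ideal of $\mathsf{B}(C)$. By Lemma~\ref{lem:orange}, $[ee^{-1}]\in I$ for some $e\in C_o$. I claim $[ff^{-1}]\in I$ for every $f\in C_o$; granting this, $[1_C]=\bigvee_{f}[ff^{-1}]\in I$, so $I=\mathsf{B}(C)$. Fix $f\in C_o$. By Lemma~\ref{lem:simple} there is a tight cover $\{x_1x_1^{-1},\dots,x_nx_n^{-1}\}$ of $ff^{-1}$ together with idempotents $a_ia_i^{-1}\leq ee^{-1}$ such that each $a_ix_i^{-1}$ is a basic morphism of $\mathsf{R}(C)$. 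By Lemma~\ref{lem:nanaimo}, $\bigvee_i x_ix_i^{-1}\leq_{e}ff^{-1}$, so applying the essential, join-preserving map $\theta$ gives $\bigvee_i[x_ix_i^{-1}]=[ff^{-1}]$. Since $\mathbf{d}([a_ix_i^{-1}])=[x_ix_i^{-1}]$ and $\mathbf{r}([a_ix_i^{-1}])=[a_ia_i^{-1}]\leq[ee^{-1}]$, the finite set $\{[a_1x_1^{-1}],\dots,[a_nx_n^{-1}]\}$ is a pencil witnessing $[ff^{-1}]\preceq[ee^{-1}]$. As $[ee^{-1}]\in I$ and $I$ is additive, \cite[Lemma~4.1]{Lawson2016} gives $[ff^{-1}]\in I$, proving the claim.

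$(2)\Rightarrow(1)$. Assume $\mathsf{B}(C)$ is $0$-simplifying and fix $e,f\in C_o$; I will verify condition (2) of Lemma~\ref{lem:simple}. By Lemma~\ref{lem:cake} the ideal $I=(\mathsf{B}(C)[ee^{-1}]\mathsf{B}(C))^{\vee}$ is additive, and it is non-trivial since $[ee^{-1}]\neq0$; hence $I=\mathsf{B}(C)$, so $[ff^{-1}]\in I$ and Lemma~\ref{lem:cake} yields a finite $Y\subseteq\mathsf{B}(C)$ with $\bigvee\mathbf{d}(Y)=[ff^{-1}]$ and $\bigvee\mathbf{r}(Y)\leq[ee^{-1}]$. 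Writing each element of $Y$ as a finite join of classes of basic morphisms (Lemma~\ref{lem:joinbasic}) and replacing $Y$ by the resulting finer family, I may assume $Y=\{[c_1b_1^{-1}],\dots,[c_mb_m^{-1}]\}$ with each $c_jb_j^{-1}$ a basic morphism, so $\mathbf{d}(c_j)=\mathbf{d}(b_j)$; then $\bigvee_j[b_jb_j^{-1}]=[ff^{-1}]$ and $\bigvee_j[c_jc_j^{-1}]\leq[ee^{-1}]$. For each $j$, $[b_jb_j^{-1}]\leq[ff^{-1}]$ gives $[b_jb_j^{-1}]=[b_jb_j^{-1}ff^{-1}]$, and since $b_jb_j^{-1}\neq0$ and $\equiv$ is $0$-restricted we get $b_jb_j^{-1}ff^{-1}\neq0$, which forces $\mathbf{r}(b_j)=f$, i.e.\ $b_j\in fC$; similarly $c_j\in eC$. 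In particular $b_jb_j^{-1}\leq ff^{-1}$ in $\mathsf{R}(C)$ by Lemma~\ref{lem:oreo}, so $\bigvee_j b_jb_j^{-1}\leq ff^{-1}$, while $[\bigvee_j b_jb_j^{-1}]=[ff^{-1}]$ gives $\bigvee_j b_jb_j^{-1}\equiv ff^{-1}$; hence $\bigvee_j b_jb_j^{-1}\leq_{e}ff^{-1}$. By Lemma~\ref{lem:nanaimo} and Lemma~\ref{lem:correspondence}, $\{b_1,\dots,b_m\}$ is a large subset of $fC$, so $\{b_jb_j^{-1}\}$ is a tight cover of $ff^{-1}$; together with $c_jc_j^{-1}\leq ee^{-1}$ and the fact that the $c_jb_j^{-1}$ are basic morphisms, this is precisely condition (2) of Lemma~\ref{lem:simple}. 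Hence $C$ is cofinal.

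Most of this is bookkeeping with compatible joins plus the observation that $\theta$ respects order, $\mathbf{d}$, $\mathbf{r}$, $0$ and compatible joins. The step needing the most care is the passage from $\mathsf{B}(C)$ back to $\mathsf{R}(C)$ in $(2)\Rightarrow(1)$: one must extract from an abstract pencil honest basic morphisms $c_jb_j^{-1}$ with $b_j\in fC$, $c_j\in eC$ and $\mathbf{d}(c_j)=\mathbf{d}(b_j)$, and it is exactly here that $0$-restrictedness of $\equiv$ (to pin down the ranges) and Lemma~\ref{lem:correspondence} (to recover largeness in $fC$) do the work.
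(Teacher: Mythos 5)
Your proof is correct and follows essentially the same route as the paper's: both directions reduce to pencils between the vertex idempotents $[ee^{-1}]$ via Lemma~\ref{lem:orange}, Lemma~\ref{lem:simple}, Lemma~\ref{lem:cake} and Lemma~\ref{lem:correspondence}. Your two small deviations --- concluding $(1)\Rightarrow(2)$ by observing $[1_C]=\bigvee_{f}[ff^{-1}]\in I$, and pinning down $b_j\in fC$, $c_j\in eC$ via $0$-restrictedness of $\equiv$ rather than the paper's multiplication by $ee^{-1}$ and $ff^{-1}$ --- are cosmetic and, if anything, slightly cleaner.
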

\begin{proof} (1)$\Rightarrow$(2).
Let $I$ be a non-trivial additive ideal of $\mathsf{B}(C)$.
By Lemma~\ref{lem:orange}, $I$ contains an idempotent of the form $[ee^{-1}]$
for some $e \in C_{o}$.
Let $f \in C_{o}$ be arbitrary.
Then by Lemma~\ref{lem:simple}, there exits a tight cover $\{x_{1}x_{1}^{-1}, \ldots, x_{n}x_{n}^{-1}\}$ of $ff^{-1}$ and
elements $a_{1}a_{1}^{-1}, \ldots, a_{n}a_{n}^{-1} \leq ee^{-1}$ such that
$a_{1}x_{1}^{-1}, \ldots, a_{n}x_{n}^{-1}$ are well-defined basic morphisms.
Observe that $\bigvee \mathbf{d}[a_{i}x_{i}^{-1}] = [ff^{-1}]$,
it is here that we use the fact that $\{x_{1}x_{1}^{-1}, \ldots, x_{n}x_{n}^{-1}\}$ is a tight cover of $ff^{-1}$
and Lemma~\ref{lem:new-characterization},
and that $\mathbf{r}[a_{i}x_{i}^{-1}] \leq [ee^{-1}]$.
Thus $\{[a_{1}x_{1}^{-1}], \ldots [a_{n}x_{n}^{-1}]\}$ is a pencil from $[ff^{-1}]$ to $[ee^{-1}]$.
But $I$ is an additive ideal and so $[ff^{-1}] \in I$ by Lemma~\ref{lem:cake}.
Thus $I$ contains every idempotent of the form $[ff^{-1}]$ where $f \in C_{o}$.
Now, let $[ab^{-1}]$ be an arbitrary basic morphism.
Then $\mathbf{d}(ab^{-1}) = bb^{-1} \leq \mathbf{r}(b)\mathbf{r}(b)^{-1}$ and
$\mathbf{r}(ab^{-1}) = aa^{-1} \leq \mathbf{r}(a)\mathbf{r}(a)^{-1}$.
But $I$ being an additive ideal is also an order ideal.
Thus $\mathbf{d}(ab^{-1}), \mathbf{r}(ab^{-1}) \in I$.
But in an inverse semigroup, an ideal that contains $x^{-1}x$ must contain $x$.
Thus $[ab^{-1}] \in I$.
But $I$ is additive and so $I$ contains all elements of $\mathsf{B}(C)$
(since every element is a join of basic morphisms).
We have therefore shown that $I = \mathsf{B}(C)$.

(2)$\Rightarrow$(1).
Let $e, f \in C_{o}$.
Observe that $I = (\mathsf{B}(C)[ee^{-1}]\mathsf{B}(C))^{\vee}$ is an additive ideal of $\mathsf{B}(C)$ containing $[ee^{-1}]$.
Then, by assumption, $I = \mathsf{B}(C)$.
Thus $[ff^{-1}] \in I$.
It follows by Lemma~\ref{lem:cake} that $[ff^{-1}] \preceq [ee^{-1}]$.
We now apply Lemma~\ref{lem:sasha},
to deduce that there is a finite set $\{a_{1}, \ldots, a_{m}\}$ or elements of $\mathsf{R}(C)$
such that  $\{\mathbf{d}(a_{1}), \ldots, \mathbf{d}(a_{m})\}$ is a tight cover of $ff^{-1}$ 
and  $\{\mathbf{r}(a_{1}), \ldots, \mathbf{r}(a_{m})\} \subseteq ee^{-1}$.
But each $a_{i}$ can be written as a join of basic morphisms.
We now apply Lemma~\ref{lem:blackadder} several times.
We may therefore assume that the $a_{i}$ are each basic morphisms.
By Lemma~\ref{lem:simple}, it follows that $C$ is cofinal.
\end{proof}

By theorem~\ref{them:fundamental} and Theorem~\ref{them:zero-simplifying}, we have proved the following.

\begin{theorem}\label{them:main} Let $C$ be a finitely aligned higher rank graph with a finite number of identities which has no sources and is row finite.
Then the following are equivalent:
\begin{enumerate}
\item $C$ is aperiodic and cofinal.
\item The Boolean inverse monoid $\mathsf{B}(C)$ is simple.
\end{enumerate}
\end{theorem}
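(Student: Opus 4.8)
The plan is to reduce the statement directly to Theorem~\ref{them:fundamental} and Theorem~\ref{them:zero-simplifying} via the definition of simplicity. Recall that, by definition, a Boolean inverse monoid is \emph{simple} precisely when it is both fundamental and $0$-simplifying; so it suffices to characterise each of these two properties of $\mathsf{B}(C)$ separately in terms of $C$ and then conjoin the results.

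First I would check that the standing hypotheses of the two component theorems are automatically satisfied here. Since $C$ is a higher rank graph it is conical and cancellative by Lemma~\ref{lem:important-hrg}, and since in addition it is finitely aligned, Lemma~\ref{lem:code} upgrades this to strong finite alignment (each nonempty $a \vee b$ is a code). Together with the hypotheses that $C$ has a finite number of identities, has no sources and is row finite, this means $C$ meets the hypotheses of both Theorem~\ref{them:fundamental} and Theorem~\ref{them:zero-simplifying}; in particular $\mathsf{B}(C)$ is the well-defined Boolean inverse monoid furnished by Theorem~\ref{them:first-main}.

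The argument is then immediate: by Theorem~\ref{them:fundamental}, $\mathsf{B}(C)$ is fundamental if and only if $C$ is aperiodic; by Theorem~\ref{them:zero-simplifying}, $\mathsf{B}(C)$ is $0$-simplifying if and only if $C$ is cofinal. Conjoining the two equivalences gives that $\mathsf{B}(C)$ is simultaneously fundamental and $0$-simplifying --- that is, simple --- if and only if $C$ is simultaneously aperiodic and cofinal, which is the assertion.

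There is essentially no hard step remaining: the genuine work has already been done in Theorem~\ref{them:fundamental} (whose engine is the passage, using that $\equiv$ is idempotent-pure and $0$-restricted, between infinitesimal basic morphisms below $ab^{-1}$ and incomparability of $au,bu$, i.e.\ Lemma~\ref{lem:needful} together with Proposition~\ref{prop:infinitesimal}) and in Theorem~\ref{them:zero-simplifying} (whose engine is the pencil and additive-ideal calculus of Lemma~\ref{lem:orange}, Lemma~\ref{lem:cake} and Lemma~\ref{lem:simple}). The only point demanding a moment's care is the reduction from ``finitely aligned'' to ``strongly finitely aligned'' noted above, which is what lets those two theorems apply verbatim; once that observation is in place, the present theorem is just their conjunction.
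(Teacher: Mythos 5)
Your proposal is correct and is exactly the paper's argument: the paper derives Theorem~\ref{them:main} in one line by combining Theorem~\ref{them:fundamental} and Theorem~\ref{them:zero-simplifying} with the definition of simplicity as fundamental plus $0$-simplifying. Your additional remark that finite alignment upgrades to strong finite alignment via Lemma~\ref{lem:code} (so the component theorems apply) is a sound and worthwhile check that the paper leaves implicit.
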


It is clear that the Boolean inverse monoids $\mathsf{B}(C)$ are countable.
We now apply some results from \cite{Lawson2016}.
We call the countable atomless Boolean algebra the {\em Tarski algebra}
and use the term {\em Tarski monoid} to mean a countable, Boolean inverse $\wedge$-monoid whose semilattice of idempotents is the Tarski algebra.
The following is \cite[Proposition~4.4]{Lawson2016}.

\begin{proposition}\label{prop:old} let $S$ be a countable Boolean inverse $\wedge$-monoid.
If $S$ is $0$-simplifying then either $S$ is a Tarski monoid or the semilattice of idempotents of $S$ is finite.
\end{proposition}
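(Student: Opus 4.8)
The plan is to split into cases according to whether $\mathsf{E}(S)$ has an atom, where by an \emph{atom} I mean a non-zero idempotent $e$ such that $0 < f \leq e$ implies $f = e$. Note first that $\mathsf{E}(S)$ is countable, being a subset of the countable set $S$, and that $S$ is a $\wedge$-monoid by hypothesis; and if $S = \{0\}$ then $\mathsf{E}(S) = \{0\}$ is finite, so we may assume $1 \neq 0$.

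\emph{The atomless case.} If $\mathsf{E}(S)$ has no atoms then, since $1 \neq 0$, we may build recursively a strictly descending chain $1 = e_0 > e_1 > e_2 > \cdots$ of non-zero idempotents (at each stage $e_n$ is not an atom, so admits a proper non-zero sub-idempotent, which we take as $e_{n+1}$), so $\mathsf{E}(S)$ is infinite. Being a countable, infinite, atomless Boolean algebra, it is isomorphic to the Tarski algebra by the classical uniqueness of the countable atomless Boolean algebra. Hence $S$ is a Tarski monoid.

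\emph{The case of an atom.} Suppose $e \in \mathsf{E}(S)$ is an atom. Here I would invoke $0$-simplifying via Lemma~\ref{lem:cake}: the additive ideal $I = (SeS)^{\vee}$ contains $e \neq 0$, hence equals $S$; in particular $1 \in I$, so, $1$ being idempotent, Lemma~\ref{lem:cake} yields $1 \preceq e$. Thus there is a finite set $\{x_1, \ldots, x_m\} \subseteq S$ with $1 = \bigvee_{i} \mathbf{d}(x_i)$ and $\bigvee_{i} \mathbf{r}(x_i) \leq e$. Discarding the indices with $x_i = 0$ alters neither join, and for each surviving $i$ we have $0 \neq \mathbf{r}(x_i) \leq e$, so $\mathbf{r}(x_i) = e$ because $e$ is an atom. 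Conjugation by $x_i$, namely $f \mapsto x_i^{-1} f x_i$, is an order-isomorphism from $\{f \in \mathsf{E}(S) : f \leq \mathbf{r}(x_i)\}$ onto $\{f \in \mathsf{E}(S) : f \leq \mathbf{d}(x_i)\}$ (standard inverse semigroup theory; see \cite{Lawson1998}); as the domain is the two-element set $\{0, e\}$, so is the codomain, and therefore $\mathbf{d}(x_i)$ is an atom.

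Consequently $1 = \bigvee_i \mathbf{d}(x_i)$ is a finite join of atoms; write $f_1, \ldots, f_k$ for the distinct atoms among the $\mathbf{d}(x_i)$. For any $g \in \mathsf{E}(S)$, distributivity in the Boolean algebra $\mathsf{E}(S)$ gives $g = g \wedge 1 = \bigvee_{j=1}^{k} (g \wedge f_j)$, and $g \wedge f_j \in \{0, f_j\}$ since $f_j$ is an atom; hence $g = \bigvee \{f_j : f_j \leq g\}$ is determined by the subset $\{j : f_j \leq g\}$ of $\{1, \ldots, k\}$, so $|\mathsf{E}(S)| \leq 2^{k}$ is finite. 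The only step that is not pure bookkeeping is the assertion that $\mathbf{d}(x_i)$ is an atom, which I expect to be the main point to get right: it rests precisely on the order-isomorphism between the idempotent down-sets of $\mathbf{d}(x_i)$ and $\mathbf{r}(x_i)$ furnished by conjugation by $x_i$.
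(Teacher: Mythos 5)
Your proof is correct. Note that the paper offers no argument for this proposition at all: it is imported verbatim as \cite[Proposition~4.4]{Lawson2016}, so there is no internal proof to compare against, and your write-up supplies a complete, self-contained justification. The structure is the natural one: the atomless case reduces (without using the $0$-simplifying hypothesis) to the classical uniqueness of the countable atomless Boolean algebra, which is exactly what the paper's definition of a Tarski monoid requires; in the atomic case you correctly use Lemma~\ref{lem:cake} to get a pencil from $1$ to an atom $e$ (after observing that $(SeS)^{\vee}$ is a nonzero additive ideal, hence all of $S$), and the key step --- that conjugation $f \mapsto x_i^{-1}fx_i$ is an order-isomorphism from $\mathbf{r}(x_i)^{\downarrow} \cap \mathsf{E}(S)$ onto $\mathbf{d}(x_i)^{\downarrow} \cap \mathsf{E}(S)$, so that $\mathbf{d}(x_i)$ is again an atom --- is standard inverse semigroup theory and is exactly the right point to isolate. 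The concluding count $|\mathsf{E}(S)| \leq 2^{k}$ via distributivity is fine. I have no corrections.
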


By Lemma~\ref{lem:finite-fundamental},
a fundamental inverse semigroup in which the semilattice of idempotents is finite must itself be finite.
The finite simple Boolean inverse monoids are precisely the finite symmetric inverse monoids \cite[Theorem~4.18]{Lawson2012};
the groups of units of the finite symmetric inverse monoids are the finite symmetric inverse monoids.
With the help of \cite[Theorem~2.22]{Lawson2016}, we have therefore proved the following which is our main theorem.

\begin{theorem}\label{them:A} Let $C$ be a finitely aligned higher rank graph with a finite number of identities which has no sources and is row finite.
If $C$ is aperiodic and cofinal then there are two possibilities:
\begin{enumerate}
\item The Boolean inverse monoid $\mathsf{B}(C)$ is finite and isomorphic to a finite symmetric inverse monoid,
and its group of units is a finite symmetric group.
\item The Boolean inverse monoid $\mathsf{B}(C)$ is countably infinite,
and its group of units is isomorphic
to a full subgroup of the group of self-homeomorphisms of the Cantor space
which acts minimally and in which each element has clopen support.
\end{enumerate}
\end{theorem}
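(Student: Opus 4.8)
The plan is to assemble the machinery already in place; the theorem is essentially a bookkeeping exercise once the hard analytic input (Theorems~\ref{them:fundamental} and~\ref{them:zero-simplifying}) is granted. First I would record that $\mathsf{B}(C)$ is a countable Boolean inverse $\wedge$-monoid: it is Boolean by Theorem~\ref{them:first-main}; it is a $\wedge$-monoid because $\mathsf{R}(C)$ is a distributive inverse $\wedge$-monoid by Proposition~\ref{prop:dis} and $\equiv$ is idempotent-pure on $\mathsf{R}(C)$, so Lemma~\ref{lem:idpt-pure-dist} transfers the $\wedge$-structure to the quotient; and it is countable since a $k$-graph $C$ is countable, whence $C$ has only countably many finitely generated right ideals and only countably many bijective morphisms between any two of them, so $\mathsf{R}(C)$, and therefore $\mathsf{B}(C)=\mathsf{R}(C)/{\equiv}$, is countable. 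Since a finitely aligned $k$-graph is strongly finitely aligned by Lemma~\ref{lem:code}, the hypotheses of Theorem~\ref{them:main} are met, so aperiodicity and cofinality of $C$ give that $\mathsf{B}(C)$ is simple, i.e.\ both fundamental and $0$-simplifying.

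Next I would apply the dichotomy of Proposition~\ref{prop:old}: as $\mathsf{B}(C)$ is a countable $0$-simplifying Boolean inverse $\wedge$-monoid, either its semilattice of idempotents $\mathsf{E}(\mathsf{B}(C))$ is finite, or $\mathsf{B}(C)$ is a Tarski monoid (its idempotent semilattice is the countable atomless Boolean algebra). In the first case, $\mathsf{B}(C)$ is fundamental with finitely many idempotents, so it is finite by Lemma~\ref{lem:finite-fundamental}; a finite simple Boolean inverse monoid is isomorphic to a finite symmetric inverse monoid $I(X)$ by \cite[Theorem~4.18]{Lawson2012}, and its group of units is the finite symmetric group on $X$. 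This is conclusion~(1). In the second case $\mathsf{B}(C)$ is automatically countably infinite, and the plan is to pass through non-commutative Stone duality \cite{Lawson2010,Lawson2012,Lawson2016}: the Tarski algebra $\mathsf{E}(\mathsf{B}(C))$ is the Boolean algebra of clopen subsets of the Cantor space $\mathcal{C}$, and \cite[Theorem~2.22]{Lawson2016} identifies the group of units $\mathsf{U}(\mathsf{B}(C))$ with a subgroup of $\mathrm{Homeo}(\mathcal{C})$.

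It then remains to read off the three adjectives in conclusion~(2). Fullness and the clopen-support property come from the Boolean $\wedge$-structure: each unit $g$ restricts along complemented idempotents to compatible partial homeomorphisms of clopen sets, and $g \wedge 1$ exists and is the idempotent supported on the fixed-point set, whose complement---the support of $g$---is clopen. Minimality of the action is the dual statement of $0$-simplifying, since additive ideals of $\mathsf{B}(C)$ correspond to open invariant subsets of $\mathcal{C}$ and $\mathsf{B}(C)$ has no non-trivial ones. Since the two cases of Proposition~\ref{prop:old} are exhaustive and mutually exclusive, this completes the proof. The one genuinely delicate step is this last paragraph: carefully matching the internal semigroup-theoretic notions (fundamental, $0$-simplifying, $\wedge$-monoid) with the external dynamical notions (minimal, full, clopen support) through the duality of \cite{Lawson2016}; everything else is a direct citation chain.
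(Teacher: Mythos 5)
Your proposal is correct and follows essentially the same route as the paper: simplicity via Theorem~\ref{them:main}, the dichotomy of Proposition~\ref{prop:old}, Lemma~\ref{lem:finite-fundamental} together with \cite[Theorem~4.18]{Lawson2012} for the finite case, and \cite[Theorem~2.22]{Lawson2016} for the Tarski-monoid case. The only difference is that you spell out the countability of $\mathsf{B}(C)$ and the translation of fundamental/$0$-simplifying into full/minimal/clopen-support, which the paper compresses into the citation of \cite[Theorem~2.22]{Lawson2016}.
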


\begin{conjecture}{\em In the case where $\mathsf{B}(C)$ is countably infinite,
we are interested in the situation where its commutator subgroup is simple.
We conjecture that when the higher rank graph satisfies, in addition, the condition of \cite[Proposition~4.9]{KP}
then the commutator subgroup should be simple.}
\end{conjecture}

\section{The groupoid associated with $\mathsf{B}(C)$}

The goal of this section is to prove that the \'etale groupoid associated with the Boolean inverse monoid $\mathsf{B}(C)$
under non-commutative Stone duality is the usual groupoid $\mathcal{G}(C)$ associated with the higher rank graph $C$.
We refer the reader to \cite{Lawson2022} for a discussion of non-commutative Stone duality
which is needed during and after Lemma~\ref{lem:audrey}. 

Let $C$ be a $k$-graph.
Define $\Omega_{k}$ to be the category of all ordered pairs $(\mathbf{m}, \mathbf{n}) \in \mathbb{N}^k \times \mathbb{N}^k$, where $\mathbf{m} \leq \mathbf{n}$;
see \cite[Examples~1.7(ii)]{KP}.
A {\em $k$-tiling} in $C$ is a degree-preserving functor $w$ from $\Omega_{k}$ to $C$. These are called \emph{infinite paths} in $C$ elsewhere in the literature (for example in \cite{KP}).
Explicitly, $w$ satisfies the following three conditions:
\begin{enumerate}
\item $w(\mathbf{m},\mathbf{m})$ is an identity.
\item $w(\mathbf{m}, \mathbf{n})w(\mathbf{n},\mathbf{p}) = w(\mathbf{m},\mathbf{p})$
\item $d(w(\mathbf{m},\mathbf{n})) = \mathbf{n} - \mathbf{m}$.
\end{enumerate}
Denote the set of all $k$-tilings of $C$ by $C^{\infty}$.
If $w$ is a $k$-tiling, define $\mathbf{r}(w) = w(\mathbf{0},\mathbf{0})$.
Denote by $C^{\infty}(e)$ all $k$-tilings $w$ such that $e = \mathbf{r}(w)$.
For each $\mathbf{p} \in \mathbb{N}^{k}$ and $w \in C^{\infty}$ define $\sigma^{\mathbf{p}}(w) \in C^{\infty}$ by
$\sigma^{\mathbf{p}}(w)(\mathbf{m}, \mathbf{n}) = w(\mathbf{p} + \mathbf{m}, \mathbf{p} + \mathbf{n})$.
We shall now show that $k$-tilings of $C$ can be replaced by suitable subsets of $C$.

\begin{lemma}\label{lem:good-tilings} Let $C$ be a $k$-graph. 
For each $k$-tiling $w$, define
$$
\mathscr{C}_w = \{w(\mathbf{0}, \mathbf{m}) \colon \mathbf{m} \in \mathbb{N}^{k}\}.
$$
Then the map $w \mapsto \mathscr{C}_w$ is a bijection between $k$-tilings of $C$ and the set of good subsets of $C$.
\end{lemma}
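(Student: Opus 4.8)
The plan is to show that the map $w \mapsto \mathscr{C}_w$ lands in the set of good subsets, that it is injective, and that it is surjective, using the characterisation of good subsets from the definition just before Remark~\ref{rem:good-properties} (any two elements comparable; one element of every degree $\mathbf{m} \in \mathbb{N}^k$) together with the unique factorisation property.

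First I would check that $\mathscr{C}_w$ is good. Condition (2) is immediate: for any $\mathbf{m}$, the element $w(\mathbf{0},\mathbf{m})$ lies in $\mathscr{C}_w$ and has $d(w(\mathbf{0},\mathbf{m})) = \mathbf{m}$ by condition (3) in the definition of a $k$-tiling. For condition (1), given $\mathbf{m}, \mathbf{n} \in \mathbb{N}^k$, set $\mathbf{p} = \mathbf{m} \vee \mathbf{n}$; then the factorisation property $w(\mathbf{0},\mathbf{p}) = w(\mathbf{0},\mathbf{m})w(\mathbf{m},\mathbf{p}) = w(\mathbf{0},\mathbf{n})w(\mathbf{n},\mathbf{p})$ shows $w(\mathbf{0},\mathbf{p}) \in w(\mathbf{0},\mathbf{m})C \cap w(\mathbf{0},\mathbf{n})C$, so the two elements are comparable (dependent). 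Hence $\mathscr{C}_w$ is good.

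For injectivity, suppose $\mathscr{C}_v = \mathscr{C}_w$. For each $\mathbf{m}$, both $v(\mathbf{0},\mathbf{m})$ and $w(\mathbf{0},\mathbf{m})$ lie in this common set and have degree $\mathbf{m}$; by the observation recorded just before Lemma~\ref{lem:good-filter} (two elements of a good subset with the same degree are equal, via comparability and Lemma~\ref{lem:levi}), $v(\mathbf{0},\mathbf{m}) = w(\mathbf{0},\mathbf{m})$ for all $\mathbf{m}$. It then follows from the factorisation property that $v(\mathbf{m},\mathbf{n}) = w(\mathbf{m},\mathbf{n})$ for all $\mathbf{m} \le \mathbf{n}$: indeed $v(\mathbf{0},\mathbf{m})v(\mathbf{m},\mathbf{n}) = v(\mathbf{0},\mathbf{n}) = w(\mathbf{0},\mathbf{n}) = w(\mathbf{0},\mathbf{m})w(\mathbf{m},\mathbf{n})$, and cancellativity (Lemma~\ref{lem:important-hrg}) finishes it. So $v = w$.

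Surjectivity is the main point. Given a good subset $A$, for each $\mathbf{m}$ let $a_{\mathbf{m}}$ be the unique element of $A$ of degree $\mathbf{m}$ (uniqueness again by the observation before Lemma~\ref{lem:good-filter}). For $\mathbf{m} \le \mathbf{n}$, since $a_{\mathbf{m}}$ and $a_{\mathbf{n}}$ are comparable and $d(a_{\mathbf{n}}) = \mathbf{n} \ge \mathbf{m} = d(a_{\mathbf{m}})$, Lemma~\ref{lem:levi} gives a unique $t$ with $a_{\mathbf{n}} = a_{\mathbf{m}} t$; define $w(\mathbf{m},\mathbf{n}) := t$, and $w(\mathbf{m},\mathbf{m})$ to be the identity $\mathbf{d}(a_{\mathbf{m}})$. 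One then verifies the three axioms of a $k$-tiling: the degree condition holds since $d(t) = \mathbf{n} - \mathbf{m}$; the identity condition is by construction; and the cocycle condition $w(\mathbf{m},\mathbf{n})w(\mathbf{n},\mathbf{p}) = w(\mathbf{m},\mathbf{p})$ follows from $a_{\mathbf{p}} = a_{\mathbf{m}} w(\mathbf{m},\mathbf{p})$ and $a_{\mathbf{p}} = a_{\mathbf{n}} w(\mathbf{n},\mathbf{p}) = a_{\mathbf{m}} w(\mathbf{m},\mathbf{n}) w(\mathbf{n},\mathbf{p})$ together with left cancellation. Functoriality (degree preservation) is then automatic. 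Finally $w(\mathbf{0},\mathbf{m}) = a_{\mathbf{m}}$, so $\mathscr{C}_w = \{a_{\mathbf{m}} : \mathbf{m} \in \mathbb{N}^k\} = A$, using Remark~\ref{rem:good-properties} to know that every element of $A$ is one of the $a_{\mathbf{m}}$. The only mild subtlety — the step I expect to need the most care — is confirming well-definedness of $w(\mathbf{m},\mathbf{n})$ and the cocycle identity purely from comparability plus Lemma~\ref{lem:levi} and cancellativity, but this is routine given the tools already established.
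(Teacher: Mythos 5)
Your proof is correct. The forward direction (that $\mathscr{C}_w$ is a good subset) is the same argument the paper gives; for the converse, the paper simply cites \cite[Remarks~2.2]{KP} for the existence and uniqueness of a $k$-tiling with prescribed initial segments, whereas you reconstruct that argument in full --- defining $a_{\mathbf{m}}$ as the unique degree-$\mathbf{m}$ element of $A$, extracting $w(\mathbf{m},\mathbf{n})$ via Lemma~\ref{lem:levi}, and verifying the cocycle identity by cancellativity --- which is exactly the content of that citation, so the two proofs agree in substance. One cosmetic point: at the very end, the fact you need (that every element of $A$ equals some $a_{\mathbf{m}}$) is the uniqueness-by-degree observation recorded just before Lemma~\ref{lem:good-filter}, not really Remark~\ref{rem:good-properties}, but this does not affect the argument.
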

\begin{proof}Let  $w \colon \Omega_{k} \rightarrow C$ be a $k$-tiling.
Observe that $\mathbf{r}(w(\mathbf{0}, \mathbf{m})) = \mathbf{r}(w(\mathbf{0}, \mathbf{0})) = e$, say.
Thus $\mathscr{C}_{w} \subseteq eC$.
The elements of  $\mathscr{C}_{w}$ are pairwise dependent and for each $\mathbf{n} \in \mathbb{N}^{k}$
there exists $x \in \mathscr{C}_{w}$ such that $d(x) = \mathbf{n}$.
So $\mathscr{C}_w$ is a good subset.
Conversely, let $A \subseteq C$ be a good subset. 
Then, \cite[Remarks~2.2]{KP} shows that there is a unique $k$-tiling $w_A$ such that $\mathscr{C}_{w_A} = A$.
\end{proof}

\noindent
{\bf Definition. }Let $C$ be a higher rank graph.
A subset $A \subseteq C$ is called {\em expanding} if each pair of elements of $A$ is dependent
and for each $\mathbf{m} \in \mathbb{N}^{k}$ there exists $a \in A$ such that $d(a) \geq \mathbf{m}$.\\

Every good subset is an expanding subset.
The following result shows how to get good subsets from expanding subsets.

\begin{lemma}\label{lem:boozy} Let $C$ be a higher rank graph.
Let $A$ be an expanding subset of $C$.
Then $\mathsf{Pref}(A)$ is a good subset.
\end{lemma}
\begin{proof} Let $x,y \in \mathsf{Pref}(A)$.
Then $a = xu$ and $b = yv$ for some $a,b \in A$ and $u,v \in C$.
The elements $a$ and $b$ are comparable.
Thus $xuc = yvd$ for some $c,d \in C$.
It follows that $x$ and $y$ are comparable.
Now let $\mathbf{m} \in \mathbb{N}^{k}$ be arbitrary.
Then there exists $a \in A$ such that $d(a) \geq \mathbf{m}$.
Let $d(a) = \mathbf{m} + \mathbf{n}$.
By the (UFP), there exists $x,y \in C$ such that $a = xy$, $d(x) = \mathbf{m}$ and $d(y) = \mathbf{n}$.
By definition, $x \in \mathsf{Pref}(A)$ and $d(x) = \mathbf{m}$.
We have therefore proved that $\mathsf{Pref}(A)$ is a good subset.
\end{proof}

\begin{lemma}\label{lem:hocus-pocus} Let $C$ be a $k$-graph.
Suppose that $A$ is an expanding subset such that $A \subseteq \mathbf{d}(x)C$.
Then $xA$ is an expanding subset.
\end{lemma}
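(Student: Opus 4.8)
The plan is to verify directly the two conditions in the definition of an expanding subset for the set $xA = \{xa \colon a \in A\}$. The first point to record is that this set is well-defined: since $A \subseteq \mathbf{d}(x)C$, every $a \in A$ satisfies $\mathbf{r}(a) = \mathbf{d}(x)$, so the product $xa$ exists for each $a \in A$. This is the only place the hypothesis $A \subseteq \mathbf{d}(x)C$ is used, and it is what makes the statement go through.

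For the comparability condition, I would take two arbitrary elements $xa, xa' \in xA$ with $a,a' \in A$. Since $A$ is expanding, $a$ and $a'$ are comparable, so there exist $u,v \in C$ with $au = a'v$. Left-multiplying by $x$ (which is legitimate, as $\mathbf{d}(x) = \mathbf{r}(a) = \mathbf{r}(a')$) gives $x(au) = x(a'v)$, i.e. $(xa)u = (xa')v$, so $xa$ and $xa'$ are comparable. For the degree condition, given $\mathbf{m} \in \mathbb{N}^{k}$, use that $A$ is expanding to pick $a \in A$ with $d(a) \geq \mathbf{m}$; then, since $d$ is a functor, $d(xa) = d(x) + d(a) \geq d(a) \geq \mathbf{m}$, so $xa \in xA$ has degree at least $\mathbf{m}$.

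There is no genuine obstacle in this argument: both conditions are preserved under left translation essentially for free, comparability because it is a purely multiplicative notion and the degree condition because degrees add. The only thing requiring care is the well-definedness of $xA$, which is precisely guaranteed by the assumption $A \subseteq \mathbf{d}(x)C$, and this should be stated explicitly at the start of the proof.
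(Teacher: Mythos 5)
Your proof is correct and follows essentially the same route as the paper's: comparability of $xa$ and $xa'$ is obtained by left-multiplying the relation $au = a'v$ by $x$, and the degree condition follows from $d(xa) = d(x) + d(a) \geq \mathbf{m}$. Your explicit remark that the hypothesis $A \subseteq \mathbf{d}(x)C$ is exactly what makes the products $xa$ defined is a small but worthwhile addition that the paper leaves implicit.
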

\begin{proof} Let $xa, xb \in xA$.
Then $a,b \in A$ and so are comparable.
Thus $au = bv$ for some $u,v \in C$.
It follows that $(xa)u = (xb)v$ and so $xa$ and $xb$ are comparable.
Let $\mathbf{m} \in \mathbb{N}^{k}$.
Then there exists $a \in A$ such that $d(a) \geq \mathbf{m}$.
It follows that $d(xa) \geq \mathbf{m}$.
We have therefore proved that $xA$ is an expanding subset.
\end{proof}

Let $C$ be a higher rank graph,
let $x \in C$, 
and let $w$ a $k$-tiling such that $\mathbf{d}(x) = \mathbf{r}(w)$. 
We define the $k$-tiling $xw$.
By Lemma~\ref{lem:good-tilings},
we can construct from $w$ the good subset $\mathscr{C}_w$.
By Lemma~\ref{lem:hocus-pocus}, the set $x \mathscr{C}_w$ is an expanding subset.
Thus by Lemma~\ref{lem:boozy}, the set $\mathsf{Pref}(x \mathscr{C}_w)$ is a good subset.
It follows that $\mathsf{Pref}(x \mathscr{C}_w)$ corresponds, by Lemma~\ref{lem:good-tilings},
to a $k$-tiling, which we denote by $xw$. \\

\noindent
{\bf Definition. }Let $C$ be a higher rank graph.
Let $A$ be a good subset and let $x \in A$.
Define $x^{-1}A$ to be all elements $a$ such that $xa \in A$.\\

\begin{lemma}\label{lem:shazam}
Let $C$ be a higher rank graph,
let $A$ be a good subset in $C$, 
and let $x \in A$.
Then $x^{-1}A$ is an expanding set.
\end{lemma}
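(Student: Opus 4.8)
The plan is to verify the two defining properties of an expanding set for $x^{-1}A = \{a \in C : xa \in A\}$ directly from the definitions, using only that $C$ is cancellative (Lemma~\ref{lem:important-hrg}(1)), the factorisation Lemma~\ref{lem:levi}, and the two defining properties of the good subset $A$. It is worth noting at the outset that $x^{-1}A$ is non-empty, since $x \in A$ and $x\mathbf{d}(x) = x$ force $\mathbf{d}(x) \in x^{-1}A$, and that every element of $x^{-1}A$ has range $\mathbf{d}(x)$, since $xa$ being defined already requires $\mathbf{r}(a) = \mathbf{d}(x)$.

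First I would establish pairwise comparability. Given $a, b \in x^{-1}A$, the elements $xa$ and $xb$ both lie in the good subset $A$, hence are comparable; so $(xa)u = (xb)v$ for some $u, v \in C$. Rewriting this as $x(au) = x(bv)$ and cancelling $x$ on the left yields $au = bv$, so $a$ and $b$ are comparable. This step is routine.

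Next I would establish the degree condition. Fix $\mathbf{m} \in \mathbb{N}^{k}$ and put $\mathbf{n} = d(x) + \mathbf{m}$. Since $A$ is good, there is $c \in A$ with $d(c) = \mathbf{n}$, and since $c, x \in A$ they are comparable, say $cu = xv$. As $d(c) = d(x) + \mathbf{m} \ge d(x)$, Lemma~\ref{lem:levi} applied to $cu = xv$ (taking $c$ in the role of the longer first factor, $x$ in the role of $u$) produces $t \in C$ with $c = xt$. Then $xt = c \in A$, so $t \in x^{-1}A$, and $d(t) = d(c) - d(x) = \mathbf{m} \ge \mathbf{m}$. Hence for every $\mathbf{m}$ there is an element of $x^{-1}A$ of degree at least (in fact exactly) $\mathbf{m}$, and combining this with the previous paragraph shows $x^{-1}A$ is an expanding set.

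I do not anticipate a genuine obstacle; the only point needing a little care is matching the variables correctly when invoking Lemma~\ref{lem:levi} and observing that the factorisation it returns is automatically of the form $c = xt$ required to produce a member of $x^{-1}A$. In fact this argument shows slightly more — that $x^{-1}A$ is itself a good subset — but the weaker conclusion stated is all that is needed in the sequel.
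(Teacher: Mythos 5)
Your proposal is correct and follows essentially the same route as the paper's own proof: comparability via left cancellation after multiplying by $x$, and the degree condition by choosing $c \in A$ of degree $d(x) + \mathbf{m}$ and factoring $c = xt$. The only difference is cosmetic — you justify the factorisation $c = xt$ explicitly via comparability and Lemma~\ref{lem:levi}, where the paper appeals a little more tersely to the UFP.
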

\begin{proof} The product $x\mathbf{d}(x)$ is defined.
Thus $\mathbf{d}(x) \in x^{-1}A$.
Let $u,v \in x^{-1}X$.
Then $xu,xv \in A$.
Thus $xub = xvc$ for some $b,c \in C$.
By cancellation, $ub = vc$.
Thus $u$ and $v$ are comparable.
Let $\mathbf{m} \in \mathbb{N}^{k}$.
Put $\mathbf{n} = \mathbf{m} + d(x)$.
Then there exists $a \in A$ such that $\mathbf{n} = d(a)$.
By the UFP, we can write $a = xy$ where $d(y) = \mathbf{m}$.
Thus $y \in x^{-1}A$.
\end{proof}

The proof of the following is immediate.

\begin{lemma}\label{lem:bien} Let $C$ be a $k$-graph and let $A$ be a good subset of $C$.
Suppose that the product $xy$ exists and that $xy \in A$.
Then $(xy)^{-1}A = y^{-1}(x^{-1}A)$.
\end{lemma}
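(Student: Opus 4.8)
The plan is to unwind both sides to the same subset of $C$ and conclude from associativity of composition in the category, taking a little care that all the composites that appear are actually defined.

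First I would record the preliminaries that make the notation legitimate. Since $xy \in A$ and a good subset is a filter (Lemma~\ref{lem:good-filter}), the defining property of a filter applied to the factorisation $xy = x\cdot y$ gives $x \in A$, so $x^{-1}A$ is defined; moreover $y \in x^{-1}A$ precisely because $xy\in A$, so $y^{-1}(x^{-1}A) = \{b\in C : yb \in x^{-1}A\}$ makes sense, and $(xy)^{-1}A$ is defined because $xy \in A$.

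Next I would fix an arrow $b\in C$ and chase the equivalences. The composite $(xy)b$ is defined iff $\mathbf{d}(xy) = \mathbf{r}(b)$; since $\mathbf{d}(xy) = \mathbf{d}(y)$, this holds iff $yb$ is defined. Whenever $yb$ is defined we have $\mathbf{r}(yb) = \mathbf{r}(y) = \mathbf{d}(x)$, the last equality because $\exists xy$, so $x(yb)$ is also defined, and by associativity $(xy)b = x(yb)$. Hence
\[
b \in (xy)^{-1}A \iff (xy)b \in A \iff x(yb)\in A \iff b \in y^{-1}(x^{-1}A),
\]
which is the asserted equality of sets. The only thing that even resembles an obstacle is the bookkeeping on domains and codomains needed before invoking associativity; there is no genuine difficulty, which is why the statement is flagged as immediate.
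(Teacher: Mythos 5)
Your proof is correct and is exactly the direct verification the paper has in mind (the paper simply declares the lemma immediate and gives no argument). The domain/codomain bookkeeping and the observation that $x\in A$ follows from the filter property of good subsets are the right preliminaries, and the rest is associativity, as you say.
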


Let $A$ be a good subset and let $\mathbf{m} \in \mathbb{N}^{k}$.
Then there is a unique  $x \in A$ such that $d(x) = \mathbf{m}$.
Define
$$\sigma^{\mathbf{m}}(A) = \mathsf{Pref}(x^{-1}A).$$

\begin{lemma}\label{lem:windfarm} Let $C$ be a higher rank graph.
Let $w$ be a $k$-tiling and $x = w(\mathbf{0},\mathbf{m})$.
Then $\mathsf{Pref}(x^{-1}\mathscr{C}_w) = \mathscr{C}_{\sigma^{\mathbf{m}}}(w)$.
\end{lemma}
\begin{proof} Let $y \in \mathsf{Pref}(x^{-1}\mathscr{C}_w)$.
Then $yv \in x^{-1}\mathscr{C}_{w}$ for some $v$.
It follows that $xyv \in \mathscr{C}_{w}$.
Thus $xyv = w(\mathbf{0},\mathbf{n})$.
We may write this as $xyv = w(\mathbf{0},\mathbf{m})w(\mathbf{m},\mathbf{n})$.
By uniqueness, $yv =  w(\mathbf{m},\mathbf{n})$ where $\mathbf{m} \leq \mathbf{n}$.
We may write this as
$yv = w(\mathbf{m},\mathbf{r})w(\mathbf{r},\mathbf{n})$ where $d(y) = \mathbf{r} - \mathbf{m}$.
It follows that $y =  w(\mathbf{m},\mathbf{r}) = (\sigma^{\mathbf{m}}w)(\mathbf{0},  \mathbf{r}-\mathbf{m})$.
Thus $y \in  \mathscr{C}_{\sigma^{\mathbf{m}}}(w)$.
We now prove the reverse inclusion.
Let $y \in \mathscr{C}_{\sigma^{\mathbf{m}}}(w)$.
Then $y = (\sigma^{\mathbf{m}}w)(\mathbf{0},\mathbf{n})$.
By definition $y = w(\mathbf{m},\mathbf{n} + \mathbf{m})$.
Thus $xy = w(\mathbf{0},\mathbf{n} + \mathbf{m}) \in \mathscr{C}_w$.
It follows that $y \in x^{-1} \mathscr{C}_w$.
\end{proof}

We can now define $\mathcal{G}(C)$, the usual groupoid associated with $C$, as follows \cite{KP, FMY}.
Its elements are triples $(w_{1}, \mathbf{n}, w_{2}) \in C^{\infty} \times \mathbb{Z}^{k} \times C^{\infty}$
where $\mathbf{l}, \mathbf{m} \in \mathbb{N}^{k}$ are such that
$\mathbf{n} = \mathbf{l} - \mathbf{m}$ and $\sigma^{\mathbf{l}}(w_{1}) = \sigma^{\mathbf{m}}(w_{2})$.
Define $\mathbf{d}(w_{1}, \mathbf{n}, w_{2}) = (w_{2}, \mathbf{0}, w_{2})$ and $\mathbf{r}(w_{1}, \mathbf{n}, w_{2}) = (w_{1}, \mathbf{0}, w_{1})$.
Multiplication is defined by $(w_{1}, \mathbf{m}, w_{2})(w_{2}, \mathbf{n}, w_{3}) = (w_{1}, \mathbf{m} + \mathbf{n}, w_{3})$
and the inverse by $(w_{2}, \mathbf{n},w_{1})^{-1} = (w_{1}, -\mathbf{n}, w_{2})$.
Let $x,y \in C$ such that $\mathbf{d}(x) = \mathbf{d}(y)$.
Then, a topology is defined on $\mathcal{G}(C)$ with base 
$$Z(x,y) = \{ (xw, d(x) - d(y), yw) \colon w \in C^{\infty}(\mathbf{d}(x))\}.$$

The remainder of this section is devoted to proving that the groupoid $\mathcal{G}(C)$
is the \'etale groupoid associated with the Boolean inverse monoid $\mathsf{B}(C)$ under non-commutative Stone duality when $C$ is a $k$-graph.
The first step in establishing this result is to describe  $\mathcal{G}(C)$ using good subsets.
We say that a triple $(A,\mathbf{n},B)$, 
where $A$ and $B$ are good subsets and $\mathbf{n} \in \mathbb{Z}^{k}$, is {\em allowable}
if there exists $x \in A$ and $y \in B$
such that $\mathbf{d}(x) = \mathbf{d}(y)$,
$\mathbf{n} = d(x) - d(y)$,
and $\mathsf{Pref}(x^{-1}A) = \mathsf{Pref}(y^{-1}B)$.
The set of such allowable triples becomes a groupoid when we define
$\mathbf{d}(A,\mathbf{n},B) = (B,\mathbf{0},B)$, 
$\mathbf{r}(A,\mathbf{n},B) = (A,\mathbf{0},A)$,
$(A,\mathbf{n},B)^{-1} = (B,-\mathbf{n},A)$,
and a partial product by 
$(A,\mathbf{m}, B)(B,\mathbf{n},C) = (A,\mathbf{m} + \mathbf{n},C)$.
The groupoid  $\mathcal{G}(C)$ is isomorphic to the groupoid of allowable triples
by the function 
$(w_{2},\mathbf{m},w_{1}) \mapsto (\mathscr{C}_{w_{2}}, \mathbf{m}, \mathscr{C}_{w_{1}})$
using Lemma~\ref{lem:good-tilings}.
The base for the topology on the set of allowable triples has the following form.
Let $x,y \in C$ such that $\mathbf{d}(x) = \mathbf{d}(y)$.
Then
$$Z'(x,y) = \{ (\mathsf{Pref}(xA), d(x) - d(y), \mathsf{Pref}(yA)) \colon A \subseteq \mathbf{d}(x)C\text{ is expanding}\}$$
is a set of allowable triples.

\begin{lemma}\label{lem:new-groupoid} Let $C$ be a $k$-graph.
The set of allowable triples forms a groupoid isomorphic to the groupoid $\mathcal{G}(C)$.
\end{lemma}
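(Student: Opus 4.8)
The plan is to transport the groupoid structure of $\mathcal{G}(C)$ across the bijection $\Phi\colon w \mapsto \mathscr{C}_w$ between $k$-tilings and good subsets furnished by Lemma~\ref{lem:good-tilings}. Concretely, I would set $\Psi(w_{1},\mathbf{n},w_{2}) = (\mathscr{C}_{w_{1}},\mathbf{n},\mathscr{C}_{w_{2}})$. Since $\Phi$ is a bijection, $\Psi$ is automatically injective and its image is the set of all triples $(A,\mathbf{n},B)$ with $A$ and $B$ good; so the first and principal task is to show that such a triple is \emph{allowable}, in the sense defined just above, if and only if $(\Phi^{-1}A,\mathbf{n},\Phi^{-1}B)$ belongs to $\mathcal{G}(C)$.

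The key point for this identification is that $\Phi$ intertwines the two shift maps: for $\mathbf{m}\in\mathbb{N}^{k}$ one has $\Phi(\sigma^{\mathbf{m}}(w)) = \sigma^{\mathbf{m}}(\Phi(w))$, where on the right $\sigma^{\mathbf{m}}(A)=x^{-1}A$ for the unique $x\in A$ with $d(x)=\mathbf{m}$ (and one checks in passing that $x^{-1}A$ is in fact good, sharpening Lemma~\ref{lem:shazam}). This is a short computation with the UFP: writing $x=w(\mathbf{0},\mathbf{m})$, an element $a$ lies in $x^{-1}\mathscr{C}_{w}$ exactly when $xa=w(\mathbf{0},\mathbf{p})=x\,w(\mathbf{m},\mathbf{p})$ for some $\mathbf{p}\geq\mathbf{m}$, i.e.\ exactly when $a=w(\mathbf{m},\mathbf{p})=\sigma^{\mathbf{m}}(w)(\mathbf{0},\mathbf{p}-\mathbf{m})$. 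Granting this, a triple $(A,\mathbf{n},B)$ with $A,B$ good is allowable if and only if there are $\mathbf{l},\mathbf{m}\in\mathbb{N}^{k}$ with $\mathbf{n}=\mathbf{l}-\mathbf{m}$ and $\sigma^{\mathbf{l}}(A)=\sigma^{\mathbf{m}}(B)$: taking $x\in A$, $y\in B$ with $d(x)=\mathbf{l}$, $d(y)=\mathbf{m}$ (these exist and are unique because $A,B$ are good), the condition $x^{-1}A=y^{-1}B$ is precisely $\sigma^{\mathbf{l}}(A)=\sigma^{\mathbf{m}}(B)$, and the requirement $\mathbf{d}(x)=\mathbf{d}(y)$ then comes for free, both sides being the unique degree-$\mathbf{0}$ element of this common good subset. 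Applying $\Phi^{-1}$ and using injectivity of $\Phi$, this reads $\sigma^{\mathbf{l}}(\Phi^{-1}A)=\sigma^{\mathbf{m}}(\Phi^{-1}B)$, which is exactly the defining condition for membership of $\mathcal{G}(C)$. Hence $\Psi$ restricts to a bijection from $\mathcal{G}(C)$ onto the set of allowable triples.

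Next I would equip the allowable triples with the evident operations, $\mathbf{d}(A,\mathbf{n},B)=(B,\mathbf{0},B)$, $\mathbf{r}(A,\mathbf{n},B)=(A,\mathbf{0},A)$, $(A,\mathbf{n},B)^{-1}=(B,-\mathbf{n},A)$, and $(A,\mathbf{m},B)(B,\mathbf{n},D)=(A,\mathbf{m}+\mathbf{n},D)$, and check that $\Psi$ intertwines them with those of $\mathcal{G}(C)$. Compatibility with $\mathbf{d}$, $\mathbf{r}$ and inversion is immediate from the definitions, and composability matches because $\Phi$ is injective (so $B=B'$ iff $w_{B}=w_{B'}$). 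The only point needing an argument is that the product of two composable allowable triples is again allowable, and this is exactly what Lemma~\ref{lem:nice-one} delivers: given witnesses $x_{1}\in A$, $y_{1}\in B$ for $(A,\mathbf{m},B)$ and $y_{2}\in B$, $z_{2}\in D$ for $(B,\mathbf{n},D)$, use Lemma~\ref{lem:nice-one} to raise $y_{1}$ and $y_{2}$ to the unique $y\in B$ of degree $d(y_{1})\vee d(y_{2})$, obtaining new witnesses $x'\in A$ and $z'\in D$ with $x'^{-1}A=y^{-1}B=z'^{-1}D$, $d(x')-d(y)=\mathbf{m}$ and $d(y)-d(z')=\mathbf{n}$; these witness that $(A,\mathbf{m}+\mathbf{n},D)$ is allowable. (Alternatively this closure is automatic once the previous paragraph is in hand, since $\mathcal{G}(C)$ is closed under its product.) With that, $\Psi$ visibly preserves products, so it is a groupoid isomorphism, and the groupoid axioms for the allowable triples follow by transport of structure from $\mathcal{G}(C)$.

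I expect the main obstacle to be purely the bookkeeping of the second paragraph, namely matching the ``allowable'' condition with the tail-equivalence condition defining $\mathcal{G}(C)$ via the shift-map identification; everything else is formal. If a topological statement is also wanted, one would further check that $\Psi$ carries each basic open set $Z(x,y)$ onto $\{(A,d(x)-d(y),B): x\in A,\ x^{-1}A=y^{-1}B,\ \mathbf{d}(x)=\mathbf{d}(y)\}$, which is the basic clopen set coming from non-commutative Stone duality, so that $\Psi$ is a homeomorphism as well.
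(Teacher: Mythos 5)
Your proposal is correct and follows essentially the same route as the paper: transport the groupoid structure across the bijection $w \mapsto \mathscr{C}_{w}$ of Lemma~\ref{lem:good-tilings}, using Lemma~\ref{lem:nice-one} to close the set of allowable triples under composition. The paper compresses the final step to the assertion that the map is ``a well-defined bijection and functor,'' whereas you supply the detail it omits -- the computation that $\mathscr{C}_{\sigma^{\mathbf{m}}(w)} = x^{-1}\mathscr{C}_{w}$ for $x = w(\mathbf{0},\mathbf{m})$, which is exactly what matches the allowability condition with the tail-equivalence condition defining $\mathcal{G}(C)$ -- so your write-up is, if anything, more complete.
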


We shall now show that the groupoid $\mathcal{G}(C)$ is isomorphic to the groupoid constructed from the Boolean
inverse monoid $\mathsf{B}(C)$ as described in \cite{Lawson2022}.
We therefore need to relate ultrafilters in $\mathsf{B}(C)$ with allowable triples.

Lemma~\ref{lem:good-tilings}, Lemma~\ref{lem:good-filter} Proposition~\ref{prop:mo}
and the fact that in $C$ all tight filters are ultrafilters, we have proved the following.

\begin{proposition}\label{prop:eight} Let $C$ be a higher rank graph with a finite number of identities
which is row finite and has no sources. For each $k$-tiling $w$ of $C$, the set
$\mathsf{P}_w = \{w(0,\mathbf{m})w(0, \mathbf{m})^{-1} \colon \mathbf{m} \in \mathbb{N}^k\}^{\uparrow}$ is an ultrafilter
in $\mathsf{E}(\mathsf{R}(C))$, and the map $w \mapsto \mathsf{P}_w$ is a bijection between
$k$-tilings in $C$ and ultrafilters in $\mathsf{E}(\mathsf{R}(C))$.
\end{proposition}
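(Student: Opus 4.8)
The plan is to assemble Proposition~\ref{prop:eight} almost entirely from the dictionary of bijections established earlier, so that the argument is a short chain of compositions rather than a fresh construction. First I would invoke Lemma~\ref{lem:good-tilings} to identify $k$-tilings of $C$ with good subsets of $C$ via $w \mapsto \mathscr{C}_w$. Next, Lemma~\ref{lem:good-filter} tells us that good subsets are exactly the tight filters in $C$, which are exactly the maximal filters in $C$; in particular they form a single well-defined class of filters. Then Proposition~\ref{prop:mo} provides an order-isomorphism $\mathsf{P}$ between filters in $C$ and prime filters in $\mathsf{E}(\mathsf{R}(C))$, restricting to a bijection between maximal filters (equivalently tight filters) in $C$ and ultrafilters in $\mathsf{E}(\mathsf{R}(C))$ (the statement of Proposition~\ref{prop:mo} records precisely that ultrafilters correspond to ultrafilters and tight filters to tight filters). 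Composing these three bijections gives a bijection between $k$-tilings of $C$ and ultrafilters in $\mathsf{E}(\mathsf{R}(C))$.

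The remaining point is purely bookkeeping: I must check that the composite bijection sends $w$ to the set $\mathsf{P}_w = \{w(0,\mathbf{m})w(0,\mathbf{m})^{-1} : \mathbf{m}\in\mathbb{N}^k\}^{\uparrow}$ named in the statement. By Lemma~\ref{lem:good-tilings}, $w$ corresponds to the good subset $\mathscr{C}_w = \{w(0,\mathbf{m}) : \mathbf{m}\in\mathbb{N}^k\}$. Applying the map $\mathsf{P}$ of Proposition~\ref{prop:mo} to this filter yields $\mathsf{P}(\mathscr{C}_w) = \{xx^{-1} : x\in\mathscr{C}_w\}^{\uparrow}\cap\mathsf{E}(\mathsf{R}(C))$, which is exactly $\{w(0,\mathbf{m})w(0,\mathbf{m})^{-1} : \mathbf{m}\in\mathbb{N}^k\}^{\uparrow}$ as an upward-closed subset of $\mathsf{E}(\mathsf{R}(C))$ — i.e.\ $\mathsf{P}_w$. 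So the composite bijection is the claimed map $w\mapsto\mathsf{P}_w$, and the proposition follows. I would also note, for definiteness, that the hypotheses ``finite number of identities, row finite, no sources'' guarantee (via Lemma~\ref{lem:finitely-aligned-rf}, Lemma~\ref{lem:code}, Lemma~\ref{lem:important-hrg}, and the discussion at the end of Section~3) that $C$ is a strongly finitely aligned conical cancellative category with a finite number of identities, so that Proposition~\ref{prop:mo} and the results of Section~4 are applicable.

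The only genuine subtlety — and the step I would be most careful about — is matching the \emph{closure operations} correctly, namely confirming that the upward closure taken inside $C$ in the definition of $\mathscr{C}_w$ (none is needed, as good subsets are already prefix-closed) and the upward closure taken inside $\mathsf{E}(\mathsf{R}(C))$ in the definitions of $\mathsf{P}(A)$ and of $\mathsf{P}_w$ agree, so that no spurious idempotents are added or lost when passing through the correspondence. This is handled by Lemma~\ref{lem:oreo}(1), which says $xy^{-1}\le uv^{-1}$ iff $(x,y)=(us,vs)$ for some $s$, and hence $xx^{-1}\le uu^{-1}$ iff $x=us$ for some $s$; thus the upward closure of $\{w(0,\mathbf{m})w(0,\mathbf{m})^{-1}\}$ inside the idempotents corresponds under $\mathsf{F}$ (the inverse of $\mathsf{P}$ from Proposition~\ref{prop:mo}) precisely to the set of elements $u$ with some $w(0,\mathbf{m})=us$, which is exactly the prefix-closed set $\mathscr{C}_w$ because good subsets are closed under taking prefixes by definition. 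With this checked, the proof is a one-paragraph composition of cited results, and I would write it as such, citing Lemma~\ref{lem:good-tilings}, Lemma~\ref{lem:good-filter}, Proposition~\ref{prop:mo}, and Lemma~\ref{lem:oreo} in turn.
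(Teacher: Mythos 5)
Your proposal is correct and follows essentially the same route as the paper, which derives the proposition by combining Lemma~\ref{lem:good-tilings}, Lemma~\ref{lem:good-filter} and Proposition~\ref{prop:mo} together with the fact that tight filters in $C$ are ultrafilters. The extra bookkeeping you supply (checking via Lemma~\ref{lem:oreo} that the composite really is $w \mapsto \mathsf{P}_w$) is a reasonable elaboration of details the paper leaves implicit.
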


Let $A$ be a good subset of $C$.
By Lemma~\ref{lem:good-filter-more}, this is (precisely) a maximal filter in $C$.
Define $\mathsf{F}(A) = \{[xx^{-1}] \colon x \in A \}^{\uparrow}$ in $\mathsf{B}(C)$.
By Theorem~\ref{them:seven} and Proposition~\ref{prop:mo}, this is an idempotent ultrafilter in $\mathsf{B}(C)$.

\begin{lemma}\label{lem:audrey} 
Let $C$ be a higher rank graph with a finite number of identities
which is row finite and has no sources.
Then there is a bijection between the set of allowable triples in $C$ and the set of ultrafilters in $\mathsf{B}(C)$.
\end{lemma}
\begin{proof} Let $\tau = (A,\mathbf{n},B)$ be an allowable triple.
Then $A$ and $B$ are good sets,
there is an $x \in A$, such that $d(x) = \mathbf{l}$, and there is $y \in B$, such that $d(y) = \mathbf{m}$,
where $\mathbf{d}(x) = \mathbf{d}(y)$, $\mathbf{n} = \mathbf{l} - \mathbf{m}$ and $\mathsf{Pref}(x^{-1}A) = \mathsf{Pref}(y^{-1}B)$.
Observe that $xy^{-1}$ is a well-defined morphism in $\mathsf{R}(C)$.
Define $\mathcal{F} = \mathsf{F}(B)$.
Then $\mathcal{F}$ is an identity filter in $\mathsf{B}(C)$.
Put
$$\mathcal{A} = \mathcal{A}_{\tau} = ([xy^{-1}]\mathcal{F})^{\uparrow};$$
this is the ultrafilter in $\mathsf{B}(C)$ associated with the allowable triple $\tau$.

We now show that this ultrafilter is independent of the choices we made above.
Let $x_{1} \in A$ be such that $d(x_{1}) = \mathbf{l}_{1}$ and let $y_{1} \in B$ be such that $d(y_{1}) = \mathbf{m}_{1}$
where $\mathbf{d}(x_{1}) = \mathbf{d}(y_{1})$,
$\mathbf{n} = \mathbf{l}_{1} - \mathbf{m}_{1}$
and $x_{1}^{-1}A = y_{1}^{-1}B$.
We show that
$\mathcal{B} = ([x_{1}y_{1}^{-1}]\mathcal{F})^{\uparrow}$
and
$\mathcal{A} = ([xy^{-1}]\mathcal{F})^{\uparrow}$
are equal.
Consider the product $(yx^{-1})(x_{1}y_{1}^{-1})$.
Let $xC \cap x_{1}C = \{u_{1}, \ldots, u_{m}\}C$.
Since $x,x_{1} \in A$ there exists an $i$ such that $u_{i} = xp_{i} = x_{1}q_{i} \in A$ by Lemma~\ref{lem:filter-property}.
Observe that $(yp_{i})(y_{1}q_{i})^{-1} \leq (yx^{-1})(x_{1}y_{1}^{-1})$.
We have that $yp_{i}, y_{1}q_{i} \in B$
--- this follows because $p_{i} \in x^{-1}A = y^{-1}B$ and so $yp_{i} \in B$,
and $q_{i} \in x_{1}^{-1}A = y_{1}^{-1}B$ and so $y_{1}q_{i} \in B$.
We shall prove that $d(yp_{i}) = d(y_{1}q_{i})$ from which it will follow that $yp_{i} = y_{1}q_{i}$,
since $B$ is a filter and so a good subset.
But this follows from the fact that $\mathbf{l}_{1} - \mathbf{m}_{1} = \mathbf{l} - \mathbf{m}$.
We have therefore found an idempotent in $\mathcal{F}$ below  $[yx^{-1}][x_{1}y_{1}^{-1}]$.
This proves that $\mathcal{A} = \mathcal{B}$.

We now go in the opposite direction.
Let $\mathcal{A}$ be an ultrafilter in $\mathsf{B}(C)$.
Then, using the fact that ultrafilters are prime,
we may write this in the form $([xy^{-1}]\mathcal{F})^{\uparrow}$ where $[yy^{-1}] \in \mathcal{F}$
and $\mathcal{F}$ is an idempotent ultrafilter in $\mathsf{B}(C)$.
The ultrafilter $\mathcal{F}$ is completely determined by the ultrafilter $\mathcal{F} \cap \mathsf{E}(\mathsf{B}(C))$ which is in
$\mathsf{E}(\mathsf{B}(C))$.
The ultrafilter $\mathcal{F} \cap \mathsf{E}(\mathsf{B}(C))$ arises from the maximal filter $B$ in $C$ via Proposition~\ref{prop:mo}.
Observe that $y \in B$.
Now $([xy^{-}] \mathcal{F}[yx^{-1}])^{\uparrow}$ is an idempotent ultrafilter in  $\mathsf{B}(C)$ that contains $[xx^{-1}]$.
This corresponds to a maximal filter $A$ in $C$ that contains $x$.
We have therefore constructed a triple $(A,d(x) - d(y),B)$.
It remains to show that it is allowable.
Thus we need to prove that $x^{-1}A = y^{-1}B$.
let $u \in y^{-1}A$.
Then $yu \in A$.
It follows that $[yu(yu)^{-1}] \in \mathcal{F} = \mathbf{d}(\mathcal{A})$.
But $[xy^{-1}] \in \mathcal{A}$.
It follows that $[xy^{-1}][yu(yu)^{-1}] \in \mathcal{A}$.
Thus $[xu(yu)^{-1}] \in \mathcal{A}$.
It folows that $xu \in B$, as required.
The converse is proved by symmetry.
\end{proof}

We now prove that the topological groupoid of allowable triples and the topological groupoid of ultrafilters in $\mathsf{B}(C)$ are isomorphic.
A composable pair of allowable triples has the following form:
$$(C, d(u)-d(v), B)(B, d(v) - d(w), A)$$
where $u \in C$, $v \in B$ and $w \in A$.
Their product is
$$(C, d(u) - d(w), A).$$
We now turn to products of ultrafilters.
Let $A$ and $B$ be ultrafilters such that $\mathbf{d}(A) = \mathbf{r}(B)$.
Then $A \cdot B = (AB)^{\uparrow}$.
Let $A = (aF)^{\uparrow}$ and $B = (bG)^{\uparrow}$ where $\mathbf{d}(A) = F$ and $\mathbf{d}(B) = G$.
Observe that $A \cdot B = (abG)^{\uparrow}$.
Now, $a\mathbf{r}(b) \in A\mathbf{d}(A) \subseteq A$.
Similarly, $\mathbf{d}(a)b \in B$.
It follows that we can assume $\mathbf{d}(a) = \mathbf{r}(b)$.
The ultrafilter associated with $(C, d(u)-d(v), B)$ is $([uv^{-1}]\mathcal{F})^{\uparrow}$
where $x \in B$ iff $[xx^{-1}] \in \mathcal{F}$.
The ultrafilter associated with $(B, d(v) - d(w), A)$ is
$([vw^{-1}]\mathcal{G})^{\uparrow}$.
The product of
$([uv^{-1}]\mathcal{F})^{\uparrow}$
and
$([vw^{-1}]\mathcal{G})^{\uparrow}$
is defined
and equals
$([uw^{-1}]\mathcal{G})^{\uparrow}$
which corresponds to the allowable triple
$(C, d(u) - d(w),A)$.
We now describe the topology.
Let $\mathbf{d}(x) = \mathbf{d}(y)$.
Then
$$Z'(x,y) = \{ (\mathsf{Pref}(xA), d(x) - d(y), \mathsf{Pref}(yA)) \colon A \subseteq \mathbf{d}(x)C\}$$
is a set of allowable triples.
This corresponds to the set of ultrafilters in $\mathsf{B}(C)$ that contain the element $[xy^{-1}]$.
It follows that the groupoids are isomorphic as topological groupoids.

\section{Invariants of the group $\mathscr{G}(C)$}

In this section, we shall assume that the Boolean inverse $\wedge$-monoid $\mathsf{B}(C)$ is simple and countably infinite.
The group $\mathscr{G}(C)$ coincides with the topological full group
of the groupoid $\mathcal{G}(C)$, which is a Hausdorff, \'etale, effective, minimal groupoid with
unit space homeomorphic to the Cantor space.
By \cite[Theorem~3.10]{Matui2015}, this implies that if
$\mathscr{G}(C) \cong \mathscr{G}(C')$, then $\mathcal{G}(C) \cong \mathcal{G}(C')$. Consequently
both the $K$-theory of the groupoid $C^*$-algebra $C^*(\mathcal{G}(C))$ and the homology, in the
sense of Matui, of the groupoid $\mathcal{G}(C)$ are isomorphism invariants of $\mathscr{G}(C)$.
When $C$ is a $k$-graph, this is particularly interesting because, as we saw in the preceding
section, the groupoid $\mathcal{G}(C)$ coincides with the one studied by Kumjian and Pask \cite{KP}. 
So known invariants of their groupoid are also invariants of our group $\mathscr{G}(C)$.
Kumjian and Pask prove in \cite[Corollary~3.5(i)]{KP}, that the $k$-graph $C^*$-algebra $C^*(C)$
coincides with the groupoid $C^*$-algebra $C^*(\mathcal{G}(C))$. 
Hence the $K$-theory of $C^*(C)$ provides an isomorphism invariant of $\mathscr{G}(C)$.
We therefore have the following.

\begin{corollary}
Let $C$ and $C'$ be row-finite aperiodic, cofinal higher rank graphs with finitely many vertices
and no sources. If $\mathscr{G}(C) \cong \mathscr{G}(C')$ as discrete groups, then $K_*(C^*(C))
\cong K_*(C^*(C'))$.
\end{corollary}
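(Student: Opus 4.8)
The plan is to run the reconstruction chain
\[
\mathscr{G}(C) \cong \mathscr{G}(C') \;\Longrightarrow\; \mathcal{G}(C) \cong \mathcal{G}(C') \;\Longrightarrow\; C^*(C) \cong C^*(C') \;\Longrightarrow\; K_*(C^*(C)) \cong K_*(C^*(C')),
\]
assembling the structural results proved above. First I would invoke Theorem~\ref{them:A}: since each of $C,C'$ is row finite, aperiodic, cofinal, with no sources and finitely many identities, $\mathsf{B}(C)$ and $\mathsf{B}(C')$ are countable simple Boolean inverse $\wedge$-monoids, each either finite --- hence a finite symmetric inverse monoid whose group of units is a finite symmetric group --- or countably infinite, in which case, by Proposition~\ref{prop:old}, Lemma~\ref{lem:finite-fundamental} and Theorem~\ref{them:fundamental}, it is a Tarski monoid, so its semilattice of idempotents is the countable atomless Boolean algebra. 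Because a group isomorphism preserves finiteness, and the group of units of $\mathsf{B}(C)$ is $\mathscr{G}(C)$ by Theorem~\ref{them:first-main}, the hypothesis $\mathscr{G}(C) \cong \mathscr{G}(C')$ puts $C$ and $C'$ into the same one of these two cases.

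In the finite case I would argue directly: $\mathscr{G}(C) \cong \operatorname{Sym}(n)$ and $\mathscr{G}(C') \cong \operatorname{Sym}(m)$, and $\operatorname{Sym}(n) \cong \operatorname{Sym}(m)$ forces $n = m$; by Section~6 the dual groupoid of $\mathsf{B}(C)$ is $\mathcal{G}(C)$, which here is the full equivalence relation on an $n$-point space, so \cite[Corollary~3.5(i)]{KP} gives $C^*(C) \cong C^*(\mathcal{G}(C)) \cong M_n(\mathbb{C})$, and likewise $C^*(C') \cong M_n(\mathbb{C})$; thus $K_*(C^*(C)) \cong K_*(C^*(C'))$ (both equal to $(\mathbb{Z},0)$).

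In the infinite case the core input is Section~6: under non-commutative Stone duality $\mathsf{B}(C)$ corresponds to the \'etale groupoid $\mathcal{G}(C)$, which is exactly Kumjian and Pask's groupoid $\mathcal{G}_C$, and hence $\mathscr{G}(C)$, being the group of units of $\mathsf{B}(C)$, is the topological full group $[[\mathcal{G}(C)]]$. I would then check that $\mathcal{G}(C)$ satisfies the hypotheses of Matui's reconstruction theorem: it is Hausdorff, \'etale and second countable since $\mathsf{B}(C)$ is a countable Boolean inverse $\wedge$-monoid; it is effective since $C$ is aperiodic, using Theorem~\ref{them:fundamental} and the duality between fundamentality of a Boolean inverse monoid and effectiveness of its groupoid; it is minimal since $C$ is cofinal, using Theorem~\ref{them:zero-simplifying} and the duality between $0$-simplifying and minimality; and its unit space is homeomorphic to the Cantor space because $\mathsf{B}(C)$ is a Tarski monoid here. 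The same applies to $\mathcal{G}(C')$. Now \cite[Theorem~3.10]{Matui2015} converts the abstract group isomorphism $[[\mathcal{G}(C)]] \cong [[\mathcal{G}(C')]]$ into an isomorphism $\mathcal{G}(C) \cong \mathcal{G}(C')$ of topological groupoids; isomorphic topological groupoids have isomorphic $C^*$-algebras, and \cite[Corollary~3.5(i)]{KP} identifies $C^*(\mathcal{G}(C))$ with $C^*(C)$, so $C^*(C) \cong C^*(\mathcal{G}(C)) \cong C^*(\mathcal{G}(C')) \cong C^*(C')$ and the $K$-groups agree.

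I expect the only real work to lie in verifying that $\mathcal{G}(C)$ meets every hypothesis of \cite[Theorem~3.10]{Matui2015}, especially second countability and the Cantor-space unit space, and in the separate handling of the finite case, where Matui's theorem does not apply but the conclusion is trivial since both $C^*$-algebras are full matrix algebras. The one conceptual point that makes the argument possible is the identification, carried out in Section~6, of the dual groupoid of $\mathsf{B}(C)$ with $\mathcal{G}(C)$: this is what allows a purely group-theoretic hypothesis to be fed into a groupoid reconstruction theorem and then converted into $C^*$-algebraic information.
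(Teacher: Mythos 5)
Your proposal follows exactly the same route as the paper: Matui's reconstruction theorem converts the group isomorphism into an isomorphism of topological groupoids $\mathcal{G}(C) \cong \mathcal{G}(C')$, and the Kumjian--Pask identification $C^*(C) \cong C^*(\mathcal{G}_C)$ then yields isomorphic $C^*$-algebras and hence isomorphic $K$-theory. Your separate treatment of the finite case (where $\mathsf{B}(C)$ is a finite symmetric inverse monoid, the unit space is not the Cantor space, and Matui's theorem does not literally apply) is a small point of extra care that the paper glosses over, but it does not change the argument.
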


The difficulty here is that the $K$-theory of $k$-graph $C^*$-algebras has proven notoriously
difficult to compute. The most general results are those of \cite{Evans}, but these apply in
general only when $k \le 2$.
However another, closely related, invariant of the groupoid $\mathcal{G}(C)$ and hence of our
group $\mathscr{G}(C)$ is the homology of $\mathcal{G}(C)$.

\begin{corollary}
Let $C$ and $C'$ be row-finite aperiodic, cofinal higher rank graphs with finitely many vertices
and no sources. If $\mathscr{G}(C) \cong \mathscr{G}(C')$ as discrete groups, then
$H_*(\mathcal{G}(C)) \cong H_*(\mathcal{G}(C))$.
\end{corollary}

In this case, we have an explicit calculation of the invariant obtained from \cite[Proposition~7.6]{FKPS}
building on earlier work of Matui \cite{Matui2012}. Specifically, the homology of the groupoid
$\mathcal{G}(C)$ is precisely the homology of a chain complex developed by Evans \cite{Evans}.
To describe it, we proceed as follows.

Let $\varepsilon_1, \dots, \varepsilon_k$ denote the generators of $\mathbb{Z}^k$ (so
$\varepsilon_i = (0, \dots, 0, 1, 0, \dots, 0)$, with the $1$ appearing in the $i$th coordinate).
Also for each $i$, let $M_i$ be the $i$th coordinate matrix of the $k$-graph $C$. That is,
recalling that $C_o$ is the (finite) space of identity morphisms (or vertices) of $C$,
the matrix $M_i$ is the $C_o \times C_o$ matrix with entries
\[
M_i(e, f) = |\{a \in C : d(a) = \varepsilon_i, \mathbf{r(a)} = e\text{ and }\mathbf{d}(a) = f\}|.
\]
We regard each $M_i$ as an endomorphism of the free abelian group $\mathbb{Z} C_o$.
Recall that for $p \ge 1$, we write $\bigwedge^p \mathbb{Z}^k$ for the $p$th exterior power of
$\mathbb{Z}^k$, which is generated by the elements $\varepsilon_{i_1} \wedge \cdots \wedge
\varepsilon_{i_p}$ where $1 \le i_j \le k$ for all $j$.
Define $D^C_0 = \mathbb{Z} C_o$,
and for $p \ge 1$, define $D^C_p =
\big(\bigwedge^p \mathbb{Z}^k\big) \otimes \mathbb{Z}C_{\mathbf{0}}$; observe that this forces
$D^C_p = \{0\}$ for $p \ge k$. For $p  \ge 2$, define $\partial_p  \colon D^C_p \to D^C_{p-1}$ (using
the hat symbol to indicate deletion of a term) by
\[
\partial_p((\varepsilon_{i_1} \wedge \cdots \wedge \varepsilon_{i_p}) \otimes \varepsilon_e)
    = \sum^p_{j=0} (-1)^{j+1} (\varepsilon_{i_1} \wedge \cdots \wedge \widehat{\varepsilon}_{i_j} \wedge \cdots \wedge \varepsilon_{i_p}) \otimes (I - M_{i_j}^t)\varepsilon_e;
\]
and, finally, define $\partial_1 \colon  D^C_1 \to D^C_0$ by
\[
\partial_1(\varepsilon_i \otimes \varepsilon_e) = (1 - M^t_i)\varepsilon_e.
\]
Then $(D^C_*, \partial_*)$ is a chain complex, and \cite[Proposition~7.6]{FKPS} shows that
$H_*(D^C_*, \partial_*)$ is isomorphic to the groupoid homology $H_*(\mathcal{G}_C)$. So we obtain the following corollary.

\begin{corollary}\label{cor:H* invariants}
Let $C$ and $C'$ be row-finite aperiodic, cofinal higher rank graphs with finitely many vertices
and no sources. If $\mathscr{G}(C) \cong \mathscr{G}(C')$ as discrete groups, then $H_*(D^C_*,
\partial_*) \cong H_*(D^{C'}_*, \partial_*)$. In particular,
\[
 H_k(D^C_*, \partial_*)
 =
\bigcap^k_{i=1} \ker(I - M(C)^t_i) \cong \bigcap^k_{i=1} \ker(I - M(C')^t_i),
\]
and
\[
H_0(D^C_*, \partial_*)
=
\mathbb{Z}C_o / \Big(\sum^k_{i=1} \operatorname{im}(I - M(C)^t_i)\Big)
     \cong \mathbb{Z}C'_o / \Big(\sum^k_{i=1} \operatorname{im}(I - M(C')^t_i)\Big).
\]
\end{corollary}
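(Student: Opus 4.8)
The plan is to combine the two preceding corollaries with an elementary identification of the top and the bottom homology of the complex $(D^C_*,\partial_*)$. The isomorphism $H_*(D^C_*,\partial_*) \cong H_*(D^{C'}_*,\partial_*)$ is essentially already in hand: when $\mathscr{G}(C) \cong \mathscr{G}(C')$, \cite[Theorem~3.10]{Matui2015} gives $\mathcal{G}(C) \cong \mathcal{G}(C')$ as topological groupoids (legitimately, since Section~6 presents both as Hausdorff, \'etale, effective, minimal groupoids with Cantor unit space), Section~6 identifies $\mathcal{G}(C)$ with the Kumjian--Pask groupoid $\mathcal{G}_C$, an isomorphism of \'etale groupoids induces an isomorphism of groupoid homology in each degree, and \cite[Proposition~7.6]{FKPS} identifies $H_n(\mathcal{G}_C)$ with $H_n(D^C_*,\partial_*)$ for every $n$. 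I would stitch these together into a \emph{grading-preserving} isomorphism $H_n(D^C_*,\partial_*) \cong H_n(D^{C'}_*,\partial_*)$ for each $n$; the point to flag is precisely that it respects the grading, so that the degrees $n=k$ and $n=0$ can be extracted separately afterwards.

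For the ``in particular'' assertions I would work entirely inside the chain complex, using $C_{\mathbf{0}} = C_o$ from Lemma~\ref{lem:important-hrg}. For the top homology, since $\bigwedge^{p}\mathbb{Z}^{k}=0$ for $p>k$ the complex stops at $D^C_k$, so $H_k(D^C_*,\partial_*) = \ker\big(\partial_k\colon D^C_k \to D^C_{k-1}\big)$; identifying $D^C_k = \big(\bigwedge^{k}\mathbb{Z}^{k}\big)\otimes\mathbb{Z}C_o$ with $\mathbb{Z}C_o$ through the generator $\varepsilon_1\wedge\cdots\wedge\varepsilon_k$ of $\bigwedge^{k}\mathbb{Z}^{k}$, the defining formula for $\partial_k$ (extended $\mathbb{Z}$-linearly in the second factor) reads
\[
\partial_k\big((\varepsilon_1\wedge\cdots\wedge\varepsilon_k)\otimes\xi\big) = \sum_{j=1}^{k}(-1)^{j+1}\big(\varepsilon_1\wedge\cdots\wedge\widehat{\varepsilon}_j\wedge\cdots\wedge\varepsilon_k\big)\otimes(I-M_j^{t})\xi .
\]
Because the $k$ truncated wedge products $\varepsilon_1\wedge\cdots\wedge\widehat{\varepsilon}_j\wedge\cdots\wedge\varepsilon_k$, $1 \le j \le k$, form a $\mathbb{Z}$-basis of $\bigwedge^{k-1}\mathbb{Z}^{k}$, the right-hand side vanishes precisely when $(I-M_j^{t})\xi = 0$ for every $j$, whence $H_k(D^C_*,\partial_*) \cong \bigcap_{i=1}^{k}\ker(I-M_i^{t})$. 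For the bottom homology, $H_0(D^C_*,\partial_*) = \mathbb{Z}C_o/\operatorname{im}(\partial_1)$, and the formula $\partial_1(\varepsilon_i\otimes\varepsilon_e) = (I-M_i^{t})\varepsilon_e$ makes $\operatorname{im}(\partial_1) = \sum_{i=1}^{k}\operatorname{im}(I-M_i^{t})$ immediate, so $H_0(D^C_*,\partial_*) \cong \mathbb{Z}C_o/\big(\sum_{i=1}^{k}\operatorname{im}(I-M_i^{t})\big)$. The same two formulas hold with $C$ replaced by $C'$, and feeding the grading-preserving isomorphism of the first paragraph into degrees $n=k$ and $n=0$ then produces the two displayed isomorphisms.

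I do not anticipate a genuine obstacle. The first half is bookkeeping assembled from the two preceding corollaries and Section~6; the second half is a one-line linear-algebra computation with exterior powers. The only facts that really deserve checking are that $D^C_{k+1}=0$ --- so that $H_k$ is a plain kernel rather than a subquotient --- and that the truncated wedge products are $\mathbb{Z}$-linearly independent; both are standard for $\bigwedge^{\bullet}\mathbb{Z}^{k}$, and the latter is also exactly what reduces $\ker\partial_k$ to $\bigcap_j\ker(I-M_j^{t})$.
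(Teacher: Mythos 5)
Your proposal is correct and follows essentially the same route as the paper: the first statement is obtained by combining \cite[Theorem~3.10]{Matui2015} with \cite[Proposition~7.6]{FKPS} (via the identification of $\mathcal{G}(C)$ with $\mathcal{G}_C$ from Section~6), and the two displayed isomorphisms come from the direct computation identifying $H_k$ with $\bigcap_i\ker(I-M_i^t)$ and $H_0$ with $\mathbb{Z}C_o/\sum_i\operatorname{im}(I-M_i^t)$. You have simply written out the linear-algebra details that the paper leaves as ``direct computation,'' correctly reading the degree-$k$ term of the complex as $\bigwedge^k\mathbb{Z}^k\otimes\mathbb{Z}C_o\cong\mathbb{Z}C_o$.
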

\begin{proof}
The first statement follows immediately from \cite[Theorem~3.10]{Matui2015} and
\cite[Proposition~7.6]{FKPS}.
The second is by direct computation: the intersection of the kernels
of the $I - M(C)^t_i$ is isomorphic to $H_k(D^C_*, \partial_*)$ and the quotient of
$\mathbb{Z}C_o$ by the sum of their images is isomorphic to $H_0(D^C_*, \partial_*)$
(and similarly for $C'$).
\end{proof}

\section{Two series of concrete examples}

In this section we provide two constructions of infinite families of $k$-graphs $C$ with mutually
non-isomorphic groups $\mathscr{G}(C)$. In the first family, the higher-rank graphs $C$ can all
be chosen to be of the same rank $k \ge 2$, and the associated groups $\mathscr{G}(C)$
are distinguished by the finite $0$th homology groups of the $k$-graphs. In the second family of
examples, we show that for each $k \ge 1$ and each $R \ge 1$, there is an aperiodic cofinal
$k$-graph $C_{k, R}$ that is row-finite and has finitely many vertices and whose $k$th homology
group is of rank $R$.

\subsection{Examples distinguished by their $0$th homology}

We shall use the results of the previous section to construct, for each $k \ge 2$ and each
$k$-tuple of integers $(m_1,m_2,\ldots,m_k)$, two series of pairwise non-isomorphic $k$-graphs
with two vertices and different groups
\[
\mathbb{Z}C_o/\Big(\sum^k_{i=1} \operatorname{im}(I - M(C)^t_i)\Big).
\]
Our construction consists of two steps:
first, we construct a family of cube complexes with two vertices, covered by products of $k$ trees,
and second, we explain how to get a $k$-graph from each complex.

For background on cube complexes covered by products of $k$ trees see \cite{RSV} and references in the paper.

\noindent{\bf Step 1.} Let $X_1,...,X_k$ be distinct alphabets, such that
$\left| X_i \right|=m_i$ and
$$X_i=\{x_1^i,x_2^i,...,x_{m_i}^i\}.$$
Let $F_i$ be the free group generated by $X_i$.
The direct product
\[
G = F_1\times F_2 \times \ldots \times F_k
\]
has presentation
\[
    G=\langle X_1,X_2,...,X_k | [x^i_s,x^j_l]=1, i \neq j=1,...,k; s=1,...,m_i;l=1,...,m_j \rangle,
\]
where $[x,y]$ denotes the commutator $xyx^{-1}y^{-1}$.

The group $G$ acts simply and transitively on a Cartesian product $\Delta$ of $k$ trees $T_1,T_2,..,T_k$ of valencies $2m_1,2m_2,...,2m_k$ respectively:
each $T_i$ is identified with the Cayley tree of $F_i$, and the action of $G$ is the coordinatewise action of the component groups $F_i$.

The quotient of this action is a cube complex $P$ with one vertex $v$ such that the universal cover of $P$ is $\Delta$. The edges of the cube complex $P$
are naturally labelled by elements of $X=X_1\cup X_2...\cup X_k$, and are naturally oriented by the usual algebraic ordering on $F_i$. The $1$-skeleton of $P$ is a wedge of $\sum_{i=1}^k m_i$ circles.

We construct a family of double covers of $P$ in the following way. Consider a labelling $\ell : X \to \mathbb{Z}_2$ of the elements of $X$ (equivalently the edges of the $1$-skeleton of $P$).
We obtain a cover $P^2_\ell$ of $P$ whose vertex set is $\{v\} \times \mathbb{Z}_2$ and whose set of $1$-cubes is $X \times \mathbb{Z}_2$, with range and domain maps given by
$r(x, i) = (r(x), i)$ and $s(x,i) = (s(x), i + \ell(x))$. Specifically, $P^2_\ell$ is the quotient of $\Delta$ by the action of the kernel of the homomorphism $G \to \mathbb{Z}_2$
induced by $\ell$. Observe that in $P^2_\ell$, for a given $x \in X$ either $(x, 0)$ and $(x,1)$ are loops based at $(v,0)$ and $(v,1)$ (if $\ell(x) = 0$), or $(x,0)$ is an edge
from $(v,1)$ to $(v,0)$ and $(x,1)$ is an edge from $(v,0)$ to $(v,1)$.

So Figure~\ref{fig:1} illustrates the $2$-cubes and part of the $1$-skeleton of $P$ corresponding to symbols $a \in X_i$ and $b \in X_j$ with $i \not= j$
in which $\ell(a) = 1$ and $\ell(b) = 0$; Figure~\ref{fig:2} illustrates the corresponding $2$-cubes and part of the $1$-skeleton if $\ell(a) = \ell(b) = 1$.

\begin{figure}[!htbp]
\[
\begin{tikzpicture}
    \node (00) at (0,0) {$(v,0)$};
    \node (01) at (0,2) {$(v,1)$};
    \node (10) at (2,0) {$(v,0)$};
    \node (11) at (2,2) {$(v,1)$};
    \draw[->-] (00) to node[pos=0.5, right] {\small$(a,1)$} (01);
    \draw[->-] (00) to node[pos=0.5, below] {\small$(b,0)$} (10);
    \draw[->-] (10) to node[pos=0.5, left] {\small$(a,1)$} (11);
    \draw[->-] (01) to node[pos=0.5, above] {\small$(b,1)$} (11);
    \begin{scope}[xshift=3.8cm]
    \node (00) at (0,0) {$(v,1)$};
    \node (01) at (0,2) {$(v,0)$};
    \node (10) at (2,0) {$(v,1)$};
    \node (11) at (2,2) {$(v,0)$};
    \draw[->-] (00) to node[pos=0.5, right] {\small$(a,0)$} (01);
    \draw[->-] (00) to node[pos=0.5, below] {\small$(b,0)$} (10);
    \draw[->-] (10) to node[pos=0.5, left] {\small$(a,0)$} (11);
    \draw[->-] (01) to node[pos=0.5, above] {\small$(b,1)$} (11);
    \end{scope}
    \begin{scope}[xshift=8cm]
    \node (v0) at (0,1) {$(v,0)$};
    \node (v1) at (2,1) {$(v,1)$};
    \draw[->-, out=30, in=150] (v0) to node[pos=0.5, above] {\small$(a,1)$} (v1);
    \draw[->-, out=210, in=330] (v1) to node[pos=0.5, below] {\small$(a,0)$} (v0);
    \draw[->-] (v0) .. controls +(-1, 1) and +(-1, -1) .. (v0) node[pos=0.25, above] {\small$(b,0)$};
    \draw[->-] (v1) .. controls +(1, 1) and +(1, -1) .. (v1) node[pos=0.75, below] {\small$(b,1)$};
    \end{scope}
\end{tikzpicture}
\]
\caption{$\ell(a) = 1$ and $\ell(b) = 0$}\label{fig:1}
\end{figure}
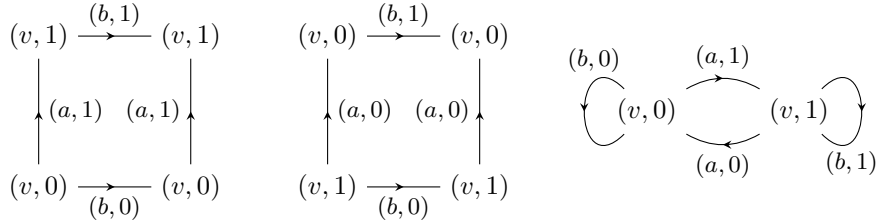

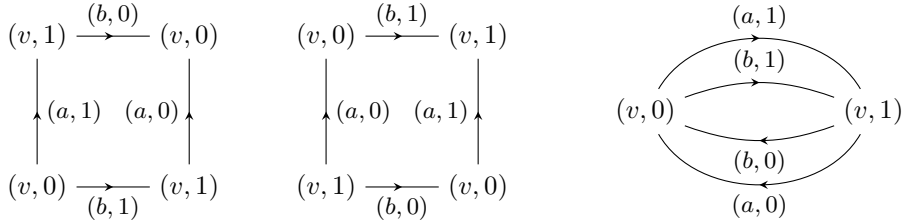
\begin{figure}[!htbp]
\[
\begin{tikzpicture}
    \node (00) at (0,0) {$(v,0)$};
    \node (01) at (0,2) {$(v,1)$};
    \node (10) at (2,0) {$(v,1)$};
    \node (11) at (2,2) {$(v,0)$};
    \draw[->-] (00) to node[pos=0.5, right] {\small$(a,1)$} (01);
    \draw[->-] (00) to node[pos=0.5, below] {\small$(b,1)$} (10);
    \draw[->-] (10) to node[pos=0.5, left] {\small$(a,0)$} (11);
    \draw[->-] (01) to node[pos=0.5, above] {\small$(b,0)$} (11);
    \begin{scope}[xshift=3.8cm]
    \node (00) at (0,0) {$(v,1)$};
    \node (01) at (0,2) {$(v,0)$};
    \node (10) at (2,0) {$(v,0)$};
    \node (11) at (2,2) {$(v,1)$};
    \draw[->-] (00) to node[pos=0.5, right] {\small$(a,0)$} (01);
    \draw[->-] (00) to node[pos=0.5, below] {\small$(b,0)$} (10);
    \draw[->-] (10) to node[pos=0.5, left] {\small$(a,1)$} (11);
    \draw[->-] (01) to node[pos=0.5, above] {\small$(b,1)$} (11);
    \end{scope}
    \begin{scope}[xshift=8cm]
    \node (v0) at (0,1) {$(v,0)$};
    \node (v1) at (3,1) {$(v,1)$};
    \draw[->-, out=20, in=160] (v0) to node[pos=0.5, above] {\small$(b,1)$} (v1);
    \draw[->-, out=200, in=340] (v1) to node[pos=0.5, below] {\small$(b,0)$} (v0);
    \draw[->-, out=60, in=120] (v0) to node[pos=0.5, above] {\small$(a,1)$} (v1);
    \draw[->-, out=240, in=300] (v1) to node[pos=0.5, below] {\small$(a,0)$} (v0);
    \end{scope}
\end{tikzpicture}
\]
\caption{$\ell(a) = \ell(b) = 1$}\label{fig:2}
\end{figure}

We will consider two specific labellings $\ell_u$ and $\ell_m$ (the $u$ and $m$ stand for
``uniform" and ``mixed"). The uniform labelling $\ell_u$ is given by $\ell_u(x) = 1$ for all $x$,
whereas the mixed labelling $\ell_m$ satisfies $\ell_m|_{X_1} \equiv 0$ and $\ell_m|_{X \setminus
X_1} \equiv 1$. So under $\ell_u$ all $2$-squares are as in Figure~\ref{fig:2}; but under $\ell_m$
squares in which $b$ belongs to $X_1$ are as in Figure~\ref{fig:1}, and the remaining squares are
as in Figure~\ref{fig:2}. Figure~\ref{fig:4} illustrates a $3$-cube in the cube complex for
$\ell_u$ (left) and a $3$-cube in the cube complex for $\ell_m$ in which $a$ belongs to $X_1$ and
the edges $b, c$ belong to $X \setminus X_1$ (right).

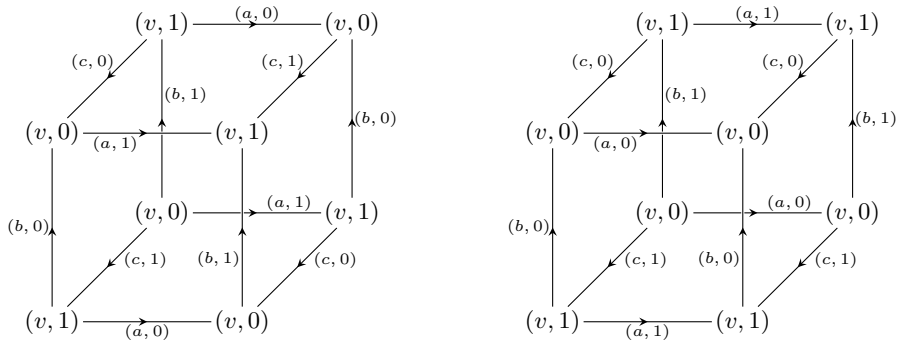
\begin{figure}[!htbp]
\[
\begin{tikzpicture}[scale=1.25]
    \node[inner sep=1pt] (000) at (0,0,0) {\small$(v,0)$};
    \node[inner sep=1pt] (100) at (2,0,0) {\small$(v,1)$};
    \node[inner sep=1pt] (010) at (0,2,0) {\small$(v,1)$};
    \node[inner sep=1pt] (110) at (2,2,0) {\small$(v,0)$};
    \node[inner sep=1pt] (001) at (0,0,3) {\small$(v,1)$};
    \node[inner sep=1pt] (101) at (2,0,3) {\small$(v,0)$};
    \node[inner sep=1pt] (011) at (0,2,3) {\small$(v,0)$};
    \node[inner sep=1pt] (111) at (2,2,3) {\small$(v,1)$};
    \draw[->-] (000) to node[pos=0.75, above, inner sep=0.5pt] {\tiny$(a,1)$} (100);
    \draw[->-] (000) to node[pos=0.65, right, inner sep=0.5pt] {\tiny$(b,1)$} (010);
    \draw[->-] (000) to node[pos=0.3, anchor=north west, inner sep=0pt] {\tiny$(c,1)$} (001);
    \draw[->-] (100) to node[pos=0.5, right, inner sep=0.5pt] {\tiny$(b,0)$} (110);
    \draw[->-] (100) to node[pos=0.3, anchor=north west, inner sep=0pt] {\tiny$(c,0)$} (101);
    \draw[->-] (001) to node[pos=0.5, below, inner sep=0.5pt] {\tiny$(a,0)$} (101);
    \draw[->-] (001) to node[pos=0.5, left, inner sep=0.5pt] {\tiny$(b,0)$} (011);
    \draw[->-] (010) to node[pos=0.5, above, inner sep=0.5pt] {\tiny$(a,0)$} (110);
    \draw[->-] (010) to node[pos=0.4, anchor=south east, inner sep=0pt] {\tiny$(c,0)$} (011);
    \draw[->-] (110) to node[pos=0.4, anchor=south east, inner sep=0pt] {\tiny$(c,1)$} (111);
    \draw[very thick, white] (101) to (111);
    \draw[->-] (101) to node[pos=0.3, left, inner sep=0.5pt] {\tiny$(b,1)$} (111);
    \draw[very thick, white] (011) to (111);
    \draw[->-] (011) to node[pos=0.25, below, inner sep=1pt] {\tiny$(a,1)$} (111);
\end{tikzpicture}
\qquad\qquad
\begin{tikzpicture}[scale=1.25]
    \node[inner sep=1pt] (000) at (0,0,0) {\small$(v,0)$};
    \node[inner sep=1pt] (100) at (2,0,0) {\small$(v,0)$};
    \node[inner sep=1pt] (010) at (0,2,0) {\small$(v,1)$};
    \node[inner sep=1pt] (110) at (2,2,0) {\small$(v,1)$};
    \node[inner sep=1pt] (001) at (0,0,3) {\small$(v,1)$};
    \node[inner sep=1pt] (101) at (2,0,3) {\small$(v,1)$};
    \node[inner sep=1pt] (011) at (0,2,3) {\small$(v,0)$};
    \node[inner sep=1pt] (111) at (2,2,3) {\small$(v,0)$};
    \draw[->-] (000) to node[pos=0.75, above, inner sep=0.5pt] {\tiny$(a,0)$} (100);
    \draw[->-] (000) to node[pos=0.65, right, inner sep=0.5pt] {\tiny$(b,1)$} (010);
    \draw[->-] (000) to node[pos=0.3, anchor=north west, inner sep=0pt] {\tiny$(c,1)$} (001);
    \draw[->-] (100) to node[pos=0.5, right, inner sep=0.5pt] {\tiny$(b,1)$} (110);
    \draw[->-] (100) to node[pos=0.3, anchor=north west, inner sep=0pt] {\tiny$(c,1)$} (101);
    \draw[->-] (001) to node[pos=0.5, below, inner sep=0.5pt] {\tiny$(a,1)$} (101);
    \draw[->-] (001) to node[pos=0.5, left, inner sep=0.5pt] {\tiny$(b,0)$} (011);
    \draw[->-] (010) to node[pos=0.5, above, inner sep=0.5pt] {\tiny$(a,1)$} (110);
    \draw[->-] (010) to node[pos=0.4, anchor=south east, inner sep=0pt] {\tiny$(c,0)$} (011);
    \draw[->-] (110) to node[pos=0.4, anchor=south east, inner sep=0pt] {\tiny$(c,0)$} (111);
    \draw[very thick, white] (101) to (111);
    \draw[->-] (101) to node[pos=0.3, left, inner sep=0.5pt] {\tiny$(b,0)$} (111);
    \draw[very thick, white] (011) to (111);
    \draw[->-] (011) to node[pos=0.25, below, inner sep=1pt] {\tiny$(a,0)$} (111);
\end{tikzpicture}
\]
\caption{A $3$-cube for $\ell_u$ (left) and for $\ell_m$ (right)}\label{fig:4}
\end{figure}

\noindent{\bf Step 2.} For each of $\star = u$ and $\star = m$, we explain how to construct a $k$-graph $C_\star$ from $P^2_{\ell_\star}$,
by specifying their skeletons and factorization rules as in \cite{HRSW}. For either value of $\star$, we define
$(C_\star)_o = \{(v,0), (v,1)\}$, and for each $i \le k$, we define $(C_\star)_{e_i}$ to be the set
\[
    X_i \times \mathbb{Z}_2 \sqcup \{(\overline{x}, j) : x \in X_i, j \in \mathbb{Z}_2\},
\]
a disjoint union of two copies $X \times \mathbb{Z}_2$. The range and source maps on
$(C_\star)_{e_i}$ restrict to those in $P^2_{\ell_\star}$ on $X_i \times \mathbb{Z}_2$, and we
define $r(\overline{x}, j) = s(x, j)$ and $s(\overline{x},j) = r(x,j)$. The factorization rules
are as follows: for each $2$-cube
\[
\begin{tikzpicture}
    \node (00) at (0,0) {$(v,0)$};
    \node (01) at (0,2) {$(v,1)$};
    \node (10) at (2,0) {$(v,0)$};
    \node (11) at (2,2) {$(v,1)$};
    \draw[->-] (00) to node[pos=0.5, left] {\small$(a,i_a)$} (01);
    \draw[->-] (00) to node[pos=0.5, below] {\small$(b,i_b)$} (10);
    \draw[->-] (10) to node[pos=0.5, right] {\small$(a,j_a)$} (11);
    \draw[->-] (01) to node[pos=0.5, above] {\small$(b,j_b)$} (11);
\end{tikzpicture}
\]
in $P^2_{\ell_\star}$, we have four factorization rules:
\begin{align*}
(b,i_b)(a,i_a) &= (a,j_a)(b,j_b),&\qquad
(a,i_a)(\overline{b},j_b) &= (\overline{b},i_b)(a,j_a),\\
(\overline{a},j_a)(b,i_b) &= (b,j_b)(\overline{a},i_a),&\qquad
(\overline{b},j_b)(\overline{a},j_a) &= (\overline{a},i_a)(\overline{b},i_b)
\end{align*}

That these factorization rules satisfy the associativity condition of \cite{HRSW} follows from a routine calculation
using that the $P^2_{\ell_\star}$ is a quotient of of a direct product of trees.

To proceed, we define matrices
\[
D_i := \begin{pmatrix} 2m_i&0\\ 0&2m_i \end{pmatrix}
    \quad\text{ and }\quad
T_i := \begin{pmatrix} 0&2m_i\\ 2m_i&0 \end{pmatrix}.
\]
Observe that in $C_u$ the adjacency matrices $M(C)_i$ are equal to $T_i$, while in $C_m$, we have
$M(C)_1 = D_1$ and $M(C)_i = T_i$ for $2 \le i \le k$. Routine calculations using this show that
\begin{align*}
\mathbb{Z} (C_u)_o / \Big(\sum^k_{i=1} \operatorname{im}(I - M(C_u)^t_i)\Big)
    &\cong \mathbb{Z}/\gcd(4m_1^2-1, \dots, 4m_k^2-1)\mathbb{Z},\quad\text{ and}\\
\mathbb{Z} (C_m)_o / \Big(\sum^k_{i=1} \operatorname{im}(I - M(C_m)^t_i)\Big)
    &\cong \mathbb{Z}/\gcd(2m_1 - 1, 4m_2^2-1, \dots, 4m_k^2 - 1)\mathbb{Z}.
\end{align*}

\begin{corollary}
There are at least two non-isomorphic groups $\mathscr{G}(C)$ for each $k \ge 2$ and infinitely many choices of alphabets $Y_1,\ldots,Y_k$ of sizes
$2m,\ldots,2m$.
\end{corollary}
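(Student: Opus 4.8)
The plan is to apply Corollary~\ref{cor:H* invariants} to the two $k$-graphs $C_u$ and $C_m$ constructed above, after specialising to the case in which all $k$ alphabets have the same size. Concretely, I would fix $m \ge 1$ and set $m_1 = m_2 = \cdots = m_k = 2m$ (so that $Y_i = X_i$ has $2m$ elements). The displayed isomorphisms at the end of Step~2 then read, in this special case,
\[
\mathbb{Z}(C_u)_o\Big/\Big(\sum_{i=1}^k \operatorname{im}(I - M(C_u)^t_i)\Big) \cong \mathbb{Z}/(16m^2-1)\mathbb{Z},
\qquad
\mathbb{Z}(C_m)_o\Big/\Big(\sum_{i=1}^k \operatorname{im}(I - M(C_m)^t_i)\Big) \cong \mathbb{Z}/(4m-1)\mathbb{Z},
\]
where for the first group all the entries of the $\gcd$ coincide, and for the second one I would use that $4m-1$ divides $16m^2-1 = (4m-1)(4m+1)$, so that $\gcd(4m-1, 16m^2-1, \dots, 16m^2-1) = 4m-1$.

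The next step is the elementary observation that these two abelian groups are non-isomorphic: they are finite cyclic of orders $16m^2-1$ and $4m-1$ respectively, and $16m^2-1-(4m-1) = 4m(4m-1) > 0$ for every $m \ge 1$, so they do not even have the same cardinality. Hence the $H_0$-clause of Corollary~\ref{cor:H* invariants} forbids $\mathscr{G}(C_u) \cong \mathscr{G}(C_m)$. Since $m$ ranges over all positive integers, this yields, for each fixed $k \ge 2$, infinitely many choices of $k$-tuples of equal-size alphabets for which the two associated groups $\mathscr{G}(C_u)$ and $\mathscr{G}(C_m)$ are non-isomorphic, which is the assertion of the corollary.

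Before Corollary~\ref{cor:H* invariants} can be invoked one must verify that $C_u$ and $C_m$ fall under its standing hypotheses, namely that they are row-finite higher rank graphs with finitely many vertices, no sources, and are both aperiodic and cofinal. Finitely many vertices and row-finiteness are immediate, since $(C_\star)_o$ has two elements and each edge set $(C_\star)_{e_i}$ is finite; the no-sources condition holds because the ``barred'' edges $(\overline{x},j)$ provide, at each vertex and in each coordinate direction $e_i$, an edge having that vertex as its range. Cofinality follows from the fact that for $i \ge 2$ the coordinate matrix is the swap matrix $T_i$, which connects the two vertices in one step. Aperiodicity is the point requiring genuine care: I would deduce it from the fact that $P^2_{\ell_\star}$ is a quotient of the product $\Delta$ of $k$ trees, each of valency $4m \ge 4$, so that given distinct morphisms $a \ne b$ of equal degree one can extend them by a path in a suitable coordinate tree and land on incomparable vertices of $\Delta$, hence on incomparable elements of $C_\star$.

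The main obstacle is precisely this verification of aperiodicity (and, to a lesser extent, cofinality) of $C_u$ and $C_m$; once the displayed isomorphisms of Step~2 and the standing hypotheses are in hand, the remaining arithmetic and the appeal to Corollary~\ref{cor:H* invariants} are routine.
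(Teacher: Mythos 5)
Your proposal is correct and follows essentially the same route as the paper: specialise to equal alphabet sizes, read off the two $H_0$ groups from the displayed formulas of Step~2, observe that the resulting cyclic groups have different orders, and invoke Corollary~\ref{cor:H* invariants}. The only differences are immaterial: the paper takes $m_1=\cdots=m_k=m$ (obtaining $\mathbb{Z}/(4m^2-1)\mathbb{Z}$ versus $\mathbb{Z}/(2m-1)\mathbb{Z}$) where you take $m_i=2m$, and you additionally flag the verification of aperiodicity and cofinality of $C_u$ and $C_m$, a hypothesis check the paper's own proof leaves implicit.
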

\begin{proof}
Take $m_1 = \cdots = m_k$ in the examples above. We obtain $H_0(D^{C_u}_*, \partial_*) \cong
\mathbb{Z}/(4m^2-1)\mathbb{Z}$, and since $2m-1$ divides $4m^2-1$ we have $H_0(D^{C_m}_*,
\partial_*) \cong \mathbb{Z}/(2m-1)\mathbb{Z}$. The result then follows from Corollary~\ref{cor:H*
invariants}.
\end{proof}

Note that the known groups $nV$ can be presented in our language using an $n$-graph $C$ with $1
\times 1$ adjacency matrices with single entry equal $2$. It is relatively easy to check that each
$\ker(\partial_j)$ in Evans' complex is a subgroup of a direct sum of kernels of the maps $I -
M(C)^t_i$, which in this instance are all equal to the $1 \times 1$ matrix $(-1)$. So $H_j(C) = 0$
for $1 \le j \le n$. Also, $H_0(C) = \mathbb{Z}/\big(\sum^n_{i=1} I - M(C)^t_i\mathbb{Z}) =
\mathbb{Z}/(-1)\mathbb{Z} = 0$. Hence the homology groups of all of these $n$-graphs are trivial.
In particular none of the groups $\mathscr{G}(C)$ discussed above are isomorphic to the groups
$nV$.

\subsection{Examples with nontrivial $k$th homology}

Fix integers $k, R \ge 1$. We will construct a $k$-graph $C_{k, R}$ by specifying its
skeleton and factorization rules as in \cite{HRSW}.

The vertex set $V$ of the skeleton $E$ has $R+1$ elements (for example, we could take $V = \{1, \dots, R+1\}$,
but to lighten notation, we will avoid choosing a particular enumeration).

For each $i \le k$, the set of edges of $E$ of colour $i$ is
\[
\{e^i_{v, m, w} : v \not=w \in V \text{ and } 1 \le m \le 2\}
    \cup \{e^i_{v, m, v} : v \in V \text{ and } 1 \le m \le 3\}
\]
and the range and domain maps are given by $\mathbf{r}(e^i_{v, m, w}) = v$ and $\mathbf{d}(e^i_{v,
m, w}) = w$.

So for any two distinct vertices $v,w$, there are $2$ edges of each colour pointing from $v$ to $w$
(and also from $w$ to $v$), and there are $3$ loops of each colour at each vertex.

For example, in the skeleton of $C_{k, 2}$, each of the singly-coloured subgraphs is as in
Figure~\ref{fig:skeleton}.
\begin{figure}[!htbp]
\[
\begin{tikzpicture}
    \node[inner sep=0.5pt] (v) at (0,0) {$v$};
    \node[inner sep=0.5pt] (w) at (4,0) {$w$};
    \draw[->-, out=20, in=160] (v) to node[pos=0.5, below,inner sep=0pt] {\small$e^i_{w,1,v}$} (w);
    \draw[->-, out=45, in=135] (v) to node[pos=0.5, above, inner sep=0pt] {\small$e^i_{w,2,v}$} (w);
    \draw[->-, out=200, in=340] (w) to node[pos=0.5, above,inner sep=0pt] {\small$e^i_{v,1,w}$} (v);
    \draw[->-, out=225, in=315] (w) to node[pos=0.5, below, inner sep=0pt] {\small$e^i_{v,2,w}$} (v);
    \draw[->-] (v) .. controls +(-1, 1) and +(1, 1) .. (v) node[pos=0.5, above,] {\small$e^i_{v,1,v}$};
    \draw[->-] (v) .. controls +(-1, 1) and +(-1, -1) .. (v) node[pos=0.5, left] {\small$e^i_{v,2,v}$};
    \draw[->-] (v) .. controls +(-1, -1) and +(1, -1) .. (v) node[pos=0.5, below] {\small$e^i_{v,3,v}$};
    \draw[->-] (w) .. controls +(-1, 1) and +(1, 1) .. (w) node[pos=0.5, above] {\small$e^i_{v,1,v}$};
    \draw[->-] (w) .. controls +(1, 1) and +(1, -1) .. (w) node[pos=0.5, right] {\small$e^i_{v,2,v}$};
    \draw[->-] (w) .. controls +(-1, -1) and +(1, -1) .. (w) node[pos=0.5, below] {\small$e^i_{v,3,v}$};
\end{tikzpicture}
\]
\caption{One of the singly-coloured subgraphs in the skeleton of $C_{k,2}$}\label{fig:skeleton}
\end{figure}
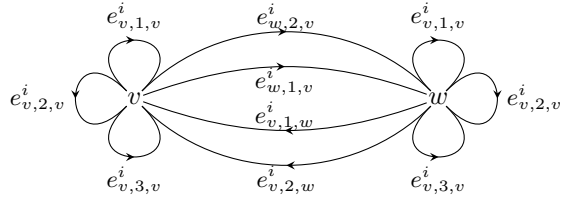

We must now specify factorization rules. For $j \not= l \le k$, the $jl$-coloured paths are those
of the form $e^j_{u,m,v}e^l_{v,n,w}$, and we must specify factorization rules that determine range
and source preserving bijections between $jl$-coloured paths and $lj$-coloured paths and that
satisfy the associativity condition \cite[Equation~(3.2)]{HRSW}.

We define them in two cases:
\begin{itemize}
\item[(F1)] if $u,v \in V$ are distinct, then we define $e^i_{u, m, u}e^j_{u, n, v} = e^j_{u, n, v}e^i_{v, m, v}$.
\item[(F2)] if either $u = v = w$ or $u \not= v$ and $v \not= w$, we define $e^i_{u, m, v}e^j_{v, n, w} = e^j_{u, n, v}e^i_{v, m, w}$.
\end{itemize}
It is routine to check that these factorization rules determine a complete collection of squares
as in \cite[p.578]{HRSW}. Routine but tedious calculations also verify the associativity condition
\cite[Equation~(3.2)]{HRSW}.

By \cite[Theorem~4.4]{HRSW} there is a $k$-graph $C_{k,R}$ whose skeleton is the coloured graph
$E$, and whose factorization rules are given by (F1)~and~(F2). Since for all $v,w \in V$ we have
$v (C_{k,R})_{e_1} w \not= \emptyset$, it is immediate that $C_{k,R}$ is cofinal. To see that it
is aperiodic, let $B_{R+1}$ be the $1$-graph whose skeleton is a bouquet of $R+1$ loops at a
single vertex. Fix a vertex $v \in V$ and observe that the sub-$k$-graph generated by the edges
$\{e^i_{(v,m,v)} : i \le k\text{ and }m \le R+1\}$ is isomorphic to the cartesian product
$\prod^k_{i-1} B_{R+1}$ of $k$ copies of $B_{R+1}$. Since the $1$-graph $B_{R+1}$ is aperiodic, so
is the $k$-graph $\prod^k_{i-1} B_{R+1}$, and so it contains an aperiodic infinite path. This is
then an aperiodic infinite path in $C_{k, R}$.

We now observe that for each $i \le k$, the matrix $(1 - M(C_{k, R})^t_i)$ is the $(R+1) \times (R+1)$
integer matrix
\[
A_R := \left(\begin{matrix}
    -2 & -2 & \cdots & -2 \\
    -2 & -2 &  \cdots & -2 \\
    \vdots & \vdots & &\vdots \\
    -2 & -2 & \cdots & -2
\end{matrix}\right).
\]
Consequently $H_k(C_{k, R}) = \ker(A_R) \cong \mathbb{Z}^R$.

We have now proved the following.

\begin{corollary}
For each positive integer $k$, there exists a family $\{C_{k, R} : R \ge 2\}$ of $k$-graphs, such
that $\mathscr{G}(C_{k, R}) \cong \mathscr{G}(C_{k', R'})$ only if $k = k'$ and $R = R'$.
Moreover, for any $k, R$, the group $\mathscr{G}(C_{k,R})$ is not isomorphic to $\mathscr{G(C')}$
for any $l$-graph $C'$ with $l < k$.
\end{corollary}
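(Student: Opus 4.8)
The plan is to derive both assertions from Corollary~\ref{cor:H* invariants} together with one structural fact about the Evans complex: for an $l$-graph $C'$ the chain groups $D^{C'}_p = \bigwedge^p \mathbb{Z}^l \otimes \mathbb{Z} C'_o$ vanish for every $p > l$, because $\bigwedge^p \mathbb{Z}^l = 0$ once $p > l$. Consequently the homology $H_p(D^{C'}_*, \partial_*)$ can be nonzero only for $p \le l$, so the largest degree in which the homology of $D^{C'}_*$ is nonzero is at most $l$. This single number will be the invariant that separates the graphs $C_{k,R}$.

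First I would note that each $C_{k,R}$ satisfies the hypotheses of Corollary~\ref{cor:H* invariants}: it is row finite, has the $R+1$ vertices of $V$, has no sources, and was shown above to be aperiodic and cofinal. I would also make the analogous standing assumptions on the comparison $l$-graph $C'$ in the second claim — row finite, finitely many vertices, no sources, aperiodic, cofinal — so that $\mathscr{G}(C')$ is genuinely the topological full group of a Hausdorff, effective, minimal groupoid and hence Corollary~\ref{cor:H* invariants} applies to it. Next I would pin down the top nonvanishing homological degree of $C_{k,R}$ itself: we already know $H_k(C_{k,R}) = \ker(A_R)$, and the short computation that $A_R x = 0$ if and only if $\sum_i x_i = 0$ identifies this with $\{x \in \mathbb{Z}^{R+1} : \sum_i x_i = 0\} \cong \mathbb{Z}^R$, which is nonzero. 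So the homology of $D^{C_{k,R}}_*$ is nonzero in degree $k$ and zero in all degrees above $k$; that is, its top nonvanishing degree is exactly $k$, and in that degree it is free abelian of rank $R$.

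The two claims then follow formally. Suppose $\mathscr{G}(C_{k,R}) \cong \mathscr{G}(C_{k',R'})$. By Corollary~\ref{cor:H* invariants} the graded groups $H_*(D^{C_{k,R}}_*, \partial_*)$ and $H_*(D^{C_{k',R'}}_*, \partial_*)$ are isomorphic degreewise, so the sets of degrees in which they are nonzero coincide; comparing the maxima of these sets gives $k = k'$, and then comparing the homology in that common top degree gives $\mathbb{Z}^R \cong \mathbb{Z}^{R'}$, hence $R = R'$. Suppose instead $\mathscr{G}(C_{k,R}) \cong \mathscr{G}(C')$ for an $l$-graph $C'$ with $l < k$. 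Then $D^{C'}_k = 0$ since $k > l$, so $H_k(D^{C'}_*, \partial_*) = 0$, whereas Corollary~\ref{cor:H* invariants} would force this to be isomorphic to $H_k(D^{C_{k,R}}_*, \partial_*) \cong \mathbb{Z}^R \neq 0$; this contradiction proves the second statement.

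I do not expect a real obstacle here, since the substantive ingredients are already in place above: the aperiodicity and cofinality of $C_{k,R}$, the fact that all $k$ of its coordinate matrices equal $A_R$, and the identification of the groupoid homology with the homology of Evans' complex through \cite[Proposition~7.6]{FKPS} and \cite[Theorem~3.10]{Matui2015}. What remains is just a one-line degree-and-rank count in each of the two cases. The only point deserving care is making explicit which hypotheses the comparison $l$-graph $C'$ must satisfy in order for its homology to be a legitimate isomorphism invariant of $\mathscr{G}(C')$; once that is settled the argument goes through without further work.
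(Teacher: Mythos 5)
Your argument is correct and is essentially the proof the paper intends: the corollary is stated after the construction with no separate proof, and the facts you invoke (aperiodicity and cofinality of $C_{k,R}$, all coordinate matrices equal to $A_R$, hence $H_k \cong \mathbb{Z}^R$, and vanishing of the Evans chain groups above degree $l$ for an $l$-graph, so the top nonvanishing homological degree recovers the rank) are exactly what the preceding text sets up. Your explicit insistence that the comparison graph $C'$ must satisfy the standing hypotheses (row finite, finitely many vertices, no sources, aperiodic, cofinal) for Corollary~\ref{cor:H* invariants} to apply is a point the paper's statement leaves tacit, and you correctly read the paper's ``$D^C_p = \{0\}$ for $p \ge k$'' as the intended $p > k$, without which the computation $H_k(C_{k,R}) \cong \mathbb{Z}^R$ would be vacuous.
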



\end{document}